\theoremstyle{plain}
\newtheorem{thm}{Theorem}[section]
\newtheorem{lem}[thm]{Lemma}
\newtheorem{prop}[thm]{Proposition}
\theoremstyle{definition}
\newtheorem{rem}[thm]{Remark}
\numberwithin{equation}{section}
\newcommand{\Var}{\text{\normalfont Var}}
\newcommand{\conv}{\text{\normalfont conv}}
\newcommand{\tconv}{\text{\normalfont conv}}
\newcommand{\la}{\lambda}
\renewcommand{\P}{\mathbb{P}}
\newcommand{\E}{\mathbb{E}}
\newcommand{\B}{\mathbb{B}}
\newcommand{\R}{\mathbb{R}}
\newcommand{\Z}{\mathbb{Z}}
\renewcommand{\S}{\mathbb{S}}
\newcommand{\cP}{\mathcal{P}}
\newcommand{\cF}{\mathcal{F}}
\newcommand{\cT}{\mathcal{T}}
\newcommand{\cU}{\mathcal{U}}
\DeclareOldFontCommand{\sc}{\normalfont\scshape}{\mathsc}
\definecolor{zzttqq}{rgb}{0.6,0.2,0.}
\pgfplotsset{compat=1.15}
\DeclareMathOperator*{\lsup}{\overline{\lim}}
\newcommand\blfootnote[1]{%
  \begingroup
  \renewcommand\thefootnote{}\footnote{#1}%
  \addtocounter{footnote}{-1}%
  \endgroup
}
\title{
Limit theory for the first layers of the random convex hull peeling in the unit ball
}
\date{\today}
\author{Pierre Calka, Gauthier Quilan}
\begin{document}
\selectlanguage{english}
\maketitle
\begin{abstract}
    The convex hull peeling of a point set is obtained by taking the convex hull of the set and repeating iteratively the operation on the interior points until no point remains. The boundary of each hull is called a layer. We study the number of $k$-dimensional faces and the outer defect intrinsic volumes of the first layers of the convex hull peeling of a homogeneous Poisson point process in the unit ball whose intensity goes to infinity. More precisely we provide asymptotic limits for their expectation and variance as well as a central limit theorem. In particular, the growth rates do not depend on the layer.
\end{abstract}
\blfootnote{\textit{American Mathematical Society 2010 subject classifications.} Primary 60D05; Secondary 52A22. 60G55e}
\blfootnote{\textit{Key words and phrases. Poisson point process, random convex hulls, convex hull peeling, paraboloid hull process, stabilization} }
\section{Introduction}
\subsection{Context}

Random polytopes as convex hulls of random points have been extensively studied in stochastic geometry. An overview of the subject can be found in \cite{R10} and \cite[Chapter 8.2]{SW08} for instance. Let $\cP_\la$ be a Poisson point process with intensity measure $\la \mathrm{d}x$ in a convex body $K$ of $\R^d$. The study of the asymptotic behaviour as $\la \rightarrow \infty$ of $\conv(\cP_\la)$ started with the work of R\'enyi and Sulanke in \cite{RS63,RS64}, in a binomial setting. They obtain in particular a different growth rate for the mean number of extreme points when $K$ is a smooth convex body with a ${\mathcal C}^2$-regular boundary and when $K$ is a polytope, namely polynomial for the former and logarithmic for the latter.
Since then diverse results on the number of $k$-dimensional faces and on  the defect intrinsic volumes of $\conv(\cP_\la)$ have been proved for both choices of $K$. We only consider the smooth case in this paper. Asymptotic expectations 
are shown notably in \cite{S88,R05a, B04}. In particular, it is known that the mean number of extreme points grows like $\la^{\frac{d-1}{d+1}}$ up to a multiplicative constant. First-order results have then been complemented by central limit theorems in \cite{R05b, TTW18, CSY}. Results on the variance of these quantities go from general bounds in \cite{BR10, BFV10} to explicit limits in \cite{CSY, CY1}.
More recently, concentration inequalities have been derived in \cite{GT18}. The explicit formulas obtained by \cite{B05,K21a,K21b} are worth noting among the very few non-asymptotic results available. 

The subject of this paper is a generalization of the study of the convex hull of random points to the so-called \textit{convex hull peeling}. We start by taking the convex hull of the whole process and then repeatedly take the convex hull of the points that were not extreme at the previous step until no point remains. Let us write $\conv_1(\cP_\la) := \conv(\cP_\la)$ and by induction for any $n \geq 1$, $\conv_{n+1}(\cP_\la) := \conv(\text{int}(\conv_n(\cP_\la)) \cap \cP_\la)$. The boundary of the $n$-th convex hull $\partial\conv_n(\cP_\la)$ will be called the $n$-th \textit{layer} of the convex hull peeling of $\cP_\la$. The words \textit{peeling} and \textit{layers} were chosen by analogy with the peeling of an onion, see
\begin{figure}
    \centering
    \includegraphics[scale=0.6]{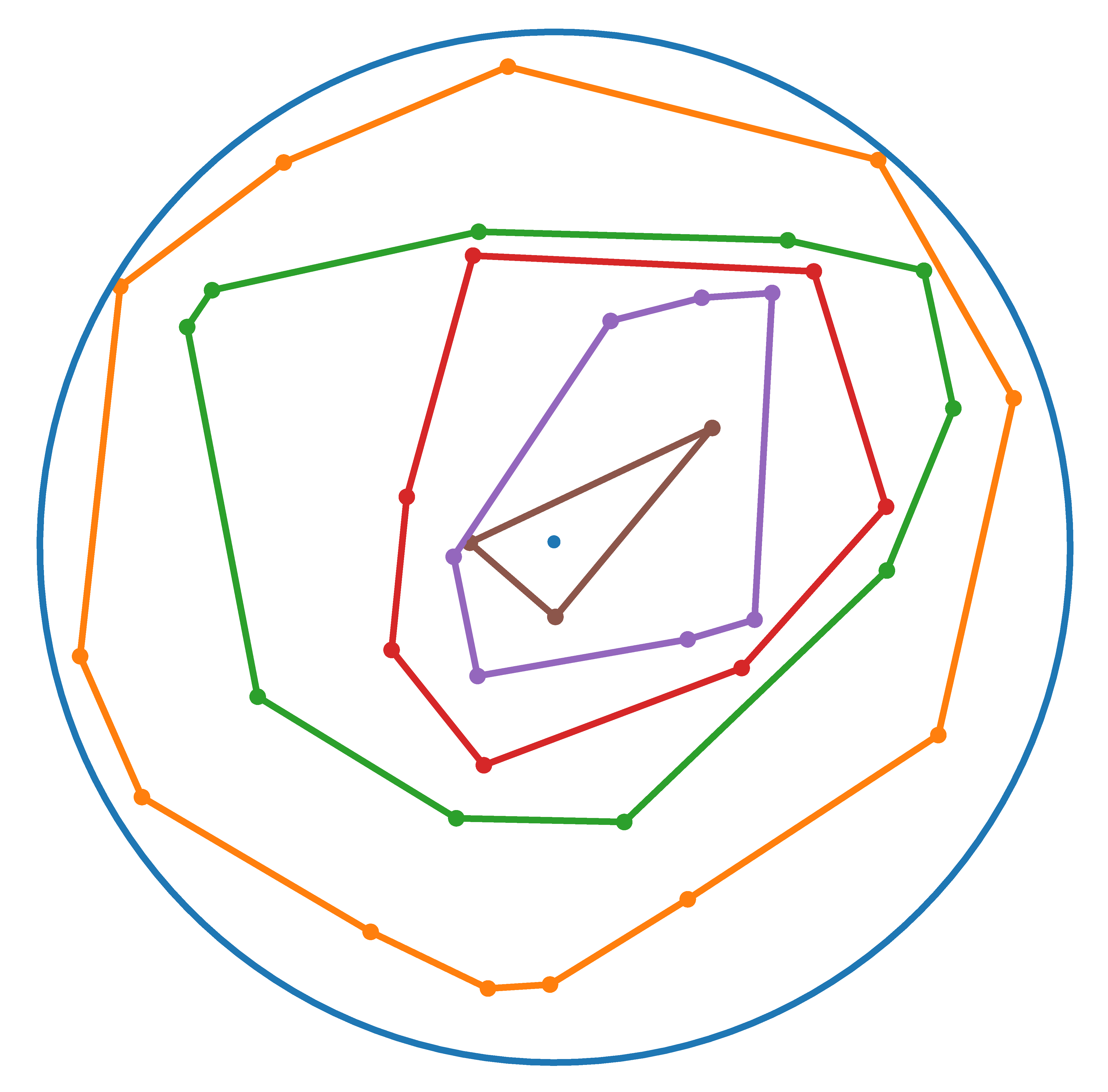}
    \caption{Example of a convex hull peeling in $\B^2$ with six layers (here the last layer has only one point).}
    \label{fig:peeling}
\end{figure}
Figure \ref{fig:peeling}.

The convex hull peeling was first introduced by Barnett in \cite{B76} as a way to order multivariate data and give a meaning to how central a point is with respect to a dataset. Indeed, the layer number of a point can be interpreted as the \textit{depth} of that point with respect to the input and we expect it to be all the larger the more central the point is. The convex hull peeling has then been used in robust statistics and outlier detection, see \cite{DG92, HA04, RS04}. It fits into a list of classical techniques for ordering multivariate data, including half-space   depth, simplicial depth or zonoid depth, see e.g. \cite{C10} for an overview on these techniques. 

  However it seems that very few theoretical results exist on the convex hull peeling of a random sample. For instance, the survey \cite{R10} devotes a small section on convex hull peeling but does not provide any reference and states that in continuation with the available asymptotic results for the convex hull, \textit{investigations concerning expectations and deviation inequalities for [the subsequent layers of the convex hull peeling] are unknown}. To the best of our knowledge, there are mostly two papers which deal with the asymptotic properties of the convex hull peeling of random points. The first one due to Dalal \cite{D04} states that the mean total number of layers of the convex hull peeling of $n$ i.i.d. uniform points in any bounded region of $\R^d$ is lower and upper bounded by multiples of $n^{\frac{2}{d+1}}$. Very recently, a breakthrough work by Calder and Smart \cite{CS20} greatly improves Dalal's estimate. Let $\cP_{\la f}$ be a Poisson point process of intensity measure $\la f(x)\mathrm{d}x$ in $K$ with $f$ a continuous and positive function on $K$. They consider the convex height function $h_\la(x)$ of any point $x$ in  $\R^d$ as the largest $n$ such that $x\in \mbox{int}(\mbox{conv}_n(\cP_{\la f}))$. Note that Dalal's work covers the particular problem of estimating the expectation of $\max h_\la$ when $f=1$. Calder and Smart show that $\la^{-\frac{2}{d+1}}h_\la$ converges uniformly in probability with an explicit exponential bound and almost surely to a multiple of a function $h$ which is the unique viscosity solution of an explicit PDE. In other words, they obtain in particular that when $\la\to\infty$, almost surely 
\begin{equation}\label{eq:CSmainresult}
\la^{-\frac{2}{d+1}}h_\la \overset{\mbox{\tiny{unif}}}{\longrightarrow} \alpha h    
\end{equation}
where $\alpha$ is a positive constant which depends only on dimension $d$ and $h$ is the unique viscosity solution of
\begin{equation*}
\left\{\begin{array}{l}\langle Dh, \mbox{cof}(-D^2h)Dh\rangle=f^2 \mbox{ in int}(K)\\
h=0 \mbox{ on } \partial K
\end{array}\right.,
\end{equation*}
$\mbox{cof}(\cdot)$ being the cofactor matrix. Denoting by $\B^d$ the unit ball of $\R^d$, we observe that \eqref{eq:CSmainresult} implies in the particular case when $K=\B^d$ and $f=1$ that the rescaled total number of layers of the peeling satisfies almost surely
\begin{equation}\label{eq:CSmax}
\la^{-\frac{2}{d+1}}\max h_\la \longrightarrow \beta_d:=\frac{(d+1)\alpha}{2d^{\frac{d-1}{d+1}}\text{\normalfont Vol}_{d-1}^{\frac{2}{d+1}}(\mathbb{S}^{d-1})}    
\end{equation}
where $\text{\normalfont Vol}_{d-1}(\mathbb{S}^{d-1})$ is the surface area of the $(d-1)$-dimensional unit sphere $\S^{d-1}$.

Because of the normalization of $h_\la$ in  \eqref{eq:CSmainresult}, this uniform convergence result can only provide information on the regime of the peeling limited to layers numbered $\la^{\frac{2}{d+1}}$ up to a multiplicative constant. The authors do not investigate any combinatorial or geometric functional of these layers. Nonetheless, they conjecture with a short heuristic argument that the number $N_{n(\la,t), 0,\la}$ of Poisson points on a layer numbered $n(\la, t) := \lfloor t \la^{\frac{2}{d+1}}\rfloor$ should satisfy a law of large numbers when $\la\to \infty$, i.e. almost surely
\begin{equation*}\label{eq:CSconjecture}
\la^{-\frac{d-1}{d+1}}N_{n(\la,t), 0,\la}\longrightarrow \int_{\{\alpha h=t\}}f^{\frac{d-1}{d+1}}    \kappa^{\frac1{d+1}}\mathrm{d}S
\end{equation*}
where $\kappa$ is the Gauss curvature of the level set $\{\alpha h=t\}$ and $\mathrm{d}S$ is the Hausdorff measure of that set.
In particular, when $K$ is the unit ball and $f=1$, the conjectured result should read, see \cite[display (1.18)]{CS20},
\begin{equation}\label{eq:CSconjectureball}
\la^{-\frac{d-1}{d+1}}N_{n(\la,t), 0,\la}\longrightarrow \frac{d+1}{2\beta_d}\left(1-\beta_d^{-1}t\right)_+^{\frac{d-1}{2}}   \end{equation}
where the constant $\beta_d$ is introduced at \eqref{eq:CSmax}.

In comparison to \cite{CS20}, our approach is different, i.e. we consider the case $K=\B^d$ and $f=1$, we choose to fix a layer numbered $n$ that does not depend on $\la$ and study the geometric properties of $\partial \conv_n(\cP_\la)$ as $\la\to\infty$. In other words, we investigate a different regime, namely the regime of the first layers in the context of uniform points in the ball. There are several reasons to do so: when applying the convex hull peeling to outlier detection, we expect the outliers to be located on the first layers of the peeling, which provides some motivation for understanding the cardinality of these particular layers. Moreover, we intend to use a global scaling transformation on the ball which has been introduced in \cite{CSY} for the study of the convex hull and which is expected to bring exhaustive information on the visible layers after rescaling, namely the first layers.

\subsection{Model}
Let $\cP_\la$ be a Poisson point process of intensity measure $\la\mathrm{d}x$ in the unit ball $\B^d$ of $\R^d$. We construct the consecutive hulls $\conv_n(\cP_\la)$, $n\ge 1$, of the peeling of $\cP_\la$. For $n\ge 1$ and $k\in\{0,\ldots,d - 1 \}$, we denote by $N_{n,k,\la}$ the number of $k$-dimensional faces of the $n$-th layer $\partial\conv_n(\cP_\la)$ and for $k\in \{1, \ldots, d\}$, by $V_{n,k,\la}$ the defect $k$-dimensional intrinsic volume of $\conv_n(\cP_\la)$, i.e.
\begin{equation}
V_{n,k,\la}=V_k(\B^d)-V_k(\conv_n(\cP_\la))    
\end{equation}
where $V_k$ stands for the $k$-th intrinsic volume, see for example \cite[p. 600]{SW08} for a definition and some properties of the intrinsic volumes.

We focus on these two families of random variables and  aim at studying their first and second-order properties.

\subsection{Main results}

For two non-negative functions $f$ and $g$, we write $f = O(g)$ if there exist a constant $C > 0$ and $\lambda_0 > 0$ such that for any $\lambda \geq \lambda_0$ we have $f(\la) \leq C g(\la)$.
Theorem \ref{thm:principalintro} below provides expectation and variance asymptotics as well as a central limit theorem for the variables $N_{n,k,\la}$. 
\begin{thm}\label{thm:principalintro}
	For any $n \geq 1$ and $k \in \lbrace 0, \ldots , d-1 \rbrace$ there exist $C_{n,k,d}, C'_{n,k,d} \in (0,\infty)$ such that
	$$\lim\limits_{\lambda \rightarrow +\infty} \lambda^{-\frac{d-1}{d+1}}{\mathbb{E}[N_{n,k,\lambda}]}
	= C_{n,k,d} \mbox{ and }
	\lim\limits_{\lambda \rightarrow +\infty} {\lambda^{-\frac{d-1}{d+1}}} {\text{\normalfont Var}[N_{n,k,\lambda}]}
	= C'_{n,k,d}.
	$$
Moreover, when $\la\to\infty$, we have
    %
    \begin{equation*}
        \sup_t \Bigg|
        \P\left(\frac{N_{n,k,\la} - \E[N_{n,k,\la}]}{\sqrt{\Var[N_{n,k,\la}]}} \le t \right)
        - \P(\mathcal{N}(0,1) \le t )\Bigg| = O\left(\la^{-\frac{d-1}{2(d+1)}} (\log \la)^{3d + 1} \right).
    \end{equation*}
\end{thm}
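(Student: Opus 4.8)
The plan is to reduce the study of the layers to a local, stabilizing functional of the Poisson process and then invoke the scaling transformation of \cite{CSY} together with the general normal approximation machinery for stabilizing functionals. First I would set up the scaling picture: near a boundary point of $\B^d$, apply the global transformation from \cite{CSY} that sends a neighborhood of $\S^{d-1}$ to (a large subset of) $\R^{d-1}\times\R_+$ and turns $\cP_\la$ into a process that converges, as $\la\to\infty$, to a homogeneous Poisson process on $\R^{d-1}\times\R_+$ whose intrinsic geometry is governed by the \emph{paraboloid hull process}. Under this rescaling, the first layer $\partial\conv_1(\cP_\la)$ becomes (asymptotically) the boundary of the paraboloid hull of the limit process, and — this is the structural heart of the argument, which I expect to be established earlier in the paper — the $n$-th layer corresponds to the $n$-th iterated paraboloid hull. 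The key point is that whether a given point of the process lies on the $n$-th layer, and which $k$-faces of the $n$-th layer it spans, is determined by the configuration of points in a random but a.s.\ finite (exponentially localized) window: the peeling operation, restricted to the first $n$ hulls, stabilizes.

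Next I would write $N_{n,k,\la}$ as a sum $\sum_{x\in\cP_\la} \xi_{n,k}(x,\cP_\la)$ of a bounded score function $\xi_{n,k}$ counting (with appropriate weight $1/(k+1)$ or an incidence-type normalization) the $k$-faces of the $n$-th layer incident to $x$. The two facts needed are: (i) \emph{stabilization} — there is a radius of stabilization $R(x,\cP_\la)$ for $\xi_{n,k}$ with stretched-exponentially decaying tails in the rescaled picture, which should follow from the corresponding stabilization property of the paraboloid hull process (established in the convex-hull setting in \cite{CY1,CSY}) by an induction on $n$, since peeling off the $(n-1)$-st hull only removes points in a controlled region; and (ii) \emph{moment bounds} — $\xi_{n,k}$ is bounded, and the score has bounded moments uniformly. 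Given these, the expectation and variance asymptotics $\la^{-\frac{d-1}{d+1}}\E[N_{n,k,\la}]\to C_{n,k,d}$ and $\la^{-\frac{d-1}{d+1}}\Var[N_{n,k,\la}]\to C'_{n,k,d}$ follow from the general law-of-large-numbers and variance-limit theorems for stabilizing functionals of Poisson input on manifolds, after checking that the limiting constants are strictly positive — positivity of $C_{n,k,d}$ is clear, while positivity of $C'_{n,k,d}$ requires the standard lower-bound trick (a non-degenerate "add a point / resample a small ball" perturbation that changes $N_{n,k,\la}$ by a bounded amount with probability bounded below), which I would carry out in the rescaled coordinates.

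For the central limit theorem with the explicit rate $O\!\left(\la^{-\frac{d-1}{2(d+1)}}(\log\la)^{3d+1}\right)$, the plan is to apply a quantitative normal approximation bound for sums of stabilizing score functions — of Stein's method type, e.g.\ the bounds à la Lachièze-Rey–Peccati–Yukich or Last–Peccati–Schulte expressed through second-order difference operators — in the rescaled space. The ingredients are exactly (i) and (ii) above together with the variance lower bound $\Var[N_{n,k,\la}]\gtrsim \la^{\frac{d-1}{d+1}}$; the $(\log\la)$ powers arise from truncating the stabilization radius at a logarithmic scale so that the stretched-exponential tail beats the polynomial volume factors, and the exponent $3d+1$ is what the bookkeeping in that truncation produces, matching the analogous rate already obtained for $\conv_1$. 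I expect the main obstacle to be item (i) for $n\ge 2$: one must show that removing the extreme points of the first $n-1$ hulls — a globally defined operation — does not destroy the exponential localization, i.e.\ that the event "$x$ is on the $n$-th layer" depends on only a localized portion of the process. The clean way to handle this is to prove, by induction on $n$, a deterministic "sandwiching" of the $n$-th rescaled layer between two translates of the paraboloid hull boundary of the limit process on a local window, with the error controlled by the (already exponentially localized) fluctuations of the preceding layers; once this geometric stability statement is in place, the probabilistic stabilization and the moment bounds transfer from the $n=1$ case essentially verbatim, and the rest of the argument is the now-standard stabilization-to-CLT pipeline.
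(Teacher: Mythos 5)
Your high-level pipeline is the right one and matches the paper's: rescale via the $T^{(\la)}$ map to the paraboloid-hull picture, decompose $N_{n,k,\la}$ as a sum of scores, prove stabilization by induction on the layer number, deduce moment bounds and pointwise convergence, apply Mecke plus dominated convergence for the first- and second-order asymptotics, prove positivity by a local perturbation argument, and close with a normal approximation for stabilizing sums. There are, however, three concrete gaps.

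First, the claim that ``$\xi_{n,k}$ is bounded, and the score has bounded moments uniformly'' is false for $k\ge 1$: a vertex of the $n$-th layer may lie on an unbounded number of $k$-faces, so $\xi_{n,k}$ is not bounded. The paper replaces this by genuine $L^p$ bounds (Lemma~\ref{borne Lp}), obtained by combining the tail of the stabilization radius with a tail bound on the maximal facet height $H_n^{(\la)}$ (Lemma~\ref{stab hauteur}); this is a real ingredient, not a trivial observation, and the normal-approximation and dominated-convergence steps downstream depend on it.

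Second, ``positivity of $C_{n,k,d}$ is clear'' hides the hardest combinatorial point of the expectation asymptotics. For $n\ge 2$ one must exhibit a configuration with positive probability that places the apex $(0,h_0)$ on the $n$-th layer and not on any earlier one; the paper does this with the nested ``tree'' construction of Lemma~\ref{lem proba etre sur une couche}, building $2(d-1)$ children paraboloids at each of $n-1$ depths, choosing the scales so that the children's down-paraboloids are pairwise disjoint and nested, and then proving robustness under an $\varepsilon$-perturbation by a downward induction on depth. The same tree is then reused for the variance lower bound, where the two ``good'' configurations in each cube of the discretization are the $\rho$- and $\rho/2$-periodic embeddings of these trees; a bare ``add a point / resample a small ball'' perturbation is not enough, because one has to certify the layer number of the perturbed points and control how external points influence the facial structure inside each cube.

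Third, the stabilization mechanism you propose (``sandwiching the $n$-th rescaled layer between two translates of the paraboloid hull boundary'') is not what the paper does, and as stated it is not obviously workable: the layer boundaries are not translates of one another and their vertical fluctuations are of the same order as the gaps between consecutive layers. The paper's actual route is recursive and criterion-based: Lemma~\ref{lem:be on layer n} characterizes ``$\ell(w,Y)\le n$'' via the existence of a down-paraboloid through $w$ meeting only points of depth $\le n-1$, Lemma~\ref{lem:dalalreecrit} gives monotonicity of $\ell$ in the input set, Lemma~\ref{lemme hauteur max} controls the maximal height of layer $n$ inside a cylinder, and Proposition~\ref{Stabilisation points} then runs the induction on $n$ with a case analysis ($F_1$, $G_1$, $G_2$) that separates ``low'' and ``high'' witnessing down-paraboloids, uses the half-paraboloid Lemma~\ref{lem:demiparabole}, and patches $\la<\infty$ quasi-paraboloid corrections. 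A matching lateral-plus-vertical stabilization (Lemma~\ref{lemme stab rect} and Proposition~\ref{stab k-faces}) is then needed to pass from $0$-faces to $k$-faces. This machinery is the bulk of the paper, and ``the moment bounds transfer from the $n=1$ case essentially verbatim'' considerably understates the induction that is required.

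On the CLT, you propose second-order Poincar\'e/Stein-type bounds (Last--Peccati--Schulte, Lachi\`eze-Rey--Schulte--Yukich) while the paper imports verbatim the dependency-graph normal approximation from \cite{CSY}; both routes are plausible given the exponential stabilization and moment bounds and would produce the same $(\log\la)^{3d+1}$ factor from the logarithmic truncation of the stabilization radius (the paper even remarks in Section~\ref{sec:conclusion} that the LSY approach, if it could be pushed through, would remove the log factor). That difference is a legitimate alternative route, not a gap.
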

 In Theorem \ref{thm:principalintrovol}, we derive similar results for the variables $V_{n,k,\la}$.  
\begin{thm}\label{thm:principalintrovol}
	For any $n \geq 1$ and $k \in \lbrace 1, \ldots , d \rbrace$ there exist $C_{V,n,k,d}, C'_{V,n,k,d} \in (0,\infty)$ such that
	$$\lim\limits_{\lambda \rightarrow +\infty} \lambda^{\frac{2}{d+1}}\mathbb{E}[V_{n,k,\lambda}]
	= C_{V,n,k,d} 
	\mbox{ and }
	\lim\limits_{\lambda \rightarrow +\infty} \lambda^{\frac{d+3}{d+1}}\text{\normalfont Var}[V_{n,k,\lambda}]
	= C'_{V,n,k,d} .$$
Moreover, when $\la\to\infty$, we have
    \begin{equation*}
        \sup_t \Bigg|
        \P\left(\frac{V_{n,k,\la} - \E[V_{n,k,\la}]}{\sqrt{\Var[V_{n,k,\la}]}} \le t \right)
        - \P(\mathcal{N}(0,1) \le t )\Bigg| = O\left(\la^{-\frac{d-1}{2(d+1)}} (\log \la)^{3d + 1} \right).
    \end{equation*}
\end{thm}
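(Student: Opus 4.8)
The plan is to run the same programme as for Theorem~\ref{thm:principalintro} — a scaling transformation onto the paraboloid picture, exponential stabilization of the relevant scores, and the general expectation, variance and normal-approximation results for stabilizing functionals of a Poisson input — after replacing the face-counting scores by geometric scores attached to the defect intrinsic volumes. The only genuinely new ingredient is the localization of $V_{n,k,\la}$ itself, described next.

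First I would write $V_{n,k,\la}$ as an integral of local contributions over $\S^{d-1}$. For $k=d$ this is immediate: writing $\rho_{n,\la}$ for the radial function of $\conv_n(\cP_\la)$ and $\sigma$ for the spherical measure,
\[
V_{n,d,\la}=\int_{\B^d}\mathbbm{1}\{x\notin\conv_n(\cP_\la)\}\,\mathrm{d}x=\frac1d\int_{\S^{d-1}}\big(1-\rho_{n,\la}(u)^{d}\big)\,\sigma(\mathrm{d}u),
\]
while for $1\le k<d$ one reduces to the same kind of surface integral via Kubota's formula and the analysis of the defect of the lower-dimensional projections, exactly as is done for $\conv(\cP_\la)$ in \cite{CSY,CY1}. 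In either case the integrand at $u$ depends only on the peeling in an $O(\la^{-2/(d+1)})$-neighbourhood of $u$, since the first $n$ hulls differ from $\partial\B^d$ only inside a boundary shell of that width — a fact already needed to prove Theorem~\ref{thm:principalintro}. Applying the scaling transformation of \cite{CSY} centred at $u$, the process $\cP_\la$ converges to a homogeneous Poisson process on $\R^{d-1}\times\R_+$, $\conv_n(\cP_\la)$ to the $n$-th paraboloid hull, and, after the induced change of variables, $\la^{2/(d+1)}$ times the integrand converges to a limit score $\xi^{(k),\infty}_{n}$ expressed through the paraboloid hull peeling, the power $\la^{2/(d+1)}$ being exactly the Jacobian in the radial direction.

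Next I would check the two hypotheses that feed this into the general machinery: (i) exponential stabilization of the prelimit and limit scores, and (ii) moments of all orders bounded uniformly in $\la$. For (i) the range of the geometric score at $u$ is controlled by the same stabilization radius $R(u)$ that governs the combinatorics of the first $n$ layers near $u$, whose exponential tail is established in the proof of Theorem~\ref{thm:principalintro}. For (ii) the new point is that the geometric scores are unbounded, unlike the face counts: one bounds the score at $u$ by a fixed power of $R(u)$ and of the number of Poisson points within distance $R(u)$, then combines the exponential tail of $R(u)$ with Poisson concentration. I expect the bulk of the work — and the main obstacle — to be the localized surface representation together with (i)--(ii) for $1\le k<d$: a defect intrinsic volume of intermediate order is a priori a global quantity, so pinning it to the thin boundary shell and to a bounded-moment local score is the delicate step; once this is done the remainder is an application of already-established results.

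With (i)--(ii) in hand, the expectation asymptotics follow from the law of large numbers for stabilizing functionals, $C_{V,n,k,d}$ being the integral over $\S^{d-1}$ — equivalently, by rotational symmetry, a single tangent-space contribution — of $\E[\xi^{(k),\infty}_{n}]$; the variance asymptotics follow likewise, $C'_{V,n,k,d}$ being assembled from the limiting two-point correlation of the scores, the normalization $\la^{(d+3)/(d+1)}=\la\cdot\la^{2/(d+1)}$ reflecting the $\la^{(d-1)/(d+1)}$ essentially independent surface patches; and the Berry--Esseen bound follows from the normal-approximation theorems for stabilizing functionals, with the very same rate $O(\la^{-(d-1)/(2(d+1))}(\log\la)^{3d+1})$ as in Theorem~\ref{thm:principalintro}, the logarithmic factor coming from truncating the exponentially-tailed stabilization radius at scale $\log\la$. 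Finiteness of the two constants comes from (ii); strict positivity of $C'_{V,n,k,d}$ is obtained, as for Theorem~\ref{thm:principalintro}, by a non-degeneracy argument, conditioning on the configuration outside $\sim\la^{(d-1)/(d+1)}$ disjoint spherical caps and checking that each cap's contribution to $V_{n,k,\la}$ is genuinely random.
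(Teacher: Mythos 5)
Your proposal captures the right strategy at a high level — rescale, stabilize, bound moments, invoke stabilization machinery for the first two moments and the CLT, randomize a deterministic configuration for positivity — and correctly anticipates that the delicate step is pinning the intrinsic-volume functional to a point-indexed, thin-shell local score. But it stops precisely at that step. The surface-integral form $V_{n,d,\la}=\frac1d\int_{\S^{d-1}}(1-\rho_{n,\la}(u)^d)\,\sigma(\mathrm{d}u)$ is a continuous integral over the sphere, not a sum indexed by Poisson points; the variance and Berry--Esseen machinery you want to plug into (Mecke's formula, dependency graphs/Stein for sums of stabilizing scores) require a representation $V_{n,k,\la}=\sum_{x\in\cP_\la}\xi_{V,n,k}(x,\cP_\la)$. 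The paper obtains this via Kubota's formula combined with the \emph{projection avoidance functionals} $\vartheta_k(y,\conv_n)$ (averaging $\mathds{1}\{y\notin\conv_n|L\}$ over $k$-planes $L$ through $y$) and then slicing the resulting $\R^d$-integral along the cones $\mathrm{cone}_n(x,\cP_\la)$ emanating from the facets of $\partial\conv_n$ through each extreme point $x$. This cone decomposition is the bridge from the global defect intrinsic volume to a Poisson-point-indexed sum of local scores, and it is the substance of what you flag as ``the main obstacle'' without describing; without it, the expectation argument goes through by rotation invariance, but the variance and CLT do not follow from the stated machinery. Once the point-indexed scores are in hand, your (i)--(ii) are essentially right: stabilization is inherited from the stabilization radius of the $k$-face scores (the cone and the projection-avoidance indicator are governed by the same local facet structure), and the moment bounds combine the tail of $H_n^{(\la)}$ and the stabilization radius exactly as you describe.

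One further gap: you address finiteness of $C_{V,n,k,d}$ and positivity of $C'_{V,n,k,d}$, but not positivity of $C_{V,n,k,d}$ itself, which Theorem~\ref{thm:principalintrovol} also asserts. The paper gets this for free: defect intrinsic volumes are monotone under set inclusion, $\conv_n(\cP_\la)\subseteq\conv_1(\cP_\la)$, so $V_{n,k,\la}\ge V_{1,k,\la}$, and the limiting expectation for the first layer is known to be positive. You should include this (or an equivalent) argument. For the variance positivity, your outline matches the paper's in spirit; the specific two alternative ``good'' configurations differ (the paper perturbs the grid \emph{height} $\delta$ vs.\ $\delta/2$ rather than the grid spacing, since spacing affects face counts but height is what moves intrinsic volumes), but that is an implementation detail rather than a conceptual gap.
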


The rates in Theorems \ref{thm:principalintro} and \ref{thm:principalintrovol} are identical to those obtained for the first layer, i.e. the convex hull of $\cP_\la$, as described in \cite{CSY}. In particular, the underlying limiting expectations and variances are proved to be different from zero. They have an explicit formulation in terms of a random process  derived from a homogeneous Poisson point process in the product space $\R^{d-1}\times\R_+$, see Theorems \ref{lim esperance precis} -- \ref{lim variance precis vol}. This solves the conjecture discussed in \cite{CS20} and along the lines above \eqref{eq:CSconjectureball} in the particular regime when the layer number does not depend on the size of the input. 

Our tools are those of stabilization theory that were used in \cite{CSY} to prove precise variance asymptotics for the first layer. The key idea consists in writing $N_{n,k,\la}$ and $V_{n,k,\la}$ as a sum $\sum_{x \in \cP_\la} \xi(x, \cP_\la)$ for some functional $\xi$ and proving that for a given point $x$ this functional only depends on the process in a neighbourhood of $x$. That is what we call stabilization. The importance of the stabilization can already be seen in the study of the variance of $N_{n,k,\la}$ as it implies that $\xi(x,\cP_\la)$ and $\xi(y,\cP_\la)$ are independent when $x$ and $y$ are far enough from each other, which simplifies the calculation of the variance. The stabilization of $\xi$ is in fact used much more extensively for all six results stated in Theorems \ref{thm:principalintro} and \ref{thm:principalintrovol} and constitutes the main difficulty of this paper. Indeed, the formation of each layer of the peeling requires a global knowledge of the point set and also of the history of the previously constructed layers. In particular, for a given point $x$, there is no easy local criterion for checking that $x$ is on the $n$-th layer of the peeling. In this regard, the problem is significantly different from the study of the convex hull as done in \cite{CSY}. The only characterization that we can use is incremental, see Lemma \ref{lem:be on layer n}, and this explains why the proof of stabilization is done by induction on the layer number. Incidentally, this also requires to estimate the position of each layer, see e.g. Lemma \ref{lemme hauteur max}.

The strategy of proof of the expectation and variance asymptotics in Theorem \ref{thm:principalintro}  is the following. 
\begin{itemize}
    \item Using the decomposition of each variable as a sum over $x\in\cP_\la$ of a functional $\xi(x,\cP_\la)$, we rewrite the expectation and variance of $N_{n,k,\lambda}$ as an integral thanks to Mecke's formula for Poisson point processes.
    \item Dealing with this multiple integral, we intend to use Lebesgue's dominated convergence theorem after applying a suitable change of variables inside the integral. To do so, we need to rescale the model. This leads us to introducing the notions of parabolic hull peeling in the upper half-space, see Section \ref{sec:rescaling}.      
    \item It then remains to show the convergence and domination of the integrands, rewritten as either an expectation or a covariance of a local functional of the parabolic hull peeling. This requires to show the so-called stabilization of the functionals, see Section \ref{sec:stab}. The stabilization implies in turn general moment bounds and the convergence of the integrands, see Section \ref{sec:lpandconv}.
\end{itemize}
The proof of the central limit theorem also relies on the stabilization results from Section \ref{sec:stab} as well as a Gauss approximation result in the particular setting of dependency graphs.

Finally, showing the positivity of the limiting expectations and variances represents another challenge. It requires to introduce a particular configuration where the determination of the layers and the calculation of the considered variables are natural and then to randomize this idealized configuration. This general principle has been used previously for proving the positivity of the limiting variances of $N_{0,k,\la}$ and $V_{0,k,\la}$, see e.g. \cite{R05b} and \cite{BFV10}. The construction that we do in the context of the $n$-th layer is partly inspired by \cite{D04}.

We have chosen to concentrate mainly on the variables $N_{n,k,\la}$ throughout the paper and to discuss briefly the adaptations that are needed in the case of the variables $V_{n,k,\la}$ at the end of the paper, see Section \ref{sec:intrinsic}. 

\subsection{Outline}
The paper is structured as follows.
\begin{itemize} 
    \item In Section \ref{sec:rescaling} we introduce the scaling transformation and study its effect on the point process and on the subsequent convex hulls. Incidentally we state a few basic properties on the peeling. We then define the scores as functionals of a point $x$ and of the point process such that 
    the variables $N_{n,k,\la}$ and $V_{n,k,\la}$ can be decomposed as sums of such scores. 
    We conclude with statements of more refined versions of the expectation and variance asymptotics of Theorem \ref{thm:principalintro}, with precise limiting constants.
    \item Section \ref{sec:stab} is devoted to
    proving the stabilization of the rescaled scores, i.e. that with probability exponentially close to $1$ they only depend on the process in the neighbourhood of the point considered. 
    \item In Section \ref{sec:lpandconv} we use stabilization properties shown to prove $L^p$ bounds and a convergence of in expectation of the rescaled scores.
    \item Section \ref{sec:proofs} contains the proofs of our main results.
    \item Finally, Section \ref{sec:conclusion} collects several concluding remarks about possible extensions of our work and open problems.
\end{itemize}
\section{Rescaling and scores}\label{sec:rescaling}
In this section, we introduce an \textit{ad hoc} scaling procedure originated in \cite{SY08b} and \cite{CSY}. We then study the image by that scaling transformation of the Poisson point process and of the layers of the underlying convex hull peeling. Next we prove general properties on the construction of the rescaled layers which are analogues of similar properties of the initial convex hull peeling.
Finally we introduce functionals that we call scores and we decompose $N_{n,k,\la}$ as the sum over every point of the process of these scores. This leads us to write explicit formulas for the constants in Theorem \ref{thm:principalintro} and \ref{thm:principalintrovol}, where scores are involved.

\subsection{Scaling transformation}
To describe the scaling transformation that we will use on the point process, we first recall a few definitions. We write $T_{e_d}$ for the tangent space of $\S^{d-1}$ at point $e_d = (0, 0, \ldots, 0,1)$. The exponential map $\exp_{e_d} : T_{e_d}\cong \R^{d-1} \longrightarrow \S^{d-1}$ maps a vector $v$ of $T_{e_d}$ to the vector $u$ that lies at the end of the geodesic of $\S^{d-1}$ of length
$\Vert v \Vert$ that starts at $e_d$ with direction $v$.
The function $\exp_{e_d}$ induces a one-to-one map between $B_{d-1}(\pi)$ and $\S^{d-1}\setminus \{-e_d\}$ of inverse $\exp^{-1}$ where $B_{l}(r)$ denotes the open ball centered at $0$ of radius $r$ in $\R^l$. 
This lets us define a one-to-one map $T^{(\la)}$  between $\B^d\setminus [0,-e_d]$ and $W_\la := \la^{\frac{1}{d+1}}B_{d-1}(\pi) \times [0, \la^{\frac{2}{d+1}})$ by
$$ T^{(\la)}(x) := \left(\la^{\frac{1}{d+1}}\exp^{-1}\left( \frac{x}{\Vert x\Vert}\right), \la^{\frac{2}{d+1}}(1 - \Vert x \Vert) \right) $$
for all $x \in \B^d \setminus [0,-e_d]$.
In general we will denote by $w = (v,h)$ with $v\in \R^{d-1}$ and $h\in \R_+$ a generic point in $W_\la$. 
The transformation $T^{(\la)}$ was already used in \cite{CSY} to obtain variance asymptotics of functionals of the convex hull of $\cP_\la$ that include the number of $k$-faces (short for \textit{$k$-dimensional faces}) and the $k$-th intrinsic volume. This transformation enjoys two important properties. First, unit volume subsets of $W_\la$ near
the hyperplane $\R^ {d-1}\times \{0\}$ contain $\Theta(1)$ rescaled points and actually, we can show that the limit point process is Poisson and has intensity $1$, see Lemma \ref{convergence rescaled}. Secondly, the transformation preserves the parabolic shape of both the defect radius-vector function and the defect support function of the random polytope $\mbox{conv}(\cP_\la)$, as described in \cite[p. 53--54]{CSY}. 
These properties have been crucial in the proofs of the results contained in \cite{CSY} on the convex hull $\mbox{conv}(\cP_\la)$. It turns out that $T^{(\la)}$ plays a similar role for the first $n$ layers as long as we take a fixed $n$  that does not vary with $\la$. Indeed, at the limit, the convex hull is mapped by $T^{(\la)}$ to what we call the parabolic hull of the limit rescaled process. In the same way it maps the convex hull peeling to the analogue of the peeling procedure in the parabolic picture that we name \textit{parabolic hull peeling} of the limit rescaled process. We give more details below, after describing the effect of $T^{(\la)}$ on $\cP_\la$.

Our scaling transformation maps the Poisson point process $\cP_\la$ to a Poisson point process on $W_\la$ that we denote by $\cP^{(\la)}$. Its intensity has a density with respect to the Lebesgue measure given by 
\begin{equation}\label{eq:intensity process}
(v,h) \mapsto \frac{ \sin^{d-2}\left(\la^{-\frac{1}{d+1}}\Vert v \Vert \right)}{\Vert\la^{-\frac{1}{d+1}}  v \Vert^{d-2}} \left( 1 - \la^{-\frac{2}{d+1} }h \right),
\end{equation}
see \cite[p. 57]{CSY} for the computation. As proved in \cite[p. 71]{CSY}, this point process converges in distribution to a homogeneous Poisson point process of intensity one on $\R^{d-1}\times \R_+$, that we denote by $\cP$ or $\cP^{(\infty)}$.
\begin{lem}\label{convergence rescaled}
	We have $\lim\limits_{\lambda \rightarrow +\infty} \mathcal{P}^{(\lambda)}\ = \mathcal{P} $ in distribution.
\end{lem}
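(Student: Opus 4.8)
The plan is to prove convergence in distribution of the point processes $\cP^{(\la)}$ to $\cP = \cP^{(\infty)}$ by reducing, via standard Poisson process theory, to pointwise convergence of intensity measures on bounded Borel sets together with a suitable uniform control near the boundary hyperplane. First I would recall that for Poisson point processes, convergence in distribution (with respect to the topology of vague convergence of counting measures, or equivalently convergence of all finite-dimensional distributions over bounded sets) is equivalent to convergence of the intensity measures $\mu_\la$ of $\cP^{(\la)}$ to the intensity measure $\mu$ of $\cP$, in the sense that $\mu_\la(A) \to \mu(A)$ for every bounded Borel set $A$ whose boundary has zero $\mu$-measure; see e.g.\ \cite[Chapter 8.2]{SW08} or the corresponding statement in \cite{CSY}. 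Since $\cP$ is the homogeneous unit-intensity process on $\R^{d-1}\times\R_+$, we have $\mu(A) = \mathrm{Vol}_d(A)$, and $\mu$-null boundaries are just Lebesgue-null boundaries, so the condition is easy to verify for the sets we need.

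Next I would carry out the pointwise convergence of the densities. From the explicit formula \eqref{eq:intensity process}, the density of $\mu_\la$ at $(v,h)$ is
$$
g_\la(v,h) = \frac{\sin^{d-2}\left(\la^{-\frac{1}{d+1}}\|v\|\right)}{\|\la^{-\frac{1}{d+1}}v\|^{d-2}}\left(1 - \la^{-\frac{2}{d+1}}h\right)\mathbbm{1}_{W_\la}(v,h).
$$
Fix $(v,h) \in \R^{d-1}\times\R_+$. As $\la \to \infty$, the argument $\la^{-\frac{1}{d+1}}\|v\| \to 0$, and using $\sin(t)/t \to 1$ as $t \to 0$ we get $\sin^{d-2}(\la^{-\frac{1}{d+1}}\|v\|)/\|\la^{-\frac{1}{d+1}}v\|^{d-2} \to 1$; likewise $1 - \la^{-\frac{2}{d+1}}h \to 1$; and since $W_\la = \la^{\frac{1}{d+1}}B_{d-1}(\pi)\times[0,\la^{\frac{2}{d+1}})$ exhausts $\R^{d-1}\times\R_+$, eventually $(v,h)\in W_\la$. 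Hence $g_\la(v,h)\to 1$ pointwise. To upgrade this to convergence of integrals $\mu_\la(A) = \int_A g_\la \to \int_A 1 = \mathrm{Vol}_d(A)$ for bounded $A$, I would apply dominated convergence: on any bounded $A$, for $\la$ large the relevant values of $\|v\|$ lie in a fixed compact subset of $[0,\pi)$ after rescaling, on which $\sin(t)/t$ is bounded (by $1$, in fact, since $|\sin t|\le |t|$), and $0\le 1 - \la^{-\frac{2}{d+1}}h \le 1$, so $0 \le g_\la \le 1$ on $A$ uniformly, giving a constant integrable dominating function on the bounded set $A$.

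Finally I would assemble these pieces: having shown $\mu_\la(A)\to\mathrm{Vol}_d(A)$ for all bounded Borel $A$ (in particular all bounded $A$ with Lebesgue-null boundary), the Poisson convergence criterion yields $\cP^{(\la)}\to\cP$ in distribution. The only mild subtlety — and the step I would be most careful about — is making precise the mode of convergence and the exact convergence theorem being invoked: one should specify that convergence in distribution is meant with respect to the vague topology on locally finite counting measures (or equivalently via finite-dimensional distributions restricted to a generating ring of bounded sets), and cite the precise Poisson limit theorem, since the process lives on the non-compact space $\R^{d-1}\times\R_+$ and the intensity measures $\mu_\la$ are initially supported on the growing windows $W_\la$. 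Since this computation is essentially the one already performed in \cite[p.~71]{CSY}, I would keep the argument brief and refer there for the details, noting only that the present intensity \eqref{eq:intensity process} is literally the one treated in that reference.
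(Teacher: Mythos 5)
Your proof is correct and follows the standard Poisson-convergence-via-intensity-measures argument, which is exactly what the paper itself points to: the paper does not present its own proof of Lemma \ref{convergence rescaled} but defers to the computation on p.~71 of \cite{CSY}, where the same density \eqref{eq:intensity process} is shown to converge pointwise to $1$ and the Poisson convergence criterion is invoked. Your pointwise limit, the uniform bound $0\le g_\la\le 1$ for dominated convergence on bounded sets, and the remark about the vague topology on locally finite counting measures are all in order.
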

Next we recall from \cite{CSY} the effect of the rescaling on the spherical caps in the ball. This will then allow us to deduce the images of the consecutive layers by the rescaling. 
Any spherical cap in the unit ball of $\R^d$ can be written
\begin{equation}\label{eq:defcap}
\text{cap}(x_0) := \{ x \in \B^d : \langle x, \frac{x_0}{\Vert x_0 \Vert} \rangle  > \Vert x_0 \Vert \},\quad x_0 \in \B^d.     
\end{equation}
One can see that $\text{cap}(x_0)$ is the cap orthogonal to $x_0$ at distance $\Vert x_0 \Vert$ of the origin. Let us write $(v_0, h_0) := T^{(\la)}(x_0)$. The cap $\text{cap}(x_0)$ is sent by $T^{(\la)}$ to a so-called \textit{downward quasi-paraboloid} $[\Pi^{\downarrow}]^{(\la)}(v_0, h_0)$. Furthermore, the quasi-paraboloids $[\Pi^{\downarrow}]^{(\la)}(v_0, h_0)$ converge to a paraboloid $$[\Pi^\downarrow(v_0, h_0)]^{(\infty)}(v_0,h_0) = \Pi^\downarrow(v_0, h_0) := \{ (v,h) \in \R^{d-1}\times \R_+ : h < h_0 - \frac{\Vert v - v_0 \Vert^2}{2} \}.$$ These results are made precise in Lemma \ref{lem:image cap} below, whose proof can be found in \cite[p. 72-73]{CSY} or in \cite[Lemma 3.1]{CY2} up to a a small adaptation.
Note that in this convergence result, with a slight abuse, we use the notation $\partial[\Pi^{\downarrow}]^{(\la)}$ (resp. $\partial\Pi^{\downarrow}$) for the function  from $\R^{d-1}$ to $\R_+$ whose graph is the boundary of the set $[\Pi^{\downarrow}]^{(\la)}$ (resp. $\Pi^{\downarrow}$).

Beforehand, 
we need to introduce useful notation for several types of cylinders that are used in the rest of the paper. For any $v \in \R^{d-1}$ and $r > 0$, $C_v(r)$ denotes the vertical cylinder $B_{d-1}(v,r) \times [0, \infty)$ with the convention $C(r)=C_0(r)$. 
We also define the truncated cylinders
    $C_v^{\geq t}(r) := C_v(r) \cap \{ (v',h') \in \R^d : h' \geq t \}$,
    $C_v^{\leq t}(r) := C_v(r) \cap \{ (v',h') \in \R^d : h' \leq t \}$ and  
    $C_v^I(t) := C_v(r) \cap \{ (v',h') \in \R^d : h' \in I \}$ for any $t>0$ and any interval $I\subset \R_+$.
\begin{lem}\label{lem:image cap}
    Let $\la > 0$ and $x_0 \in \B^d$. We write $(v_0, h_0) = T^{(\la)}(x_0).$ Then we have
        \item \begin{align}\label{eq:paraboleverslebas}
            T^{(\la)}(\emph{cap}(x_0)) &= [\Pi^{\downarrow}]^{(\la)} (v_0, h_0)\\&:= \{ (v,h) \in W_\la : h < \la^{\frac{2}{d+1}} \left( 1 - \frac{1 - \la^{-\frac{2}{d+1}}h_0}{\cos(e_\la(v,v_0))} \right)\}\notag
        \end{align}
        where $e_\la(v, v_0) := d_{\S^{d-1}}\left(\exp_{d-1}(\la^{-\frac{1}{d+1}}v), \exp_{d-1}(\la^{-\frac{1}{d+1}}v_0)\right)$ for $v,v_0 \in \R^{d-1}$.

        Additionally, for any $L\ge 1$, we have the following convergence result.
       \begin{equation}\label{eq:cvusurtoutcompact}
       \lim\limits_{\la\rightarrow \infty}\partial[\Pi^{\downarrow}]^{(\la)}(v_0,h_0) \cap C_{(v_0,h_0)}(L) = \partial\Pi^{\downarrow}(v_0, h_0) \cap C_{(v_0,h_0)}(L)
       \end{equation}
       for the uniform convergence.
        
\end{lem}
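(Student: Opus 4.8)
The plan is to handle the two parts of Lemma~\ref{lem:image cap} separately: the identity \eqref{eq:paraboleverslebas} is an exact computation straight from the definitions, and the convergence \eqref{eq:cvusurtoutcompact} is a second-order Taylor expansion made uniform over the fixed compact set. Throughout, I would follow the scheme of \cite[p.~72--73]{CSY} (see also \cite[Lemma~3.1]{CY2}), the only difference being that the dependence on $(v_0,h_0)$ is kept explicit and the limit is stated on an arbitrary compact cylinder.

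For \eqref{eq:paraboleverslebas} I would first invert $T^{(\la)}$: if $(v,h)=T^{(\la)}(x)$ then $\Vert x\Vert=1-\la^{-\frac{2}{d+1}}h$ and $x/\Vert x\Vert=\exp_{e_d}(\la^{-\frac{1}{d+1}}v)$, and similarly $\Vert x_0\Vert=1-\la^{-\frac{2}{d+1}}h_0$, $x_0/\Vert x_0\Vert=\exp_{e_d}(\la^{-\frac{1}{d+1}}v_0)$. Substituting into the defining inequality $\langle x, x_0/\Vert x_0\Vert\rangle>\Vert x_0\Vert$ of $\text{cap}(x_0)$ in \eqref{eq:defcap}, and using that the Euclidean inner product of two points of $\S^{d-1}$ is the cosine of their geodesic distance — here $\cos(e_\la(v,v_0))$ — the condition becomes $(1-\la^{-\frac{2}{d+1}}h)\cos(e_\la(v,v_0))>1-\la^{-\frac{2}{d+1}}h_0$. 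On $\text{cap}(x_0)$ one has $\cos(e_\la(v,v_0))>0$ (the left side of the cap inequality is $>\Vert x_0\Vert>0$), so dividing by it and solving for $h$ gives precisely $h<\la^{\frac{2}{d+1}}\big(1-(1-\la^{-\frac{2}{d+1}}h_0)/\cos(e_\la(v,v_0))\big)$; intersecting with $W_\la$ yields the stated formula for $[\Pi^{\downarrow}]^{(\la)}(v_0,h_0)$.

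For \eqref{eq:cvusurtoutcompact} I would set $\varepsilon=\la^{-\frac{1}{d+1}}$ and study the graph function $v\mapsto\varepsilon^{-2}\big(1-(1-\varepsilon^2 h_0)/\cos(e_\la(v,v_0))\big)$ for $v$ in the horizontal ball $\{\Vert v-v_0\Vert\le L\}$ underlying $C_{(v_0,h_0)}(L)$, aiming to show it converges uniformly to $v\mapsto h_0-\Vert v-v_0\Vert^2/2$. The key estimate is $e_\la(v,v_0)=d_{\S^{d-1}}(\exp_{e_d}(\varepsilon v),\exp_{e_d}(\varepsilon v_0))=\varepsilon\Vert v-v_0\Vert+O(\varepsilon^3)$, uniformly on that ball. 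Granting it, $\cos(e_\la(v,v_0))=1-\tfrac12\varepsilon^2\Vert v-v_0\Vert^2+O(\varepsilon^4)$, then $(1-\varepsilon^2 h_0)/\cos(e_\la(v,v_0))=1+\tfrac12\varepsilon^2\Vert v-v_0\Vert^2-\varepsilon^2h_0+O(\varepsilon^4)$, and multiplying one minus this quantity by $\varepsilon^{-2}$ gives $h_0-\tfrac12\Vert v-v_0\Vert^2+O(\varepsilon^2)$. Since $L$ is fixed all these remainders are uniform in $v$ and tend to $0$ as $\la\to\infty$, which is the claimed uniform convergence on $C_{(v_0,h_0)}(L)$.

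The one substantive point is the uniform geodesic-distance expansion $e_\la(v,v_0)=\varepsilon\Vert v-v_0\Vert+O(\varepsilon^3)$: $\exp_{e_d}$ is an isometry only along rays through $e_d$, so one must control how it distorts segments of $\R^{d-1}$ in the transverse directions, which is a Jacobi-field computation for the unit-curvature round metric (or, concretely, a direct expansion of the spherical law of cosines after writing $\exp_{e_d}(\varepsilon v)$ in coordinates). I expect this to be the main obstacle; everything else is Taylor-remainder bookkeeping with no uniformity issues, because $L$ does not vary with $\la$.
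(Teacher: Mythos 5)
Your proposal is correct and follows the same route as the references to which the paper delegates this proof, namely \cite[pp.~72--73]{CSY} and \cite[Lemma~3.1]{CY2}: exact algebraic inversion of $T^{(\la)}$ for \eqref{eq:paraboleverslebas}, and a uniform second-order Taylor expansion on the fixed cylinder for \eqref{eq:cvusurtoutcompact}. The only remark is that the geodesic-distance expansion you flag as the main obstacle is handled cleanly by the spherical law of cosines: with $\varepsilon=\la^{-1/(d+1)}$ and $\gamma$ the angle between $v$ and $v_0$ in $\R^{d-1}$, one has $\cos(e_\la(v,v_0))=\cos(\varepsilon\Vert v\Vert)\cos(\varepsilon\Vert v_0\Vert)+\sin(\varepsilon\Vert v\Vert)\sin(\varepsilon\Vert v_0\Vert)\cos\gamma$, and expanding this directly yields $\cos(e_\la(v,v_0))=1-\tfrac{\varepsilon^2}{2}\Vert v-v_0\Vert^2+O(\varepsilon^4)$ uniformly on $\{\Vert v-v_0\Vert\le L\}$, so the intermediate estimate on $e_\la$ itself and the Jacobi-field analysis are both unnecessary.
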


\begin{rem}
As in \cite{CSY}, when $\la\in (0,\infty)$, we can introduce a dual set
\begin{align}\label{eq:paraboleverslehaut}
[\Pi^{\uparrow}]^{(\la)}(v_0,h_0)&:=T^{(\la)}(\partial B(\frac{x_0}{2},\frac{\|x_0\|}{2}))\\&=\{ (v,h) \in W_\la : h > \la^{\frac{2}{d+1}} ( 1 -(1 - \la^{-\frac{2}{d+1}}h_0) \cos(e_\la(v,v_0)) )\}\notag
\end{align}
that we call the upward quasi-paraboloid with apex $(v_0,h_0)$. Similarly, we define
\begin{align}\label{eq:paraboleverslehaut2}
[\Pi^{\uparrow}]^{(\infty)}(v_0, h_0) =\Pi^{\uparrow}(v_0,h_0)&:=\{(v,h)\in \R^{d-1}\times\R_+: h>h_0+\frac{\|v-v_0\|^2}{2}\}.
\end{align}
In particular, for any $\la\in (0,\infty]$ and any $(v_0,h_0),(v_1,h_1)\in W_\la$,
$$((v_1,h_1)\in [\Pi^{\uparrow}]^{(\la)}(v_0,h_0))\Longleftrightarrow ((v_0,h_0)\in [\Pi^{\downarrow}]^{(\la)}(v_1,h_1)).$$
This fact will be used on many occasions in the forthcoming proofs.
\end{rem}
Recalling that for a locally finite point set $X$, $$\conv(X) = \bigcup_{\substack{H^+ \text{ half-space } \\ X \cap H^+ = \varnothing}} (H^+)^c,$$
we are led by Lemma \ref{lem:image cap} to the following definition, that corresponds to the analogue of the convex hull in the (quasi-)parabolic setting, where the role of the half-spaces is played by downward quasi-paraboloids or paraboloids. For $\la \in (0,\infty]$ and a locally finite point set $Y$ in $W_\la$, we write
 $$\Phi^{(\la)}(Y) := \bigcup_{\substack{w \in W_\la \\ Y \cap [\Pi^\downarrow ]^{(\la)}(w) = \varnothing}} [\Pi^\downarrow ]^{(\la)}(w)^c$$
 that we call the quasi-parabolic hull of $Y$, or parabolic hull when $\la = \infty$. We will generally write $\Phi$ instead of $\Phi^{(\infty)}$ for sake of simplicity.
Thanks to Lemma \ref{lem:image cap}, we obtain that $T^{(\la)}$ maps the convex hull of a point set to the quasi-parabolic hull of the image of this point set, i.e.
 $$T^{(\la)}(\conv(X)) = \Phi^{(\la)}(T^{(\la)}(X)),$$
provided that $\conv(X)$ contains the origin in its interior. In particular, when $X=\cP_\la$, Wendel's formula \cite{W62}
shows that the event $\{0\in \text{int}(\conv(\cP_\la))\}$  has a probability going to $1$ exponentially fast when $\la\to\infty$ and we implicitly condition on that event when working with $T^{(\la)}$.

We call \textit{extreme points} of $Y$ the points of $Y\cap \partial\Phi_1^{(\la)}(Y)$. They are naturally images by $T^{(\la)}$ of the extreme points of the convex hull of $[T^{(\la)}]^{-1}(Y)$.
 
 When $\la\to\infty$, the intersection $\partial \Phi^{(\la)}(\cP^{(\la)})\cap C(R)$, $R>0$, converges in distribution to $\partial \Phi(\cP) \cap C(R)$ where each quasi-parabolic hull is seen as a continuous function over $C(R)$ and the set of continuous functions on $C(R)$ is endowed with the topology of the uniform convergence, see \cite[Theorem 4.1]{CSY}. 
 
Let us now investigate the action of the transform $T^{(\la)}$ on the convex hull peeling procedure. 
Quite naturally, it maps the convex hull peeling to a quasi-parabolic hull peeling that will converge in some sense to a parabolic hull peeling. We define the hulls of the quasi-parabolic and parabolic hull peeling recursively with the following formula: for all $n \geq 2$, $\la \in (0,\infty]$ and a locally finite point set $Y\subset \R^{d-1}\times\R_+$, we set
  $$\Phi^{(\la)}_n(Y) = \Phi^{(\la)}(Y \cap \text{int}(\Phi^{(\la)}_{n-1}(Y))). $$
 When $\la=\infty$, we speak of parabolic hull peeling and we write $\Phi_n$ instead of $\Phi_n^{(\infty)}$.
 In the same way as for the the first layer of the convex hull peeling, the subsequent ones are mapped by $T^{(\la)}$ to the corresponding parabolic hulls, i.e.
  \begin{equation}\label{eq:changement echelle peeling}
  T^{(\la)}(\conv_n(X)) = \Phi^{(\la)}_n(T^{(\la)}(X)),
  \end{equation}
  provided that the origin lies in the interior of $\conv_n(X)$. It is a direct consequence of \cite[Theorem 1.2]{CS20}, see also \eqref{eq:CSmainresult}, and \cite[Equation (1.10) for $x=0$]{CS20} that the event $\{0\in \text{int}(\conv_n(\cP_\la))\}$ has a probability going to $1$ exponentially fast when $\la\to\infty$. Henceforth, when dealing with $\conv_n$, we implicitly condition on that particular event.
  
   
    We call the sets $\partial\Phi_n^{(\la)}(Y)$ the \textit{layers} of the quasi-parabolic or parabolic hull peeling of $Y$.
   
    For any $n\ge 1$, the set $\Phi_n^{(\la)}(Y)$ (resp. $\Phi_n(Y)$) is the complement of a union of down quasi-paraboloids (resp. paraboloids). As the quasi-paraboloids converge to paraboloids, see Lemma \ref{lem:image cap}, and $W_\la$ goes to $\R^{d-1}\times\R_+$ as $\la$ goes to infinity, we can extend \cite[Theorem 4.1]{CSY} to the convergence in distribution of the subsequent rescaled layers of the original convex peeling to the corresponding layers of the parabolic peeling associated with the limit Poisson point process. As a side result, we also obtain the convergence of the point process of points of $\cP^{(\la)}$ on the $n$-th layer. This is summarized in Proposition \ref{prop:convrescaledlayers} below. 
    \begin{prop}\label{prop:convrescaledlayers}
    Let $R>0$ and $n\ge 1$. When $\la\to\infty$, we have that
    $$\partial\Phi_n^{(\la)}(\cP^{(\la)}) \overset{{\mathcal L}}{\to}
    \partial\Phi_n(\cP) $$ 
    where the set of continuous functions over $\R^{d-1}$ is endowed with the topology of the uniform convergence on every compact set. Moreover, 
    $$\cP^{(\la)}\cap \partial\Phi_n^{(\la)}(\cP^{(\la)})
\overset{{\mathcal L}}{\to}\cP\cap \partial\Phi_n(\cP).$$
    \end{prop}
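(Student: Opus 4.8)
The plan is to bootstrap the known first-layer convergence, namely \cite[Theorem 4.1]{CSY} (which gives the joint convergence of $\partial\Phi^{(\la)}(\cP^{(\la)})\cap C(R)$ together with $\cP^{(\la)}$ on every compact cylinder), up through the layers by induction on $n$. The base case $n=1$ is precisely \cite[Theorem 4.1]{CSY}, strengthened to track the boundary point process $\cP^{(\la)}\cap\partial\Phi^{(\la)}(\cP^{(\la)})$ as well; the latter is already present in \cite{CSY} since the $k$-face counts of the first layer are treated there. For the inductive step, I would fix $R>0$, fix $n\ge 2$, and assume the claim for $n-1$. The definition $\Phi_n^{(\la)}(Y)=\Phi^{(\la)}(Y\cap\mathrm{int}(\Phi_{n-1}^{(\la)}(Y)))$ says the $n$-th layer is obtained by applying the (quasi-)parabolic hull operator $\Phi^{(\la)}$ to the restriction of $\cP^{(\la)}$ to the interior of the $(n-1)$-st hull. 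So the strategy is: (i) show that $\cP^{(\la)}\cap\mathrm{int}(\Phi_{n-1}^{(\la)}(\cP^{(\la)}))$ converges in distribution to $\cP\cap\mathrm{int}(\Phi_{n-1}(\cP))$ as point processes on each compact cylinder $C(R')$; (ii) show that the hull operator $Y\mapsto\Phi^{(\la)}(Y)\cap C(R)$, applied to these restricted processes, converges to $\Phi(Y)\cap C(R)$; (iii) read off the boundary point process convergence from the joint convergence.

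For step (i) the natural tool is the Skorokhod representation theorem: by the induction hypothesis we may realize $\partial\Phi_{n-1}^{(\la)}(\cP^{(\la)})\to\partial\Phi_{n-1}(\cP)$ uniformly on compacts and $\cP^{(\la)}\to\cP$ (as locally finite point sets, i.e.\ with points converging one-to-one on compacts) on a common probability space. One then needs that almost surely no point of $\cP$ lies exactly on the limiting layer $\partial\Phi_{n-1}(\cP)$, which holds because $\cP$ has intensity one (Lebesgue-absolutely continuous) while the layer is a Lebesgue-null Lipschitz graph; this is the standard ``no mass on the boundary'' condition that makes the indicator $\mathbbm{1}[w\in\mathrm{int}(\Phi_{n-1}^{(\la)})]$ converge at each point of $\cP$. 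Combined with the uniform convergence of the layers and the one-to-one convergence of the points, this gives that for $\la$ large the set of $\cP^{(\la)}$-points inside $\Phi_{n-1}^{(\la)}$ coincides, on any fixed compact, with the image of the set of $\cP$-points inside $\Phi_{n-1}$. For step (ii), one invokes the same machinery as in the proof of \cite[Theorem 4.1]{CSY}: the map sending a locally finite point configuration to the boundary function of its (quasi-)parabolic hull over $C(R)$ is continuous at configurations that are ``in general position'' and for which the hull over $C(R)$ is determined by points in some a priori bounded cylinder $C(R')$ — a localization/finiteness estimate that is exactly the non-degeneracy input used in \cite{CSY}, together with the convergence $[\Pi^\downarrow]^{(\la)}\to\Pi^\downarrow$ uniformly on compacts from Lemma~\ref{lem:image cap}. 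Pushing the convergence of the restricted point processes through this continuous map yields $\partial\Phi_n^{(\la)}(\cP^{(\la)})\to\partial\Phi_n(\cP)$ on $C(R)$; since $R$ was arbitrary, this is convergence in the topology of uniform convergence on every compact set. Finally, step (iii): the pair $(\partial\Phi_n^{(\la)}(\cP^{(\la)}),\,\cP^{(\la)})$ converges jointly (the second coordinate convergence is Lemma~\ref{convergence rescaled} upgraded to joint convergence through the Skorokhod coupling), and intersecting a point process with the $\varepsilon$-neighborhood of a uniformly-converging boundary graph is again continuous at configurations putting no mass on the limiting graph; this gives $\cP^{(\la)}\cap\partial\Phi_n^{(\la)}(\cP^{(\la)})\to\cP\cap\partial\Phi_n(\cP)$.

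The main obstacle, and the place where genuine care is required rather than soft-topology bookkeeping, is establishing the a priori localization needed in step (ii): one must argue that the $n$-th quasi-parabolic layer over a fixed cylinder $C(R)$ is, with probability tending to one, unaffected by points of $\cP^{(\la)}$ outside a larger but \emph{fixed} cylinder $C(R')$, uniformly in $\la$. For the first layer this is the content of the ``hull is built from nearby points'' estimates in \cite{CSY}; for higher layers it must be combined with a crude upper bound on the height of the $(n-1)$-st layer over $C(R')$ — otherwise the restriction $\cP^{(\la)}\cap\mathrm{int}(\Phi_{n-1}^{(\la)})$ could in principle be influenced by far-away geometry. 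Such a height bound is precisely the kind of input the paper flags (cf.\ Lemma~\ref{lemme hauteur max}), so I would cite it here. Once the localization is in hand, everything reduces to continuity of finitely many algebraic operations (forming lower envelopes of finitely many paraboloids) composed with the convergences already recorded, and the induction closes.
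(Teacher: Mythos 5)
The paper omits this proof, remarking only that it is a natural analogue of \cite[Theorem 4.1]{CSY} and \cite[Theorem 1.1]{CY2}, so there is no ``paper proof'' to compare against. Your overall plan — induct on $n$ through the recursion $\Phi_n^{(\la)}(Y)=\Phi^{(\la)}(Y\cap\mathrm{int}(\Phi_{n-1}^{(\la)}(Y)))$, pass to a Skorokhod coupling, and control everything by a localization estimate in the spirit of Lemma~\ref{lemme hauteur max} — is a reasonable and, I believe, workable route.

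However, there is a genuine error in step (i), and it recurs in step (iii). You assert that ``almost surely no point of $\cP$ lies exactly on the limiting layer $\partial\Phi_{n-1}(\cP)$, which holds because $\cP$ has intensity one \dots\ while the layer is a Lebesgue-null Lipschitz graph,'' and you use this to get pointwise convergence of the indicators $\mathds{1}[w\in\mathrm{int}(\Phi_{n-1}^{(\la)})]$. This claim is false: by construction of the peeling, the set $\cP\cap\partial\Phi_{n-1}(\cP)$ is exactly the $(n-1)$-st layer of $\cP$, a.s.\ an infinite set, so $\cP$ puts plenty of mass on $\partial\Phi_{n-1}(\cP)$. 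The standard ``no mass on the boundary'' criterion applies to a \emph{fixed} measure-zero set, not to a boundary defined from the process itself; the layer graph interpolates through the very points you are testing, so Lebesgue-null does not help. The same confusion underlies the last sentence of your step (iii), where you again require the point process to ``put no mass on the limiting graph'': the whole content of the second claimed convergence is that the point process \emph{does} put mass on the graph and that this set of points converges.

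The fix is to replace the topological argument by a combinatorial stability one. Strengthen the induction hypothesis to include the \emph{joint} convergence of $(\cP^{(\la)},\partial\Phi_1^{(\la)},\dots,\partial\Phi_{n-1}^{(\la)},\cP^{(\la)}\cap\partial\Phi_1^{(\la)},\dots,\cP^{(\la)}\cap\partial\Phi_{n-1}^{(\la)})$ to $(\cP,\partial\Phi_1,\dots,\partial\Phi_{n-1},\cP\cap\partial\Phi_1,\dots,\cP\cap\partial\Phi_{n-1})$; under a single Skorokhod coupling for this tuple, each point of $\cP$ on layer $m\le n-1$ is eventually matched to a point of $\cP^{(\la)}$ on layer $m$, and each point strictly interior to $\Phi_{n-1}(\cP)$ is eventually strictly interior to $\Phi_{n-1}^{(\la)}(\cP^{(\la)})$. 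This is exactly the layer-label stability at general-position configurations that the paper establishes in the proof of Lemma~\ref{lem:convesp1} (the induction on the layer number showing $g_{n,\la}(w_0,\eta)=g_{n,\infty}(w_0,\eta)$ for $\la$ large), combined with the localization you already identified. Once that is in place, your steps (ii) and (iii) go through as you sketched them.
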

    In other words, Proposition \ref{prop:convrescaledlayers} explains to what extent the parabolic hull peeling is the rescaled limiting model of the convex hull peeling in the ball. Since Proposition \ref{prop:convrescaledlayers} is a natural analogue of the results stated and proved in \cite[Theorem 4.1]{CSY} and \cite[Theorem 1.1]{CY2} for the (quasi)-parabolic hull process, its proof is omitted. 
    Let us note that when $\la=\infty$, the parabolic hull peeling of $\cP$, as seen in
    Figure \ref{fig:parabolic_peeling}, is also a crucial tool of \cite{CS20} under the name of \textit{semiconvex peeling}.
    \begin{figure}
        \flushleft
        \begin{overpic}[width=0.9\textwidth]{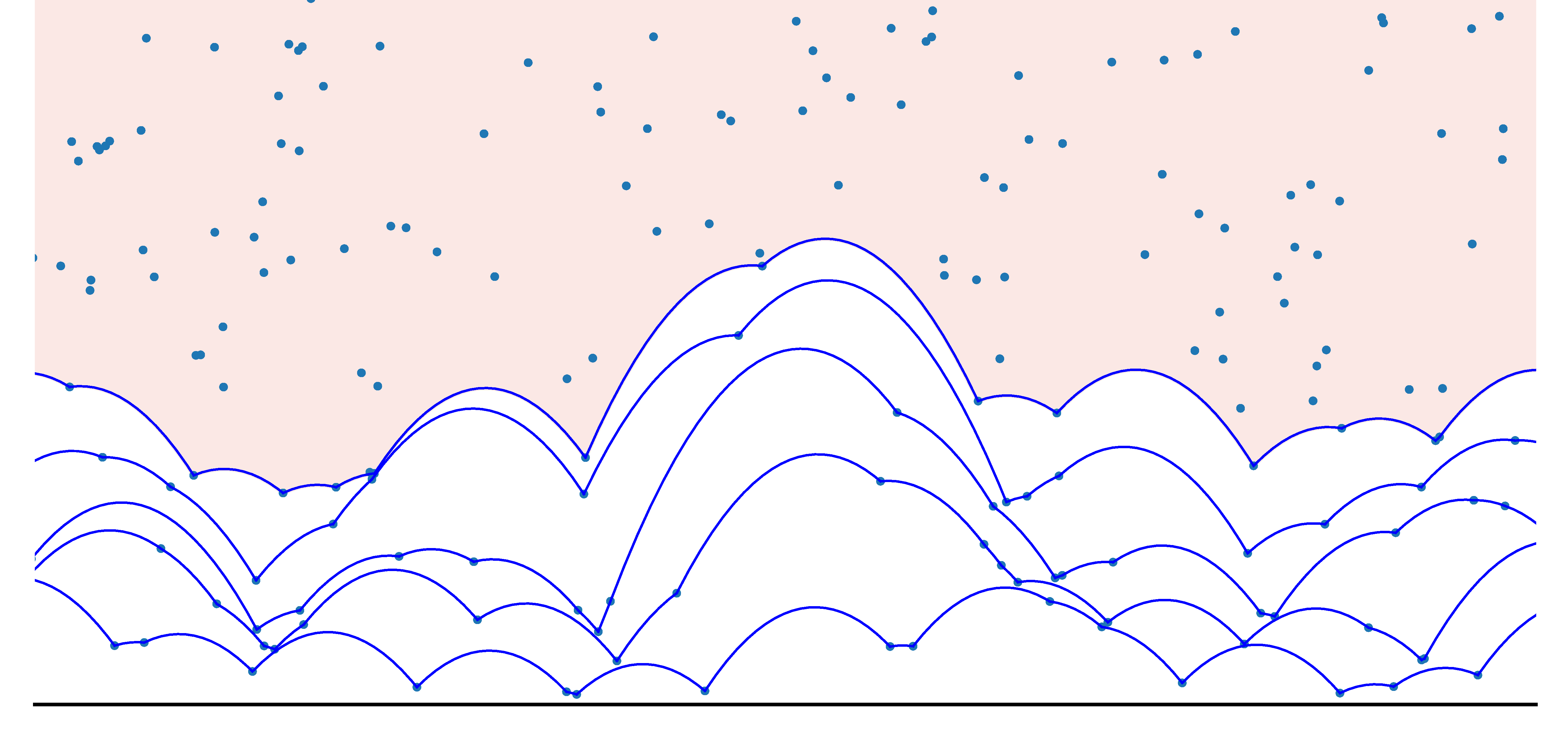}
        \put (98.5,7) {\small$\displaystyle \partial \Phi_1(\cP)$}
        \put (98.5,11.8) {\small$\displaystyle \partial \Phi_2(\cP)$}
        \put (98.5,14.4) {\small$\displaystyle \partial \Phi_3(\cP)$}
        \put (98.5,19) {\small$\displaystyle \partial \Phi_4(\cP)$}
        \put (98.5,23.5) {\small$\displaystyle \partial \Phi_5(\cP)$}
        \end{overpic}
     \caption{First layers of a parabolic hull peeling, with the interior of $\Phi_5(\cP)$ in pink.}
     \label{fig:parabolic_peeling}
    \end{figure}
    \subsection{Properties of the rescaled layers}
     For any $w\in \R^{d-1}\times \R_+$, we introduce the number
    \begin{equation}\label{eq:defnumerocouche}
    \ell^{(\la)}(w,Y)=n \mbox{ such that $w\in \partial \Phi^{(\la)}_n(Y\cup\{w\})$}.  
    \end{equation}
    In particular, for any $x\in \B^d$,
    $\ell^{(\la)}(T^{(\la)}(x),T^{(\la)}(X))$ is the number of the layer of $x$ in the initial convex hull peeling of $X \cup \{x \}$.
    We now aim at proving an explicit criterion for determining $\ell^{(\la)}(w,Y)$, see Lemma \ref{lem:be on layer n}, and the monotonicity of $\ell^{(\la)}(w,Y)$ with respect to $Y$, see Lemma \ref{lem:dalalreecrit}. Both of these properties could be stated for the initial convex hull peeling but we will only use the rescaled versions below.
    
    Let us recall, see e.g. \cite[pages 66-67]{CSY}, that a point $w$ in $\R^{d-1}\times\R_+$ is extreme if and only if there exists $(v_1, h_1) \in \partial [\Pi^\uparrow]^{(\la)}(w)$ such that
    $[\Pi^\downarrow]^{(\la)}(v_1,h_1) \cap Y = \varnothing$.
    
    Lemma \ref{lem:be on layer n} provides a geometric interpretation of the function $\ell^{(\la)}$ that extends the result above and that we will use frequently -- and sometimes implicitly -- in the rest of the paper, see Figure \ref{fig:criterion} for an illustration of this lemma.
    \begin{figure}
        \centering
        \begin{overpic}[width=0.8\textwidth]{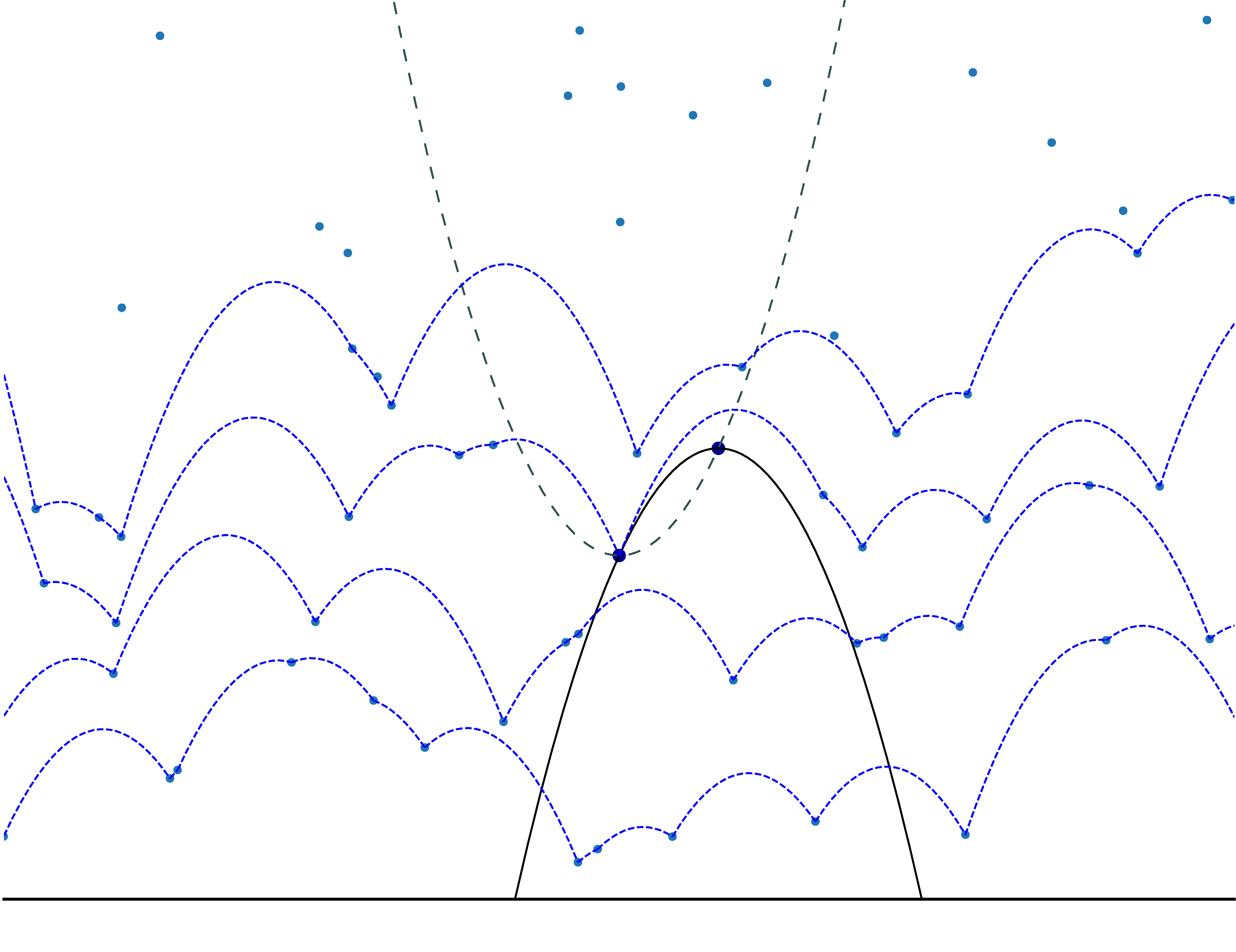}
 \put (46,30) {$\displaystyle w$}
 \put (60, 41) {$ \displaystyle(v_1, h_1)$}
 \put (67,0) {$ \Pi^\downarrow(v_1,h_1)$}
 \put (26.5,77) {$ \Pi^\uparrow(w)$}

\end{overpic}
        \caption{Illustration of the criteria (i) and (ii) of Lemma \ref{lem:be on layer n} in the case $\la=\infty$ and $l^{(\infty)}(w,Y) = 3$. As in (ii), the  paraboloid
        $\Pi^\downarrow(v_1,h_1)$ contains $w$ on its boundary and only contains points on layers at most $2$. As in (i), any translate of $\Pi^\downarrow(v_1,h_1)$  containing $w$ in its boundary (i.e. with $(v_1, h_1) \in \partial \Pi^\uparrow(w)$) contains at least one point of layer at least $2$.}
        \label{fig:criterion}
    \end{figure}
    \begin{lem}\label{lem:be on layer n}
        Let $Y$ be a locally finite subset of $\R^{d-1}\times\R_+$, $w \in Y$, $\lambda\in (0,\infty]$ and $n\ge 1$. Then we have the two following equivalences.
        
        \noindent (i) 
        \emph{(}$\ell^{(\la)}(w,Y)\ge n$\emph{)} $\Longleftrightarrow$ \emph{(}$\forall\,(v_1, h_1) \in \partial[\Pi^\uparrow]^{(\la)}(w):
    Y\cap [\Pi^\downarrow]^{(\la)}(v_1,h_1) \not\subseteq  \cup_{i=1}^{n-2} \partial [\Phi_i]^{(\la)}(Y)$\emph{)}.\\~\\
        (ii) \emph{(}$\ell^{(\la)}(w,Y)\le n$\emph{)} $\Longleftrightarrow$ \emph{(}$\exists\;(v_1, h_1) \in \partial[\Pi^\uparrow]^{(\la)}(w): 
        Y\cap [\Pi^\downarrow]^{(\la)}(v_1,h_1) \subseteq \cup_{i=1}^{n-1} \partial
        [\Phi_i]^{(\la)}(Y)$\emph{)}.
    \end{lem}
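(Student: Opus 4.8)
The plan is to extend the characterization of extreme points recalled just before the statement, peeling one layer at a time through the recursion $\Phi_n^{(\la)}(Y)=\Phi^{(\la)}\bigl(Y\cap\text{int}(\Phi_{n-1}^{(\la)}(Y))\bigr)$; the argument is the same for every $\la\in(0,\infty]$. Throughout I abbreviate $\Phi_i:=\Phi_i^{(\la)}(Y)$ and, with the convention $\Phi_0:=\R^{d-1}\times\R$, I set $Y_n:=Y\cap\text{int}(\Phi_{n-1})$, so that $Y_1=Y$ and $\Phi_n=\Phi^{(\la)}(Y_n)$ for every $n\ge 1$.

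First I would record the elementary structural facts about the (quasi-)parabolic hull peeling, all of them analogues of standard properties of the convex hull peeling: (a) $\Phi^{(\la)}$ is inclusion-monotone (more points give a larger hull), whence an easy induction yields the nesting $\Phi_1\supseteq\Phi_2\supseteq\cdots$ and hence $Y_1\supseteq Y_2\supseteq\cdots$; and (b) $Z\subseteq\Phi^{(\la)}(Z)$ for every locally finite $Z$, since a point of $Z$ lies outside every downward quasi-paraboloid that avoids $Z$. Using (a) and (b) one checks that, for $w\in Y$, the recursion removes $w$ at exactly the step $m_0:=\min\{m\ge 1:\ w\notin\text{int}(\Phi_m)\}$: for $1\le m<m_0$ we have $w\in\text{int}(\Phi_m)$, so $w\in Y_{m_0}\subseteq\Phi^{(\la)}(Y_{m_0})=\Phi_{m_0}$ by (b), hence $w\in\partial\Phi_{m_0}$, while by the nesting $w\notin\text{int}(\Phi_m)$ for all $m\ge m_0$. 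Consequently $\ell^{(\la)}(w,Y)=m_0$, which gives, for $w\in Y$,
\[
\ell^{(\la)}(w,Y)\le n\iff w\notin\text{int}(\Phi_n),\qquad\ell^{(\la)}(w,Y)\ge n\iff w\in\text{int}(\Phi_{n-1}),
\]
the two being logical negations of one another; in particular, for $y\in Y$ the membership $y\in\bigcup_{i=1}^{n-1}\partial[\Phi_i]^{(\la)}(Y)$ is equivalent to $\ell^{(\la)}(y,Y)\le n-1$.

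Next I would prove (ii) for every $n\ge 1$, applying the recalled extreme-point criterion not to $Y$ itself but one level down in the peeling. For the forward implication, if $m:=\ell^{(\la)}(w,Y)\le n$ then $w\in Y_m$ and $w\in\partial\Phi^{(\la)}(Y_m)=\partial\Phi_m$, so $w$ is an extreme point of $Y_m$, and the criterion produces $(v_1,h_1)\in\partial[\Pi^\uparrow]^{(\la)}(w)$ with $[\Pi^\downarrow]^{(\la)}(v_1,h_1)\cap Y_m=\varnothing$; then every point of $Y\cap[\Pi^\downarrow]^{(\la)}(v_1,h_1)$ lies in $Y\setminus Y_m$, hence outside $\text{int}(\Phi_{m-1})$, hence on a layer $\le m-1\le n-1$, i.e.\ in $\bigcup_{i=1}^{n-1}\partial[\Phi_i]^{(\la)}(Y)$. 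For the converse, given such a $(v_1,h_1)$: if $w\notin\text{int}(\Phi_{n-1})$ then $\ell^{(\la)}(w,Y)\le n-1\le n$ and we are done; otherwise $w\in Y_n$, and any point $y$ of $Y_n\cap[\Pi^\downarrow]^{(\la)}(v_1,h_1)$ would lie in $\bigcup_{i=1}^{n-1}\partial[\Phi_i]^{(\la)}(Y)$, hence satisfy $\ell^{(\la)}(y,Y)\le n-1$ and so $y\notin\text{int}(\Phi_{n-1})$, contradicting $y\in Y_n$; therefore $[\Pi^\downarrow]^{(\la)}(v_1,h_1)\cap Y_n=\varnothing$, and since $w\in Y_n$ the criterion applied to $Y_n$ gives $w\in\partial\Phi^{(\la)}(Y_n)=\partial\Phi_n$, whence $w\notin\text{int}(\Phi_n)$ and $\ell^{(\la)}(w,Y)\le n$.

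Finally, equivalence (i) for $n\ge 2$ is exactly the logical negation of (ii) with $n$ replaced by $n-1$, and (i) for $n=1$ holds trivially (there it asserts $\ell^{(\la)}(w,Y)\ge 1$). I expect the main obstacle to lie not in the paraboloid manipulations above but in the bookkeeping of the second paragraph — verifying that the recursion really assigns each $w\in Y$ the single layer $m_0=\min\{m\ge 1:\ w\notin\text{int}(\Phi_m)\}$, i.e.\ that $w$ actually meets $\partial\Phi_{m_0}$ rather than being separated from $\Phi_{m_0}$ (which is where $Y_{m_0}\subseteq\Phi^{(\la)}(Y_{m_0})$ is used) and is not reabsorbed at a later step (which is where the nesting is used). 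Everything beyond that is a direct unwinding of the extreme-point criterion one step down in the peeling.
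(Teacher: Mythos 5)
Your proof is correct and follows the same essential approach as the paper's — apply the extreme-point criterion to the peeled point set $Y_m = Y \cap \text{int}(\Phi_{m-1})$ rather than to $Y$ — but your organization is cleaner. You make explicit the structural facts the paper uses implicitly (the nesting $\Phi_1 \supseteq \Phi_2 \supseteq \cdots$ and the identity $\ell^{(\la)}(w,Y) = \min\{m \ge 1 : w \notin \text{int}(\Phi_m)\}$ for $w\in Y$), and you observe that (i) at level $n$ is the contrapositive of (ii) at level $n-1$, so it suffices to prove (ii); the paper proves both implications of (i) separately. One small imprecision: you write that (i) for $n=1$ ``holds trivially,'' but under the standard convention that $\cup_{i=1}^{-1}\partial[\Phi_i]^{(\la)}(Y) = \varnothing$, the right-hand side of (i) at $n=1$ asserts that every downward (quasi-)paraboloid through $w$ meets $Y$, which is precisely the negation of the extreme-point criterion and can fail for $w\in Y$, while the left-hand side $\ell^{(\la)}(w,Y)\ge 1$ is always true. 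So (i) as literally stated is false for $n=1$; the statement should really have $n\ge 2$ in part (i). This is a defect in the lemma's stated range of validity rather than in your argument (the paper does not address the $n=1$ case either), but ``holds trivially'' is not an accurate description of what happens there.
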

    \begin{proof}
    \noindent (i) Let us assume that $\ell^{(\la)}(w,Y) \geq n$ and take $(v_1,h_1) \in \partial[\Pi^\uparrow]^{(\la)}(w).$ If we had $[\Pi^\downarrow]^{(\la)}(v_1,h_1) \cap Y \subseteq  \cup_{i=1}^{n-2} \partial [\Phi_i]^{(\la)}(Y)$, this quasi-paraboloid would not intersect $Y$ after removing the first $(n-2)$ layers, meaning that $w$ would be at most of layer $(n-1)$ so the first implication holds.
    
    Conversely, let us assume that $[\Pi^\downarrow]^{(\la)}(v_1,h_1) \cap Y \not\subseteq  \cup_{i=1}^{n-2} \partial [\Phi_i]^{(\la)}(Y)$. Then after removing the first $(n-2)$ layers, any down quasi-paraboloid whose boundary contains $w$ has to meet $Y$. This implies that $w$ is not extreme after removing the first $(n-2) $ layers and thus $\ell^{(\la)}(w,Y) \geq n$.
    \\~\\
    \noindent (ii) If $\ell^{(\la)}(w,Y) = m \le n$, $w$ is extreme when we remove the first $(m-1)$ layers. Thus let $\;(v_1, h_1) \in \partial[\Pi^\uparrow]^{(\la)}(w)$ be such that $[\Pi^\downarrow]^{(\la)}(v_1, h_1)$ does not contain any point of $Y$ after removing the first $(m-1)$ layers. This implies that
    $$ [\Pi^\downarrow]^{(\la)}(v_1,h_1) \cap Y
        \subseteq \cup_{i=1}^{m-1} \partial
        [\Phi_i]^{(\la)}(Y)\subseteq \cup_{i=1}^{n-1} \partial
        [\Phi_i]^{(\la)}(Y).$$
        
        Conversely, if we assume that $$\exists\;(v_1, h_1) \in \partial[\Pi^\uparrow]^{(\la)}(w): 
        [\Pi^\downarrow]^{(\la)}(v_1,h_1) \cap Y \subseteq \cup_{i=1}^{n-1} \partial
        [\Phi_i]^{(\la)}(Y),$$
        either $w$ belongs to the first $(n-1)$ layers or it is extreme once the first $(n-1)$ layers are removed. Thus $\ell^{(\la)}(w, Y)\leq n$.
    \end{proof}


Lemma \ref{lem:dalalreecrit} below, which is of frequent use in our proofs, shows that the variables $\ell^{(\la)}(w,X)$ are increasing with respect to the set $X$. 
It slightly rephrases \cite[Lemma 3.1]{D04} in the context of the parabolic hull peeling and \cite[Lemma 2.1]{CS20}. For sake of completeness, we include a short proof below.
\begin{lem}\label{lem:dalalreecrit}
For $\la \in (0,\infty]$, if $X\subset Y\subset W_\la$, we have for every $w\in W_\la$, $\ell^{(\la)}(w,X)\le \ell^{(\la)}(w,Y).$    
\end{lem}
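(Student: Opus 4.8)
The plan is to prove the monotonicity by induction on the layer number, using the incremental characterization of layers from Lemma \ref{lem:be on layer n}. The statement to prove is: for $X \subseteq Y$, every point $w$ satisfies $\ell^{(\la)}(w,X) \le \ell^{(\la)}(w,Y)$. Equivalently, setting $n = \ell^{(\la)}(w,X)$, I want to show $\ell^{(\la)}(w,Y) \ge n$, i.e. that $w$ survives into layer $n$ of the peeling of $Y \cup \{w\}$ whenever it survives into layer $n$ of the peeling of $X \cup \{w\}$.

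First I would reduce to a cleaner inductive claim about the layers themselves: for every $i \ge 1$ and every $w \in \R^{d-1}\times\R_+$, if $w$ lies on or inside $\Phi_i^{(\la)}(X)$ — meaning $\ell^{(\la)}(w, X) \ge i$ in the sense of \eqref{eq:defnumerocouche}, or more precisely that $w$ is not removed in the first $i-1$ peeling steps of $X$ — then the same holds for $Y$. The base case $i = 1$ is immediate since every point is on or inside $\Phi_1^{(\la)}$ by definition. For the inductive step, suppose the claim holds up to index $n-1$; I want to show that if $w$ survives the first $n-1$ peeling steps of $X$ then it survives the first $n-1$ peeling steps of $Y$. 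Here I would invoke criterion (i) of Lemma \ref{lem:be on layer n}: $\ell^{(\la)}(w, X) \ge n$ means that for every $(v_1,h_1) \in \partial[\Pi^\uparrow]^{(\la)}(w)$, the down-quasi-paraboloid $[\Pi^\downarrow]^{(\la)}(v_1,h_1)$ contains a point of $X$ not on any of the first $n-2$ layers of $X$ — that is, a point of $X$ that survives the first $n-2$ peeling steps of $X$.

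The key step is then to transfer this surviving witness point from the peeling of $X$ to the peeling of $Y$. Fix $(v_1,h_1) \in \partial[\Pi^\uparrow]^{(\la)}(w)$ and let $z \in X \cap [\Pi^\downarrow]^{(\la)}(v_1,h_1)$ be a point with $\ell^{(\la)}(z,X) \ge n-1$ (which exists by criterion (i)). By the inductive hypothesis applied to $z$, since $z$ survives $n-2$ peeling steps of $X$ and $X \subseteq Y$, the point $z$ also survives $n-2$ peeling steps of $Y$, i.e. $\ell^{(\la)}(z, Y) \ge n-1$, so $z \notin \cup_{i=1}^{n-2}\partial[\Phi_i]^{(\la)}(Y)$. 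Moreover $z \in X \subseteq Y$ and $z \in [\Pi^\downarrow]^{(\la)}(v_1,h_1)$. Hence $Y \cap [\Pi^\downarrow]^{(\la)}(v_1,h_1) \not\subseteq \cup_{i=1}^{n-2}\partial[\Phi_i]^{(\la)}(Y)$. Since $(v_1,h_1) \in \partial[\Pi^\uparrow]^{(\la)}(w)$ was arbitrary, criterion (i) of Lemma \ref{lem:be on layer n} applied to $Y$ gives $\ell^{(\la)}(w,Y) \ge n$, completing the induction.

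The main obstacle is making the induction hypothesis precise: the natural object to induct on is not $\ell^{(\la)}(w,\cdot)$ directly but the predicate ``$w$ is not removed during the first $i-1$ peeling steps'', and one must check this is exactly captured by $\ell^{(\la)}(w, \cdot) \ge i$ together with the membership conventions, so that criterion (i) of Lemma \ref{lem:be on layer n} can be applied verbatim. One should also be slightly careful that the witness $z$ belongs to $X$ itself (not merely to $X \cup \{w\}$) so that the inductive hypothesis, stated for subsets, applies to the pair $X \subseteq Y$; this is automatic because in criterion (i) the quasi-paraboloid's boundary passes through $w$, so $w \notin [\Pi^\downarrow]^{(\la)}(v_1,h_1)$ (an open set), hence any point of $(X\cup\{w\}) \cap [\Pi^\downarrow]^{(\la)}(v_1,h_1)$ is automatically in $X$. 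Everything else is bookkeeping around the definitions.
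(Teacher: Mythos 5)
Your proof takes essentially the same route as the paper's -- an induction on the layer number driven by the incremental characterization in Lemma~\ref{lem:be on layer n} -- but run in the dual logical direction: you induct on $\ell^{(\la)}(w,X)\ge n$ and invoke criterion~(i), whereas the paper inducts on $\ell^{(\la)}(w,Y)=n$ and invokes criterion~(ii). Both directions work and the core idea (transfer a witness point between the two peelings via the induction hypothesis) is identical.

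There is, however, a real slippage in the bookkeeping that the paper is careful to avoid. Lemma~\ref{lem:be on layer n} is stated with $w$ an element of the ambient set, so the instance needed here is for the set $X\cup\{w\}$, and what criterion~(i) actually yields is a witness $z\in X\cap[\Pi^\downarrow]^{(\la)}(v_1,h_1)$ with $\ell^{(\la)}(z,X\cup\{w\})\ge n-1$ -- \emph{not} $\ell^{(\la)}(z,X)\ge n-1$ as you write. The latter is a genuinely weaker assertion; by the very monotonicity being proved, $\ell^{(\la)}(z,X)\le\ell^{(\la)}(z,X\cup\{w\})$, so it does not follow from what the criterion provides. Consequently the induction hypothesis must be applied to $z$ with the pair $X\cup\{w\}\subseteq Y\cup\{w\}$ (yielding $\ell^{(\la)}(z,Y\cup\{w\})\ge n-1$), and the concluding application of criterion~(i) must be with $Y\cup\{w\}$ rather than $Y$. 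This in turn forces the induction hypothesis to be quantified over \emph{all} pairs of point sets, not just the fixed $X\subseteq Y$, which you do not make explicit. You do flag the easier half of this care -- that $w\notin[\Pi^\downarrow]^{(\la)}(v_1,h_1)$, so the witness lies in $X$ itself -- but the layer number of the witness is the more delicate part, and it is exactly there that the paper writes $\ell^{(\la)}(w',Y\cup\{w\})\le n-1$ and applies the induction hypothesis to $X\cup\{w\}$ and $Y\cup\{w\}$. Once the $\cup\{w\}$ is carried through consistently and the hypothesis is stated universally, your argument is sound.
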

\begin{proof}
We prove the result by induction on $n= \ell^{(\la)}(w,Y).$ When $n=1$, $w$ is extreme for the point set $Y\cup\{w\}$ so is also extreme for the smaller point set $X\cup\{w\}$. When $n> 1$, by Lemma \ref{lem:be on layer n} (ii), $w$ lies on the boundary of a down quasi-paraboloid such that each point $w'$ of $Y$  in its interior satisfies $\ell^{(\la)}(w',Y\cup\{w\})\le (n-1)$. When $w'\in X$, the induction hypothesis applied to $w'$ and the point sets $X\cup\{w\}$ and $Y\cup\{w\}$ shows that $\ell^{(\la)}(w',X\cup\{w\})\le \ell^{(\la)}(w',Y\cup\{w\})\le (n-1)$. Consequently, using again Lemma \ref{lem:be on layer n} (ii), we obtain that $\ell^{(\la)}(w,X)\le n.$ This completes the proof.

\end{proof}
\begin{rem}
In fact, when $Y\setminus X$ is finite, we can show by arguments similar to the proof of \cite[Lemma 3.1]{D04} that $\ell^{(\la)}(w,Y)\le \ell^{(\la)}(w,X)+\#(Y\setminus X)$ where $\#(\cdot)$ denotes the cardinality, see Figure \ref{fig:increasing_layer}.
\end{rem}
\begin{figure}
    \centering
    \begin{overpic}[width=0.95\textwidth,trim={1cm 0cm 1cm 0},clip]{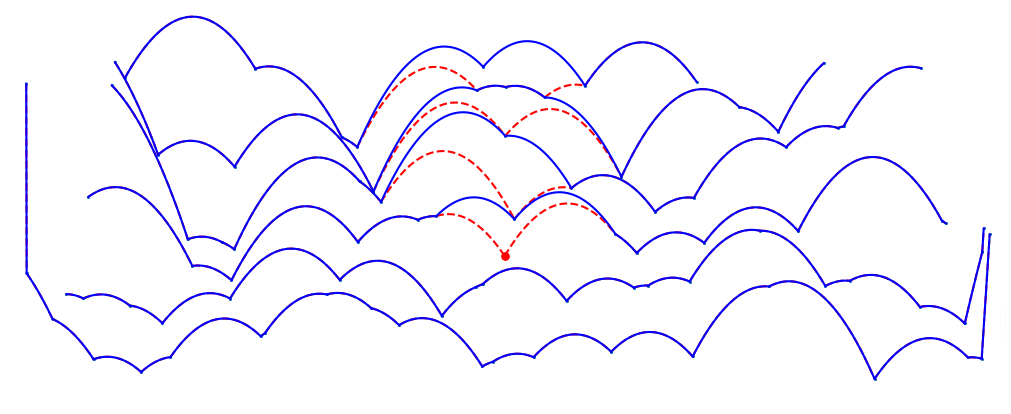}
    \put(0,3.3){\color{black}\line(100,0){100}}
\end{overpic}
    \caption{The effect of adding a (red) point to the parabolic hull peeling. The red dashed lines are the changes applied to the layers.}
    \label{fig:increasing_layer}
\end{figure}

\subsection{Scores and correlation functions}

In this subsection we associate to each point of $\B^d$ (resp. $W_\la$) a random variable depending on that point and on the Poisson point process which we call \textit{score}.
We start by defining the score of a point in the initial convex hull peeling before rescaling, i.e. for every $x\in \B^d$, $n\ge 1$ and $k\in \{0,\ldots,d-1\}$, we introduce the r.v.
$$\xi_{n,k}(x,X):=  \left\{\begin{array}{ll}\frac{1}{k+1} \# \cF_{n,k}(x,X) &\mbox{ if $x\in \partial \tconv_{n}(X\cup\{x\})$}\\0 &\mbox{ otherwise}\end{array}\right.$$
where $\cF_{n,k}(x, X)$ is the set of all $k$-faces containing $x$ of 
$\partial\tconv_n(X \cup \{x\})$. 
The factor $\frac{1}{k+1}$ is needed to take into account the fact that the faces are counted multiple times since a $k$-face contains a.s. $(k+1)$ points of $\cP_{\la}$. In particular, we get the identity
\begin{equation}\label{eq:decompavantrescaling}
N_{n,k,\la}=\sum_{x\in \cP_\la}\xi_{n,k}(x,\cP_\la).    
\end{equation}

We now extend this notion of score to the rescaled model. Let $\la \in (0,\infty]$, $Y$ be a locally finite subset of $W_\la$, $w\in W_\la$, $n\ge 1$ and $k\in \{0,\ldots,d-1\}$. \label{page:defFnk} We denote by $\cF_{n,k}^{(\la)}(w, Y)$ the set of $k$-faces of $\partial \Phi_n^{(\la)}(Y\cup\{w\})$ containing $w$, i.e. the image by $T^{(\la)}$ of the set of $k$-faces of $\mbox{conv}_n([T^{(\la)}]^{-1}(Y\cup\{w\}))$ containing $[T^{(\la)}]^{-1}(w)$ when $\la < \infty$. When $\la = \infty$, $\cF^{(\la)}_{n,k}(w, Y)$ is the set of $k$-dimensional parabolic faces of $\Phi_{n,k}^{(  \la)}(Y \cup \{w\})$, as defined in \cite[p. 65--66]{CSY}, containing $w$.
For any fixed $\la \in (0,\infty],$ we define the score
\begin{equation}\label{eq:defxink}
\xi^{(\la)}_{n,k}(w,Y) 
:= \left\{\begin{array}{ll}\frac{1}{k+1} \#\cF^{(\la)}_{n,k}(w,Y) &\mbox{ if } w\in \partial \Phi^{(\la)}_{n}(Y\cup\{w\})\\0 &\mbox{ otherwise}\end{array}\right..
\end{equation}
	We deduce from \eqref{eq:defxink} and \eqref{eq:changement echelle peeling} that for every $w \in W_\la$, 
	\begin{equation}\label{eq:egalie scores}
	\xi_{n,k}^{(\la)}(w, \cP^{(\la)}) = \xi_{n,k}\left([T^{(\la)}]^{-1}(w), \cP_\la\right).
	\end{equation}
Note that the r.v. $\xi_{n,k}^{(\la)}$ are calibrated such that $\sum_{w\in \cP^{(\la)}}\xi_{n,k}^{(\la)}(w,\cP^{(\la)})$ is a.s. the total number of $k$-faces of $\partial \Phi_n^{(\la)}(\cP^{(\la)})$. 

We then introduce the two-point correlation function which is crucial for deriving the limiting variance. 
For any $\la\in (0,\infty]$ let us write
\begin{align}\label{eq:defcnkla}
c_{n,k}^{(\lambda)}((0, h_0), (v_1, h_1))&:=\E[\xi_{n,k}^{(\la)}((0,h_0),\cP^{(\la)}\cup\{(v_1,h_1)\})\xi_{n,k}^{(\la)}((v_1,h_1),\cP^{(\la)}\cup\{(0,h_0)\})]\nonumber\\
&\hspace*{3.8cm}-\E[\xi_{n,k}^{(\la)}((0,h_0),\cP^{(\la)})]\E[\xi_{n,k}^{(\la)}((v_1,h_1),\cP^{(\la)})].    
\end{align}

We conclude by giving a more precise statement of Theorem \ref{thm:principalintro}, as we have now introduced every notation involved in the limiting constants.

\begin{thm}\label{lim esperance precis}
	For any $n \geq 1$ and $k \in \lbrace 0, \ldots , d-1 \rbrace$ we have
	$$\lim\limits_{\lambda \rightarrow \infty} {\lambda^{-\frac{d-1}{d+1}}} {\mathbb{E}[N_{n,k,\lambda}]}
	= \text{\normalfont Vol}_{d-1}(\mathbb{S}^{d-1}) \int_{0}^{+\infty} \mathbb{E}[\xi_{n, k}^{(\infty)}((0,h), \mathcal{P})] \mathrm{d}h \in ( 0, \infty). $$
\end{thm}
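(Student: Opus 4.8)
The plan is to start from the exact decomposition \eqref{eq:decompavantrescaling}, namely $N_{n,k,\la} = \sum_{x\in\cP_\la}\xi_{n,k}(x,\cP_\la)$, apply the Mecke equation to turn $\E[N_{n,k,\la}]$ into a single integral over $\B^d$, then push everything through the scaling map $T^{(\la)}$ and identify the limit by dominated convergence. Concretely, Mecke's formula gives
\begin{equation*}
\E[N_{n,k,\la}] = \la\int_{\B^d}\E[\xi_{n,k}(x,\cP_\la\cup\{x\})]\,\mathrm{d}x.
\end{equation*}
Using the identity \eqref{eq:egalie scores} (in the form $\xi_{n,k}(x,\cP_\la\cup\{x\}) = \xi_{n,k}^{(\la)}(T^{(\la)}(x),\cP^{(\la)}\cup\{T^{(\la)}(x)\})$, valid on the event $\{0\in\mathrm{int}(\conv_n(\cP_\la))\}$ whose complement has exponentially small probability and contributes negligibly since $\xi_{n,k}$ is bounded by the total number of $k$-faces on the event in question — one has to be a little careful here, but $\xi_{n,k}\le \binom{\#(\cP_\la\cap B)}{k}$ type bounds together with Poisson concentration handle it), I would change variables $w = T^{(\la)}(x)$. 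The Jacobian of $T^{(\la)}$ is exactly what makes the intensity of $\cP^{(\la)}$ equal to \eqref{eq:intensity process}, so the substitution turns $\la\,\mathrm{d}x$ into $\mathrm{d}w$ times the density factor, yielding
\begin{equation*}
\la^{-\frac{d-1}{d+1}}\E[N_{n,k,\la}] = \la^{-\frac{d-1}{d+1}}\int_{W_\la}\E[\xi_{n,k}^{(\la)}(w,\cP^{(\la)}\cup\{w\})]\,\frac{\sin^{d-2}(\la^{-\frac1{d+1}}\|v\|)}{\|\la^{-\frac1{d+1}}v\|^{d-2}}\,(1-\la^{-\frac2{d+1}}h)\,\mathrm{d}w + o(1).
\end{equation*}

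Next I would exploit the translation invariance of the parabolic picture in the $v$-variable: by the construction of $\cP^{(\la)}$ and of the quasi-paraboloids, $\E[\xi_{n,k}^{(\la)}((v,h),\cP^{(\la)}\cup\{(v,h)\})]$ depends on $v$ only through boundary effects coming from the fact that $W_\la$ is a finite cylinder of radius $\la^{1/(d+1)}\pi$ — away from the boundary it is, to leading order, a function of $h$ alone. Integrating out $v$ over the $(d-1)$-dimensional ball of radius $\la^{1/(d+1)}\pi$ produces a factor $\la^{\frac{d-1}{d+1}}$ times $\text{\normalfont Vol}_{d-1}(B_{d-1}(\pi))$ up to the correction coming from the $\sin^{d-2}/(\cdot)^{d-2}$ density, which after the substitution $v\mapsto \la^{-1/(d+1)}v$ integrates the solid-angle density over $B_{d-1}(\pi)$ and reconstitutes $\text{\normalfont Vol}_{d-1}(\S^{d-1})$ exactly (this is the same computation as in \cite[p.~57, p.~71]{CSY}). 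This cancels the prefactor $\la^{-(d-1)/(d+1)}$ and leaves an integral in $h$ over $[0,\la^{2/(d+1)})$ whose integrand converges pointwise to $\E[\xi_{n,k}^{(\infty)}((0,h),\cP)]$ by Proposition \ref{prop:convrescaledlayers} (convergence of the rescaled layers and of the point process on them, which forces convergence of the combinatorial score, with the usual care that the score is a.s.\ continuous with respect to the relevant convergence — almost-sure absence of degenerate configurations for the limit Poisson process).

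The main obstacle is the domination step needed to apply Lebesgue's theorem: one must bound $\E[\xi_{n,k}^{(\la)}((v,h),\cP^{(\la)}\cup\{(v,h)\})]$ uniformly in $\la$ by an integrable function of $h$ (after the $v$-integration), decaying fast enough in $h$ to kill the $(1-\la^{-2/(d+1)}h)$ weight and make the $h$-integral finite and convergent. This is precisely where the stabilization theory of Section~\ref{sec:stab} and the $L^p$ and moment bounds of Section~\ref{sec:lpandconv} enter: stabilization gives that $\xi_{n,k}^{(\la)}(w,\cdot)$ depends only on $\cP^{(\la)}$ in a random cylinder around $w$ whose radius has exponential tails, hence $\xi_{n,k}^{(\la)}$ has bounded $L^p$ norms, and — crucially — a point at large height $h$ is very unlikely to reach layer $n$ at all (it is buried under $\Theta(h^{(d+1)/2})$ points on average, cf.\ the height estimates like Lemma~\ref{lemme hauteur max}), so $\P(w\in\partial\Phi_n^{(\la)}(\cP^{(\la)}\cup\{w\}))$ decays super-polynomially in $h$, giving the required domination. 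Combining the pointwise convergence and the domination, Lebesgue's theorem yields
\begin{equation*}
\lim_{\la\to\infty}\la^{-\frac{d-1}{d+1}}\E[N_{n,k,\la}] = \text{\normalfont Vol}_{d-1}(\S^{d-1})\int_0^\infty\E[\xi_{n,k}^{(\infty)}((0,h),\cP)]\,\mathrm{d}h,
\end{equation*}
and positivity and finiteness of this constant follow from the $L^p$ bounds (finiteness) and from the explicit lower-bound construction announced in the introduction (positivity), so the limit lies in $(0,\infty)$ as claimed.
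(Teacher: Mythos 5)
Your overall strategy is the same as the paper's: decompose $N_{n,k,\la}$ via \eqref{eq:decompavantrescaling}, apply Mecke's formula, rescale through $T^{(\la)}$, and conclude by dominated convergence (with positivity coming from the constructive lemma). The change of variables is organized a little differently — the paper first uses the exact rotation-invariance of $\cP_\la$ to reduce to $\E[\xi_{n,k}(r e_d,\cP_\la)]$, then goes to spherical coordinates and substitutes $h=\la^{2/(d+1)}(1-r)$ (which pulls out $\text{Vol}_{d-1}(\S^{d-1})$ exactly and leaves a one-dimensional $h$-integral with factor $(1-\la^{-2/(d+1)}h)^{d-1}$), whereas you push the full map $T^{(\la)}$ and then integrate $v$ out. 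Both routes are equivalent, and you correctly note that the $v$-independence of $\E[\xi_{n,k}^{(\la)}((v,h),\cP^{(\la)})]$ comes from the rotation-invariance of the original model, not from any translation-invariance of the parabolic picture (the intensity \eqref{eq:intensity process} is not translation-invariant).

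The genuine gap is in the pointwise-convergence step. You cite Proposition~\ref{prop:convrescaledlayers} (weak convergence of the rescaled layers and of the point process on them) as the source of $\E[\xi_{n,k}^{(\la)}((0,h),\cP^{(\la)})]\to\E[\xi_{n,k}^{(\infty)}((0,h),\cP)]$, appealing to ``a.s.\ continuity'' of the score and the absence of degeneracies. But $\xi_{n,k}^{(\la)}((0,h),\cdot)$ is not a continuous functional of the layer restricted to a fixed compact, and a priori it is determined by the process in an unbounded neighbourhood of $(0,h)$; weak convergence of the layers alone cannot control it, and certainly not at the level of expectations. The paper instead proves this convergence as a standalone result, Proposition~\ref{expectation convergence xi k-faces}, via Lemma~\ref{lem:convesp1}: one first uses the stabilization estimates of Section~\ref{sec:stab} (Proposition~\ref{stab k-faces} and Lemma~\ref{localisation hauteur k-faces}) to replace $\xi_{n,k}^{(\la)}$ by the localized score $\xi_{n,k,[r,l_nr^2]}^{(\la)}$ up to an error that is exponentially small in $r$ and uniform in $\la$, and only then applies the continuous mapping theorem on the compact window $C^{\le l_n r^2}(r)$ together with Lemma~\ref{convergence rescaled} and the $L^p$ bounds of Lemma~\ref{borne Lp} for uniform integrability. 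Without this localization step the pointwise convergence is not justified, and this is exactly the part of the argument where the incremental layer structure (as opposed to the single convex hull of \cite{CSY}) makes the stabilization results of Section~\ref{sec:stab} indispensable.
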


\begin{thm}\label{lim variance precis}
	For any $n \geq 1$ and $k \in \lbrace 0, \ldots , d-1 \rbrace$ we have
	\begin{align*}
	&\lim\limits_{\lambda \rightarrow \infty} {\lambda^{-\frac{d-1}{d+1}}}{\text{\normalfont Var}[N_{n,k,\lambda}]} 
	= \text{\normalfont Vol}_{d-1}(\mathbb{S}^{d-1}) \left( I_1 + I_2  \right) \in ( 0, \infty)
	\end{align*}
	where 
	\begin{equation}\label{eq:defI1}
	I_1 := \int_{0}^{\infty} \mathbb{E}[\xi_{n, k}^{(\infty)}((0,h), \mathcal{P})^2] \mathrm{d}h
	\end{equation}
	and 
	\begin{equation}\label{eq:defI2}
	I_2 := \int_{0}^{+\infty} \int_{0}^{+\infty} \int_{\mathbb{R}^{d-1}} c_{n, k}^{(\infty)}((0,h_0), (v_1, h_1)) \mathrm{d}v_1 \mathrm{d}h_0 \mathrm{d}h_1 .    
	\end{equation}
	
\end{thm}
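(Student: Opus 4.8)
The plan is to transport the whole computation to the rescaled parabolic picture of Section~\ref{sec:rescaling} and then pass to the limit by dominated convergence, just as was done for the convex hull ($n=1$) in \cite{CSY}; the genuinely new input is the stabilization of the rescaled scores (Section~\ref{sec:stab}) and the $L^p$ and convergence-in-expectation statements of Section~\ref{sec:lpandconv}. Starting from the decomposition \eqref{eq:decompavantrescaling}, $N_{n,k,\la}=\sum_{x\in\cP_\la}\xi_{n,k}(x,\cP_\la)$, I would apply the one- and two-point Mecke formulas for Poisson functionals to write
\begin{align*}
\Var[N_{n,k,\la}] = {}& \la\int_{\B^d}\E\big[\xi_{n,k}(x,\cP_\la\cup\{x\})^2\big]\,\mathrm{d}x\\
& + \la^2\int_{\B^d}\int_{\B^d}\Big(\E\big[\xi_{n,k}(x,\cP_\la\cup\{x,y\})\,\xi_{n,k}(y,\cP_\la\cup\{x,y\})\big]\\
& \qquad\qquad - \E\big[\xi_{n,k}(x,\cP_\la\cup\{x\})\big]\,\E\big[\xi_{n,k}(y,\cP_\la\cup\{y\})\big]\Big)\,\mathrm{d}x\,\mathrm{d}y ,
\end{align*}
the first term being the source of $I_1$ and the second of $I_2$.

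Next I would use the rotational invariance of $\cP_\la$ and the rotational equivariance of the peeling: $\E[\xi_{n,k}(x,\cP_\la\cup\{x\})^2]$ depends on $x$ only through $\|x\|$ and the integrand of the second term only through $\|x\|$, $\|y\|$ and the geodesic distance between $x/\|x\|$ and $y/\|y\|$, so fixing the first point on the segment $[0,e_d]$ and integrating over the remaining directions produces a factor $\text{\normalfont Vol}_{d-1}(\S^{d-1})$. Performing the radial substitution $h=\la^{2/(d+1)}(1-\|x\|)$, applying $T^{(\la)}$ to the second variable (whose Jacobian is $\la^{-1}$ times the density $\rho^{(\la)}$ of \eqref{eq:intensity process}), and invoking \eqref{eq:egalie scores} together with the definition \eqref{eq:defcnkla} of $c_{n,k}^{(\la)}$, one is led to
$$\la^{-\frac{d-1}{d+1}}\,\Var[N_{n,k,\la}]=\text{\normalfont Vol}_{d-1}(\S^{d-1})\,\big(J_1^{(\la)}+J_2^{(\la)}\big)$$
where
\begin{align*}
J_1^{(\la)}&:=\int_0^{\la^{2/(d+1)}}\E\big[\xi_{n,k}^{(\la)}((0,h),\cP^{(\la)}\cup\{(0,h)\})^2\big]\,\big(1-\la^{-2/(d+1)}h\big)^{d-1}\,\mathrm{d}h,\\
J_2^{(\la)}&:=\int_0^{\la^{2/(d+1)}}\int_{W_\la}c_{n,k}^{(\la)}\big((0,h_0),(v_1,h_1)\big)\,\rho^{(\la)}(v_1,h_1)\\
&\qquad\qquad\qquad\times\big(1-\la^{-2/(d+1)}h_0\big)^{d-1}\,\mathrm{d}v_1\,\mathrm{d}h_1\,\mathrm{d}h_0 .
\end{align*}
All powers of $\la$ cancel, which is exactly what singles out the normalization $\la^{-(d-1)/(d+1)}$.

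It then remains to let $\la\to\infty$ in $J_1^{(\la)}$ and $J_2^{(\la)}$ by dominated convergence. The pointwise convergence of the integrands — $\E[\xi_{n,k}^{(\la)}((0,h),\cdot)^2]\to\E[\xi_{n,k}^{(\infty)}((0,h),\cP)^2]$, $c_{n,k}^{(\la)}\to c_{n,k}^{(\infty)}$, $\rho^{(\la)}\to 1$ and $(1-\la^{-2/(d+1)}h)^{d-1}\to 1$ — is precisely what the convergence results of Section~\ref{sec:lpandconv} provide, building on the convergence of the rescaled layers (Proposition~\ref{prop:convrescaledlayers}) together with the uniform integrability furnished by the $L^p$-bounds there. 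The domination needed to conclude splits in two: for $J_1^{(\la)}$, a bound $\E[\xi_{n,k}^{(\la)}((0,h),\cdot)^2]\le g(h)$ with $g\in L^1(\R_+)$, combining a $\la$-uniform moment bound for bounded $h$ with exponential decay in $h$ — a point sitting at rescaled height $h$ belongs to the $n$-th layer only with probability exponentially small in $h$; for $J_2^{(\la)}$, a bound $|c_{n,k}^{(\la)}((0,h_0),(v_1,h_1))|\,\rho^{(\la)}(v_1,h_1)\le G(v_1,h_0,h_1)$ with $G$ integrable on $\R^{d-1}\times\R_+^2$, which must moreover carry exponential decay in $\|v_1\|$. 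Passing these bounds to the limit also yields $I_1<\infty$ and the absolute convergence of the integral defining $I_2$, hence the finiteness of the right-hand side; the strict positivity $I_1+I_2>0$, i.e. $C'_{n,k,d}\in(0,\infty)$, is obtained by a separate argument through the idealized-configuration construction alluded to in the introduction. The expectation asymptotics of Theorem~\ref{lim esperance precis} follow along the same route, keeping only the first-moment analogue of $J_1^{(\la)}$.

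The step I expect to be the real obstacle is the $\la$-uniform integrable domination of the two-point correlation $c_{n,k}^{(\la)}$. Its decay in $\|v_1\|$ rests on stabilization: on an event of probability exponentially close to $1$, the score $\xi_{n,k}^{(\la)}((0,h_0),\cP^{(\la)}\cup\{(0,h_0),(v_1,h_1)\})$ is unchanged by the insertion of $(v_1,h_1)$, and symmetrically, so on that event the two scores decouple and, by Hölder's inequality, $|c_{n,k}^{(\la)}|$ is bounded by the probability of the bad event — exponentially small in the mutual distance — times higher moments of the scores controlled by the $L^p$-bounds. The difficulty intrinsic to the peeling, absent in the convex-hull case of \cite{CSY}, is that membership in the $n$-th layer has no local criterion, only the incremental one of Lemma~\ref{lem:be on layer n}; the stabilization radius of the scores, together with a simultaneous control of the location of the layers (the role of estimates such as Lemma~\ref{lemme hauteur max}), therefore has to be established by induction on $n$, and this is the core of Section~\ref{sec:stab}.
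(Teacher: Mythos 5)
Your proposal matches the paper's proof in essentially every respect: the same one- and two-point Mecke decomposition into the $I_1$- and $I_2$-type integrals, the same use of rotational invariance and the scaling transformation $T^{(\la)}$ to extract the $\text{\normalfont Vol}_{d-1}(\S^{d-1})\,\la^{\frac{d-1}{d+1}}$ prefactor, the same reduction to dominated convergence with pointwise convergence from Propositions~\ref{expectation convergence xi k-faces} and \ref{correlation convergence} and domination from Lemma~\ref{borne Lp} and Lemma~\ref{Lp bound correlation}, and the same recognition that stabilization (by induction on the layer number) is the substantive obstacle and that strict positivity requires the separate idealized-configuration argument. You have correctly reconstructed the architecture of the argument.
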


We restate in a similar way Theorem \ref{thm:principalintrovol} for the intrinsic volumes, using the definitions of $\xi^{(\infty)}_{V,n,k}$ and $c^{(\infty)}_{V,n,k}$ introduced at \eqref{eq:defxiinftyVnk} and \eqref{eq:defclaVnk} respectively.

\begin{thm}\label{lim esperance precis vol}
	For any $n \geq 1$ and $k \in \lbrace 1, \ldots , d \rbrace$ we have
	$$\lim\limits_{\lambda \rightarrow +\infty} \lambda^{\frac{2}{d+1}}\mathbb{E}[V_{n,k,\lambda}] 
	= \text{\normalfont Vol}_{d-1}(\mathbb{S}^{d-1}) \int_{0}^{+\infty} \mathbb{E}[\xi_{V,n, k}^{(\infty)}((0,h), \mathcal{P})] dh \in ( 0, \infty ). $$
\end{thm}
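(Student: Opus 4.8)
The plan is to reduce Theorem~\ref{lim esperance precis vol} to the same machinery used for $N_{n,k,\la}$, after setting up the analogue of the decomposition \eqref{eq:decompavantrescaling} for intrinsic volumes. First I would introduce a score $\xi_{V,n,k}(x,X)$ so that $V_{n,k,\la}=\sum_{x\in\cP_\la}\xi_{V,n,k}(x,\cP_\la)$. The natural choice, as in \cite{CSY}, is to use the Steiner-type expansion of $V_k(\conv_n(X))$ as a sum over the $(k-1)$-faces (or, more symmetrically, over the vertices) of $\partial\conv_n(X)$ of a local contribution: each vertex on the $n$-th layer gets credited a fraction of the intrinsic-volume defect of the cones it spans, so that the defect $V_{n,k,\la}=V_k(\B^d)-V_k(\conv_n(X))$ telescopes into $\sum_x \xi_{V,n,k}(x,\cP_\la)$ with $\xi_{V,n,k}(x,X)=0$ unless $x\in\partial\conv_n(X\cup\{x\})$. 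I would then rescale via $T^{(\la)}$, defining $\xi^{(\la)}_{V,n,k}(w,Y)$ exactly as at \eqref{eq:defxiinftyVnk}, so that the analogue of \eqref{eq:egalie scores} holds: the $\la^{2/(d+1)}$ prefactor absorbs the Jacobian/scaling weight coming from the fact that $V_k$ is a $k$-homogeneous functional measured in the ``height'' direction $h\sim\la^{-2/(d+1)}$.

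Next I would apply Mecke's formula to $\E[V_{n,k,\la}]=\E\big[\sum_{x\in\cP_\la}\xi_{V,n,k}(x,\cP_\la)\big]=\la\int_{\B^d}\E[\xi_{V,n,k}(x,\cP_\la)]\,\mathrm dx$, then change variables $w=T^{(\la)}(x)$. Using \eqref{eq:egalie scores}-type identity and the Jacobian of $T^{(\la)}$ — which, as computed in \cite[p.~57]{CSY}, produces exactly the density \eqref{eq:intensity process} against $\la\,\mathrm dx$, namely $\mathrm dv\,\mathrm dh$ times the factor $\tfrac{\sin^{d-2}(\la^{-1/(d+1)}\|v\|)}{\|\la^{-1/(d+1)}v\|^{d-2}}(1-\la^{-2/(d+1)}h)$ — the integral becomes
\begin{equation*}
\la^{\frac{2}{d+1}}\E[V_{n,k,\la}]
=\int_{W_\la}\E[\xi^{(\la)}_{V,n,k}(w,\cP^{(\la)})]\,\frac{\sin^{d-2}(\la^{-\frac1{d+1}}\|v\|)}{\|\la^{-\frac1{d+1}}v\|^{d-2}}\big(1-\la^{-\frac2{d+1}}h\big)\,\mathrm dw,
\end{equation*}
where $w=(v,h)$. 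Writing $\mathrm dw=\mathrm dv\,\mathrm dh$ and integrating the $v$-variable over $\la^{1/(d+1)}B_{d-1}(\pi)$, the ``angular'' density factor tends pointwise to $1$ and, after noticing that by rotation invariance $\E[\xi^{(\la)}_{V,n,k}((v,h),\cP^{(\la)})]$ depends on $v$ only through $\|v\|$, the $v$-integral collapses (via polar coordinates and the surface-area normalisation) to $\mathrm{Vol}_{d-1}(\S^{d-1})\int_0^\infty\E[\xi^{(\la)}_{V,n,k}((0,h),\cP^{(\la)})]\,\mathrm dh$ up to the vanishing correction. One then passes to the limit by dominated convergence: the pointwise convergence $\E[\xi^{(\la)}_{V,n,k}((0,h),\cP^{(\la)})]\to\E[\xi^{(\infty)}_{V,n,k}((0,h),\cP)]$ and a dominating bound integrable in $h$ are precisely the inputs proved in Section~\ref{sec:lpandconv} for the $N_{n,k,\la}$ scores, and the same stabilization argument of Section~\ref{sec:stab} — applied to the volume score, whose locality is inherited from the locality of the layer structure via Lemmas~\ref{lem:be on layer n} and \ref{lemme hauteur max} — yields both. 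This gives the stated limit with the constant $C_{V,n,k,d}=\mathrm{Vol}_{d-1}(\S^{d-1})\int_0^\infty\E[\xi^{(\infty)}_{V,n,k}((0,h),\cP)]\,\mathrm dh$.

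It remains to establish that this constant is finite and strictly positive. Finiteness follows from the $L^1$ bound on the score (a consequence of the exponential stabilization and a deterministic bound on the volume defect contributed by a single vertex, which decays superpolynomially in $h$ once the layer is localized). Positivity is the genuinely delicate point, and I expect it to be the main obstacle: it requires exhibiting an open set of configurations of the limit process $\cP$ on which $\xi^{(\infty)}_{V,n,k}((0,h),\cP)>0$ with positive probability, for $h$ in a set of positive Lebesgue measure. Following the strategy sketched after Theorem~\ref{lim esperance precis vol} (and as for $N_{n,k,\la}$), I would build an explicit idealized configuration — a deterministic arrangement of points organizing the first $n-1$ parabolic layers into a predictable shape with the origin at height $h$ lying on the $n$-th layer and genuinely contributing to $V_k$ — along the lines of \cite{D04}, then randomize it: place points in small balls around the idealized positions and argue by continuity/openness of the relevant events (layer membership, the face structure at the origin, nonvanishing of the local volume contribution) that the perturbed configuration still has $\xi^{(\infty)}_{V,n,k}>0$, which occurs with positive probability under $\cP$. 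The induction on $n$ needed to control the position of the lower layers (via Lemma~\ref{lemme hauteur max}) is what makes this construction more involved than in the single-hull case of \cite{CSY,BFV10}.
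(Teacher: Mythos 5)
Your skeleton for the convergence — score decomposition, Mecke, change of variables via $T^{(\la)}$, dominated convergence from stabilization — matches the paper's route, though your description of the score is vague and the terminology is off: the paper does not use a ``Steiner-type expansion'' but Kubota's formula $V_k(\conv_{n,\la})\propto\int_{G(d,k)}\text{Vol}_k(\conv_{n,\la}|L)\,\mathrm{d}\nu_k(L)$, rewrites the projected defect via a projection-avoidance indicator $\vartheta_L(x,\cdot)=\mathds{1}_{\{x\notin\conv_{n,\la}|L\}}$ plus Fubini, and only then partitions $\R^d$ by the cones $\text{cone}_n(x,\cP_\la)$ attached to the facets at $x$. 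The Kubota/projection step is not optional when $k<d$, since $V_k$ is not a volume, so your sketch would need this filled in for the rescaling and the $\la^{2/(d+1)}$ prefactor to actually come out. The more substantive divergence is the positivity argument. You propose the full tree-construction-plus-randomization machinery of Lemma~\ref{lem proba etre sur une couche}, and it would indeed work: any configuration that forces $(0,h)$ onto the $n$-th layer automatically yields a strictly positive volume score, so the integral is positive. But for the \emph{expectation} of defect intrinsic volumes the paper has a one-line shortcut that you miss: $\conv_n(\cP_\la)\subset\conv_1(\cP_\la)$, hence $V_{n,k,\la}\ge V_{1,k,\la}$, so $\lim_{\la}\la^{2/(d+1)}\E[V_{n,k,\la}]\ge\lim_{\la}\la^{2/(d+1)}\E[V_{1,k,\la}]>0$, the last inequality being B\'ar\'any's classical first-hull result \cite{B89}. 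Monotonicity of the defect under set inclusion makes the heavy construction superfluous here; it is only genuinely needed for the variance positivity (and for the face counts $N_{n,k,\la}$, where no such monotonicity is available).
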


\begin{thm}\label{lim variance precis vol}
	For any $n \geq 1$ and $k \in \lbrace 1, \ldots , d \rbrace$ we have
	\begin{align*}
	&\lim\limits_{\lambda \rightarrow +\infty} \lambda^{\frac{d+3}{d+1}} \text{\normalfont Var}[V_{n,k,\lambda}]
	= \text{\normalfont Vol}_{d-1}(\mathbb{S}^{d-1}) \left( I_1 + I_2  \right) \in (0, \infty )
	\end{align*}
	where 
	\begin{equation*}\label{eq:defI1 V}
	I_1 := \int_{0}^{\infty} \mathbb{E}[\xi_{V,n, k}^{(\infty)}((0,h), \mathcal{P})^2] \mathrm{d}h
	\end{equation*}
	and 
	\begin{equation*}\label{eq:defI2 V}
	I_2 := \int_{0}^{+\infty} \int_{0}^{+\infty} \int_{\mathbb{R}^{d-1}} c_{V,n, k}^{(\infty)}((0,h_0), (v_1, h_1)) \mathrm{d}v_1 \mathrm{d}h_0 \mathrm{d}h_1 .    
	\end{equation*}
	
\end{thm}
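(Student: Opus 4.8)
The plan is to deduce Theorem \ref{lim variance precis vol} from the corresponding results for the variables $N_{n,k,\la}$ (Theorems \ref{lim esperance precis} and \ref{lim variance precis}) by following exactly the same scheme, with the combinatorial score $\xi_{n,k}^{(\la)}$ replaced by the volume score $\xi_{V,n,k}^{(\la)}$. First I would recall the decomposition announced in Section \ref{sec:intrinsic}: one writes $V_{n,k,\la}=\sum_{x\in\cP_\la}\xi_{V,n,k}(x,\cP_\la)$ for an appropriate score attributing to each extreme point $x$ of $\partial\conv_n(\cP_\la)$ a local contribution to the defect intrinsic volume (obtained, say, by a cone-type or pivot-type decomposition of $V_k(\B^d)-V_k(\conv_n(\cP_\la))$, so that the contributions of the points of a common $k$-face are shared appropriately, mirroring the factor $\tfrac1{k+1}$ in \eqref{eq:defxink}). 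Then one rescales via $T^{(\la)}$, defining $\xi_{V,n,k}^{(\la)}$ on $W_\la$ so that \eqref{eq:egalie scores} has its analogue, and one introduces the two-point correlation function $c_{V,n,k}^{(\la)}$ by \eqref{eq:defclaVnk} in exact parallel to \eqref{eq:defcnkla}.

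Next, I would apply the Mecke formula for Poisson point processes to $\E[V_{n,k,\la}]$ and $\Var[V_{n,k,\la}]$, exactly as in the proof of Theorems \ref{lim esperance precis} and \ref{lim variance precis}. After the change of variables $x\mapsto w=T^{(\la)}(x)$, the Jacobian produces the surface-area factor $\text{\normalfont Vol}_{d-1}(\S^{d-1})$ together with the scaling powers of $\la$; the powers are $\la^{-\frac{2}{d+1}}$ for the expectation and $\la^{-\frac{d+3}{d+1}}$ for the variance because the volume score carries an extra factor coming from the magnitude of a typical facet's contribution to the defect intrinsic volume (a cap of the relevant scale contributes a volume-type quantity of order $\la^{-\frac{2}{d+1}}$ per point rather than order $1$, which explains the shift of exponent by $\frac{2}{d+1}$ relative to Theorem \ref{lim variance precis}). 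One then invokes Lebesgue's dominated convergence theorem: the integrands converge pointwise to $\E[\xi_{V,n,k}^{(\infty)}((0,h),\cP)]$, resp.\ to $\E[\xi_{V,n,k}^{(\infty)}((0,h),\cP)^2]$ and $c_{V,n,k}^{(\infty)}((0,h_0),(v_1,h_1))$, by the convergence results of Section \ref{sec:lpandconv} (Proposition \ref{prop:convrescaledlayers} plus the convergence of the rescaled scores), and they are dominated by integrable functions thanks to the $L^p$ bounds and the exponential stabilization established in Sections \ref{sec:stab}--\ref{sec:lpandconv}. The only genuinely new point compared to $N_{n,k,\la}$ is that the volume score is unbounded, so one must check that it inherits the same stabilization radius and that the extra moments required for domination are finite; this follows because $\xi_{V,n,k}^{(\la)}(w,Y)$ is controlled by a polynomial in the local radius of stabilization (the diameter of the relevant faces), and exponential stabilization kills polynomial factors.

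Finally, for the positivity of the limiting constants $C_{V,n,k,d}$ and $C'_{V,n,k,d}$, I would argue as for $N_{n,k,\la}$: positivity of $C_{V,n,k,d}$ is immediate since $\xi_{V,n,k}^{(\infty)}((0,h),\cP)>0$ on an event of positive probability for $h$ in a set of positive measure; positivity of $C'_{V,n,k,d}=I_1+I_2$ requires showing that $I_1+I_2>0$, which cannot follow from $I_1>0$ alone because $I_2$ could a priori be negative. Here I would reuse the idealized-configuration-plus-randomization argument described in the introduction (inspired by \cite{D04,R05b,BFV10}): build a deterministic configuration of points for which the first $n$ layers, and the volume score of a designated point, are explicitly computable and depend only on finitely many points, then perturb it randomly on a compact region to produce genuine variance while keeping the dependency local; the stabilization guarantees that this local fluctuation is not cancelled by the rest of the process, giving a strictly positive lower bound on $\Var[V_{n,k,\la}]$ after rescaling. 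The main obstacle is precisely this last step together with the unboundedness of the volume score in the domination argument; everything else is a transcription, line by line, of the proofs already carried out for $N_{n,k,\la}$, which is why in Section \ref{sec:intrinsic} we only indicate the necessary modifications.
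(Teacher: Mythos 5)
Your outline tracks the paper's own scheme closely: Mecke formula, the scaling transformation $T^{(\la)}$, dominated convergence using the stabilization and $L^p$ results of Sections \ref{sec:stab}--\ref{sec:lpandconv}, and a good-configuration-plus-randomization argument for positivity of the limiting variance. Two details, however, are not merely transcription, and one of them is a genuine gap in your plan.

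First, the score is not defined through a generic ``cone-type or pivot-type decomposition.'' The paper, following \cite{CSY}, uses Kubota's formula together with projection-avoidance functionals $\vartheta_k$, so that $\xi_{V,n,k}(x,\cP_\la)$ is an integral of $\vartheta_k(y,\conv_n(\cP_\la\cup\{x\}))$ over $\text{cone}_n(x,\cP_\la)$. This specific representation is what produces a well-defined limit score $\xi_{V,n,k}^{(\infty)}$ over a v-cone in the parabolic picture, explains the extra factor $\la$ in \eqref{eq:defxiVnk}, and — crucially — lets you reuse Proposition \ref{stab k-faces} and Lemma \ref{stab hauteur}: once $H_n^{(\la)}((0,h_0),\cP^{(\la)})\le r^2/32$, the whole integral defining $\xi_{V,n,k}^{(\la)}$ is determined by $\cP^{(\la)}\cap\mathcal{U}$, so the stabilization radius is the same as for the combinatorial score, and the moment bound $\E[(\xi_{V,n,k}^{(\la)}((0,h),\cP^{(\la)}))^p]\le c_1 h^p e^{-c_2 h^{(d+1)/2}}$ follows from $\xi_{V,n,k}^{(\la)}\le\text{Vol}_d(C^{\le H}(R))$. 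A different splitting of the defect volume among the extreme points might not localize this way, so you should not leave this choice vague.

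Second, and this is the real gap: you cannot simply ``reuse'' the good-configuration dichotomy from the $k$-faces proof. There, configurations (b1) and (b2) used lattice spacings $1$ versus $1/2$ at the same height $\delta$, producing face counts differing by roughly a factor $2^{d-1}$, which is what creates the conditional variance in \eqref{var pos Qi}. For a volume-type functional this does not obviously work: changing the lattice density at fixed height leaves the $n$-th layer at essentially the same height, hence essentially the same defect volume, so the two cases need not produce a gap of order one in $\sum_x\xi_{V,n,k}^{(\infty)}(x,\cP)$. The paper instead keeps the lattice spacing fixed and compares apex height $\delta$ with apex height $\delta/2$ in (b2). Then the $n$-th layer in (b2) is (up to the $\varepsilon$-perturbation) a vertical translate of the one in (b1), and the defect-volume contribution differs by a definite amount $c\delta$, giving the lower bound in \eqref{var pos Qi intr}. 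Without identifying this change, your proof of the positivity of $I_1+I_2$ does not close; the rest of what you write is correct and is indeed what Section \ref{sec:intrinsic} carries out.
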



\section{Stabilization}\label{sec:stab}
The aim of this section is to show stabilization results for the considered scores. This means roughly that the score calculated at one particular fixed point requires the knowledge of the Poisson points outside of a lateral neighborhood of that fixed point with an exponentially decreasing probability. This tool is essential to get moment bounds in Lemma \ref{borne Lp}, then the convergence of the mean of one score and of the covariance of the scores and ultimately our main results, i.e Theorems \ref{thm:principalintro} and \ref{thm:principalintrovol}. 

\subsection{Local scores and stabilization radius}

First we extend the notion of score to a local score in an angular sector around a point in the following way. For $x\in \B^d$ and $r>0$, we introduce $S(x,r)=\{y\in\R^d:d_{\S^{d-1}}(x/\|x\|,y/\|y\|)\le r\}$ where $d_{\S^{d-1}}$ is the geodesic distance along $\S^{d-1}$ and
$$\xi_{n,k,[r]}(x,\cP_\la):=\xi_{n,k}(x,\cP_\la\cap S(x,r)).$$
We define the stabilization radius in the initial model as
$$R_{n,k}(x,\cP_\la):=\inf\{R>0:\xi_{n,k}(x, \cP_{\la}) =  \xi_{n,k, [r]}(x, \cP_{\la})\  \forall r \geq R \}.$$
In particular, thanks to the rotation invariance of $\cP_\la$, we get the identity in law
\begin{equation}\label{eq:rotinvRnk}
R_{n,k}(x,\cP_\la)\overset{\mbox{\tiny{(d)}}}{=}R_{n,k}(\|x\|e_d,\cP_\la).    
\end{equation}
We formally introduce the stabilization radius in the rescaled model as
\begin{equation}\label{eq:defRnkla}
 R_{n,k}^{(\la)}(w, \cP^{(\la)}) := R_{n,k}([T^{(\la)}]^{-1}(w),\cP_\la).
 \end{equation}
Combining \eqref{eq:defRnkla} with \eqref{eq:rotinvRnk}, we obtain the invariance under horizontal translation of $R_{n,k}^{(\la)}$, namely for any $(v,h)\in W_\la$,
\begin{equation}\label{eq:translinvRnkla}
R_{n,k}^{(\la)}((v,h),\cP^{(\la)})\overset{\mbox{\tiny{(d)}}}{=}R_{n,k}^{(\la)}((0,h),\cP^{(\la)}).
\end{equation}

%

Since the image by $T^{(\la)}$ of $S(e_d,r)$ for $r\in (0,\pi)$ is a cylinder, we also extend the notion of score in the rescaled picture to a local score in a cylinder of radius $r$ around a point in the following way. For any $r > 0$, $\la \in (0, \infty]$ and $w = (v,h) \in W_\la$ we write \begin{equation}\label{eq:xinkr}\xi^{(\la)}_{n,k, [r]}(w, \cP^{(\la)}) := \xi^{(\la)}_{n,k}(w, \cP^{(\la)}\cap C_v(r)) .
\end{equation}
When $w = (0,h)$, the following equality provides a convenient expression of the stabilization radius, which is the one we  use most of the time:
$$ R_{n,k}^{(\la)}((0,h), \cP^{(\la)}) = \inf \{R > 0\ :\  \xi_{n,k}^{(\la)}((0,h), \cP^{(\la)}) =  \xi_{n,k, [r]}^{(\la)}((0,h), \cP^{(\la)})\  \forall r \geq R \}.$$
We provide estimates for the distribution tail of $R_{n,0}^{(\la)}(w,\cP^{(\la)})$ in Section \ref{sec:stabpoints} and of $R_{n,k}^{(\la)}(w,\cP^{(\la)})$ in Section \ref{sec:stabfaces}.
\subsection{Stabilization for points}\label{sec:stabpoints}
A prerequisite for Sections \ref{sec:stabpoints} and \ref{sec:stabfaces} is the following geometric lemma that we will use extensively when showing the stabilization property. Though standard, we include its proof below for the reader's convenience.
\begin{lem}\label{lem:demiparabole}
	Let $\la\in (0,\infty]$ and $w_0, w_1 \in H$ such that $\partial[\Pi^\downarrow]^{(\la)}(w_1)$ goes through $w_0$. Then there exists a half-space $P^+$ delimited by a hyperplane $P$ going 
	through $w_0$ with direction containing $(0, 0, \ldots, 0, 1)$ such that $[\Pi^\downarrow]^{(\la)}(w_0) \cap P^+  \subseteq [\Pi^\downarrow]^{(\la)}(w_1) $.
\end{lem}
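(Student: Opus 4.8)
The plan is to treat the two cases $\la=\infty$ and $\la<\infty$ separately, starting with the cleaner parabolic picture where everything reduces to an explicit computation. Write $w_0=(v_0,h_0)$ and $w_1=(v_1,h_1)$. When $\la=\infty$, the hypothesis that $\partial\Pi^\downarrow(w_1)$ passes through $w_0$ means exactly $h_0=h_1-\tfrac{\|v_0-v_1\|^2}{2}$, i.e.\ $w_1\in\partial\Pi^\uparrow(w_0)$. The candidate hyperplane $P$ is the one through $w_0$ whose $\R^{d-1}$-trace is the hyperplane of $\R^{d-1}$ through $v_0$ orthogonal to $v_1-v_0$ (and containing the vertical direction $e_d$), with $P^+$ the side containing $v_1$. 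To check $\Pi^\downarrow(w_0)\cap P^+\subseteq\Pi^\downarrow(w_1)$, take $(v,h)\in\Pi^\downarrow(w_0)\cap P^+$, so $h<h_0-\tfrac{\|v-v_0\|^2}{2}$ and $\langle v-v_0,v_1-v_0\rangle\ge 0$. Expanding $\|v-v_1\|^2=\|v-v_0\|^2+\|v_0-v_1\|^2-2\langle v-v_0,v_1-v_0\rangle\le \|v-v_0\|^2+\|v_0-v_1\|^2$ gives $h_1-\tfrac{\|v-v_1\|^2}{2}\ge h_1-\tfrac{\|v-v_0\|^2}{2}-\tfrac{\|v_0-v_1\|^2}{2}=h_0-\tfrac{\|v-v_0\|^2}{2}>h$, which is exactly $(v,h)\in\Pi^\downarrow(w_1)$. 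So the parabolic case is a one-line convexity estimate: the chordal inequality between the two downward paraboloids holds on the half-space where the linear term has the right sign.

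For $\la<\infty$, I would translate everything back through $[T^{(\la)}]^{-1}$ into the ball, where $[\Pi^\downarrow]^{(\la)}(w_i)$ is the spherical cap $\mathrm{cap}(x_i)$ with $x_i=[T^{(\la)}]^{-1}(w_i)$, and the condition that $\partial[\Pi^\downarrow]^{(\la)}(w_1)$ passes through $w_0$ becomes $x_0\in\partial\,\mathrm{cap}(x_1)$, i.e.\ $\langle x_0, x_1/\|x_1\|\rangle=\|x_1\|$. A spherical cap $\mathrm{cap}(x_1)=\{x\in\B^d:\langle x,x_1/\|x_1\|\rangle>\|x_1\|\}$ is the intersection of $\B^d$ with the open half-space $H_1^+=\{x:\langle x,x_1/\|x_1\|\rangle>\|x_1\|\}$, whose bounding hyperplane $H_1$ passes through $x_0$. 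Likewise $\mathrm{cap}(x_0)=\B^d\cap H_0^+$ with $H_0$ through $x_0$. So one wants a hyperplane $P$ through $x_0$, containing the vertical direction after rescaling — in the ball this is the tangent-space direction that $T^{(\la)}$ sends to $(0,\dots,0,1)$, which near $x_0$ is (up to higher order) the radial direction $x_0/\|x_0\|$ — such that $\mathrm{cap}(x_0)\cap P^+\subseteq H_1^+$. Since $x_0\in H_1$, the half-space $H_1^+$ already contains $x_0$ on its boundary; the natural choice is to take $P=H_1$ itself, or a hyperplane through $x_0$ separating appropriately, and verify the vertical-direction requirement. The cleanest route is: the set $\mathrm{cap}(x_0)$ is convex and lies in $\B^d$; the hyperplane $H_1$ through $x_0$ cuts $\B^d$ into $H_1^+\cap\B^d=\mathrm{cap}(x_1)$ and $H_1^-\cap\B^d$; so $\mathrm{cap}(x_0)\cap H_1^+\subseteq\B^d\cap H_1^+=\mathrm{cap}(x_1)$ trivially, and one only has to arrange that $H_1$ (or a suitable rotation of it about the $(d-2)$-flat $H_0\cap H_1$ through $x_0$) contains the prescribed direction at $x_0$. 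Pulling this back through $T^{(\la)}$ then yields the stated hyperplane $P$ with direction containing $(0,\dots,0,1)$.

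The main obstacle is the bookkeeping in the finite-$\la$ case: making precise which hyperplane in the ball gets mapped by $T^{(\la)}$ to a hyperplane of $W_\la$ containing the vertical direction $(0,\dots,0,1)$, since $T^{(\la)}$ is not affine and only maps spherical caps/spheres-through-the-origin to quasi-paraboloids, not general hyperplanes to hyperplanes. I would handle this by noting that the statement only requires \emph{some} such $P$, so I have freedom: start from $H_1$, which already does the containment job, and rotate it about $H_0\cap H_1$ (all such rotations still pass through $x_0$ and still satisfy $\mathrm{cap}(x_0)\cap (\text{half-space})\subseteq\mathrm{cap}(x_1)$ as long as we stay on the correct side) until the image direction is vertical; alternatively, and more robustly, redo the half-space construction directly in $W_\la$ using the explicit formula \eqref{eq:paraboleverslebas} for $[\Pi^\downarrow]^{(\la)}$, differentiating $\cos(e_\la(v,v_0))$ at $v=v_0$ to identify the supporting hyperplane of the quasi-paraboloid at its boundary point $w_0$ and taking $P^+$ to be the half-space on which the first-order term dominates — this reduces to a monotonicity estimate on $\cos(e_\la(\cdot,v_0))$ analogous to the triangle-type inequality used in the $\la=\infty$ case. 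I expect the $W_\la$-side computation to be the technically heaviest part, but conceptually it is the same separating-hyperplane idea as for paraboloids.
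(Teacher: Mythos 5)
Your $\la=\infty$ argument is correct and is essentially the paper's proof up to a change of coordinates: the paper places $w_0$ and $w_1$ on a coordinate axis and expands the paraboloid equations, while you work invariantly with the inner product, but both use the same hyperplane (through $w_0$, orthogonal to $v_1-v_0$, on the side of $v_1$) and the same one-line parallelogram-law estimate. That part is fine.

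The $\la<\infty$ part, however, has a real gap, and it is precisely at the step you flag as "bookkeeping." You correctly observe that $H_1$ does not pull back to a hyperplane of $W_\la$, and that one should instead use a hyperplane $L'$ of $\R^d$ through the origin and $e_d$ (these are exactly the hyperplanes whose images under $T^{(\la)}$ are vertical in $W_\la$, since they intersect $\S^{d-1}$ in great circles through $e_d$, which $\exp^{-1}$ straightens). But your justification that such an $L'$ still does the job — "all such rotations [about $H_0\cap H_1$] still satisfy $\mathrm{cap}(x_0)\cap(\text{half-space})\subseteq\mathrm{cap}(x_1)$ as long as we stay on the correct side" — is asserted, not proved, and as stated it is false: the rotation that lands on $H_0$ gives $\mathrm{cap}(x_0)\cap H_0^+ = \mathrm{cap}(x_0)$, which is certainly not contained in $\mathrm{cap}(x_1)$. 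The containment is not a soft consequence of being a rotation; it must be checked for the specific hyperplane $L'$ one needs, namely the one through $0$ containing $H_0\cap H_1$ (equivalently, $L' = \{x : \langle x, w\rangle = 0\}$ with $w = u - (s/r)e_d$, writing $x_0 = re_d$, $x_1 = su$). That check is a two-line computation: for $x\in\mathrm{cap}(x_0)$ on the side $\langle x,w\rangle>0$, one has $\langle x,u\rangle = \langle x,w\rangle + (s/r)\langle x,e_d\rangle > (s/r)\cdot r = s$, so $x\in\mathrm{cap}(x_1)$. This is the content behind the paper's brief claim that $\mathrm{cap}(x_1)$ contains "half" of $\mathrm{cap}(x_0)$, with "half" meaning the half cut off by a vertical (through $0$ and $e_d$) hyperplane; it is elementary, but it is where the geometry actually happens, and your rotation heuristic does not replace it. Your alternative route (differentiating $\cos(e_\la(\cdot,v_0))$ at $v_0$ to get a supporting hyperplane) is also off: the tangent hyperplane to $\partial[\Pi^\downarrow]^{(\la)}(w_1)$ at the boundary point $w_0$ is not vertical in general, whereas the lemma requires a vertical $P$, so that supporting-hyperplane picture does not directly yield the statement.
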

\begin{proof}
 For finite $\la$, this is a direct consequence of the following fact in the non-rescaled model: for any point $x_0=r e_d$, $r\in (0,1)$ and any point $x_1\in \partial B(\frac{x_0}{2},\frac{\|x_0\|}{2})$, the set $\mbox{cap}(x_1)$ as defined in \eqref{eq:defcap} contains at least half of $\mbox{cap}(x_0)$. Using \eqref{eq:paraboleverslebas} and \eqref{eq:paraboleverslehaut}, we deduce the required result in the rescaled model.
 
	For $\la=\infty$, we proceed along the following lines. An orthogonal transformation allows us to assume that $w_0 = (0, 0, \ldots, 0, h_0)$ for some $h_0 > 0$ and 
	$w_1 = (a, 0, \ldots, 0, h_1)$ with $a, h_1 > 0$. Since $\partial\Pi^\downarrow(w_1)$ goes through $w_0$, we must have
	$a = \sqrt{2(h_1 - h_0)}$.
	Let us show that $\Pi^\downarrow(w_0) \cap \{ (v,h) : (v)_1 > 0 \}  \subseteq \Pi^\downarrow(w_1) $ where $(v)_1,\ldots,(v)_{d-1}$ denote the consecutive coordinates of $v$. The equations of both paraboloids are
	$$ \Pi^{\downarrow}(w_0): \bigg(h < h_0 - \frac{1}{2}\sum_{i=1}^{d-1} (v)_i^2\bigg)\;\;\mbox{ and }\;\; \Pi^{\downarrow}(w_1): \bigg(h < h_1 - \frac{1}{2} \big(((v)_1 - a)^2 + \sum_{i = 2}^{d-1} (v)_i^2 \big)\bigg). $$
	For $(v,h) \in \Pi^\downarrow(w_0) \cap \{ (v,h) : (v)_1 > 0 \}$, using $a = \sqrt{2(h_1 - h)}$, we get
	\begin{align*}
		h 
												&< h_0 - \frac{1}{2} \left(((v)_1 - a)^2 + \sum_{i = 2}^{d-1} (v)_i^2 \right)
		+ \frac{1}{2}\sqrt{2(h_1 - h_0)}^2 - a (v)_1 \\
		&< h_1 - \frac{1}{2} \left(((v)_1 - a)^2 + \sum_{i = 2}^{d-1} (v)_i^2 \right).
	\end{align*}
This completes the proof.
\end{proof}

In the next lemma we show that the maximal height of the Poisson points on the $n$-th layer $\partial \Phi_n^{(\la)}$ of the quasi-parabolic hull peeling inside a cylinder is bounded with a probability going to 1 exponentially fast with respect to the bound. This will be essential for proving the stabilization result in Proposition \ref{Stabilisation points} and will also be useful when proving Lemma \ref{stab hauteur} which provides a stabilization in height. 

Here and in the sequel we denote by $c, c_1, c_2 ...$ generic positive constants that only depend on $n$, $k$ and $d$ and which may change from line to line.
\begin{lem}\label{lemme hauteur max}
	For all $n \geq 1$, there exist $\lambda_0, c_1,c_2 > 0$ such that for all $t \geq 0$,  $\lambda \in [\lambda_0, \infty]$ and $1 \leq  r < \pi \la^{\frac{1}{d+1}}$, we have 
		 \begin{align*}
		 &
		 \mathbb{P}\left( \exists (v, h) \in  {\mathcal P}^{(\lambda)} \cap \partial \Phi_n^{(\lambda)}\left(\mathcal{P}^{(\lambda)} \cap C(r)\right) \cap C(r/2^n) \text{ with } h \geq t \right) 
		 \leq c_1 r^{d-1} e^{-c_2t(r\wedge\sqrt{t})^{d-1}}
		 \end{align*}
		 and
		 \begin{align*} 
		 &\mathbb{P}\left( \exists (v, h) \in  {\mathcal P}^{(\lambda)} \cap \partial \Phi_n^{(\lambda)}\left(\mathcal{P}^{(\lambda)}\right) \cap C(r/2^n) \text{ with } h \geq t \right)
		 \leq c_1 r^{d-1} e^{-c_2t(r\wedge\sqrt{t})^{d-1}}.
\end{align*}

\end{lem}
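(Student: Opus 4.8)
The plan is to prove both displayed bounds, for every $n$, by a single induction on $n$, deducing the bound for $\partial\Phi_n^{(\la)}(\cP^{(\la)})$ from the one for $\partial\Phi_n^{(\la)}(\cP^{(\la)}\cap C(r))$. Indeed, since $\cP^{(\la)}\cap C(r)\subseteq\cP^{(\la)}$, Lemma \ref{lem:dalalreecrit} gives $\ell^{(\la)}(w,\cP^{(\la)}\cap C(r))\le\ell^{(\la)}(w,\cP^{(\la)})$ for all $w$, so every point of $\cP^{(\la)}\cap\partial\Phi_n^{(\la)}(\cP^{(\la)})$ lying in $C(r/2^n)\subseteq C(r)$ lies on some layer $m\le n$ of the peeling of $\cP^{(\la)}\cap C(r)$; since $C(r/2^n)\subseteq C(r/2^m)$, the second event is contained in the union over $m\in\{1,\dots,n\}$ of the first events, and the bound for the second follows from the first ones. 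So I would prove the first bound by induction on $n$. Throughout one may assume $1\le t<\la^{2/(d+1)}$: if $t\ge\la^{2/(d+1)}$ no point of $W_\la$ has height $\ge t$, and if $t<1$ then $(r\wedge\sqrt t)^{d-1}=t^{(d-1)/2}$ and the right-hand side exceeds $1$ once $c_1$ is chosen large (recall $r\ge1$).

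\textbf{Base case $n=1$.} Here $(v,h)\in\cP^{(\la)}\cap C(r)$ lies on $\partial\Phi_1^{(\la)}(\cP^{(\la)}\cap C(r))$ iff it is extreme, i.e. (Lemma \ref{lem:be on layer n}(ii) with $n=1$) iff $[\Pi^\downarrow]^{(\la)}(v_1,h_1)\cap\cP^{(\la)}\cap C(r)=\varnothing$ for some $(v_1,h_1)\in\partial[\Pi^\uparrow]^{(\la)}(v,h)$. By Mecke's formula the probability in question is at most $\int_{B_{d-1}(r/2)}\int_t^\infty\P\big((v,s)\text{ extreme for }(\cP^{(\la)}\cap C(r))\cup\{(v,s)\}\big)\,\rho^{(\la)}(v,s)\,\mathrm{d}s\,\mathrm{d}v$, where $\rho^{(\la)}\le1$ is the intensity \eqref{eq:intensity process}. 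If $(v,s)$ is extreme, Lemma \ref{lem:demiparabole} (with $w_0=(v,s)$, $w_1=(v_1,h_1)$) shows that the empty set $[\Pi^\downarrow]^{(\la)}(v_1,h_1)$ contains $[\Pi^\downarrow]^{(\la)}(v,s)\cap P^+$ for some half-space $P^+$ through the vertical line of $(v,s)$, hence contains $S(v,s)\cap P^+$ where $S(v,s):=C_v^{\le s/2}(\rho_s)$ and $\rho_s$ is a small dimensional multiple of $\min(\sqrt s,r)$, chosen (using the estimates on $e_\la$ and $\cos$ behind Lemma \ref{lem:image cap}, valid for $\la\ge\la_0$) so that $S(v,s)\subseteq[\Pi^\downarrow]^{(\la)}(v,s)\cap C(r)$. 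Fixing once and for all finitely many wedges $W_1,\dots,W_N$ about the axis, $N=N(d)$, coming from a covering of $\mathbb{S}^{d-2}$ by caps of angular radius $\pi/4$ — so that every half-space through the axis contains some $W_j$ — being extreme forces one of the $N$ fixed regions $A_j(v,s):=S(v,s)\cap W_j$, each of volume $\gtrsim s(r\wedge\sqrt s)^{d-1}$, to be empty of $\cP^{(\la)}$. On each $A_j(v,s)$ the intensity is $\ge c_0>0$: the factor $1-\la^{-2/(d+1)}h'\ge1/2$ since $h'\le s/2<\la^{2/(d+1)}/2$, and $\sin^{d-2}(\cdot)/(\cdot)^{d-2}\ge(2/\pi)^{d-2}$ since the base stays in $B_{d-1}(\tfrac34\pi\la^{1/(d+1)})$. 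A void estimate then gives $\P((v,s)\text{ extreme})\le Ne^{-c\,s(r\wedge\sqrt s)^{d-1}}$; integrating, using $\mathrm{Vol}(B_{d-1}(r/2))\asymp r^{d-1}$ and the elementary bound $\int_t^\infty e^{-c\,s(r\wedge\sqrt s)^{d-1}}\mathrm{d}s\le Ce^{-c'\,t(r\wedge\sqrt t)^{d-1}}$, settles the base case.

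\textbf{Inductive step $n\to n+1$.} Assume the first bound for $1,\dots,n$, and let $G$ be the event that every point of $\cP^{(\la)}\cap C(r)$ lying on a layer $\le n$ of the peeling of $\cP^{(\la)}\cap C(r)$ and in $C(r/2^n)$ has height $\le t/2$. Since $C(r/2^n)\subseteq C(r/2^m)$, applying the induction hypothesis at height $t/2$ for $m=1,\dots,n$ gives $\P(G^c)\le c_1r^{d-1}e^{-c_2t(r\wedge\sqrt t)^{d-1}}$ after adjusting constants. On $G$, suppose $(v,h)\in\cP^{(\la)}\cap\partial\Phi_{n+1}^{(\la)}(\cP^{(\la)}\cap C(r))\cap C(r/2^{n+1})$ with $h\ge t$. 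By Lemma \ref{lem:be on layer n}(ii) there is $(v_1,h_1)\in\partial[\Pi^\uparrow]^{(\la)}(v,h)$ such that $[\Pi^\downarrow]^{(\la)}(v_1,h_1)\cap\cP^{(\la)}\cap C(r)$ lies in the first $n$ layers, and by Lemma \ref{lem:demiparabole} this set contains $[\Pi^\downarrow]^{(\la)}(v,h)\cap P^+$, hence $S(v,h)\cap P^+$, where now $S(v,h):=C_v^{\ge t/2}(\rho)\cap C_v^{\le ch}(\rho)$ with $c\in(0,1)$ fixed and $\rho$ a dimensional multiple of $\min(\sqrt{h},r)$ chosen so that $S(v,h)\subseteq[\Pi^\downarrow]^{(\la)}(v,h)\cap C(r/2^n)$. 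Any point of $\cP^{(\la)}$ in $S(v,h)$ would be a point of $\cP^{(\la)}\cap C(r)$ on a layer $\le n$, lying in $C(r/2^n)$ at height $>t/2$ — excluded on $G$ — so $S(v,h)\cap W_j$ is empty of $\cP^{(\la)}$ for some $j$, with $\mathrm{Vol}(S(v,h)\cap W_j)\gtrsim\min(h,t)\,(r\wedge\sqrt{\min(h,t)})^{d-1}=t(r\wedge\sqrt t)^{d-1}$, growing with $h$, and intensity $\ge c_0$ on it (since $h'<ch<\la^{2/(d+1)}$ and the base is in $B_{d-1}(r/2^n)$). A Mecke computation over the apex $(v,h)\in B_{d-1}(r/2^{n+1})\times[t,\infty)$, the same wedges $W_1,\dots,W_N$, and the void estimate bound $\P(G\cap\{\exists\,\cdots\})$ by $c_1r^{d-1}e^{-c_2t(r\wedge\sqrt t)^{d-1}}$; combined with $\P(G^c)$ this proves the bound for $n+1$ and closes the induction.

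\textbf{Main obstacle.} The crux is the inductive step, specifically the introduction of $G$. Lemma \ref{lem:be on layer n}(ii) yields only combinatorial information — a down-(quasi-)paraboloid through $(v,h)$ meets the configuration only on the first $n$ layers — which is quantitatively empty on its own, a configuration with few layers having possibly arbitrarily many points; conditioning on $G$ turns ``lies on a layer $\le n$'' into ``has height $\le t/2$'', hence that paraboloid into a genuinely empty macroscopic region amenable to a void estimate. This is also why one must induct on the layer number, matching the incremental nature of Lemma \ref{lem:be on layer n}. The second recurring issue is uniformity in $\la\in[\la_0,\infty]$: on each region that appears one must know that $[\Pi^\downarrow]^{(\la)}$ and $W_\la$ behave like $\Pi^\downarrow$ and $\R^{d-1}\times\R_+$ and that the intensity \eqref{eq:intensity process} stays bounded below, which is what forces the a priori restriction to the sub-cylinder $C(r/2^n)$ and to heights that are a fixed fraction of $\la^{2/(d+1)}$, and is handled by the elementary estimates on $\sin^{d-2}(x)/x^{d-2}$ and on geodesic distances used in \cite{CSY}.
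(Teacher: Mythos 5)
Your proof is correct, and although it rests on the same ingredients as the paper's (Lemma \ref{lem:be on layer n}(ii), the half-paraboloid Lemma \ref{lem:demiparabole}, a deterministic sector decomposition, void estimates and Mecke), it reorganizes them in a genuinely different way. The paper proves, for each fixed $w=(v,h)$, the pointwise bound $\P(w\in\partial\Phi_n^{(\la)}((\cP^{(\la)}\cup\{w\})\cap C(r)))\le c_1e^{-c_2h(r\wedge\sqrt h)^{d-1}}$ by decomposing $\Pi^\downarrow(w)$ into $2^{d-1}$ orthant pieces $A_i$ and observing that each $B_{i,n}(h,r)=A_i\cap\{h'\ge h/2\}\cap C(r/2^{n-1})$ is either void or contains a layer-$\le(n-1)$ point at height $\ge h/2$, to which the induction hypothesis is applied \emph{inside} the Mecke integral, at the $w$-dependent height $h/2$. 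You instead pull the induction hypothesis out front: you introduce the global event $G$ that no layer-$\le n$ point of the peeling of $\cP^{(\la)}\cap C(r)$ in $C(r/2^n)$ exceeds height $t/2$, bound $\P(G^c)$ once by the induction hypothesis at the fixed height $t/2$, and on $G$ obtain a genuinely empty sector of a cylinder so that per-$w$ only a void estimate and Mecke are needed. This buys a cleaner separation between the inductive input and the void estimate, at the mild price of a slightly less tight per-$w$ bound. Two other aspects of your writeup improve on the paper's exposition: you formally derive the second display from the first via Lemma \ref{lem:dalalreecrit} (the paper merely says ``the method is very similar''), and you reduce to $1\le t<\la^{2/(d+1)}$ explicitly at the outset. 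Two small imprecisions worth fixing: in the inductive step, the ``layer-$\le n$'' claim is available only for points in $S(v,h)\cap P^+$, so the emptiness statement should be made for $S(v,h)\cap W_j$ with $W_j\subseteq P^+$ (which is what you ultimately use, but the intermediate sentence about ``any point of $\cP^{(\la)}$ in $S(v,h)$'' overreaches); and the wedges $W_j$ of course have to be translated to the apex $v$ before intersecting with $S(v,h)$, which is implicit in the notation.
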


\begin{proof}
We only prove the first inequality as the method for getting the second one is very similar.
			We begin with the proof for $\lambda = \infty$ as the case $\lambda < \infty$ is a bit more
	technical. We are going to show it by induction on $n$. 
	We first prove the induction step as it contains the main ideas. 
	Then we describe what needs to be changed to prove 
	the induction step
	for $\lambda < \infty$ and finally we explain the slight modifications that are needed to prove the base case.\\
	
	\noindent\textit{Proof of the induction step for $\lambda = \infty$.} 
	We assume that the result holds for all $l < n$ with a fixed $n > 1$
	and we show that it holds for $n$. 
	Let $w = (v,h) \in C(r/2^n)$ with $h \geq t$. Our first step is to show that the event
	\begin{equation}\label{lemme hauteur max eq w}
		\lbrace w \in \partial\Phi_n\left((\mathcal{P} \cup \lbrace w \rbrace) \cap C(r))\right \rbrace
	\end{equation}
	 occurs with probability smaller than 
	$c_1 \exp(- c_2 h (r\wedge \sqrt{h})^{d-1}). $ Here we add $w$ to the point process because we plan to use Mecke's formula later to deal with a union over all $w$.
	
	Let $(v_1,h_1) \in \R^{d-1}\times \R_+$ such that $w \in \partial\Pi^\downarrow(v_1, h_1)$ and  $\Pi^\downarrow(v_1, h_1)$ only contains points of layer at most $(n-1)$ for $C(r)$. Lemma \ref{lem:be on layer n} (ii) guarantees that such a $(v_1,h_1)$ exists.
	By Lemma \ref{lem:demiparabole}, this downward paraboloid contains at least half of $\Pi^\downarrow(w)$. Consequently, denoting by $A_1,\ldots,A_{2^{d-1}}$ the intersections of $\Pi^{\downarrow}(w)$ with the product of an orthant of $\R^{d-1}$ translated by $v$ with $\R_+$, 
	we have $\text{Vol}_d(A_i  \cap C^{\ge h/2}(r/2^{n-1})) = \frac{1}{2^{d-1}} \text{Vol}_d(\Pi^\downarrow(w)  \cap C^{\ge h/2}(r/2^{n-1}))$ and $\Pi^\downarrow(v_1, h_1)$ contains at least one of the $A_i$. 
	
	Let us write 
	\begin{equation}\label{eq:Bi}
	 B_{i,n}(h,r) := A_i \cap \{ (v',h') : h' \geq h/2 \} \cap C(r/2^{n-1})  .
	 \end{equation}
	From the preceding reasoning we deduce that 
    \begin{align*}
			\lbrace w \in \partial\Phi_n\left((\mathcal{P} \cup \lbrace w \rbrace) \cap C(r))\right \rbrace
	\subseteq \{ \exists i : \cP \cap B_{i,n}(h,r) \subseteq [\Phi_{n}((\cP\cup\{ w\})\cap C(r))]^c \}.
	\end{align*}
	For fixed $i$, $B_{i,n}(h,r)$ is either empty, which happens with probability smaller than $c_1 \exp(-c_2 h (r\wedge \sqrt{h})^{d-1})$ or it contains a point at height larger than $h/2$ on a layer at most $(n-1)$, which happens with probability smaller than $c_1 \exp(-c_2 h (r\wedge \sqrt{h})^{d-1})$ by the induction hypothesis. Consequently we have 
	\begin{equation}\label{lemme hauteur max eq w fixe 2}
	\P(\lbrace w \in \partial\Phi_n\left((\mathcal{P} \cup \lbrace w \rbrace) \cap C(r)\right) ) \le c_1 \exp(-c_2 h (r\wedge \sqrt{h})^{d-1}) . 
	\end{equation}
	
	We can write
	\begin{align*}
	\mathbb{P}\left( \exists (v, h)  \in \mathcal{P}\cap \partial\Phi_n(\mathcal{P} \cap C(r)) \cap C(r/2^n)
	 \text{ with } h \geq t \right)& \\
	\leq \mathbb{E}\left[ \sum_{w \in \mathcal{P} \cap C^{\geq t}(r/2^n) }
	\mathds{1}_{w \in \partial\Phi_n(\mathcal{P} \cap C(r))}  \right]&.
	\end{align*}
	We combine this with Mecke's formula and \eqref{lemme hauteur max eq w fixe 2} to get 
		\begin{align*}
	\mathbb{P}&\left( \exists (v, h)  \in \mathcal{P}\cap\partial\Phi_n(\mathcal{P} \cap C(r)) \cap  C(r/2^n)
	 \text{ with } h \geq t \right) \\
	&\leq \int_{\Vert v \Vert \leq r/2^n} \int_{h \in \left] t, +\infty\right[} 
	\mathbb{P}\left((v,h) \in \partial \Phi_n
	((\mathcal{P} \cup \{ (v, h) \} ) \cap C(r)) \right) \mathrm{d}h \mathrm{d}v\\
	&\leq \int_{\Vert v \Vert \leq r/2^n} \int_{h \in \left] t, +\infty\right[} c_1  \exp(- c_2 h (r \wedge \sqrt{h})^{d-1}) \mathrm{d}h \mathrm{d}v\\
	&\leq c_1 r^{d-1} \exp(- c_2 t (r \wedge \sqrt{t})^{d-1}).
	\end{align*}
	This proves the induction step.
	\\~\\
	\textit{Proof of the induction step for $\la < \infty$.}
	Let us check that the proof above still holds. The only difference here is that the intensity of the process is no longer constant. However let us recall that this intensity has a density given by \eqref{eq:intensity process}
 so it is uniformly bounded from below for any $\Vert v \Vert \leq \frac{3}{4} \la^{\frac{1}{d+1}} \pi$ and $h \leq \frac{3}{4} \la^{\frac{2}{d+1}}$ by a constant that does not depend on $\la$ and is upper bounded by $1$. The same proof as in the case $\la = \infty$ shows that \begin{align*}
			\{ w \in \partial\Phi_n^{(\la)}((\mathcal{P}^{(\la)} \cup \lbrace w \rbrace) \cap C(r)) \}
	\subseteq \{ \exists i : \cP^{(\la)} \cap B_{i,n}(h,r) \subseteq [\Phi_{n}^{(\la)}((\cP^{(\la)}\cup\{ w\})\cap C(r))]^c \}.
	\end{align*}
	with $B_{i,n}(h,r)$ introduced at \eqref{eq:Bi}.
	If for a fixed $i$, $B_{i,n}(h,r)$ is empty then in particular the set
	$B_{i,n}(h,r) \cap \{ (v',h') : \frac{1}{2}h  \le h' \le \frac{3}{4}h \} \cap C(r/2^{n-1})$ is also empty and included in the region $\{ (v', h') :  \Vert v' \Vert \leq \frac{3}{4} \la^{\frac{1}{d=1}} \pi \text{ and } h' \leq \frac{3}{4} \la^{\frac{2}{d+1}} \}$ on which the density at \eqref{eq:intensity process} is bounded from below by a constant. Consequently, 
	\begin{align*}
	\P(\cP^{(\la)}\cap B_{i,n}(h,r) =\varnothing)&\le e^{-c_1 \text{Vol}_d(B_{i,n}(h,r))}\le e^{-c_2 h(r\wedge \sqrt{h})^{d-1}}.    
	\end{align*}
The use of the induction hypothesis remains unchanged so we still have \eqref{lemme hauteur max eq w fixe 2} in the case $\la < \infty$. To get the result we then follow the same steps as before except that we upper-bound the intensity density by $1$ after the use of Mecke's formula.
	\\~\\
	\textit{Proof of the base case $n=1$ for both $\lambda=\infty$ and $\lambda<\infty$.}
 We define for every $1\le i\le 2^{d-1}$
	$$\tilde{B}_{i,1}(h,r)=A_i\cap\{(v',h'):\frac12 h\le h'\le \frac34 h\}\cap C(3r/4).$$
which guarantees that the intensity measure of $\tilde{B}_{i,1}(h,r)$ is lower bounded by its Lebesgue measure up to a multiplicative constant. Using the inclusion 
$$\{w\in \partial\Phi_1\left((\mathcal{P}^{(\la)} \cup \lbrace w \rbrace) \cap C(r))\right \rbrace
	\subseteq \{ \exists i : \cP^{(\la)}\cap \tilde{B}_{i,1}(h,r) =\varnothing\},$$
we get an analogue of \eqref{lemme hauteur max eq w fixe 2} which, combined with Mecke's formula, proves the base case.

This completes the proof of Lemma \ref{lemme hauteur max}.
\end{proof}

We are now ready to prove a stabilization result for the $0-$faces, i.e. the extreme points of the $n-$th layer. It is a crucial step towards a general stabilization for $k-$faces. 
\begin{prop}\label{Stabilisation points}
	For all $n \geq 1$, there exist  $\lambda_0, c_1,c_2 > 0$  such that for any $h_0 > 0$, $\lambda \in \left[\lambda_0, +\infty\right]$ and $1 \leq r < \pi\la^{\frac{1}{d+1}}$ we have 
	\begin{equation}\label{eq:majoration queue rayon}
	 \mathbb{P}\left( R_{n,0}^{(\lambda)}(0,h_0) 
	\geq r \right) \leq c_1 \exp\left( -c_2 r^{d+1}  \right) .
	\end{equation}
\end{prop}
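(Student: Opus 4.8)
The plan is to bound the probability that the $0$-face score at $(0,h_0)$ fails to stabilize within a cylinder of radius $r$, using the incremental characterization of layers from Lemma \ref{lem:be on layer n} together with the height estimate of Lemma \ref{lemme hauteur max}. First I would observe that if $\ell^{(\la)}((0,h_0),\cP^{(\la)}) \ne n$ then the score vanishes identically (as does the local score for all large $r$), so up to an event of exponentially small probability we may assume the point $(0,h_0)$ actually lies on the $n$-th layer; more precisely I would split $\{R_{n,0}^{(\la)}(0,h_0)\ge r\}$ according to whether $(0,h_0)$ is extreme on layer $n$ or not, and handle the non-extreme case using Lemma \ref{lem:be on layer n}(i) applied inside $C(r)$ versus the whole space. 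The core of the argument is the extreme case: if $(0,h_0)$ is extreme on the $n$-th layer of $\cP^{(\la)}$ but the stabilization radius exceeds $r$, then the ``witnessing'' downward paraboloid $[\Pi^\downarrow]^{(\la)}(v_1,h_1)$ with $(v_1,h_1)\in\partial[\Pi^\uparrow]^{(\la)}(0,h_0)$ that certifies extremality (after peeling $n-1$ layers) must reach outside $C(r)$, which forces $h_1 \gtrsim r^2$ by the paraboloid geometry.

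Next I would make this quantitative. Because $(v_1,h_1)\in\partial[\Pi^\uparrow]^{(\la)}(0,h_0)$ and the boundary of $[\Pi^\downarrow]^{(\la)}(v_1,h_1)$ passing through $(0,h_0)$ opens downward like $h_1 - \|v-v_1\|^2/2$ (up to the quasi-paraboloid corrections controlled by Lemma \ref{lem:image cap}), escaping the cylinder of radius $r$ at height $\ge 0$ requires $h_1 \ge c r^2$ for a dimensional constant $c$. Then $[\Pi^\downarrow]^{(\la)}(v_1,h_1)$ is a region containing (at least) a half-paraboloid (Lemma \ref{lem:demiparabole}) whose intersection with $C(r)$ and with $\{h \le h_1/2\}$ (say) has $d$-volume of order $h_1 (r\wedge\sqrt{h_1})^{d-1} \gtrsim r^{d+1}$ for $r\lesssim \sqrt{h_1}$, hence of order $r^{d+1}$; and all Poisson points of $\cP^{(\la)}$ in that region must lie on the first $n-1$ layers. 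The probability that this region either is empty or contains only points of layer $\le n-1$ is exponentially small: emptiness costs $e^{-c\,\mathrm{Vol}_d}\le e^{-c r^{d+1}}$ by the lower bound on the intensity \eqref{eq:intensity process}, and the event that a point of height of order $r^2$ there lies on a layer $\le n-1$ is controlled by Lemma \ref{lemme hauteur max} with $t$ of order $r^2$, giving $e^{-c t (r\wedge\sqrt t)^{d-1}} = e^{-c r^{d+1}}$. A discretization or covering argument over the possible apices $(v_1,h_1)$ (with only a polynomial-in-$r$ number of boxes needed, absorbed into $c_1$) then yields the bound $c_1 e^{-c_2 r^{d+1}}$.

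Finally, I would assemble the pieces: condition on the event $\{0\in\mathrm{int}(\conv_n(\cP_\la))\}$ (probability exponentially close to $1$), use Lemma \ref{lemme hauteur max} to restrict to configurations where the first $n-1$ layers inside $C(r)$ do not rise above height $o(r^2)$ except with probability $\le c_1 e^{-c_2 r^{d+1}}$, and on the complementary good event show that the local score $\xi^{(\la)}_{n,0,[r]}(0,h_0)$ coincides with $\xi^{(\la)}_{n,0}(0,h_0)$: adding or removing Poisson points outside $C(r)$ cannot change whether $(0,h_0)$ is extreme on layer $n$, because any witnessing paraboloid (in either direction of Lemma \ref{lem:be on layer n}) that would detect such a change would have to extend outside $C(r)$, hence reach height $\gtrsim r^2$, contradicting the good event. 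The main obstacle I anticipate is the bookkeeping in the inductive use of Lemma \ref{lemme hauteur max}: one must carefully ensure that the ``points only on layers $\le n-1$'' condition inside the large paraboloid is genuinely controlled uniformly in the apex position and that the half-paraboloid volume estimate survives the passage from paraboloids to quasi-paraboloids for finite $\la$ (using \eqref{eq:cvusurtoutcompact} and the region $\|v\|\le \frac34\la^{1/(d+1)}\pi$, $h\le\frac34\la^{2/(d+1)}$ where \eqref{eq:intensity process} is bounded below), which is exactly the kind of technical layer-by-layer argument the authors flag as the crux of the paper.
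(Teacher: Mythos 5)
Your plan has a genuine gap at its core claim, which makes the decomposition incomplete for $n\ge 2$. You assert that if $(0,h_0)$ is extreme on the $n$-th layer of $\cP^{(\la)}$ but the stabilization radius exceeds $r$, then the witnessing downward paraboloid $[\Pi^\downarrow]^{(\la)}(v_1,h_1)$ must reach outside $C(r)$ and hence have $h_1\gtrsim r^2$. This is false for $n\ge 2$. Even when $h_1$ is small (so the paraboloid is wholly contained in, say, $C(r/2)$), the condition ``$\Pi^\downarrow(v_1,h_1)$ contains only points on layers $\le n-1$'' is \emph{not} a local condition on that paraboloid: the layer number of a point $w$ inside it depends on the peeling history outside $\Pi^\downarrow(v_1,h_1)$, and in particular the status of such a $w$ can switch between $C(r)$ and a larger cylinder without any paraboloid through $(0,h_0)$ ever leaving $C(r)$. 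Concretely, there can be a point $w\in\Pi^\downarrow(v_1,h_1)\cap C(r/2)$ with $\ell^{(\infty)}(w,\cP\cap C(r))\le n-2$ but $\ell^{(\infty)}(w,\cP)\ge n-1$ by Lemma \ref{lem:dalalreecrit}, and this alone forces $R_{m,0}^{(\infty)}(w)\ge r/2$ for some $m\le n-2$ --- and nothing in your sketch bounds the probability of that. Your later ``good event'' (first $n-1$ layers stay below height $o(r^2)$ inside $C(r)$) does not exclude this scenario either: the offending $w$ can be at low height and still jump layers.

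What fills the gap in the paper is that Proposition \ref{Stabilisation points} is proved by \emph{induction on $n$}, and the dangerous case (small witnessing paraboloid, their event $F_1$) is handled by applying the \emph{induction hypothesis} --- the stabilization bound for $R_{m,0}^{(\infty)}(w)$ with $m\le n-2$ --- to the point $w$ above, together with Mecke's formula to sum over candidate $w$'s. You do invoke induction, but only for Lemma \ref{lemme hauteur max} (heights); you never reuse the stabilization statement itself at a lower layer. The remaining pieces of your plan (the split into extreme/non-extreme corresponding to the paper's $E_1,E_2$; the high-apex case via half-paraboloid volume estimates and Lemma \ref{lemme hauteur max}; the discretization over $(v_1,h_1)$; the $\la<\infty$ adjustments via \eqref{eq:cvusurtoutcompact} and the intensity lower bound) are all consistent with the paper's $F_2$, $G_1$, $G_2$ analysis. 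But without the recursive appeal to $R_{m,0}$ for $m<n$, the argument proves the $n=1$ case and does not close for $n\ge 2$.
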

\begin{proof}
    We give a detailed proof for the case $\lambda = \infty$ and briefly describe at the end how to adapt the proof to make it work for finite $\lambda$. 

	We show this result by induction.
	The case $n=1$ corresponds to \cite[Lemma 6.1]{CSY}. We now fix $n\ge 2$ and assume that \eqref{eq:majoration queue rayon} is verified for all $m<n$. Let us show \eqref{eq:majoration queue rayon} for $n$.
	
	We first notice that
			\begin{align*}
		\{R_{n,0}^{(\lambda)}(0,h_0) 
	\geq r\} &= \bigcup_{s\geq r}\{ \xi_{n,0}^{(\infty)}((0,h_0), \mathcal{P} \cap C(s)) 
		\neq \xi_{n,0}^{(\infty)}((0,h_0), \mathcal{P})\}
		    \end{align*}
		    Let us introduce
		\begin{align*}
		E_1 &:= \bigcup_{s\geq r}\bigcup_{l < n} \{ (0,h_0) \in \partial\Phi_l(\mathcal{P} \cap C(s)) \cap \partial\Phi_n(\mathcal{P})\},\\
		E_2 &:= \bigcup_{s\geq r} \bigcup_{l > n} \{ (0,h_0) \in \partial\Phi_n(\mathcal{P} \cap C(s)) \cap \partial\Phi_l(\mathcal{P})
		\}.
	\end{align*}
	Since $\{R_{n,0}^{(\lambda)}(0,h_0) 
	\geq r\} = E_1 \cup E_2$, it is enough to prove  that for any $r\ge 1$,
	\begin{equation}\label{eq:majPE1}
	\P(E_1)\le c_1\exp(-c_2r^{d+1})
	\end{equation}
	and
	\begin{equation}\label{eq:majPE2}
	\P(E_2)\le c_1\exp(-c_2r^{d+1}).
	\end{equation}
	
	\noindent\textit{Decomposition of $E_1$}.
	The strategy is the following: we plan to select a down-paraboloid which contains $(0,h_0)$ on its boundary and a point $w$ in its interior to which we can apply the induction hypothesis, recalling \eqref{eq:translinvRnkla}.

	To do so, we introduce the two events
	\begin{align*}
	&F_1 := \{\ \exists (v_1, h_1) \in \partial\Pi^{\uparrow}(0,h_0),\ h_1 \leq r^2/32 \text{ : } \\&\hspace{3cm} \cP \cap C(r)\cap \Pi^{\downarrow}(v_1, h_1)  \subseteq 
	[\Phi_{n-1}(\cP\cap C(r))]^c\}
		\cap \{(0,h_0) \in \partial \Phi_n(\cP) \},\\
	&F_2 := \{ \ \exists (v_1, h_1) \in \partial \Pi^{\uparrow}(0,h_0),\ h_1 \geq r^2/32 \text{ : }\\& \hspace{3cm} \cP \cap C(r)\cap \Pi^{\downarrow}(v_1, h_1) \subseteq
	[\Phi_{n-1}(\cP\cap C(r))]^c\}
	\cap \{(0,h_0) \in \partial \Phi_n(\cP)\},
	\end{align*}
    see Figure \ref{fig:F2}.
	    \begin{figure}
        \centering
        \begin{overpic}[width=0.6\textwidth]{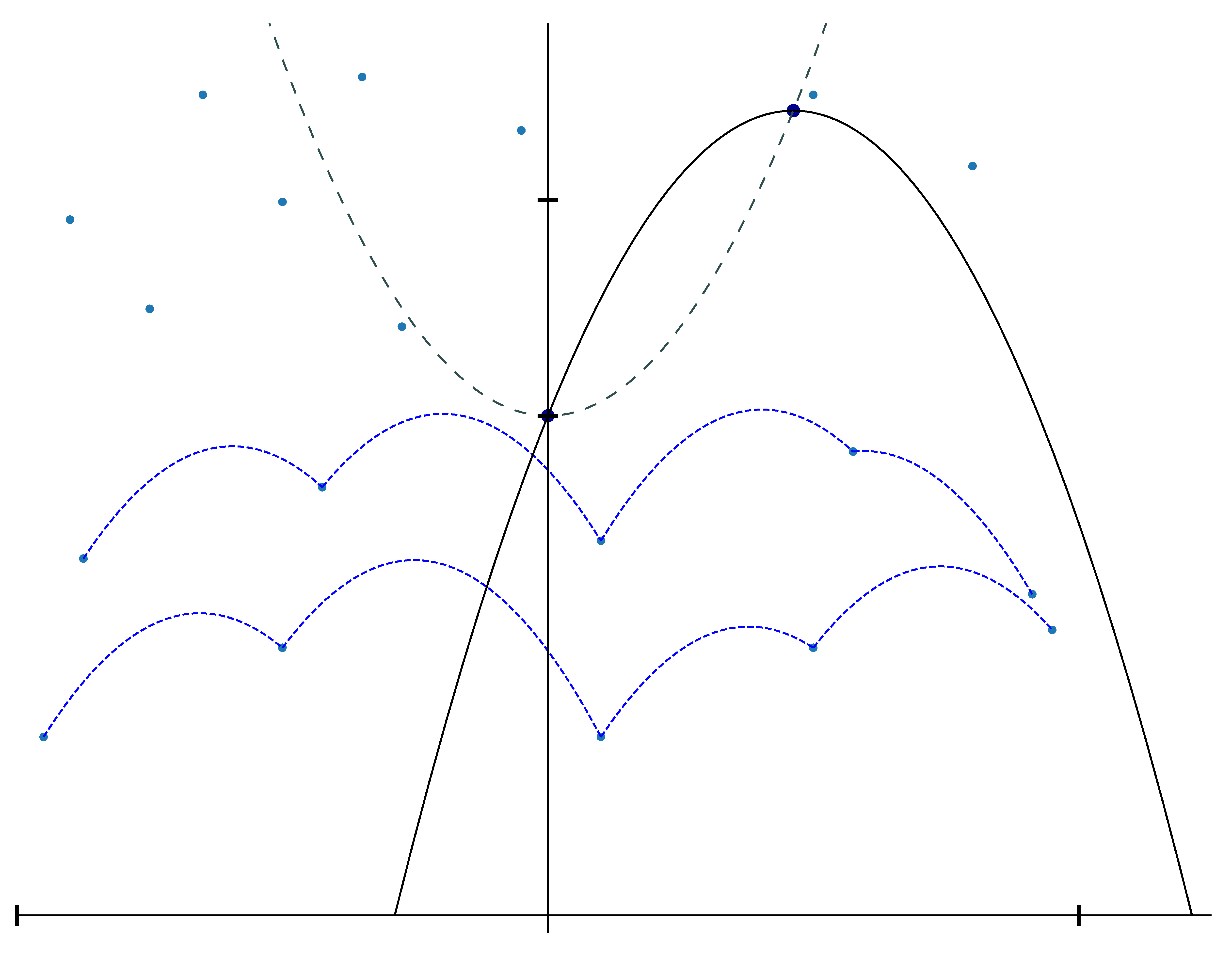}
 \put (45.7,41.3) {$\displaystyle h_0$}
 \put (37.8,61.7) {$\displaystyle \frac{r^2}{32}$}
 \put (52, 72) {$\displaystyle (v_1, h_1)$}
 \put (86.8, 1) {$\displaystyle r$}
 \put (-1, 1) {$\displaystyle -r$}

\end{overpic}
        \caption{Example of a situation where event $F_2$ from Proposition \ref{Stabilisation points} occurs when $(0,h_0)$ is on layer $3$ for the peeling of $\cP \cap C(r)$.}
        \label{fig:F2}
    \end{figure}

	When $n=2$, we replace the inclusion 
	$\cP \cap C(r)\cap \Pi^{\downarrow}(v_1, h_1) \subseteq
	[\Phi_{n-1}(\cP\cap C(r))]^c$
	in the definition of $F_1$ and $F_2$ above by $\cP\cap C(r)\cap \Pi^{\downarrow}(v_1,h_1)=\varnothing$.
	
	In particular, $E_1 \subseteq F_1 \cup F_2$. Indeed
	if  one of the events of the union in the definition of $E_1$ occurs for fixed $l < n$ and $s \geq r$, there exists
	$(v_1, h_1) \in \partial\Pi^{\uparrow}(0, h_0)$ such that for every $w\in \cP\cap C(r)\cap\Pi^{\downarrow}(v_1, h_1)$,  $\ell^{(\infty)}(w,\cP\cap C(s))\le (l-1)$. 
	Lemma \ref{lem:dalalreecrit} then implies that $\ell^{(\infty)}(w,\cP\cap C(r))\le (l-1) \le (n-2)$ for any such $w$.
	
	Consequently, it suffices to upper bound $\P(F_1)$ and $\P(F_2)$ which we do with two different strategies. 
	In the case of $F_1$, there is a downward paraboloid $\Pi^{\downarrow}(v_1,h_1)$ which is low enough to be contained in a cylinder smaller than $C(r)$. This implies that we can apply the induction hypothesis to a well chosen point in $\Pi^\downarrow(v_1,h_1)$. When on $F_2$, the downward paraboloid is high enough so that there is a high point $w$ with $\ell^{(\infty)}(w,\cP\cap C(r))\le n$ and we deduce from Lemma \ref{lemme hauteur max} that it happens with exponentially small probability.
	\\~\\
	\noindent\textit{Upper bound for $\P(F_1)$.}
		Let us fix $(v_1,h_1)$ with $h_1\le r^2/32$ as in the event $F_1$.
	Using that $(v_1,h_1)\in \partial \Pi^{\uparrow}(0,h_0)$, we get 
	\begin{equation}\label{eq:majv1}
	\Vert v_1 \Vert = \sqrt{2(h_1 - h_0)} \leq \sqrt{2h_1} = r/4.
	\end{equation}
	Furthermore if we take $(v_2, h_2) \in \Pi^\downarrow(v_1, h_1)$, the norm of $v_2$ is smaller than the norm of $v_1$ plus half of the width of the paraboloid $\Pi^\downarrow(v_1, h_1)$, so
	\begin{equation}\label{eq:majv2}
	\Vert v_2 \Vert \leq \Vert v_1 \Vert + \|v_2-v_1\|\le \frac{r}{4}+\sqrt{2h_1} \le r/2 .
	\end{equation}
	This implies that $\Pi^\downarrow(v_1, h_1) \subseteq C(r/2)$. In particular, when $n=2$, we get that $(0,h_0)$ is an extremal point of $\cP\cup \{(0,h_0)\}$ and subsequently that $F_1=\varnothing$. In the case $n\ge 2$, we proceed in the following way. Since $\ell^{(\infty)}((0,h_0),\cP)=n$, we can choose a point $w\in\Pi^{\downarrow}(v_1,h_1)$ such that $\ell^{(\infty)}(w,\cP)\ge (n-1)$.
	Then because we are on the event $F_1$, we also have $\ell^{(\infty)}(w,\cP\cap C(r))\le (n-2)$.
	By Lemma \ref{lem:dalalreecrit}, this implies that $\ell^{(\infty)}(w,\cP\cap C_w(r/2))\le (n-2)$, which means that 
	\begin{equation}\label{eq:stabrad}
	\exists m \leq (n-2) :
	R_{m,0}^{(\infty)}(w) \geq r/2.
	\end{equation}
	 Using the induction hypothesis
	$$\mathbb{P}(R_{m,0}^{(\infty)}(w) \geq r/2) \leq c_1 \exp(-c_2r^{d+1}).$$
	
	Since $w$ belongs to $\cP\cap C^{\le \frac{r^2}{32}}(r/2)$, we   rewrite 
	\begin{align*}
		\mathbb{P}\left( F_1 \right) &\leq \mathbb{P}\bigg(\bigcup_{m =1}^{n-2} \bigcup_{w \in \mathcal{P} \cap C^{\leq \frac{r^2}{32}}(\frac{r}2)} 
	\{ R_{m,0}^{(\infty)}(w, \mathcal{P}) \geq \frac{r}2 \} \bigg) 
	\le 
	\mathbb{E}\bigg[ \sum_{m=1}^{n-2} \sum_{w \in \mathcal{P} \cap C^{\leq \frac{r^2}{32}}(\frac{r}2)}\hspace*{-.3cm} 
	\mathds{1}_{\{R_{m,0}^{(\infty)}(w, \mathcal{P}) \geq \frac{r}2\}} \bigg].
	\end{align*}
	Now we use Mecke's formula to obtain
	\begin{align}\label{eq:findeF1}
	\mathbb{P}\left( F_1 \right) &\leq \sum_{m \leq n-2}\int_{C^{\leq r^2/32}(r/2)} \mathbb{P}\left( R_{m,0}^{(\infty)}(w, \mathcal{P}) \geq r/2 \right) \mathrm{d}w \nonumber \\
	&\leq c_1\exp(-c_2r^{d+1}).
	\end{align}
	
	\noindent\textit{Decomposition of $F_2$}.
	We rewrite $F_2=G_1\cup G_2$ where
	\begin{align*}
		G_1 := \{  \exists (v_1, h_1&),\ h_1 \geq r^2/32, \Vert v_1 \Vert \leq r/6 : \\& \cP\cap C(r)\cap \Pi^{\downarrow}(v_1, h_1)  \subseteq 
[\Phi_{n-1}(\cP\cap C(r))]^c\}
		\cap \{(0,h_0) \in \partial \Phi_n(\cP)\}.
		\\
		G_2 := \{  \exists (v_1, h_1&),\ h_1 \geq r^2/32,\ \Vert v_1 \Vert \geq r/6 : \\ & \cP\cap C(r)\cap \Pi^{\downarrow}(v_1, h_1) \subseteq 
		[\Phi_{n-1}(\cP\cap C(r))]^c\}
		\cap \{(0,h_0) \in \partial \Phi_n(\cP)\}.
	\end{align*}
	Again, when $n=2$, we replace the inclusion $\cP \cap C(r) \cap \Pi^{\downarrow}(v_1, h_1) \subseteq
	[\Phi_{n-1}(\mathcal{P}\cap C(r))]^c$ in the definition of $F_1$ and $F_2$ above by $\cP\cap C(r)\cap \Pi^{\downarrow}(v_1,h_1)=\varnothing$.
	\begin{figure}[H]
	    \begin{subfigure}{.5\textwidth}
		\centering
		\includegraphics[scale=0.86]{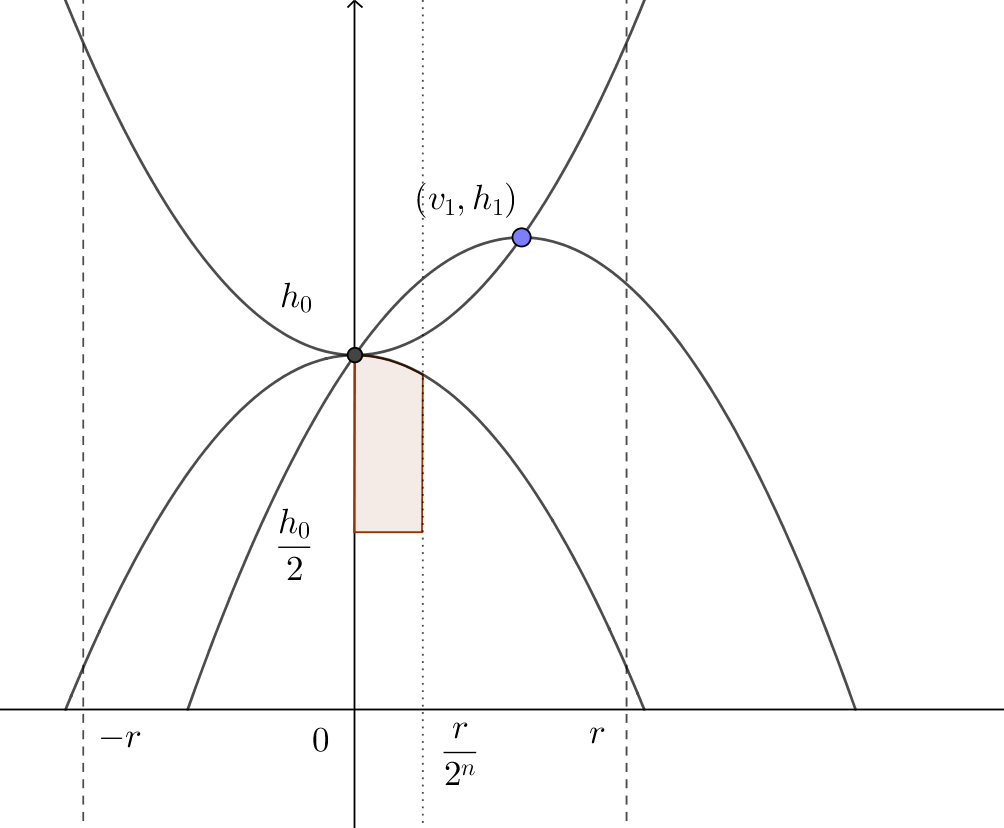}
		\end{subfigure}
		\begin{subfigure}{.5\textwidth}
		\centering
		\includegraphics[scale=0.86]{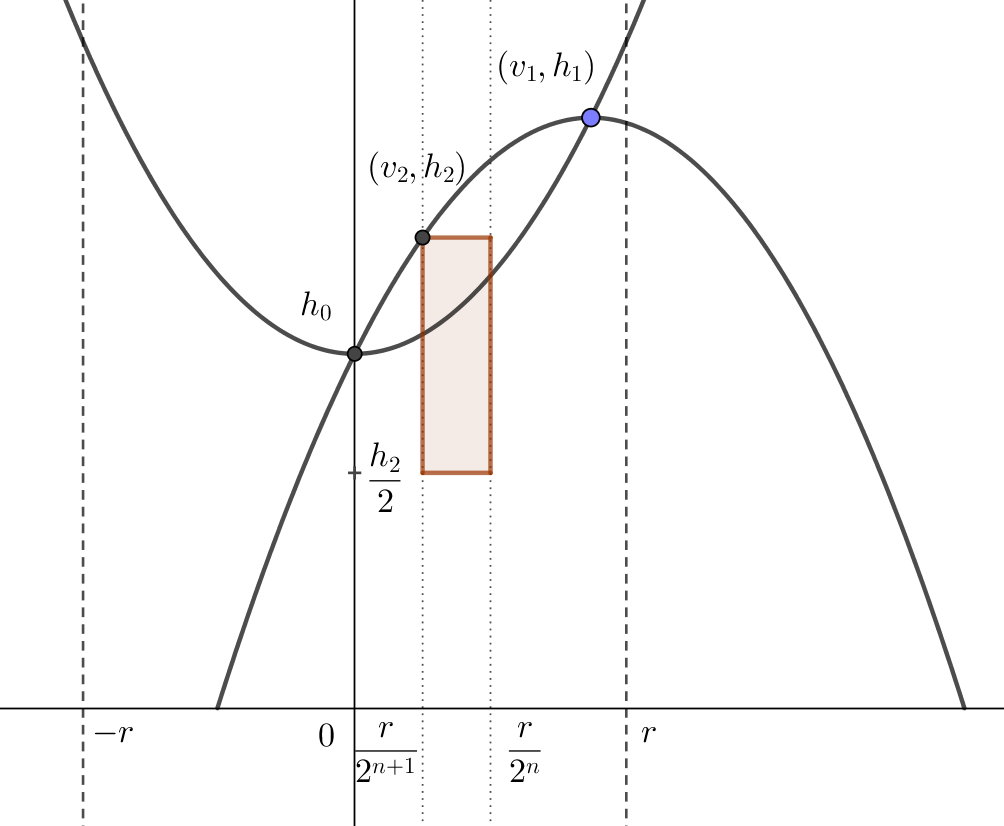}
		\end{subfigure}

		\caption{Illustration of the geometric constructions leading to the upper-bounds of $\P(G_1)$ (left) and $\P(G_2)$ (right)
		}
		\label{StabGen_fig1}
	\end{figure}
	
	\noindent\textit{Upper bound for $\P(G_1)$}.
	 We fix $(v_1,h_1)$ with $\Vert v_1 \Vert \leq r/6$ and $h_1\ge r^2/32$ as in the event $G_1$. In particular, we get
	 \begin{equation}\label{eq:5/288}
	 h_0=h_1-\frac{\|v_1\|^2}{2}\ge \frac{5}{288}r^2.    
	 \end{equation}
	 By Lemma \ref{lem:demiparabole}, the down-paraboloid
	$\Pi^{\downarrow}(v_1, h_1)$ contains half of $\Pi^{\downarrow}(0,h_0) \cap C^{\geq \frac{h_0}{2}}(\frac{r}{2^n})$, see Figure \ref{StabGen_fig1} (left).
	As in the proof of Lemma \ref{lemme hauteur max},  we can find a deterministic subset $A_i$  of $\Pi^\downarrow(0, h_0)$ for some $1\le i\le 2^{d-1}$ such that $A_i\subseteq\Pi^\downarrow(v_1, h_1)$ and 
	\begin{equation}\label{eq:volorthant}
	\text{Vol}_d \bigg(A_i  \cap C^{\geq \frac{h_0}{2}}\Big(\frac{r}{2^n}\Big)\bigg) = \frac{1}{2^{d-1}} \text{Vol}_d\bigg(\Pi^\downarrow(0, h_0) \cap C^{\geq \frac{h_0}{2}}\Big(\frac{r}{2^n}\Big) \bigg).    
	\end{equation}
	The set $A_i\cap C^{\ge \frac{h_0}{2}}(\frac{r}{2^n})$ is empty with probability bounded by $\exp(-c r^{d+1})$ thanks to \eqref{eq:5/288} and \eqref{eq:volorthant}. If $A_i\cap C^{\ge \frac{h_0}{2}}(\frac{r}{2^n})$ is not empty, which happens only when $n\ge 3$, it contains a point $w$ at height at least $\frac{h_0}{2} \geq cr^2$ with $\ell^{(\infty)}(w,\cP\cap C(r))\le (n-2)$.
	Using Lemma \ref{lemme hauteur max} this happens
	with probability smaller than $c_1 \exp(-c_2 r^{d+1})$.
	
	A union bound on the finite number of sets $A_1, \ldots A_{2^{d-1}}$ yields 
	\begin{equation}\label{eq:upperboundPG1}
	\P(G_1) \leq c_1 \exp(-c_2 r^{d+1}).
	\end{equation}
	\\~\\
	\noindent\textit{Upper bound for $\P(G_2)$}.
		 We fix $(v_1,h_1)$ with $\Vert v_1 \Vert \geq r/6$ and $h_1\ge r^2/32$ as in the event $G_2$. Let $P$ be the vertical plane containing the origin and $(v_1, h_1)$ and let $(v_2, h_2)$ be the highest intersection point between
	$\Pi^{\downarrow}(v_1, h_1)$ and $\partial C(r/2^{n+1})$ in $P$.
	Using $\|v_1\|=\sqrt{2(h_1-h_0)}$, we get
	\begin{equation}\label{Stabilisation points h2}
	    h_2 = h_1-\frac12(\|v_1\|-\frac{r}{2^{n+1}})^2=h_0 + \frac{r}{2^{n+1}} \sqrt{2(h_1 - h_0)} - \frac{r^2}{2(2^{n+1})^2}\ge 
	    cr^2. 
	\end{equation}
	We claim that $\Pi^{\downarrow}(v_1,h_1)\cap C^{(\frac{h_2}{2},h_2)}(\frac{r}{2^n})$ contains a deterministic cylinder $C_0$ with width proportional to $r$ and height proportional to $h_2$.\label{page:cylindre}
Indeed, for $n\ge 3$, the cylinder inscribed in $C(\frac{r}{2^n})\setminus C(\frac{r}{2^{n+1}})$
	of radius $\frac{r}{2^{n+2}}$
	between heights $\frac{h_2}2$ and $h_2$ and with axis included in $P$
	as in Figure \ref{StabGen_fig1} (right) is fully included in $\Pi^\downarrow(v_1,h_1)$, thanks to the inequality
	 $\Vert v_1 \Vert \geq \frac{r}{6} \geq \frac{r}{2^n}$. For $n = 2$, the inequality is not satisfied and that is why we take a thinner cylinder inscribed in $C(\frac{r}{6}) \setminus C(\frac{r}{8})$ instead.
	 
	The cylinder $C_0$ constructed above is empty with probability smaller than 
	$ \exp(-ch_2 r^{d-1})$. If $C_0$ is not empty, which happens only when $n\ge 3$, it contains a point $w$ of height at least $\frac{h_2}2$ with $\ell^{(\infty)}(w,\cP\cap C(r))\le (n-2)$. By Lemma \ref{lemme hauteur max} combined with \eqref{Stabilisation points h2}, this happens with probability smaller than $c_1 r^{d-1}\exp(-c_2 h_2 r^{d-1})$.
	
	Finally, discretizing $\partial \Pi^\uparrow(0, h_0)$ for 
	\begin{equation}\label{eq:h1plusgrandqueh0}
	h_1 \geq h_0 + \frac{1}{2} \bigg(\frac{r}6\bigg)^2,
	\end{equation}
	we get
	\begin{align}\label{eq:upperboundPG2}
	\mathbb{P}(G_2) &\leq c_1 \int_{h_0 + r^2/72}^{\infty} (h_1 - h_0)^{(d-2)/2} 
	r^{d-1}e^{-c_2 (h_0 + \frac{r}{2^{n+1}} \sqrt{2(h_1 - h_0)} - \frac{r^2}{2(2^{n+1})^2}) r^{d-1}} \mathrm{d}h_1\notag\\
	&\leq c_1 \int_{h_0 + r^2/72}^{\infty} (h_1 - h_0)^{(d-2)/2} 
	r^{d-1}e^{-c_2  \frac{r}{2^{n+2}} \sqrt{2(h_1 - h_0)}  r^{d-1}} \mathrm{d}h_1\notag\\
	&\leq c_1 \exp(-c_2 r^{d+1}).
	\end{align}
	
	\noindent\textit{Conclusion for $\la = \infty$.}
		Using the inclusion $E_1\subset F_1\cup G_1\cup G_2$ and the estimates for $\P(F_1)$, $\P(G_1)$ and $\P(G_2)$ obtained above, we deduce \eqref{eq:majPE1}.
The estimate \eqref{eq:majPE2}
 is obtained in a very similar fashion, where $n$ plays the role of $l$, by considering the decomposition $E_2\subset F_1'\cup F_2'$ where
\begin{align*}
F_1'&=\{ \exists (v_1, h_1) \in \partial\Pi^{\uparrow}(0,h_0),\ h_1 \leq r^2/32 \text{ : } \\&\hspace{1cm} \cP\cap C(r) \cap \Pi^{\downarrow}(v_1, h_1)  \subseteq 
[\Phi_n(\cP\cap C(r))]^c\}
		\cap \{\exists l > n,\ (0,h_0) \in \partial \Phi_l(\cP) \},\\
	F_2'&= \{ \exists (v_1, h_1) \in \partial \Pi^{\uparrow}(0,h_0),\ h_1 \geq r^2/32 \text{ : }\\& \hspace{1cm} \cP \cap C(r) \cap \Pi^{\downarrow}(v_1, h_1) \subseteq
[\Phi_n(\cP\cap C(r))]^c\}
	\cap \{\exists l > n,\ (0,h_0) \in \partial \Phi_l(\cP)\}.    
\end{align*}
For the sake of brevity, the proof of \eqref{eq:majPE2} is omitted. Combining \eqref{eq:majPE1} and \eqref{eq:majPE2}, we complete the proof of Proposition \ref{Stabilisation points} when $\lambda=\infty$.\\ 
	
	\noindent\textit{Case $\la<\infty$.}
		We recall the two necessary updates for finite $\la$. 
	\begin{enumerate}
	    \item \textit{Density}. The density of the intensity measure of $\cP^{(\la)}$ is lower bounded by a positive constant in a compact subset of $W_\la$ only.
	    \item \textit{Quasi-paraboloids}. The calculations that have been done with paraboloids are valid  for quasi-paraboloids up to a small error.
	\end{enumerate}
	The second update above implies that for $\la$ large enough, \eqref{eq:majv1} and \eqref{eq:majv2} can be replaced by $\Vert v_1 \Vert \leq \frac{r}{3}$ and
	$\Vert v_2 \Vert \leq \frac{2r}{3}$ respectively. The assertion \eqref{eq:stabrad} is in turn replaced by  $R_{m,0}^{(\la)}(w) \geq r/3$ for some $m\le (n-2)$ and we proceed with the same reasoning as before to get \eqref{eq:findeF1}. 
	
	Regarding the upper bounds of $\P(G_1)$ and $\P(G_2)$, we  make the following modifications.
	\begin{itemize}
	\item Because of the second update, the inequalities \eqref{eq:5/288}, \eqref{Stabilisation points h2} and \eqref{eq:h1plusgrandqueh0} deduced from the actual equation of a paraboloid become $h_0\ge cr^2$, $h_2\ge cr^2$ and $h_1\ge h_0+ cr^2$ for $\la$ large enough thanks to \eqref{eq:cvusurtoutcompact}. Moreover, the second equality at \eqref{Stabilisation points h2} stays the same up to a multiplicative constant.
	    \item 	In view of the first update, we replace 
	$A_i\cap C^{\ge \frac{h_0}2}(\frac{r}{2^n}) $ by
	$A_i\cap C^{(\frac{h_0}2,\frac{3h_0}{4})}(\frac{r}{2^n})$ in \eqref{eq:volorthant} and in the rest of the proof of the upper bound for $\P(G_1)$. 
	
	Moreover, the deterministic cylinder $C_0$ constructed in the proof of the upper bound for $\P(G_2)$ on page \pageref{page:cylindre} is replaced by a cylinder included in  $C^{(\frac{h_2}{2},\frac{3h_2}{4})}(\frac{r}{2^n})$ and with radius $\frac{r}{2^{n+3}}$, say. 
	\end{itemize}
We then obtain \eqref{eq:upperboundPG1} and \eqref{eq:upperboundPG2} and conclude as in the case $\la=\infty.$
	
\end{proof} 
Proposition \ref{Stabilisation points} is a general stabilization result for the score at one fixed point and this stabilization is lateral, meaning that the point process is intersected with a cylinder. Lemma \ref{lemme stab rect} below is a complementary stabilization result in a cylinder and the stabilization there is in height, meaning that the point process is intersected with a horizontal strip. 
Combining Lemma \ref{lemme stab rect} with Proposition \ref{Stabilisation points}, we can deduce a general stabilization result both in width and height. The stabilization in height is required to restrict the peeling to a cylinder bounded in height later on and use the continuous mapping theorem, see Lemma \ref{lem:convesp1}. This will ultimately imply in particular a convergence result for the mean of the functional $\xi_{n,k}^{(\la)}$, see Proposition \ref{expectation convergence xi k-faces}. An extra refinement contained in Lemma \ref{lemme stab rect} is that the stabilization in height is proved to be uniform for all the points inside a small cylinder. This will be needed for getting the stabilization in height of the $k$-face score, see Lemma \ref{localisation hauteur k-faces}.

\begin{lem}\label{lemme stab rect}
	For all $n \geq 1$, there exist $\lambda_0, c_1, c_2 > 0$ such that for all $\lambda \in [\la_0, \infty]$ and $1 \le r < \pi\la^{\frac{1}{d+1}}$, we have
	\begin{align*} \mathbb{P}\left(\exists w \in \cP^{(\la)} \cap C^{\leq l_n r^2}\left(\frac{r}{2^{n+1}}\right) : \xi_{n, 0}^{(\lambda)}(w, \cP^{(\la)}\cap C(r)) \neq \xi_{n, 0}^{(\lambda)}(w, \cP^{(\la)}\cap C^{\le l_n r^2}(r)) \right)\\
	\leq c_1\exp(-c_2 r^{d+1})
	\end{align*}
	with $l_n = \frac{1}{2^{2n+7}}$.
\end{lem}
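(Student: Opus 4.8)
The statement says: with probability $1-c_1e^{-c_2r^{d+1}}$, for every point $w$ in the lower sub-cylinder $C^{\le l_nr^2}(r/2^{n+1})$, the $0$-face score $\xi_{n,0}^{(\la)}(w,\cdot)$ does not change when we truncate the process $\cP^{(\la)}\cap C(r)$ in height at level $l_nr^2$. The key geometric point is that whether $w$ is extreme on the $n$-th layer depends, via Lemma \ref{lem:be on layer n}, only on the points of $\cP^{(\la)}\cap C(r)$ lying in certain downward (quasi-)paraboloids through $w$ and on the first $n-1$ layers of $\cP^{(\la)}\cap C(r)$ restricted to such paraboloids. So I would first argue that the only way truncation at height $l_nr^2$ can affect $\xi_{n,0}^{(\la)}(w,\cP^{(\la)}\cap C(r))$ is if some point of $\cP^{(\la)}\cap C(r)$ at height $\ge l_nr^2$ either (a) lies on one of the first $n$ layers of $\cP^{(\la)}\cap C(r)$ and is ``visible'' from $w$ (i.e.\ sits in a relevant downward paraboloid anchored at $w$), or (b) more to the point, some point on layers $1,\dots,n-1$ of the \emph{truncated} peeling sits high enough to matter. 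In any case, the bad event is contained in the event that some point of $\cP^{(\la)}\cap\partial\Phi_m^{(\la)}(\cP^{(\la)}\cap C(r))$ with $m\le n$ lies at height $\ge l_nr^2$ inside $C(r/2^{n})$, say, together with the event that the stabilization radius $R_{n,0}^{(\la)}(w)$ exceeds $r$ for some such $w$ (forcing us to look at points near the top of $C(r)$).

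Concretely the strategy would be: (i) Show the inclusion of the bad event into a union of two events. The first is $\{\exists\,w\in\cP^{(\la)}\cap C^{\le l_nr^2}(r/2^{n+1}):R_{n,0}^{(\la)}(w,\cP^{(\la)})\ge r\}$ — because if $w$ is lateral-stabilized within $C_w(r)$, then $\xi_{n,0}^{(\la)}(w,\cP^{(\la)}\cap C(r))=\xi_{n,0}^{(\la)}(w,\cP^{(\la)})$, and similarly for the truncated process provided the truncation doesn't interfere, which is handled by the second event. The second bad event is $\{\exists\,(v,h)\in\cP^{(\la)}\cap\partial\Phi_m^{(\la)}(\cP^{(\la)}\cap C(r))\cap C(r/2^m) \text{ with }h\ge l_nr^2\}$ for some $m\le n$ — this is exactly the event controlled by Lemma \ref{lemme hauteur max} with $t=l_nr^2$, which gives a bound $c_1r^{d-1}e^{-c_2 l_nr^2\cdot(r\wedge\sqrt{l_n}r)^{d-1}}=c_1r^{d-1}e^{-c_2'r^{d+1}}$. (ii) For the first event, use Proposition \ref{Stabilisation points}: $\P(R_{n,0}^{(\la)}((0,h_0))\ge r)\le c_1e^{-c_2r^{d+1}}$ uniformly in $h_0$, then combine with Mecke's formula over $w\in\cP^{(\la)}\cap C^{\le l_nr^2}(r/2^{n+1})$ and the translation invariance \eqref{eq:translinvRnkla}, integrating over a region of volume $O(r^{d-1}\cdot r^2)=O(r^{d+1})$, which still yields $c_1e^{-c_2r^{d+1}}$ after absorbing the polynomial factor (a smaller $c_2$). (iii) Sum the finitely many ($m\le n$) contributions from Lemma \ref{lemme hauteur max}; the factor $c_1r^{d-1}$ is absorbed into the exponential.

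The crux of the argument is step (i): \emph{precisely} identifying that the only interaction between the height-truncation and the $0$-face score of a low point $w$ is mediated either by a point of layers $\le n$ of $\cP^{(\la)}\cap C(r)$ sitting high up in a narrow central cylinder, or by lateral non-stabilization. For this I would run an induction on $n$ mirroring the structure of Lemma \ref{lemme hauteur max} and Proposition \ref{Stabilisation points}: by Lemma \ref{lem:be on layer n}(ii), $w$ being on layer $\le n$ for $\cP^{(\la)}\cap C(r)$ is witnessed by a downward paraboloid $[\Pi^\downarrow]^{(\la)}(v_1,h_1)$ through $w$ containing only points of layers $\le n-1$; since $w$ has height $\le l_nr^2$ and (after controlling $\|v_1\|$ as in \eqref{eq:majv1}--\eqref{eq:majv2}, giving $\|v_1\|\le r/4$ and the paraboloid $\subseteq C(r/2^n)$ or so) this paraboloid is thin, it is entirely contained in $C^{\le c r^2}(\cdot)$ for some explicit $c$, \emph{provided} $h_1\le$ (something like) $r^2/32$; and the complementary case $h_1$ large is exactly where Lemma \ref{lemme hauteur max} kicks in. Choosing $l_n=2^{-2n-7}$ makes all these thin paraboloids stay below height $l_{n-1}r^2$ so the induction hypothesis applies to the layers $\le n-1$. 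I expect this bookkeeping — tracking the nested radii $r/2^{n+1}$, the height thresholds $l_nr^2$, and checking the thin downward paraboloids through low points stay inside the truncated cylinder — to be the main obstacle, but it is of the same flavour as the geometric estimates already carried out in Lemma \ref{lemme hauteur max} and Proposition \ref{Stabilisation points}, and the adaptation to finite $\la$ (using \eqref{eq:cvusurtoutcompact} to replace paraboloids by quasi-paraboloids up to a multiplicative constant, and the lower bound on the intensity density \eqref{eq:intensity process} on compact sets) is routine in the same way as in those proofs.
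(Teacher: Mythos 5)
Your final paragraph is essentially the paper's proof: induction on the layer index, Lemma~\ref{lem:be on layer n}(ii) to extract a witness paraboloid $[\Pi^\downarrow]^{(\la)}(v_1,h_1)$ through $w$, a case split on whether $h_1\le l_nr^2$ or $h_1>l_nr^2$, and an appeal to Lemma~\ref{lemme hauteur max} in the high-$h_1$ case. That is the right skeleton. But there are two points worth flagging.

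First, your ``concrete strategy'' (i)--(iii) --- inclusion of the bad event into
$\{\exists w: R_{n,0}^{(\lambda)}(w,\cP^{(\lambda)})\ge r\}\cup\{\exists\text{ high layer-$\le n$ point of }\cP^{(\lambda)}\cap C(r)\}$,
then Proposition~\ref{Stabilisation points} plus Lemma~\ref{lemme hauteur max} --- does not work as a one-shot reduction, and the paper does not use Proposition~\ref{Stabilisation points} here at all. The difficulty is that the discrepancy between $\ell^{(\lambda)}(w,\cP^{(\lambda)}\cap C(r))$ and $\ell^{(\lambda)}(w,\cP^{(\lambda)}\cap C^{\le l_nr^2}(r))$ can be created by a cascading change in the layer numbers of \emph{low} points near $w$: deleting the high points may drop the layer of a low witness point $(v_3,h_3)\in\Pi^\downarrow(v_1,h_1)$, which in turn drops the layer of $w$, with no single high layer-$\le n$ point of $\cP\cap C(r)$ to blame. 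You cannot package this into a clean event inclusion; it has to be unrolled recursively, which is exactly what the induction in the paper does --- it bounds a \emph{probability} at each step rather than proving an inclusion of events. Moreover $R_{n,0}^{(\lambda)}(w,\cP^{(\lambda)})$ is defined via cylinders $C_v(s)$ centred at $w$ and compares $\cP^{(\lambda)}\cap C_v(s)$ with the \emph{full} $\cP^{(\lambda)}$, whereas the lemma compares two truncations of $\cP^{(\lambda)}$ both living in the origin-centred cylinder $C(r)$ and never touches the full process; the stabilization radius is the wrong tool here, and its definition does not let you pass from $C_v(r)$ to $C(r)$ for free.

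Second, a calibration detail: in the paper the induction runs over $m\le n$ with the height threshold $l_nr^2$ \emph{fixed throughout}; only the lateral radius shrinks from $r/2^{m}$ to $r/2^{m+1}$. You wrote that the thin paraboloids should ``stay below height $l_{n-1}r^2$ so the induction hypothesis applies to layers $\le n-1$'' --- but the induction hypothesis is for smaller $m$ with the same $l_n$, not a different $l_{n-1}$. The point of $l_n=2^{-2n-7}$ is that the inequality $2\sqrt{2l_nr^2}+r/2^{m+1}\le r/2^m$ (the analogue of \eqref{eq:calibration ln}) holds for every $m\le n$ simultaneously, so a witness paraboloid with $h_1\le l_nr^2$ through a point in $C^{\le l_nr^2}(r/2^{m+1})$ lands inside $C^{\le l_nr^2}(r/2^m)$, matching the hypotheses of the induction at level $p<m$. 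If you let the threshold grow to $l_{n-1}r^2$ as you go down the induction, this inclusion breaks.

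With these two corrections --- drop the stabilization-radius detour entirely, and keep $l_n$ fixed in the induction over $m\le n$ --- your outline lines up with the paper's proof.
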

\begin{proof}
	As in the previous proofs, we proceed in the case $\la=\infty$ and explain at the end how to adapt the arguments in the case $\la<\infty$. For fixed $n$, we prove by induction on $m$ that for all $m \leq n$
	there exists $c_1, c_2 > 0$ such that for all $r \ge 1$ we have
	\begin{align}\label{eq:lemencoreplusgeneral} \mathbb{P}\left(\exists w \in \cP\cap C^{\leq l_n r^2}\left(\frac{r}{2^{m+1}}\right) : \xi_{m, 0}^{(\infty)}(w, \cP\cap C(r)) \neq \xi_{m, 0}^{(\infty)}(w, \cP\cap C^{\le l_n r^2}(r) \right)\notag\\
	\leq c_1 \exp(-c_2 r^{d+1}).
    \end{align}
    Lemma \ref{lemme stab rect} is then derived from \eqref{eq:lemencoreplusgeneral} by taking $m=n$.\\
	
	\noindent\textit{Proof of the base case $m=1$ for $\la=\infty$.}
	Let $w \in C^{\leq l_n r^2}\left(\frac{r}{4}\right)$ and assume that 
	$\xi_{1,0}^{(\infty)}(w, \cP \cap C(r)) \neq \xi_{1,0}^{(\infty)}(w, \cP \cap C^{\le l_n r^2}(r))$. 
	Then there exists a downward paraboloid $\Pi^{\downarrow}(v_1,h_1)$, whose boundary contains 
	$w$, that contains no point of $\cP \cap C^{\leq l_n r^2}\left(r\right)$ and contains at least one point of
	$\cP \cap C\left(r\right)$. 
	
	If $h_1 \leq l_n r^2$, we observe that thanks to $l_n = \frac{1}{2^{2n+7}}$ and the fact that $w \in C\left(\frac{r}{4}\right)$, we get for any $(v',h')\in \Pi^{\downarrow}(v_1,h_1)$,
	\begin{align}\label{eq:basecasedetaille}
	 \|v'\|&\le \|v'-v_1\|+\|v_1\|\le \sqrt{2h_1}+\|v-v_1\|+\|v\|\le 2\sqrt{2h_1}+\frac{r}{4}\nonumber\\
	 & \le 2\sqrt{2l_n r^2}+\frac{r}{4}\le r.
	\end{align}
	The last inequality in \eqref{eq:basecasedetaille} implies that  $\Pi^\downarrow(v_1, h_1)$ is contained
	in $C^{\leq l_n r^2}\left(r\right)$ which is excluded. 
	
	If $h_1 > l_n r^2$,
	we claim that the intersection between $\Pi^\uparrow(w)$ and the vertical plane containing $w$ and $(v_1, h_1)$ contains exactly two points at height equal to $l_n r^2$ and we call $(v_2, h_2)$ the one which is closer to $(v_1,h_1)$, see Figure \ref{stab_rect_fig1}. 
	\begin{figure}
		\centering
		\includegraphics[trim={0cm 2.2cm 0 0},clip]{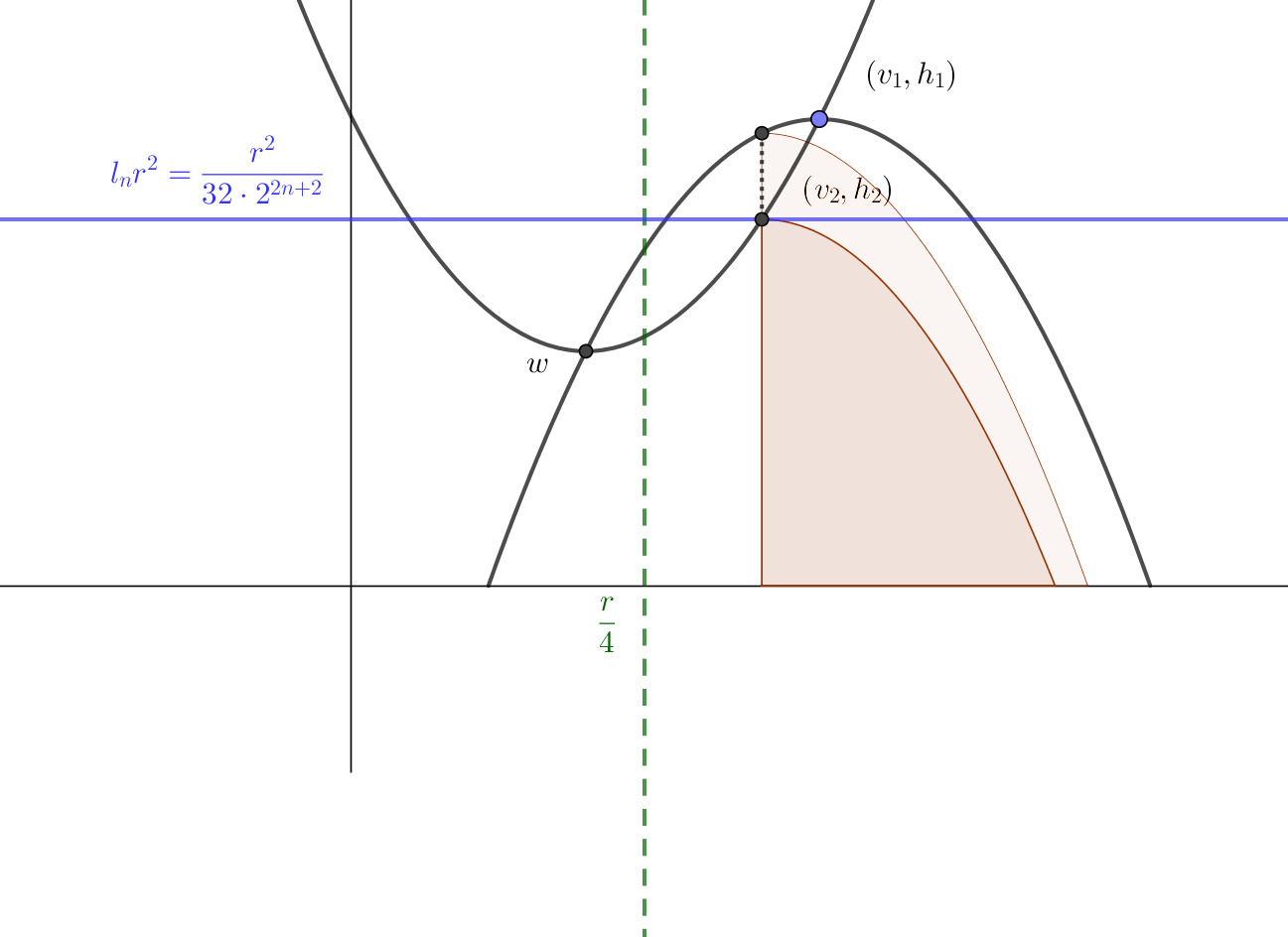}
		\caption{Case $m=1$ of Lemma \ref{lemme stab rect}.}
		\label{stab_rect_fig1}
	\end{figure}
	Thanks to Lemma \ref{lem:demiparabole}, the paraboloid $\Pi^{\downarrow}(v_1, h_1)$ contains half of the down paraboloid with apex at the vertical projection of $(v_2,h_2)$ onto $\partial \Pi^\downarrow(v_1,h_1)$. Consequently, $\Pi^{\downarrow}(v_1, h_1)$ also contains   $\Pi^{\downarrow}_+(v_2, h_2) \cap C^{\leq l_n r^2}(r)$ where $\Pi^{\downarrow}_+(v_2, h_2)$ denotes half of $\Pi^\downarrow(v_2, h_2)$.
	This latter set has volume $c r^{d+1}$ so we can show by using deterministic orthants as in the proof of Lemma \ref{lemme hauteur max} that 
	\begin{equation}\label{eq:probavidebasecase}
	 \P(\cP\cap \Pi^{\downarrow}_+(v_2,h_2)\cap C^{\le l_n r^2}(r)=\varnothing)\le \exp(-cr^{d+1}).    
	\end{equation}
	Denoting by $D$ the set of points of $\partial\Pi^\uparrow(w)$ at height $l_n r^2$ and discretizing $D$, we obtain
	\begin{equation*}
	\P(\exists (v_2,h_2)\in D: \cP\cap \Pi^{\downarrow}_+(v_2, h_2) \cap C^{\leq l_n r^2}(r)=\varnothing)\le c_1 \exp(-c_2r^{d+1}).
	\end{equation*}

	 Using Mecke's formula we get 
		\begin{align*}
		&\mathbb{P}\left(\exists w \in \cP^{(\la)}\cap C^{\leq l_n r^2}\left(\frac{r}{4}\right) : \xi_{1,0}^{(\infty)}(w, \cP^{(\la)} \cap C(r)) \neq \xi_{1,0}^{(\infty)}(w, \cP^{(\la)} \cap C^{\le l_n r^2}(r)) \right) 
		\\&\leq c_1\text{Vol}_d\left(C^{\leq l_n r^2}\left(\frac{r}{4}\right)\right)\exp(-c_2 r^{d+1})
			\le c_1 \exp(-c_2 r^{d+1}). 
	\end{align*}
	This proves the base case for $\la=\infty.$\\
	
	\noindent\textit{Proof of the induction step for $\la=\infty$.}
	Now for fixed $2\le m \le n$ we assume that the result holds for any $p < m$ and we show that it remains true for $m$.
	Let $w \in C^{\leq l_n r^2}\left(\frac{r}{2^{m+1}}\right)$.
	We denote by $E_w$ the event
	$$E_w := \{\xi_{m,0}^{(\infty)}(w, \cP \cap C(r)) \neq \xi_{m,0}^{(\infty)}(w, \cP \cap C^{\le l_n r^2}(r))\} .$$
	When on $E_w$, we also assume that $\xi_{m,0}^{(\infty)}(w, \cP \cap C^{\le l_n r^2}(r))=1 $, i.e. $\ell^{(\infty)}(w,\cP\cap C^{\le l_n r^2}(r))=m$.
	Indeed, the case $\xi_{m,0}^{(\infty)}(w, \cP \cap C(r))=1$
	can be treated in a similar way, see what we did when dealing with events $E_1$ and $E_2$ in the proof of Proposition \ref{Stabilisation points}.
	In particular, when $\ell^{(\infty)}(w, \cP\cap C^{\le l_n r^2}(r))=m$, the depth $\ell^{(\infty)}(w, \cP\cap C(r))$ is larger than $(m+1)$ thanks to Lemma \ref{lem:dalalreecrit}.
	
	
	In other words, there exists a downward paraboloid $\Pi^{\downarrow}(v_1,h_1)$ whose boundary contains $w$ and that only contains points on a layer of order at most
	$(m-1)$ for the peeling in $C^{\leq l_n r^2}\left(r\right)$ and at least one point denoted by $(v_3,h_3)$ such that 
	\begin{equation}\label{couchedev3h3}
	\ell^{(\infty)}((v_3,h_3), \cP\cap C(r))\ge m\mbox{ and }
	\ell^{(\infty)}((v_3,h_3), \cP\cap C^{\leq l_n r^2}(r))\le m-1.
	\end{equation}
	
	If $h_1 \leq l_n r^2$, then for $(v',h')\in \Pi^{\downarrow}(v_1,h_1)$, we obtain by the same method as in \eqref{eq:basecasedetaille} that
	\begin{align}\label{eq:calibration ln}
	\|v'\|\le 2\sqrt{2h_1}+\frac{r}{2^{m+1}}\le     2 \sqrt{2l_n r^2} +\frac{r}{2^{m+1}}\leq \frac{r}{2^{m}}.
	\end{align}
	We notice that the last inequality in \eqref{eq:calibration ln} still holds when $l_n$ is replaced by $\frac{1}{2^{2n+5}}$, which means that our current calibration takes into account the case $\la<\infty$ which is discussed at the end of the proof. The estimate \eqref{eq:calibration ln} shows that the paraboloid $\Pi^{\downarrow}(v_1,h_1)$ stays in $C^{\leq l_n r^2}\left(\frac{r}{2^m}\right)$. Consequently the point $(v_3, h_3)$ introduced above belongs to $C^{\leq l_n r^2}\left(\frac{r}{2^m}\right)$ and by \eqref{couchedev3h3}, it satisfies
	$\xi_{p,0}^{(\infty)}((v_3, h_3), \cP\cap C^{\le l_n r^2}(r)) \neq \xi_{p,0}^{(\infty)}((v_3,h_3), \cP\cap C(r))$ for $p=\ell^{(\infty)}((v_3,h_3),\cP\cap C^{\le l_n r^2}(r))<m$. Using the induction hypothesis, this happens
	with probability smaller than $c_1 \exp(-c_2 r^{d+1})$.
	
	If $h_1 > l_n r^2$ we proceed as in the case $m=1$ by using the construction described in Figure \ref{stab_rect_fig1}. Namely, we show that there exists half of a paraboloid  whose apex is on $\partial\Pi^\uparrow(w)$ at height $l_n r^2$ and that only contains points
	on a layer of order at most $(m-1)$ for the peeling of $\cP \cap C^{\le l_n r^2}(r)$. Let us denote by $A$ this half-paraboloid. Thanks to \eqref{eq:calibration ln}, the set $A$ is included in $C\left(\frac{r}{2^m} \right)$ which implies that 
	\begin{equation}\label{eq:probadevide}
	\P\left(\cP\cap     A \cap C^{\ge l_n r^2/2}\left(\frac{r}{2^m}\right)=\varnothing\right)\le \exp(-c r^{d+1}).
	\end{equation}
	If $\cP\cap A\cap C^{\ge l_n r^2/2}\left(\frac{r}{2^m}\right)\ne \varnothing$, we choose $(v_4,h_4)$ in that set and two cases arise.

	If $\ell^{(\infty)}((v_4,h_4),\cP\cap C(r))=\ell^{(\infty)}((v_4,h_4),\cP\cap C^{\le l_n r^2}(r))$, then $\ell^{(\infty)}((v_4,h_4),\cP\cap C(r))\le (m-1)$ and  $h_4\ge l_n r^2/2 = c r^2$. Using Lemma \ref{lemme hauteur max}, this happens with probability smaller than $c_1 \exp(-c_2 r^{d+1}).$
	
	If $\ell^{(\infty)}((v_4,h_4),\cP\cap C(r))\ne \ell^{(\infty)}((v_4,h_4),\cP\cap C^{\le l_n r^2}(r))$, we can use the induction hypothesis and a union bound for $p < m$ to prove that this happens with probability smaller than
    \sloppy$c_1\exp(-c_2 r^{d+1})$.
	
	To sum up, we have shown that for any $w \in C^{\le l_nr^2}\left(\frac{r}{2^n} \right)$  
	$$\P(E_w) \leq c_1 r^{(m-1)(d+1)} \exp(-c_2 r^{d+1}) .$$ Using Mecke's formula, we finally obtain
	\begin{align*}
	 \mathbb{P}\left(\exists w \in C^{\leq l_n r^2}\left(\frac{r}{2^{m+1}}\right) : \xi_{m,0, }^{(\infty)}(w, \cP^{(\la)}\cap C(r)) \neq \xi_{m,0, }^{(\infty)}(w, \cP^{(\la)}\cap C^{\le l_n r^2}(r)) \right)\\
	\leq c_1 r^{m(d+1)} \exp(-c_2 r^{d+1}).
	\end{align*}
	
\noindent\textit{Case $\la<\infty$.} We have to adapt the arguments where either the actual equation of a paraboloid  or a lower bound of the intensity measure of $\cP$ is used, namely \eqref{eq:basecasedetaille}, \eqref{eq:probavidebasecase}, \eqref{eq:calibration ln} and \eqref{eq:probadevide}. Thanks to \eqref{eq:cvusurtoutcompact}, for $\la$ large enough, the series of estimates leading to \eqref{eq:basecasedetaille} can be replaced by
\begin{equation}\label{eq:inegalités lambda fini}
\|v'\|\le \ldots \le 4\sqrt{2h_1}+\frac{r}{4}\le 4\sqrt{2l_nr^2}+\frac{r}{4}\le r.    
\end{equation}
The adaptation of \eqref{eq:calibration ln} is identical.

In order to show \eqref{eq:probavidebasecase} for $\la<\infty$, we use \eqref{eq:inegalités lambda fini} to show that $[\Pi^{\downarrow}]^{(\la)}(v_2,h_2)$ is included in $C(\frac{r}{2})$ for $\la$ large enough. In particular, the set $[\Pi^{\downarrow}]^{(\la)}(v_2,h_2)\cap C^{\le l_n r^2}(r)$ is contained in $C^{\le \la^{\frac2{d+1}}/2}(\frac{1}{2}\la^{\frac1{d+1}}\pi)$ which is a domain where the intensity measure of $\cP^{(\la)}$ is bounded from below. Consequently, \eqref{eq:probavidebasecase} occurs for $\la<\infty$ as well. In the same way, the estimate \eqref{eq:probadevide} holds for $\la<\infty$ as well because $A\cap C^{\ge l_n r^2/2}\left(\frac{r}{2^m}\right)$ is included in $C^{\le \la^{\frac2{d+1}}/2}(\frac{1}{2}\la^{\frac1{d+1}}\pi)$.
\end{proof}

\subsection{Stabilization for k-faces}\label{sec:stabfaces}
Henceforth, we fix $n\ge 1$ and $k \in \{0, \ldots, d-1 \}$. We aim at proving Proposition \ref{stab k-faces} which states a stabilization result for the quantities $\xi_{n,k}^{(\la)}$ introduced at \eqref{eq:defxink}. To do so, we start with a intermediary lemma on the distribution tail of the height of the parabolic facets containing a fixed point from the $n$-th layer.

Let us recall the definition of the set $\cF_{n,d-1}^{(\la)}(w,Y)$ given on page
\pageref{page:defFnk}. For any locally finite set $Y\subset \R^{d-1}\times \R_+$ and $w=(v,h)\in \R^{d-1}\times \R_+$, we introduce 
the maximal height of the facets from $\cF_{n,d-1}^{(\la)}(w,Y)$, i.e. $H_n^{(\la)}(w,Y)=0$ if $w\not\in \partial \Phi_n(Y)$ and otherwise,
\begin{equation}\label{def:H} H_{n}^{(\la)}(w,Y)=\sup\{h_1>0: \exists v_1\in\R^{d-1}, F\in \cF_{n,d-1}^{(\la)}(w,Y) \mbox{ s.t. } 
F\subset\partial[\Pi^{\downarrow}]^{(\la)}(v_1,h_1)
\}.\end{equation}


\begin{lem}\label{stab hauteur}
	For all $n \geq 1$ there exist $\lambda_0, c_1, c_2 > 0$ such that for all $\lambda \in [\lambda_0, +\infty]$, $h_0\in (0,\la^{\frac2{d+1}})$, $t>0$ and $1 \leq r < \pi\la^{\frac1{d+1}}$, we have 
	\begin{equation} \label{stab hauteur eq1}
	\mathbb{P}(\exists s\ge r: H_{n}^{(\lambda)}((0,h_0),\mathcal{P}^{(\lambda)} \cap C(s)) \geq t) \leq 
	c_1  \exp(-c_2\sqrt{t} (r\wedge \sqrt{t})^{d})
	\end{equation}
	and
	\begin{equation}\label{stab hauteur eq2}
	\mathbb{P}(H_{n}^{(\lambda)}((0,h_0),\mathcal{P}^{(\lambda)}) \geq t) \leq c_1 \exp(-c_2 t^{\frac{d+1}{2}}).
	\end{equation}
\end{lem}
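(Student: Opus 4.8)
The plan is to follow the same pattern as in the proof of Lemma \ref{lemme hauteur max}, carrying out an induction on the layer number $n$ and reducing the tail estimate on $H_n^{(\la)}$ to an emptiness-of-a-paraboloid event, then applying Mecke's formula. Fix $(0,h_0)$ and suppose $w=(0,h_0)\in\partial\Phi_n^{(\la)}(\cP^{(\la)}\cap C(s))$ for some $s\ge r$. If $H_n^{(\la)}(w,\cP^{(\la)}\cap C(s))\ge t$, then there is a facet $F\in\cF_{n,d-1}^{(\la)}(w,\cP^{(\la)}\cap C(s))$ contained in $\partial[\Pi^\downarrow]^{(\la)}(v_1,h_1)$ with $h_1\ge t$ and $w\in F$; in particular $w\in\partial[\Pi^\downarrow]^{(\la)}(v_1,h_1)$, the open downward quasi-paraboloid $[\Pi^\downarrow]^{(\la)}(v_1,h_1)$ contains only points of $\cP^{(\la)}\cap C(s)$ on layers at most $n-1$ (because it is the boundary quasi-paraboloid carrying the facet $F$, which sits on the $n$-th layer), and it carries at least $d$ points of the $n$-th layer, in particular at least one point distinct from $w$. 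Since $w=(0,h_0)$ lies on $\partial[\Pi^\downarrow]^{(\la)}(v_1,h_1)$ with $h_1\ge t$, the apex is forced to be roughly at height $t$ and, more importantly, $\|v_1\|$ is of order $\sqrt{t-h_0}$; by Lemma \ref{lem:demiparabole} the quasi-paraboloid $[\Pi^\downarrow]^{(\la)}(v_1,h_1)$ contains half of $[\Pi^\downarrow]^{(\la)}(w)$, hence one of the $2^{d-1}$ deterministic ``orthant'' pieces $A_i$ of $[\Pi^\downarrow]^{(\la)}(w)$, exactly as in the proof of Lemma \ref{lemme hauteur max}.

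From here I would argue: either this piece $A_i$ truncated at height between $h_0$ and roughly $t$ (and inside a fixed cylinder of radius comparable to $r\wedge\sqrt t$) is empty of $\cP^{(\la)}$, which by a volume computation has probability at most $\exp(-c\,\sqrt t\,(r\wedge\sqrt t)^d)$ — note the volume of the relevant slab of the paraboloid is of order $\sqrt t\,(r\wedge\sqrt t)^d$, which is where the exponent in \eqref{stab hauteur eq1} comes from — or it contains a point, which is then necessarily on a layer at most $n-1$ (by the defining property of $[\Pi^\downarrow]^{(\la)}(v_1,h_1)$) and at height of order $t$, and by the induction hypothesis (i.e.\ Lemma \ref{lemme hauteur max} applied to layer $n-1$, or the inductive statement \eqref{stab hauteur eq1} itself at level $n-1$) this occurs with probability at most $c_1 r^{d-1}\exp(-c_2\,t\,(r\wedge\sqrt t)^{d-1})$, which dominates the required bound. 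A union over the finitely many $A_i$ and a discretization of the admissible apices $(v_1,h_1)$ on $\partial[\Pi^\uparrow]^{(\la)}(0,h_0)$ — the set of which is a $(d-2)$-dimensional sphere of radius $\sqrt{2(h_1-h_0)}$, contributing only polynomial factors — then gives the bound for a fixed $s$; since the events are increasing in $s$ one passes to $\cup_{s\ge r}$, and Lemma \ref{lem:dalalreecrit} (monotonicity of the layer number in the point set) ensures that if a point has layer $\le n-1$ for $\cP^{(\la)}\cap C(s)$ then it has layer $\le n-1$ for $\cP^{(\la)}\cap C(r)$ as well, so the induction closes uniformly in $s$. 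Finally \eqref{stab hauteur eq2} follows from \eqref{stab hauteur eq1} by sending $r$ to the natural scale $\sqrt t$: taking $r\sim\sqrt t$ makes $r\wedge\sqrt t=\sqrt t$ and the exponent $\sqrt t\cdot(\sqrt t)^d=t^{(d+1)/2}$, after absorbing the polynomial prefactor.

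As in the earlier proofs, I would first do the case $\la=\infty$ with genuine paraboloids, then describe the two standard modifications for finite $\la$: the intensity density \eqref{eq:intensity process} is only bounded below on a compact part of $W_\la$, so one truncates the relevant slab of the paraboloid to a region like $C^{(h_0,\frac34 h_0\vee\ldots)}$ on which the density is bounded below by a constant; and the quasi-paraboloids differ from paraboloids only up to a small error controlled by \eqref{eq:cvusurtoutcompact}, so the geometric inequalities (the estimate $\|v_1\|\lesssim\sqrt{t-h_0}$ and the containment of $A_i$) survive with adjusted constants for $\la$ large enough. The main obstacle, I expect, is bookkeeping the geometry of the facet-carrying quasi-paraboloid carefully enough: unlike in Lemma \ref{lemme hauteur max}, where one only needs a \emph{point} of height $\ge t$ on the $n$-th layer, here one needs a whole $(d-1)$-dimensional \emph{facet} at height $\ge t$, so one must check that the existence of such a facet still forces the emptiness/low-layer dichotomy for a paraboloid of volume of order $\sqrt t\,(r\wedge\sqrt t)^d$ — this is why the exponent here is $\sqrt t\,(r\wedge\sqrt t)^d$ rather than $t\,(r\wedge\sqrt t)^{d-1}$ as in Lemma \ref{lemme hauteur max}, reflecting that constraining an entire facet to be high is harder than constraining a single vertex.
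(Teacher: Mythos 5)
The overall skeleton you sketch — reduce the height of a facet to the height of its carrying paraboloid's apex, use Lemma~\ref{lem:demiparabole} to locate a deterministic piece of a downward paraboloid inside the carrier, split into ``empty'' versus ``contains a low-layer point at height $\gtrsim t$,'' then invoke Lemma~\ref{lemme hauteur max}, discretize apices, and apply Mecke — is the right one and matches the first half of the paper's proof. But there is a concrete geometric gap that makes your argument fail whenever $h_0$ is small compared to $t$, which is exactly the case the paper has to handle separately.

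You propose to look at the orthant pieces $A_i$ of $\Pi^\downarrow(w)=\Pi^\downarrow(0,h_0)$ ``truncated at height between $h_0$ and roughly $t$'' and to estimate the emptiness probability by a volume of order $\sqrt t\,(r\wedge\sqrt t)^d$. But $\Pi^\downarrow(0,h_0)=\{(v,h):h<h_0-\|v\|^2/2\}$ lives entirely at heights $\le h_0$, so the $A_i$ do not reach above $h_0$, and if $h_0\ll t$ the truncation you describe is empty and in any case the available volume is only of order $h_0(r\wedge\sqrt{h_0})^{d-1}$, which is far too small to produce the exponent $\sqrt t\,(r\wedge\sqrt t)^d$. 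Your argument therefore only closes in the regime $h_0\gtrsim t$ (where $A_i\cap C^{\ge h_0/2}((r\wedge\sqrt t)/2^n)$ indeed has volume $\gtrsim t(r\wedge\sqrt t)^{d-1}\ge\sqrt t\,(r\wedge\sqrt t)^d$). The paper's proof has an explicit case split on $h_0\gtrless t/2$: for small $h_0$ one cannot work inside $\Pi^\downarrow(0,h_0)$ at all and must instead fit a cylinder $C_0$ of radius $\sim u/2^{n+2}$ and height $\sim h_2$ inside the carrying paraboloid $\Pi^\downarrow(v_1,h_1)$, where $u=\sqrt{2(h_1-h_0)}\wedge r$ and $h_2$ is the intersection height of $\Pi^\downarrow(v_1,h_1)$ with $C(u/2^{n+1})$; it is $h_2$ (of order $h_1-h_0\gtrsim t$ or $r\sqrt{h_1-h_0}$), not $h_0$, that supplies the height scale needed in the exponent, and the final bound is obtained by integrating $\exp(-c(h_1-h_0)\sqrt t^{\,d-1})$ and $\exp(-c(h_1-h_0)r^d)$ over the admissible apices $h_1\ge t$.

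Two smaller remarks. You present the argument as a fresh induction on $n$; the paper does not re-induct here — it quotes Lemma~\ref{lemme hauteur max} directly (whose proof was itself inductive). Either phrasing works. More substantively, your derivation of \eqref{stab hauteur eq2} from \eqref{stab hauteur eq1} by ``taking $r\sim\sqrt t$'' is not quite right as stated: \eqref{stab hauteur eq1} is constrained to $r<\pi\la^{1/(d+1)}$, so when $t$ is so large that $\sqrt t$ exceeds this threshold the substitution is not available, and the inequality $\sqrt t\,(r\wedge\sqrt t)^d\ge t^{(d+1)/2}$ then goes the wrong way. The paper simply reruns the same case analysis with $\cP^{(\la)}$ in place of $\cP^{(\la)}\cap C(s)$, which sidesteps this issue.
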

\begin{proof}
    We only prove \eqref{stab hauteur eq1} as the proof of \eqref{stab hauteur eq2} is almost identical.\\ 
    
    \noindent\textit{Case $\la = \infty.$}
	For $(v_1,h_1) \in \R^{d-1}\times \R_+$, we introduce the events $$ E_{(v_1,h_1)} = \{ \exists s\ge r, F \in \cF_{n,d-1}((0,h_0),\cP\cap C(s)), F \subseteq \partial \Pi^\downarrow(v_1, h_1) \} $$
	and
	$$E = E(t) := \bigcup_{(v_1, h_1) \in \Pi^\uparrow (0,h_0), h_1 \geq t } E_{(v_1,h_1)} .$$
	It is enough to prove that 
	\begin{equation}\label{stab hauteur cas1}
	    \P(E) \leq c_1 \exp(-c_2 \sqrt{t} (r \wedge \sqrt{t})^{d}).
	\end{equation}
	In the same spirit as in the proof of Proposition \ref{Stabilisation points}, we consider two cases  depending on the value of $h_0$. The left- (resp. right-) hand side of Figure \ref{StabGen_fig1} reflects the first case $h_0\ge t/2$ (resp. $h_0\le t/2$). 
	\\~\\
	\noindent\textit{Case $h_0 \geq t/2$}.
	As in the proof of Lemma \ref{lemme hauteur max}, we start by decomposing $\Pi^{\downarrow}(0,h_0)$ into $2^{d-1}$ deterministic subparts $A_1,\ldots,A_{2^{d-1}}$ such that 
	\begin{align}\label{eq:minvolAi}
	    \text{Vol}_d(A_i  \cap C^{\ge h_0/2}((r\wedge \sqrt{t})/2^{n})) &= \frac{1}{2^{d-1}} \text{Vol}_d(\Pi^\downarrow((0,h_0))  \cap C^{\ge h_0/2}((r\wedge\sqrt{t})/2^{n}))\notag\\&\ge ct(r\wedge\sqrt{t})^{d-1}.
	\end{align}
	
	Let $(v_1, h_1) \in \Pi^\uparrow(0,h_0)$ such that $h_1 \geq t$ and $\partial\Pi^\downarrow(v_1,h_1)$ contains a facet of $\partial \Phi_n(\cP\cap C(s))$ going through $(0,h_0)$ for some $s\ge r$.
	By Lemma \ref{lem:demiparabole}, $\Pi^{\downarrow}(v_1, h_1)$ contains half  of
	$\Pi^{\downarrow}(0,h_0)$, which implies that it contains a set $A_i\cap C^{\geq h_0 /2}((r \wedge \sqrt{t})/2^n)$ for some $i$. Consequently,
	\begin{align}\label{eq:redecomposeA}
	\P(E)&=\P\bigg(\bigcup_{(v_1, h_1) \in \Pi^\uparrow (0,h_0), h_1 > t } E_{(v_1,h_1)}\bigg)\nonumber\\&\le\P\bigg(\bigcup_{s\ge r}\bigcup_{i=1}^{2^{d-1}}\{\cP \cap A_i\cap C^{\geq h_0 /2}((r \wedge \sqrt{t})/2^n)\subset [\Phi_n(\cP\cap C(s))]^c\}\bigg)\nonumber\\
	&\le\P\bigg(\bigcup_{i=1}^{2^{d-1}}\{\cP \cap A_i\cap C^{\geq h_0 /2}((r \wedge \sqrt{t})/2^n)\subset [\Phi_n(\cP\cap C(r))]^c\}\bigg)\nonumber\\
	&\le 2^{d-1}\P(\cP \cap A_1\cap C^{\geq h_0 /2}((r \wedge \sqrt{t})/2^n)\subset [\Phi_n(\cP\cap C(r))]^c).
	\end{align}
where the inclusion $\Phi_n(\cP\cap C(r))\subset\Phi_n(\cP\cap C(s))$ is due to Lemma \ref{lem:dalalreecrit}.	
	By \eqref{eq:minvolAi},
	$\cP$ does not meet $A_1\cap C^{\geq h_0 /2}((r \wedge \sqrt{t})/2^n)$ with probability smaller 
	than $\exp(-c t (r\wedge \sqrt{t})^{d-1})$.
	Otherwise it contains a point $(v_3,h_3)$  with $h_3\ge h_0 /2 \geq t/4$ and such that $\ell^{(\infty)}((v_3,h_3),\cP\cap C(r))\le (n-1)$. Using Lemma \ref{lemme hauteur max}, we obtain that
	$$\P(\cP \cap A_1\cap C^{\geq h_0 /2}((r \wedge \sqrt{t})/2^n)\subset \Phi_n^c(\cP_\la\cap C(r))) \leq c_1 \exp(-c_2 t(r\wedge \sqrt{t})^{d-1}),$$
	which implies \eqref{stab hauteur cas1} thanks to \eqref{eq:redecomposeA} and the inequality $t(r\wedge \sqrt{t})^{d-1}\ge \sqrt{t}(r\wedge \sqrt{t})^d$.\\
	

\noindent\textit{Case $h_0 \leq t/2$}.
		Let $(v_1, h_1) \in \partial \Pi^\uparrow(0,h_0)$ such that $h_1 \geq t$ and $\partial\Pi^\downarrow(v_1,h_1)$ contains a facet of $\partial \Phi_n(\cP\cap C(s))$ going through $(0,h_0)$ for some $s\ge r$.
	For $u > 0$, the height of the highest point of intersection between 
	$\Pi^{\downarrow}(v_1, h_1)$ and $C(u/2^{n+1})$ in the vertical plane containing $(v_1,h_1)$ and the origin
	is 
	\begin{equation}\label{eq:calculh2}
	    h_2 := h_0 + \sqrt{2(h_1 - h_0)}\frac{u}{2^{n+1}} - \frac{u^2}{2^{2n+3}}.
	\end{equation}
We take $u = \sqrt{2(h_1 - h_0)} \wedge r$
	and fit a cylinder $C_0$ of radius $\frac{u}{2^{n+2}}$ between height $h_2/2$ and $h_2$
	in $C(\frac{u}{2^n}) \cap \Pi^{\downarrow}(v_1, h_1)$, see the right-hand side of
	Figure \ref{StabGen_fig1} with $r$ replaced by $u$. 
	Either the point process $\cP$ does not meet the cylinder $C_0$ which happens with probability $\exp(-c h_2 u^{d-1})$ or it contains a point $(v_3,h_3)$ with $h_3\ge h_2/2$ and such that $$\ell^{(\infty)}((v_3,h_3),\cP\cap C(r))\le \ell^{(\infty)}((v_3,h_3),\cP\cap C(s))\le (n-1).$$ 
    Using Lemma \ref{lemme hauteur max}, this happens with probability smaller than
	$c_1\exp\left( - c_2h_2(u \wedge \sqrt{h_2})^{d-1} \right)$. Consequently, we get
\begin{equation}\label{eq:A_{(v_1,h_1)}}
\P(E_{(v_1,h_1)})\le c_1u^{d-1}\exp\left( - c_2h_2(u \wedge \sqrt{h_2})^{d-1} \right).    
\end{equation}	
It remains to make explicit the right-hand side of \eqref{eq:A_{(v_1,h_1)}} in the two cases $u=\sqrt{2(h_1-h_0)}$ and $u=r$. 
	
	When $u = \sqrt{2(h_1-h_0)} \leq r$, we deduce from \eqref{eq:calculh2} and the fact that $h_1 \geq t$ that
	\begin{equation}\label{eq:doubleminoranth2}
	h_2 = h_0 + \frac{h_1 - h_0}{2^n} - \frac{h_1 - h_0}{2^{2(n+1)}}\ge c(h_1-h_0)\ge c't.
	\end{equation}
	Combining \eqref{eq:A_{(v_1,h_1)}} with  \eqref{eq:doubleminoranth2}, we obtain
	\begin{equation}\label{eq:1ereintegrande}
	\P(E_{(v_1,h_1)} ) \le  c_1 \exp(-c_2 (h_1 - h_0) \sqrt{t}^{d-1}) .
	\end{equation}
	
	When $u = r \leq \sqrt{2(h_1-h_0)}$, we obtain in the same way
	\begin{equation}\label{eq:2emeintegrande}
	\P(E_{(v_1,h_1)}) \le  c_1 \exp(-c_2 (h_1 - h_0) r^{d}) .
	\end{equation}
	Discretizing and integrating the right-hand sides of \eqref{eq:1ereintegrande} and \eqref{eq:2emeintegrande} over the set $\{(v_1,h_1)\in \Pi^{\uparrow}(0,h_0): h_1 \geq t\}$, we deduce \eqref{stab hauteur cas1} when $h_0\le t/2$. 
	
	This completes the proof of \eqref{stab hauteur eq1} in the case $\la=\infty$.\\
	
	\noindent\textit{Case $\la<\infty$.} In the case $h_0\ge t/2$, we need to replace $A_i\cap C^{\ge h_0/2}((r\wedge\sqrt{t})/2^n)$ with $A_i\cap C^{(h_0/2, \frac{3h_0}{4})}((r\wedge\sqrt{t})/2^n)$ which is included in $C^{\le \frac34\la^{\frac2{d+1}}}(\frac{\pi}{2}\la^{\frac1{d+1}})$ so that we can lower bound by a constant the density of the intensity measure of $\cP^{(\la)}$ when on that particular set. We adapt the case $h_0\le t/2$ in the exact same way as in the proof of Proposition \ref{Stabilisation points} by replacing the equality \eqref{eq:calculh2} by an inequality up to a multiplicative constant and reducing the cylinder $C_0$ so that it is included in $C^{(h_2/2,\frac34 h_2)}(\frac{u}{2^n})$, which makes it possible to lower bound by a constant the density of the intensity measure of $\cP^{(\la)}$ on $C_0$.
\end{proof}
\begin{prop}\label{stab k-faces}
	For any $n \geq 1$ there exists $\lambda_0, c_1,c_2 > 0$  such that for any $h_0 >0$, $\lambda \in [\lambda_0, +\infty]$ and $1\le r < \pi\la^{\frac1{d+1}}$,  we have 
	$$ \mathbb{P}\left( R_{n,k}^{(\lambda)}(0,h_0) 
	\ge r \right) \leq c_1 \exp\left( -c_2r^{d+1} \right) .$$
\end{prop}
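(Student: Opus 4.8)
The plan is to exhibit, for both $\lambda=\infty$ and finite $\lambda$ (with the usual passage from paraboloids to quasi-paraboloids and the fact that the density \eqref{eq:intensity process} is bounded above by $1$ and below by a positive constant on compacta), a \emph{good event} $\mathcal G=\mathcal G(r,h_0,\lambda)$ with $\mathbb P(\mathcal G^{c})\le c_{1}\exp(-c_{2}r^{d+1})$ on which $R_{n,k}^{(\lambda)}(0,h_{0})\le r$; the stated inequality then follows by applying this to radii slightly below $r$ and letting them increase. Write $w:=(0,h_{0})$ and $\widetilde{\mathcal P}:=\mathcal P^{(\lambda)}\cup\{w\}$; fix a small constant $\kappa>0$, set $t:=\kappa r^{2}$ and $c':=4\sqrt{2\kappa}$ (so $c'<\tfrac12$ for $\kappa$ small), and fix a large constant $C$. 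Let $\mathcal G$ be the intersection of the three events
\begin{align*}
&\mathcal G_{1}:=\{H_{n}^{(\lambda)}(w,\widetilde{\mathcal P})<t\},\qquad
\mathcal G_{2}:=\{H_{n}^{(\lambda)}(w,\widetilde{\mathcal P}\cap C(s))<t\ \ \forall\, s\ge r\},\\
&\mathcal G_{3}:=\{R_{m,0}^{(\lambda)}(w',\widetilde{\mathcal P})<r/C\ \ \forall\, m\in\{1,\dots,n\},\ \forall\, w'\in\widetilde{\mathcal P}\cap C(c'r)\cap\{h\le t\}\}.
\end{align*}
By \eqref{stab hauteur eq2} and \eqref{stab hauteur eq1} (using $t^{(d+1)/2}\asymp r^{d+1}$ and $\sqrt t\,(r\wedge\sqrt t)^{d}\asymp r^{d+1}$ for $\kappa\le 1$), the complements of $\mathcal G_{1}$ and $\mathcal G_{2}$ have probability at most $c_{1}\exp(-c_{2}r^{d+1})$. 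For $\mathcal G_{3}$, a union bound over the $n$ values of $m$, Markov's inequality, Mecke's formula for $\mathcal P^{(\lambda)}$, the translation invariance \eqref{eq:translinvRnkla}, the bound $1$ on the density \eqref{eq:intensity process}, and Proposition \ref{Stabilisation points} (whose proof is insensitive to the addition of the single deterministic point $w$) give $\mathbb P(\mathcal G_{3}^{c})\le c_{1}\,\mathrm{Vol}_{d}\big(C(c'r)\cap\{h\le t\}\big)\exp(-c_{2}(r/C)^{d+1})\lesssim r^{d+1}\exp(-c_{2}(r/C)^{d+1})\le c_{1}\exp(-c_{2}'r^{d+1})$.

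The core of the argument is a deterministic statement valid on $\mathcal G$: for every $s\ge r$ and every $w'\in\widetilde{\mathcal P}\cap C(c'r)\cap\{h\le t\}$ one has $\ell^{(\lambda)}(w',\widetilde{\mathcal P}\cap C(s))=\ell^{(\lambda)}(w',\widetilde{\mathcal P})$ whenever $\ell^{(\lambda)}(w',\widetilde{\mathcal P})\le n$, and $\ell^{(\lambda)}(w',\widetilde{\mathcal P}\cap C(s))>n$ whenever $\ell^{(\lambda)}(w',\widetilde{\mathcal P})>n$. This is where it is essential to control $R_{m,0}^{(\lambda)}$ for \emph{all} $m\le n$, not just for $m=n$: on $\mathcal G_{3}$, for each such $m$ the score $\xi_{m,0}^{(\lambda)}(w',\widetilde{\mathcal P}\cap C_{v'}(u))$ is constant, equal to $\xi_{m,0}^{(\lambda)}(w',\widetilde{\mathcal P})=\mathbf 1\{\ell^{(\lambda)}(w',\widetilde{\mathcal P})=m\}$, for all $u\ge r/C$, and combining this over $m\le n$ with the monotonicity Lemma \ref{lem:dalalreecrit}, applied to $C_{v'}(s-\|v'\|)\subseteq C(s)\subseteq C_{v'}(s+\|v'\|)$ (valid since $\|v'\|<c'r$ and $s\ge r$, so $s\pm\|v'\|\ge r/C$), pins down $\ell^{(\lambda)}(w',\widetilde{\mathcal P}\cap C(s))$ as claimed.

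Granting this, fix $s\ge r$. Since any facet of $\partial\Phi_{n}^{(\lambda)}$ through $w$ sits on the boundary of a downward (quasi-)paraboloid whose apex has height $\ge h_{0}$, the bound $H_{n}^{(\lambda)}<t$ of $\mathcal G_{1}\cap\mathcal G_{2}$ forces, when $h_{0}>t$, that $w$ lies on the $n$-th layer of neither $\widetilde{\mathcal P}$ nor $\widetilde{\mathcal P}\cap C(s)$, so both scores vanish. When $h_{0}\le t$, the same bound confines every downward (quasi-)paraboloid supporting a facet of $\partial\Phi_{n}^{(\lambda)}$ through $w$ (for $\widetilde{\mathcal P}$ or for $\widetilde{\mathcal P}\cap C(s)$) to $C(c'r)\cap\{h<t\}\subseteq C(r)\subseteq C(s)$; consequently all vertices of all $k$-faces through $w$, as well as all points of $\widetilde{\mathcal P}$ lying in the interiors of these paraboloids, belong to $C(c'r)\cap\{h\le t\}$, and the deterministic statement together with Lemma \ref{lem:dalalreecrit} shows that a set of these vertices spans a $k$-face of $\partial\Phi_{n}^{(\lambda)}(\widetilde{\mathcal P})$ through $w$ if and only if it spans one of $\partial\Phi_{n}^{(\lambda)}(\widetilde{\mathcal P}\cap C(s))$ through $w$. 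In every case $\xi_{n,k}^{(\lambda)}(w,\mathcal P^{(\lambda)})=\xi_{n,k,[s]}^{(\lambda)}(w,\mathcal P^{(\lambda)})$ for all $s\ge r$, i.e. $R_{n,k}^{(\lambda)}(0,h_{0})\le r$ on $\mathcal G$.

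The passage to finite $\lambda$ uses only the standard modifications — replacing paraboloids by quasi-paraboloids via \eqref{eq:cvusurtoutcompact} (which costs a bounded multiplicative factor in all the containments above, whence the slack built into $c'$), and lower bounding the density \eqref{eq:intensity process} by a constant on the compacta involved — exactly as in the proofs of Lemma \ref{lemme hauteur max} and Proposition \ref{Stabilisation points}. I expect the main obstacle to be the $k$-face matching in the case $h_{0}\le t$: a $k$-face of the restricted peeling $\partial\Phi_{n}^{(\lambda)}(\widetilde{\mathcal P}\cap C(s))$ need not a priori be a $k$-face of the full peeling, because a point inside the supporting paraboloid could have a strictly larger layer number for $\widetilde{\mathcal P}$ than for $\widetilde{\mathcal P}\cap C(s)$, which would corrupt the emptiness condition defining a face; it is precisely the deterministic statement above — resting on the whole family $\{R_{m,0}^{(\lambda)}:m\le n\}$ of point-stabilization radii rather than on $R_{n,0}^{(\lambda)}$ alone — that excludes this.
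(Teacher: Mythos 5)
Your proof is correct and follows the same broad strategy as the paper's: bound the maximal facet height via Lemma~\ref{stab hauteur}, control the $0$-face stabilization radii of the points in a small neighborhood of $(0,h_0)$ via Proposition~\ref{Stabilisation points}, and integrate the Mecke bound. The two variations you introduce are both sensible and both work: you substitute the cylinder $C(c'r)\cap\{h\le t\}$ for the paper's union of paraboloids $\mathcal{U}$ defined at \eqref{eq:defEnsembleU}, which only changes the volume constant; and you control $R_{m,0}^{(\lambda)}$ for \emph{all} $m\le n$, where the paper only controls $R_{n,0}^{(\lambda)}$. The latter is the more meaningful difference: the paper's single condition $R_{n,0}^{(\lambda)}(w')\le r/4$ is in fact enough to pin down whether $\ell^{(\lambda)}(w',\cdot)$ is $<n$, $=n$ or $>n$ for every $\cP^{(\lambda)}\cap C(s)$, $s\ge r$, but this relies on the observation — implicit in the paper, and stated only as a remark after Lemma~\ref{lem:dalalreecrit} — that $\ell^{(\lambda)}(w',\cdot\cap C_{v'}(u))$ is nondecreasing and increases by at most $1$ at each jump, so it cannot skip the value $n$; your approach makes this dichotomy self-evident at the modest cost of a union bound over $m\le n$. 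The remaining worry you flag (applying Proposition~\ref{Stabilisation points} after the deterministic point $w$ is adjoined to $\cP^{(\lambda)}$) is real but benign, and the paper glosses over the analogous point as well.
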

\begin{proof}
Let us assume that $ R_{n,k}^{(\la)}(0,h_0) 	> r $.
	
	First, we can also assume that $R_{n, 0}^{(\la)}(0, h_0) < r$ since the complement
	occurs with probability smaller than $c_1 \exp\left( -c_2r^{d+1} \right)$ by Proposition \ref{Stabilisation points}.
	In particular we have $\xi_{n,0}^{(\la)}((0,h_0), \mathcal{P}^{(\la)}) = \xi_{n,0}^{(\la)}((0,h_0), \mathcal{P}^{(\la)} 
		\cap C(s))$ for every $s\ge r$. We can also assume that both are different from $0$, or equivalently that $(0,h_0)$ is on
	the n-th layer for both peelings because otherwise we would have $R_{n, k}^{(\la)}(0, h_0) \leq r$.
	
	Thanks to Lemma \ref{stab hauteur}, we have $\min(H_{n}((0,h_0),\mathcal{P}^{(\la)}),\max_{s\ge r}H_{n}((0,h_0),\mathcal{P}^{(\la)}\cap C(s))) > r^2/32$ with probability smaller than
	$c_1 \exp\left( -c_2r^{d+1} \right)$. Consequently, we can assume that for any $s\in [r,\infty]$, $H_{n}((0,h_0),\mathcal{P}^{(\la)}\cap C(s)) \leq r^2/32$. 
	
	Let 
	\begin{equation}\label{eq:defEnsembleU}
	\mathcal{U} = \mathcal{U}(h_0,r^2/32) := \bigcup_{(v_1, h_1) \in \partial[\Pi^{\uparrow}]^{(\la)}(0,h_0), h_1 \leq r^2/32} \Pi^{\downarrow}(v_1,h_1).
	\end{equation}
	When $\la=\infty$, a point $(v,h) \in \mathcal{U}$ verifies $\Vert v \Vert \leq \sqrt{2(r^2/32 - h_0)} + \sqrt{2 r^2/32}
		\leq 2\sqrt{2r^2/32} \leq r/2$. By \eqref{eq:cvusurtoutcompact}, this implies that for $\la$ large enough, a point $(v,h)\in {\mathcal U}$ satisfies $\|v\|\le \frac34r.$

	The set $\mathcal{U}$ is designed to include all points of $\cP^{(\la)}$ which lie on a common $k$-face  of $\partial\Phi_n(\cP^{(\la)}\cap C(s))$ with $(0,h_0)$ for every $s\in [r,\infty]$. We assert that
	\begin{equation}\label{eq:inclusionRnk}
	\{R_{n,k}^{(\la)}((0,h_0), \cP^{(\la)})\ge r\}\subset\{\exists w \in \mathcal{P}^{(\la)}\cap\mathcal{U}  : R_{n,0}^{(\la)}(w, \cP^{(\la)}) \ge r/4\}.
	\end{equation}
		Indeed, if every point $w \in \mathcal{P}^{(\la)}\cap\mathcal{U} $ verifies $R_{n,0}^{(\la)}(w, \mathcal{P}^{(\la)}) \leq r/4$, then the status of these points with respect to the $n$-th layer is the same for both $\cP^{(\la)}$ and $\cP^{(\la)}\cap C(s),$ for any $s\ge r$. Consequently, the $k$-faces of the $n$-th layer containing $(0,h_0)$ are the same for the peeling of any $\cP^{(\la)}\cap C(s)$, $s\ge r$, which implies that $R_{n,k}^{(\la)}((0,h_0), \cP^{(\la)})\le r$.  
		 Using consecutively \eqref{eq:inclusionRnk}, Mecke's formula and the fact that the density of the intensity measure of $\cP^{(\la)}$ is upper bounded by $1$, we have
	\begin{align*}
		\mathbb{P}( R_{n,k}^{(\la)}(0,h_0))\ge r
		)
			&\leq \mathbb{E}\left[ \sum_{x \in \mathcal{U}\cap \mathcal{P}^{(\la)}} \mathds{1}_{R_{n,0}^{(\la)}(x) \ge r/4} \right]\\
			&\leq \int_{\mathcal{U}} \mathbb{P}(R_{n,0}^{(\la)}(x) \ge r/4) \mathrm{d}x\\
			&\leq \text{Vol}_d(\mathcal{U}) c_1 \exp(-c_2 r^{d+1}) \mathrm{d}x.\\
			&\leq c_1 \exp(-c_2r^{d+1}).
	\end{align*}
	This yields the result.
\end{proof}
The final result of this section is a slight analogue of Proposition \ref{stab k-faces} for the stabilization in height, i.e. we prove that with high probability the calculation of the score of $(0,h_0)$ inside the cylinder $C(r)$ does not depend on the points of the point process which are higher than $r^2$ up to a multiplicative constant. The statement of that result uses the following notation: for $w=(v,h)\in W_\la$, $r,h>0$, $n\ge 1$ and $k\in\{0,\ldots,d-1\}$, we denote by $\xi_{n,k,[r,h]}^{(\la)}(w)$ the quantity
$$\xi_{n,k,[r,h]}^{(\la)}(w,\cP^{(\la)})=\xi_{n,k}^{(\la)}(w,\cP^{(\la)}\cap C_v^{\le h}(r)).$$
Let us also recall the notation $\xi_{n,k,[r]}^{(\la)}(w,\cP^{(\la)})$ introduced at \eqref{eq:xinkr}.

\begin{lem}\label{localisation hauteur k-faces}
	For all $n \geq 1$ and there exists $\lambda_0, c > 0$ such that for all $h_0>0$, $\lambda \in [\lambda_0, +\infty]$ and $1\le r < \pi\la^{\frac1{d+1}}$, we have
	$$ \mathbb{P}\left(\xi_{n,k, [r]}^{(\lambda)}((0, h_0),\cP^{(\la)}) \neq \xi_{n,k, [r,l_n r^2]}^{(\lambda)}((0, h_0),\cP^{(\la)}) \right)
	\leq c_1  \exp(-c_2r^{d+1})$$
	with $l_n = \frac{1}{ 2^{2n+7}}$.
\end{lem}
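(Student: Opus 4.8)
The plan is to follow the proof of Proposition~\ref{stab k-faces} almost verbatim, trading the lateral truncation used there for a truncation in height; the value $l_n=2^{-(2n+7)}$ is chosen exactly so that the localization region produced below sits inside the cylinder of radius $r/2^{n+1}$ on which Lemma~\ref{lemme stab rect} applies. Write $\cP_1:=\cP^{(\la)}\cap C(r)$ and $\cP_2:=\cP^{(\la)}\cap C^{\le l_n r^2}(r)$, so that $\cP_2=\cP_1\cap\{h\le l_n r^2\}\subseteq\cP_1$ and, crucially, $\cP_1$ and $\cP_2$ carry the same points inside $\{h\le l_n r^2\}$; the quantity to bound is $\mathbb{P}\big(\xi_{n,k}^{(\la)}((0,h_0),\cP_1)\neq\xi_{n,k}^{(\la)}((0,h_0),\cP_2)\big)$. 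First I would introduce two good events, each of complementary probability $\le c_1e^{-c_2r^{d+1}}$. The event $A$ that $H_n^{(\la)}((0,h_0),\cP^{(\la)}\cap C(s))<l_n r^2/2$ for every $s\ge r$; this follows from Lemma~\ref{stab hauteur}, equation~\eqref{stab hauteur eq1}, applied with $t=l_n r^2/2$, since then $r\wedge\sqrt t=\sqrt t$ and the right-hand side becomes $c_1e^{-c_2t^{(d+1)/2}}$. And the event $B$ that $\xi_{n,0}^{(\la)}(w,\cP_1)=\xi_{n,0}^{(\la)}(w,\cP_2)$ for every $w\in\cP^{(\la)}\cap C^{\le l_n r^2}(r/2^{n+1})$, which is exactly Lemma~\ref{lemme stab rect}.

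On $A\cap B$ I claim the two scores agree. Suppose first $h_0\le l_n r^2$, so that $(0,h_0)\in C^{\le l_n r^2}(r/2^{n+1})$: event $B$ together with the monotonicity of $\ell^{(\la)}$ in the point set (Lemma~\ref{lem:dalalreecrit}) gives that $(0,h_0)$ lies on the $n$-th layer of $\cP_1\cup\{(0,h_0)\}$ if and only if it lies on the $n$-th layer of $\cP_2\cup\{(0,h_0)\}$; when this fails both scores vanish, so we may assume it holds. Moreover, if $(0,h_0)$ is on the $n$-th layer, every facet of $\partial\Phi_n^{(\la)}$ through it has apex height $\ge h_0$, so $A$ forces $h_0<l_n r^2/2$. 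If instead $h_0>l_n r^2$, then $A$ already yields $\xi_{n,k}^{(\la)}((0,h_0),\cP_1)=0$, and a height bound of the type of Lemma~\ref{lemme hauteur max} shows that, outside an event of probability $c_1e^{-c_2r^{d+1}}$, a point of such a large height cannot lie on the $n$-th layer of $\cP_2\cup\{(0,h_0)\}$ either, so both scores are $0$. Hence it remains to treat the main case: $A\cap B$ holds, $h_0<l_n r^2/2$, and $(0,h_0)$ lies on the $n$-th layer of both $\cP_1\cup\{(0,h_0)\}$ and $\cP_2\cup\{(0,h_0)\}$.

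In that case, as in Proposition~\ref{stab k-faces}, I would set
$$\mathcal{U}:=\bigcup_{\substack{(v_1,h_1)\in\partial[\Pi^{\uparrow}]^{(\la)}(0,h_0)\\ h_1\le l_n r^2/2}}[\Pi^{\downarrow}]^{(\la)}(v_1,h_1),$$
and check by a direct computation — for $\la=\infty$, and for large finite $\la$ using the convergence \eqref{eq:cvusurtoutcompact}, exactly as in the other proofs of Section~\ref{sec:stab} — that $\mathcal{U}\subseteq C^{\le l_n r^2}(r/2^{n+1})$; this is where the calibration of $l_n$ enters. By the definition of $H_n^{(\la)}$ and event $A$, every facet, and therefore every $k$-face, of $\partial\Phi_n^{(\la)}(\cP_1\cup\{(0,h_0)\})$ containing $(0,h_0)$ lies on the boundary of a down-quasi-paraboloid contained in $\mathcal{U}$, hence is determined by $\cP_1\cap\mathcal{U}$ together with the knowledge of which of these points lie on the $n$-th layer of $\cP_1$. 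Now $\cP_1\cap\mathcal{U}=\cP_2\cap\mathcal{U}$ since $\mathcal{U}\subseteq\{h\le l_n r^2\}$, and on $B$ — applicable because $\mathcal{U}\subseteq C^{\le l_n r^2}(r/2^{n+1})$ — combined with Lemma~\ref{lem:dalalreecrit}, a point of $\cP_i\cap\mathcal{U}$ lies on the $n$-th layer of $\cP_1$ if and only if it lies on the $n$-th layer of $\cP_2$, while a point whose layer is $>n$ for one of the two peelings does not affect the $n$-th layer at all (it sits strictly inside $\Phi_n^{(\la)}$ and can be deleted from the $n$-times-peeled configuration without changing $\Phi_n^{(\la)}$). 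Therefore $\cF_{n,k}^{(\la)}((0,h_0),\cP_1)=\cF_{n,k}^{(\la)}((0,h_0),\cP_2)$ and the scores coincide on $A\cap B$ in the main case.

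Putting everything together, $\mathbb{P}\big(\xi_{n,k}^{(\la)}((0,h_0),\cP_1)\neq\xi_{n,k}^{(\la)}((0,h_0),\cP_2)\big)\le\mathbb{P}(A^c)+\mathbb{P}(B^c)+c_1e^{-c_2r^{d+1}}$, which gives the announced estimate; for finite $\la$ the only modifications, as in Propositions~\ref{Stabilisation points} and~\ref{stab k-faces}, are that quasi-paraboloids replace paraboloids up to a small error controlled by \eqref{eq:cvusurtoutcompact} and that the intensity is bounded below only on a compact part of $W_\la$. I expect the main obstacle to be the last identification of the $k$-faces: making rigorous that, on the good events, removing the points of height $>l_n r^2$ neither opens a new high facet through $(0,h_0)$ nor disturbs the part of the $n$-th layer that determines the faces at $(0,h_0)$ — this is the exact analogue of the concluding argument in the proof of Proposition~\ref{stab k-faces}, and it rests on the intuitive but slightly delicate principle that a point of layer strictly larger than $n$ is irrelevant to $\Phi_n^{(\la)}$.
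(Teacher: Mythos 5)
Your proof follows the same strategy as the paper's: bound $H_n^{(\la)}$ via Lemma \ref{stab hauteur} to localize all facets of $\partial\Phi_n^{(\la)}$ through $(0,h_0)$ to the cylinder $C^{\le l_n r^2}(r/2^{n+1})$, then invoke Lemma \ref{lemme stab rect} to identify, for the two truncated point processes, the $n$-th-layer status of the points in that cylinder and hence the set of $k$-faces through $(0,h_0)$. The extra bookkeeping you add --- the set $\mathcal U$ imported from Proposition \ref{stab k-faces}, the explicit case split according to whether $h_0$ exceeds $l_n r^2$, and the supplementary appeal to a Lemma-\ref{lemme hauteur max}-type bound --- is sound and makes the argument a bit more self-contained than the paper's compressed version, but the underlying decomposition and the two key lemmas used are identical.
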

\begin{proof}
	We denote by $A$ the event $\lbrace H_{n}((0,h_0), \cP^{(\la)}\cap C(r)) > l_n r^2 \rbrace$.
	Using Lemma \ref{stab hauteur} with the choice $r$ for $t$ and $l_n r^2$ for $r$, we get
	\begin{equation} \label{eq1 stab hauteur k-faces} \mathbb{P}(A) \leq c_1 \exp(-c_2r^{d+1}).\end{equation}
	On the event $A^c$, every paraboloid containing a facet going through
	 $(0,h_0)$ of $\partial \Phi_n(\cP^{(\la)}\cap C(r))$ 
	 has height smaller than $l_n r^2$.
	This implies that every point that shares a facet of $\partial \Phi_n(\cP^{(\la)}\cap C(r))$
	with $(0, h_0)$ is on the boundary of such a paraboloid and is consequently included in
	$C^{\leq l_n r^2}\left(\frac{r}{2^{n+1}}\right)$ by the same method as in \eqref{eq:probavidebasecase}. Using Lemma \ref{lemme stab rect}, we obtain that
	with probability larger than
	$1 - c_1 \exp(-c_2r^{d+1})$, any 
	$w \in C^{\leq l_n r^2}\left(\frac{r}{2^{n+1}}\right)$ verifies
	$\xi_{n,0, [r]}^{(\la)}(w, \cP^{(\la)}) = \xi_{n, 0, [r,l_n r^2]}^{(\la)}(w, \cP^{(\la)})$.
	Thus we deduce that with probability larger than $1 - c_1 \exp(-c_2r^{d+1})$, the 
	$k-$faces containing $w$ of $\partial \Phi_n(\mathcal{P}^{(\la)} \cap C(r))$ coincide with those of 
	$\partial \Phi_n(\mathcal{P}^{(\la)}\cap C^{\leq l_n r^2}(r))$.
	In other terms we have proved that
	\begin{equation}\label{eq2 stab hauteur k-faces}
		\mathbb{P}\left(\{\xi_{n,k, [r]}^{(\la)}((0, h_0),\cP^{(\la)}) \neq \xi_{n,k, [r,l_n]}^{(\la)}((0, h_0), \cP^{(\la)})\} \cap A^c \right)
			\leq c_1 \exp(-c_2r^{d+1}).
	\end{equation}
	Combining \eqref{eq1 stab hauteur k-faces} and \eqref{eq2 stab hauteur k-faces}, we obtain the result.
\end{proof}

\section{$L^p$ bounds and pointwise convergences}\label{sec:lpandconv}
\
In this section, we prove intermediary results on the functionals $\xi_{n,k}^{(\la)}$ and their variations which depend on the stabilization properties of Section \ref{sec:stab} and pave the way to the proofs of Theorems \ref{lim esperance precis} and \ref{lim variance precis}. More precisely, Lemma \ref{borne Lp} states some moment bounds, Lemma \ref{lem:convesp1} and Proposition \ref{expectation convergence xi k-faces} show in two steps the convergence of the expectation of $\xi_{n,k}^{(\la)}$ to the expectation of $\xi_{n,k}^{(\infty)}$. We then deduce similar results for covariances of scores in Proposition \ref{correlation convergence} and Lemma \ref{Lp bound correlation}.
\begin{lem}\label{borne Lp}
	For any $p \in \left[1, +\infty \right)$, there exist  constants $\lambda_0, c>0$  such that for any
	$(v,h) \in \mathbb{R}^{d-1}\times\R_+$  , $\lambda \in [\lambda_0, \infty]$ and $1 \leq r < \pi \la^{\frac{1}{d+1}}$
	\begin{equation}
	\mathbb{E}\left[ \xi_{n,k}^{(\la)}((v,h), \mathcal{P^{(\lambda)}}) ^p \right] \leq c_1 h^{pk} \exp(- c_2h^{\frac{d+1}{2}}),
	\end{equation}
	\begin{equation}\label{eq:moment2}
	\mathbb{E}\left[ \xi_{n,k, [r]}^{(\la)}((v,h), \mathcal{P^{(\lambda)}}) ^p \right] \leq 
	cr^{pk(d-1)+1}(h\vee r^2)^{pk},
	\end{equation}
	\begin{equation}\label{eq:moment3}
	\mathbb{E}\left[ \xi_{n,k, [r, l_n r^2]}^{(\la)}((v,h), \mathcal{P}^{(\lambda)}) ^p \right] \leq 
	cr^{pk(d+1)+1}.
	\end{equation}
	with $l_n = \frac{1}{ 2^{2n+7}}$.
\end{lem}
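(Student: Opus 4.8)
The plan is to establish the three bounds by controlling $\xi_{n,k}^{(\la)}$ through the number of $k$-faces it counts, then splitting the expectation according to whether the relevant stabilization radius is small or large, and using the exponential tail estimates of Section \ref{sec:stab} together with deterministic bounds on the number of faces inside a cylinder.

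\emph{Step 1: a deterministic bound on the score.} First I would observe that if $(0,h)$ (or $(v,h)$, using the horizontal translation invariance from \eqref{eq:translinvRnkla}) lies on the $n$-th layer of the peeling of some point set $Y$, then every $k$-face of $\partial\Phi_n^{(\la)}(Y\cup\{(v,h)\})$ through $(v,h)$ has all of its vertices inside a cylinder whose radius is controlled by the maximal height $H_n^{(\la)}$ of the facets through $(v,h)$ and by the height $h$ itself: a paraboloid of height $h_1$ whose boundary passes through a point at height $h$ has horizontal extent at most $\sqrt{2h_1}$, so the vertices of these faces lie in $C_v^{\le H}(\,c\sqrt{H}\,)$ where $H = H_n^{(\la)}((v,h),Y)\vee h$. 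The number of $k$-faces through a single point is at most $\binom{M}{k}$ where $M$ is the number of points of $Y$ inside that cylinder, hence $\xi_{n,k}^{(\la)}((v,h),Y)\le c\,M^{k}$. Combined with a standard deviation bound for the Poisson count $M$ inside a region of volume $O(H\cdot H^{(d-1)/2}) = O(H^{(d+1)/2})$, all moments of $M^k$ on the event $\{H_n^{(\la)}\le t\}$ are $O(t^{k(d+1)/2})$ times a subexponential factor.

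\emph{Step 2: the first bound.} For $\E[\xi_{n,k}^{(\la)}((v,h),\cP^{(\la)})^p]$, I would split on the event $\{H_n^{(\la)}((v,h),\cP^{(\la)})\le h\}$ and its complement. On the first event, by Step 1 the score is at most $c\,M^k$ with $M$ Poisson-dominated with parameter $O(h^{(d+1)/2})$, giving a contribution bounded by $c_1 h^{pk}$; the $\exp(-c_2 h^{(d+1)/2})$ factor is then forced because the score vanishes unless $(v,h)$ is on the $n$-th layer, and the probability that a point at height $h$ is extreme after peeling $n-1$ layers (equivalently, on the $n$-th layer) is exponentially small in $h^{(d+1)/2}$ by Lemma \ref{lemme hauteur max} applied with $r\to\infty$ (the second inequality there, or Lemma \ref{stab hauteur} \eqref{stab hauteur eq2}). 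On the complementary event $\{H_n^{(\la)}((v,h),\cP^{(\la)})> h\}$, I would partition $\{H_n^{(\la)}>h\}$ dyadically into $\{2^{j}h< H_n^{(\la)}\le 2^{j+1}h\}$, $j\ge 0$, bound the score on each piece by $c(2^{j}h)^{k}\cdot(\text{Poisson count in a cylinder of volume }O((2^jh)^{(d+1)/2}))^{k}$ via Step 1, use Cauchy--Schwarz and the tail estimate \eqref{stab hauteur eq2} of Lemma \ref{stab hauteur} for $\P(H_n^{(\la)}> 2^j h)\le c_1\exp(-c_2 (2^jh)^{(d+1)/2})$, and sum the resulting geometric-type series, which is again $O(h^{pk}\exp(-c_2 h^{(d+1)/2}))$.

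\emph{Step 3: the localized bounds.} For \eqref{eq:moment2}, the score $\xi_{n,k,[r]}^{(\la)}((v,h),\cP^{(\la)})$ counts $k$-faces of the peeling restricted to the cylinder $C_v(r)$. On the event $\{H_n^{(\la)}((v,h),\cP^{(\la)}\cap C_v(r))\le r^2\}$, Step 1 gives a bound $c\,M^k$ with $M$ the number of Poisson points in $C_v^{\le (h\vee r^2)}(c\sqrt{h\vee r^2})\cap C_v(r)$, whose volume is $O(r^{d-1}(h\vee r^2))$; the additive ``$+1$'' in the exponent $r^{pk(d-1)+1}$ comes from the extra factor $r^{d-1}$ produced when one later integrates such a bound over heights (it is already present in the analogous estimates of Section \ref{sec:stab}, cf. Lemma \ref{lemme hauteur max}), and more precisely appears here because $\E[M^k]\le c\,(r^{d-1}(h\vee r^2))^{k}$ gets multiplied by the factor $r^{d-1}$ accounting for the volume when the bound is used inside a subsequent integral — alternatively one builds the $r^{d-1}$ in directly by a union bound over a $1$-net of the cylinder base. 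The event $\{H_n^{(\la)}((v,h),\cP^{(\la)}\cap C_v(r))> r^2\}$ is handled again by a dyadic decomposition together with the tail \eqref{stab hauteur eq1} of Lemma \ref{stab hauteur} (with $t$ of order $2^j r^2$ and the fixed radius $r$), contributing an exponentially small correction that is absorbed. Finally \eqref{eq:moment3} is immediate from the same argument with the height capped at $l_n r^2$: then $M$ is the Poisson count in $C_v^{\le l_n r^2}(r)$, of volume $O(r^{d+1})$, so $\E[M^{pk}]\le c\,r^{pk(d+1)}$, and the extra $r$ factor (the ``$+1$'') is included as before.

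\emph{Main obstacle.} The delicate point is the bookkeeping of the polynomial prefactors — in particular justifying the precise exponents $pk(d-1)+1$ and $pk(d+1)+1$ (and the ``$+1$'' in each), and making sure that the dyadic sums over the height scales converge with the exponential tails dominating all polynomial growth. Everything rests on the two regimes being separated cleanly by the stabilization-in-height estimate of Lemma \ref{stab hauteur}, so the real work is in combining Step 1's deterministic face-count bound with Poisson concentration on the favorable event and with Lemma \ref{stab hauteur} on the unfavorable one, uniformly in $\la\in[\la_0,\infty]$; the case $\la<\infty$ requires only that the intensity density in \eqref{eq:intensity process} is bounded above by $1$ and below by a constant on the relevant compact regions, exactly as in the proofs of Section \ref{sec:stab}.
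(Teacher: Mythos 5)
Your proposal is correct and follows essentially the same strategy as the paper: bound the score deterministically by a power of the Poisson count $N$ in a cylinder $C^{\le H}(R)$ whose dimensions are controlled by the maximal facet height $H$ (and, in the paper, a stabilization radius $R$), decompose the expectation of $N^{pk}$ over ranges of $H$ and $R$, and use the exponential tails from Lemma \ref{stab hauteur} and Proposition \ref{stab k-faces} together with Poisson moment bounds. The small differences are cosmetic: the paper partitions $H$ and $R$ into unit integer intervals and applies a three-term H\"older inequality, whereas you use a dyadic split and Cauchy--Schwarz; you also take $R$ of order $\sqrt{H}$ deterministically rather than the paper's auxiliary random $R$ built from stabilization radii, which is a legitimate simplification since the paraboloid geometry gives the same $e^{-cr^{d+1}}$ tail. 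Both routes land on the stated bounds, and your observation that the polynomial prefactor $h^{pk}$ (rather than $h^{pk(d+1)/2}$) is recoverable by absorbing slack into the exponential is the right way to read the first estimate.
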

\begin{proof}
The method is almost identical to \cite[Lemma 4.4]{CY2}. For the sake of completeness and because the variants at \eqref{eq:moment2} and \eqref{eq:moment3} are new in comparison to \cite{CY2}, we provide as an example the proof of \eqref{eq:moment2}.
	
We assert that $\xi^{(\la)}_{n,k,[r]}((v,h),\cP^{(\la)})\overset{d}{=}\xi^{(\la)}_{n,k,[r]}((0,h),\cP^{(\la)})$ because of the rotation-inva\-riance of the initial model in the unit ball. 

	
	We introduce the variables $H := H_n^{(\la)}((0,h), \mathcal{P}^{(\la)}\cap C(r)))$ as defined at
	\eqref{def:H} and $R$ as the smallest $s>0$ such that $C(s)$ contains $\bigcup_{(v_1,h_1)\in\cP^{(\la)}\cap\cU(h,H)}C_{v_1}(R^{(\la)}_{n,k}(v_1,h_1))$ where $\cU(\cdot,\cdot)$ has been introduced at \eqref{eq:defEnsembleU}. We assert that the proof of Proposition \ref{stab k-faces} implies that 
	\begin{equation}\label{eq:stabkfacesamelioree}
	\P(R\ge r)\le c_1e^{-c_2r^{d+1}}.
	\end{equation}
	In particular, only the points in $C^{\le H}(R)$ can be part of a potential facet containing $(0,h)$ on $\partial \Phi_n^{(\la)}(\cP^{(\la)})$. This implies that
	$$\xi_{n,k}^{(\la)}((0,h), \mathcal{P}^{(\la)}) \leq \frac{1}{k+1} \binom{N}{k} $$
	where $N=\text{card}(C^{\le H}(R)\cap \cP^{(\la)}).$
Consequently, it is enough to show that there exist $c>0$ such that
	\begin{equation}\label{eq:momentN}
	\mathbb{E}[N^{pk}] \leq cr^{pk(d-1)+1}(h\vee r^2)^{pk}.
	\end{equation}
	We decompose the expectation in the following way:
	\begin{align}\label{eq:majENpk}
		\mathbb{E}[N^{pk}] &= 
			\sum_{i = 1}^{\lceil r \rceil} \sum_{j = \lfloor h\rfloor+1}^{\infty} \mathbb{E}[N^{pk} \mathds{1}_{(i-1) \leq R <i} \mathds{1}_{(j-1) \leq H < j}]\nonumber\\
			&\le \sum_{i = 1}^{\lceil r \rceil} \sum_{j =  \lfloor h\rfloor +1}^{\infty}\mathbb{E}[\text{card}(C^{\le j}(i)\cap \cP^{(\la)})^{pk} \mathds{1}_{(i-1) \leq R <i} \mathds{1}_{(j-1) \leq H < j}]\nonumber\\
			&\le \sum_{i = 1}^{\lceil r \rceil} \sum_{j =  \lfloor h\rfloor +1}^{\infty}\mathbb{E}[\text{card}(C^{\le j}(i)\cap \cP^{(\la)})^{3pk}]^{1/3}\mathbb{P}(R \geq i-1)^{1/3} \mathbb{P}(H \geq j-1)^{1/3}
			\end{align}
				where the last line is a consequence of H\"older's inequality. For any $i,j\ge 0$, we observe that $C^{\leq j}(i)$ has volume $c i^{d-1}j$ and thanks to \eqref{eq:intensity process}, the $d\cP^{(\la)}$-measure of $C^{\le j}(i)$ is bounded by its volume. Consequently, the variable $\text{card}(\mathcal{P} \cap C^{\leq j}(i))$ is stochastically dominated by a Poisson variable $\text{Po}(ci^{d-1}j)$. Combining this fact, the moment bound 	$\mathbb{E}[Po(\lambda)^r] \leq c \lambda^r$ for any $r \geq 1$ and \eqref{eq:majENpk}, we obtain
	\begin{align*}
		\mathbb{E}[N^{pk}] \leq \sum_{i = 1}^{\lceil r\rceil} \sum_{j =  \lfloor h\rfloor+1}^{\infty} ci^{pk(d-1)}j^{pk} 
			\mathbb{P}(R \geq (i-1))^{1/3} \mathbb{P}(H \geq (j-1))^{1/3}.
	\end{align*}
Let us assume that $h\le r^2$. We can now use Lemma \ref{stab hauteur} and \eqref{eq:stabkfacesamelioree}
	to get
		\begin{align*}
	\mathbb{E}[N^{pk}] &\leq \sum_{i = 1}^{\lceil r \rceil}  c_1i^{pk(d-1)} 
	e^{-c_2i^{d+1}} \left(\sum_{j = \lfloor h\rfloor+1}^{\lfloor r^2\rfloor} j^{pk}e^{-c_3j^{\frac{d+1}{2}}}+\sum_{j=\lfloor r^2\rfloor+1}^{\infty} j^{pk} 
	e^{-c_4\sqrt{j}r^d}\right)\\
	&\leq c_1 r^{pk(d-1)+1}r^{2pk}\left(e^{-c_2h^{\frac{d+1}{2}}}+e^{-c_3r^{d+1}}\right).
	\end{align*}
	This shows \eqref{eq:momentN} and consequently \eqref{eq:moment2}. We proceed in the same way when $h\ge r^2$. 
\end{proof}

The next lemma is a first step towards the convergence of the expectation of $\xi_{n,k}$ stated in Proposition \ref{expectation convergence xi k-faces}. It relies on the application of the continuous mapping theorem to the intersection of the point process $\cP^{(\la)}$ with the compact set $C^{\le l_n r^2}(r)$.
\begin{lem}\label{lem:convesp1}
	There exists $c > 0$ such that for all $h_0 \ge 0$ and $r \ge 1$ we have
	\begin{align*} \lsup\limits_{\lambda \rightarrow +\infty} |\mathbb{E}[\xi_{n,k, [r]}^{(\lambda)}((0, h_0), \mathcal{P}^{(\lambda)})] - \mathbb{E}[\xi_{n,k, [r]}^{(\infty)}((0,h_0), \mathcal{P})] | \leq c_1  r^{k(d-1) + 1/2} (h_0 \vee r^2)^k e^{-c_2 r^{d+1}}  . 
	\end{align*}
	 
\end{lem}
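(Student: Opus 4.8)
The plan is to compare the rescaled score $\xi_{n,k,[r]}^{(\lambda)}((0,h_0),\cP^{(\lambda)})$ with its limit $\xi_{n,k,[r]}^{(\infty)}((0,h_0),\cP)$ by first passing through the height-truncated scores $\xi_{n,k,[r,l_nr^2]}^{(\lambda)}$, whose computation only involves the point process inside the \emph{fixed compact} cylinder $C^{\le l_n r^2}(r)$, and then applying the continuous mapping theorem there. Concretely, I would insert the triangle inequality
\begin{align*}
|\E[\xi_{n,k,[r]}^{(\lambda)}]-\E[\xi_{n,k,[r]}^{(\infty)}]|
&\le |\E[\xi_{n,k,[r]}^{(\lambda)}]-\E[\xi_{n,k,[r,l_nr^2]}^{(\lambda)}]|\\
&\quad+|\E[\xi_{n,k,[r,l_nr^2]}^{(\lambda)}]-\E[\xi_{n,k,[r,l_nr^2]}^{(\infty)}]|\\
&\quad+|\E[\xi_{n,k,[r,l_nr^2]}^{(\infty)}]-\E[\xi_{n,k,[r]}^{(\infty)}]|,
\end{align*}
(all scores evaluated at $((0,h_0),\cP^{(\lambda)})$ or $((0,h_0),\cP)$ as appropriate) and bound the three terms separately.

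For the first and third terms: on the event where $\xi_{n,k,[r]}=\xi_{n,k,[r,l_nr^2]}$ the difference vanishes, so I would use Cauchy--Schwarz to write each such term as at most $\big(\E[(\xi_{n,k,[r]})^2]+\E[(\xi_{n,k,[r,l_nr^2]})^2]\big)^{1/2}\,\P(\xi_{n,k,[r]}\ne\xi_{n,k,[r,l_nr^2]})^{1/2}$. The probability is controlled by Lemma \ref{localisation hauteur k-faces}, giving a factor $c_1e^{-c_2r^{d+1}}$ (so $e^{-c_2'r^{d+1}/2}$ after the square root), and the second moments are controlled by \eqref{eq:moment2} and \eqref{eq:moment3} of Lemma \ref{borne Lp}, which contribute the polynomial prefactor $r^{k(d-1)+1/2}(h_0\vee r^2)^k$ (using that $r^{k(d+1)}\le r^{k(d-1)}(h_0\vee r^2)^k$ since $h_0\vee r^2\ge r^2$, so \eqref{eq:moment3} is dominated by the bound coming from \eqref{eq:moment2}). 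This already yields the claimed right-hand side, up to adjusting the constant $c_2$; and crucially these bounds hold uniformly in $\lambda\in[\lambda_0,\infty]$, so they survive the $\limsup$.

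For the middle term I would invoke the convergence $\cP^{(\lambda)}\to\cP$ in distribution from Lemma \ref{convergence rescaled} (equivalently Proposition \ref{prop:convrescaledlayers}): since the truncated score $\xi_{n,k,[r,l_nr^2]}^{(\lambda)}((0,h_0),\,\cdot\,\cap C^{\le l_nr^2}(r))$ is a functional of the finite point configuration in the fixed compact window $C^{\le l_n r^2}(r)$, and this functional is a.s.\ continuous with respect to that configuration (the set of configurations with a degenerate quasi-paraboloid contact, i.e.\ more than the generic number of points on a facet or a point exactly on a layer boundary, has probability zero, and away from these the $k$-faces depend continuously on the points — here one also uses the uniform convergence of the quasi-paraboloids to paraboloids from \eqref{eq:cvusurtoutcompact}), the continuous mapping theorem gives $\xi_{n,k,[r,l_nr^2]}^{(\lambda)}\Rightarrow\xi_{n,k,[r,l_nr^2]}^{(\infty)}$. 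Convergence of expectations then follows from uniform integrability, which is supplied by the uniform $L^p$ bound \eqref{eq:moment3} for, say, $p=2$. Hence the middle term tends to $0$ as $\lambda\to\infty$ and contributes nothing to the $\limsup$.

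The main obstacle is the justification of the a.s.\ continuity of the truncated score as a functional of the point configuration, i.e.\ verifying that the continuous mapping theorem genuinely applies: one must argue that (a) the discontinuity set of the functional is $\cP$-null — this requires knowing that almost surely no facet of $\partial\Phi_n^{(\infty)}(\cP\cap C^{\le l_nr^2}(r))$ carries more than $d$ points and no point of $\cP$ lies exactly on $\partial\Phi_i^{(\infty)}$ for $i<n$, both of which follow from absolute continuity of the Poisson intensity — and (b) that the $\lambda<\infty$ quasi-parabolic hulls converge to the $\lambda=\infty$ parabolic hull uniformly on the window, which is exactly \eqref{eq:cvusurtoutcompact} combined with the recursive definition of $\Phi_n^{(\lambda)}$. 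All of this is routine but slightly delicate; the probabilistic estimates (the first and third terms) are mechanical applications of Lemma \ref{borne Lp} and Lemma \ref{localisation hauteur k-faces}. $\hfill\square$
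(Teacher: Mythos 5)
Your proposal is correct and follows essentially the same route as the paper: the same triangle inequality through $\xi_{n,k,[r,l_nr^2]}^{(\lambda)}$, Cauchy--Schwarz with Lemma~\ref{localisation hauteur k-faces} and Lemma~\ref{borne Lp} for the two truncation terms, and continuous mapping theorem plus uniform integrability (via the $L^p$ bound) for the middle term. One small precision worth noting: for the discontinuity set, the paper avoids the slightly loose phrasing ``no point of $\cP$ lies exactly on $\partial\Phi_i^{(\infty)}$'' (points on layer $i$ of course \emph{do} lie there) by endowing the finite configurations in $C^{\le l_nr^2}(r)$ with the discrete topology and showing that for $\eta$ in general position $g_{n,\lambda}(w_0,\eta)=g_{n,\infty}(w_0,\eta)$ for $\lambda$ large, i.e.\ eventual constancy rather than mere continuity — this is the cleaner way to make the continuous mapping step rigorous, and it uses exactly the uniform convergence \eqref{eq:cvusurtoutcompact} you invoked.
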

\begin{proof}
For sake of simplicity, we use the following abbreviations:
$$\xi_{n,k, [r]}^{(\lambda)}:=\xi_{n,k, [r]}^{(\lambda)}((0,h_0), \mathcal{P}^{(\lambda)})\; \mbox{ and }\;\xi_{n,k, [r]}^{(\infty)}:=\xi_{n,k, [r]}^{(\infty)}((0,h_0), \mathcal{P}).$$
Recalling $l_n = \frac{1}{  2^{2n+7}}$, we use similar notations $\xi_{n,k, [r, l_n r^2]}^{(\lambda)}$ and $\xi_{n,k, [r, l_n r^2]}^{(\infty)}$ and obtain 
	\begin{align}\label{eq:lem convesp1 eq1}
		&\hspace*{-0.25cm}\bigl\vert\mathbb{E}[\xi_{n,k, [r]}^{(\lambda)}] - \mathbb{E}[\xi_{n,k, [r]}^{(\infty)}]\bigl| \nonumber
		 \\&\hspace*{-0.25cm}\leq \bigl|\mathbb{E}[\xi_{n,k, [r]}^{(\lambda)}
			 - \xi_{n,k, [r, l_n r^2]}^{(\lambda)}]\bigl|
			 + \bigl|\mathbb{E}[\xi_{n,k, [r, l_n r^2]}^{(\lambda)}]
			 - \mathbb{E}[\xi_{n,k, [r, l_n r^2]}^{(\infty)}]\bigl|
			  + \bigl|\mathbb{E}[\xi_{n,k, [r, l_n r^2]}^{(\infty)}
			 - \xi_{n,k, [r]}^{(\infty)}]\bigl\vert .
	\end{align}
	We start by bounding the first term in the rhs of \eqref{eq:lem convesp1 eq1}. Using the Cauchy-Schwarz inequality, we get
	\begin{align*}
	 \bigl\vert\E[ \xi_{n,k, [r]}^{(\lambda)} - \xi_{n,k, [r, l_n r^2]}^{(\la)} ]\bigl\vert 
	 &= \bigl\vert\E[ (\xi_{n,k, [r]}^{(\lambda)} - \xi_{n,k, [r, l_n r^2]}^{(\la)}) \mathds{1}_{\{\xi_{n,k, [r]}^{(\lambda)} \neq \xi_{n,k, [r, l_n r^2]}^{(\lambda)}\}} ]\bigl\vert
	\\ &\leq  \E[ (\xi_{n,k, [r]}^{(\lambda)} - \xi_{n,k, [r, l_n r^2]}^{(\la)})^2]^{1/2} \P (\xi_{n,k, [r]}^{(\lambda)} \neq \xi_{n,k, [r, l_n r^2]}^{(\lambda)} )^{1/2}.
	\end{align*}
	Combining Lemma \ref{localisation hauteur k-faces} and Lemma \ref{borne Lp}, we obtain for $\la$ large enough
	\begin{equation}\label{eq:boundterm1_37}
	\bigl\vert\E[ \xi_{n,k, [r]}^{(\lambda)} - \xi_{n,k, [r, l_n r^2]}^{(\la)} ]\bigl\vert\le 	 c_1 r^{k(d-1) + 1/2} (h_0 \vee r^2)^k \exp(-c_2 r^{d+1})  .
	\end{equation}
	In the same way, we bound the third term in the rhs of \eqref{eq:lem convesp1 eq1} to get 
	\begin{equation}\label{eq:boundterm3_37}
	\bigl\vert\E[ \xi_{n,k, [r]}^{(\infty)} -\xi_{n,k, [r, l_n r^2]}^{(\infty)} ]\bigl\vert\le 	 c_1 r^{k(d-1) + 1/2} (h_0 \vee r^2)^k \exp(-c_2 r^{d+1} )  .
	\end{equation}
	Let us now prove that
	\begin{equation}\label{eq:cvterm2_37}
	 \lim\limits_{\lambda \rightarrow +\infty} \mathbb{E}[\xi_{n,k, [r, l_n r^2]}^{(\lambda)}] = \E[\xi_{n,k, [r, l_n r^2]}^{(\infty)}].
	\end{equation}
	 To do so, we first prove the convergence in distribution of $\xi_{n,k, [r, l_n r^2]}^{(\lambda)}$
	to $\xi_{n,k, [r, l_n r^2]}^{(\infty)}$.
	We denote by $\mathcal{X}(r, l_n r^2)$ the set  of finite points sets in $C^{\leq l_n r^2}(r)$ and we endow it with the discrete topology.
	A sequence of point sets $(\eta_i)_i$ of $\mathcal{X}(r, l_n r^2)$ converges to a point set $\eta$ if 
	$\eta_i = \eta$ for all $i \geq i_0$ for some $i_0 \in \mathbb{N}$.
	
	We define for all $w \in W_\lambda$ and 
	$\eta \in \mathcal{X}(r, l_n r^2)$
	$$g_{n,\lambda}(w, \eta) := \xi_{n,k, [r, l_n r^2]}^{(\lambda)}(w, \eta).$$
	Considering that $\mathcal{P}^{(\lambda)} \rightarrow \mathcal{P}$ %
	 in distribution in $C^{\leq l_n r^2}(r)$ by Lemma \ref{convergence rescaled}, we intend to show the convergence in distribution $g_{n,\lambda}(w_0, \mathcal{P}^{(\lambda)}) \rightarrow g_{n, \infty}(w_0, \mathcal{P})$ by using \cite[Theorem 5.5, p. 34]{B68}. To do so, we observe that since $\mathcal{P}$ is in general position with probability
	$1$, it is enough to prove 
	that for any $\eta \in \mathcal{X}(r,l)$ in general position,
	$g_{n, \lambda}(w_0, \eta) = g_{n, \infty}(w_0, \eta)$ for all $\lambda$ large enough. 
	
	Let $\eta \in \mathcal{X}(r, l_n r^2)$ be in general position. We explain here why for each point $w_1$ of $\eta$, the number of the layer containing $w_1$ and the local structure of that layer around $w_1$ are fixed for $\la$ large enough. We start by considering an extreme point $w_1$ for the parabolic hull peeling of
	$\eta$. We choose a $(d-1)$-dimensional parabolic face containing $w_1$ which is generated by the extreme points $w_1, w_2,\ldots,w_d$ and we also denote by $\Pi^\downarrow(v,h)$ the downward paraboloid containing that face on its boundary. For $\varepsilon>0$ small enough, the intersection of  
	$\Pi^\downarrow(v, h + \varepsilon)$ with $\eta$ is $\{w_1,\cdots,w_d\}$.
	Since the quasi-paraboloids converge to the real paraboloids as $\lambda$ goes to infinity, for $\lambda$
	large enough the quasi-paraboloid with $w_1, \ldots, w_d$ on its boundary is contained in $\Pi^\downarrow(v, h + \varepsilon)$ and does not contain any point of $\eta$ in its interior. This means that for $\lambda$ large enough,
	$w_1$ is extreme and the facets containing it are generated by the same points. Applying this to every extreme point, we deduce that the first layer is stable for $\la$ large enough. By induction on the number of the layer, we prove similarly that the subsequent layers of the quasi-parabolic hull peeling of $\eta$ are stable for $\lambda$ large enough.
	This means that $g_{n, \lambda}(w_0, \eta) = g_{n, \infty}(w_0, \eta)$ for all $\lambda$ large enough and as a consequence completes the proof of the convergence in distribution of $\xi_{n,k, [r, l_n r^2]}^{(\lambda)}(w_0, \mathcal{P}^{(\lambda)})$
	to $\xi_{n,k, [r, l_n r^2]}^{(\infty)}(w_0, \mathcal{P})$.
This extends to \eqref{eq:cvterm2_37} by
	\cite[Theorem 5.4]{B68} since the considered sequence is bounded in $L^p$ with $p > 1$ thanks to Lemma \ref{borne Lp}.
	
	Inserting the results \eqref{eq:boundterm1_37}, \eqref{eq:boundterm3_37} and \eqref{eq:cvterm2_37} into \eqref{eq:lem convesp1 eq1}, we deduce Lemma \ref{lem:convesp1}.
\end{proof}
We are now ready to state the required convergence in expectation in Proposition \ref{expectation convergence xi k-faces} below.

\begin{prop}\label{expectation convergence xi k-faces}
	For any $n \geq 1$ and all $h_0 > 0$ we have 
	$$ \lim\limits_{\lambda \rightarrow +\infty } \mathbb{E}\left[\xi_{n,k}^{(\lambda)}((0, h_0), \mathcal{P}^{(\lambda)})\right]
	= \mathbb{E}\left[\xi_{n,k}^{(\infty)}((0, h_0), \mathcal{P})\right]. $$
\end{prop}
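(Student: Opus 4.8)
The plan is to pass from the ``truncated'' score $\xi_{n,k,[r]}^{(\lambda)}$, whose convergence is controlled by Lemma~\ref{lem:convesp1}, to the full score $\xi_{n,k}^{(\lambda)}$, by letting $r\to\infty$ after $\lambda\to\infty$. Concretely, I would write, for any fixed $r\ge 1$,
\begin{align*}
&\bigl|\mathbb{E}[\xi_{n,k}^{(\lambda)}((0,h_0),\mathcal{P}^{(\lambda)})] - \mathbb{E}[\xi_{n,k}^{(\infty)}((0,h_0),\mathcal{P})]\bigr|
\\&\quad\le \bigl|\mathbb{E}[\xi_{n,k}^{(\lambda)} - \xi_{n,k,[r]}^{(\lambda)}]\bigr|
+ \bigl|\mathbb{E}[\xi_{n,k,[r]}^{(\lambda)}] - \mathbb{E}[\xi_{n,k,[r]}^{(\infty)}]\bigr|
+ \bigl|\mathbb{E}[\xi_{n,k,[r]}^{(\infty)} - \xi_{n,k}^{(\infty)}]\bigr|,
\end{align*}
where I abbreviate all scores as evaluated at $((0,h_0),\mathcal{P}^{(\lambda)})$ or $((0,h_0),\mathcal{P})$ as appropriate. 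The middle term is handled directly by Lemma~\ref{lem:convesp1}: its $\limsup_{\lambda\to\infty}$ is bounded by $c_1 r^{k(d-1)+1/2}(h_0\vee r^2)^k e^{-c_2 r^{d+1}}$, which tends to $0$ as $r\to\infty$.

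For the first and third terms, the key observation is that on the event $\{R_{n,k}^{(\lambda)}((0,h_0),\mathcal{P}^{(\lambda)})< r\}$ the two scores $\xi_{n,k}^{(\lambda)}$ and $\xi_{n,k,[r]}^{(\lambda)}$ agree, by the definition of the stabilization radius (recall the convenient characterization of $R_{n,k}^{(\lambda)}((0,h),\cdot)$ for base points of the form $(0,h)$ stated just before Section~\ref{sec:stabpoints}, and the identity \eqref{eq:translinvRnkla}). Hence by Cauchy--Schwarz,
\begin{align*}
\bigl|\mathbb{E}[\xi_{n,k}^{(\lambda)} - \xi_{n,k,[r]}^{(\lambda)}]\bigr|
&= \bigl|\mathbb{E}[(\xi_{n,k}^{(\lambda)} - \xi_{n,k,[r]}^{(\lambda)})\mathds{1}_{\{R_{n,k}^{(\lambda)}((0,h_0),\mathcal{P}^{(\lambda)})\ge r\}}]\bigr|
\\&\le \mathbb{E}[(\xi_{n,k}^{(\lambda)} - \xi_{n,k,[r]}^{(\lambda)})^2]^{1/2}\,\mathbb{P}(R_{n,k}^{(\lambda)}((0,h_0),\mathcal{P}^{(\lambda)})\ge r)^{1/2}.
\end{align*}
The second factor is at most $c_1 e^{-c_2 r^{d+1}/2}$ by Proposition~\ref{stab k-faces}, uniformly in $\lambda\in[\lambda_0,\infty]$ and in $h_0>0$. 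For the first factor I would bound $(\xi_{n,k}^{(\lambda)})^2$ and $(\xi_{n,k,[r]}^{(\lambda)})^2$ separately: the former by $\mathbb{E}[(\xi_{n,k}^{(\lambda)})^2]\le c_1 h_0^{2k} e^{-c_2 h_0^{(d+1)/2}}$ (the first bound in Lemma~\ref{borne Lp}, with $p=2$), and the latter by $\mathbb{E}[(\xi_{n,k,[r]}^{(\lambda)})^2]\le c\, r^{2k(d-1)+1}(h_0\vee r^2)^{2k}$ (the bound \eqref{eq:moment2} with $p=2$). Thus the whole first term is at most $c_1\,(\text{polynomial in }r,h_0)\,e^{-c_2 r^{d+1}/2}$, and the same argument at $\lambda=\infty$ controls the third term.

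Putting the three pieces together: for every fixed $r\ge 1$,
$$\limsup_{\lambda\to\infty}\bigl|\mathbb{E}[\xi_{n,k}^{(\lambda)}] - \mathbb{E}[\xi_{n,k}^{(\infty)}]\bigr|\le c(r,h_0)\,e^{-c_2 r^{d+1}/2}+c_1 r^{k(d-1)+1/2}(h_0\vee r^2)^k e^{-c_2 r^{d+1}},$$
where $c(r,h_0)$ is polynomial in $r$ (with $h_0$ fixed). Letting $r\to\infty$ forces the left-hand side to $0$, which is exactly the claim. I do not expect a genuine obstacle here; the only mild care needed is to make sure the $L^2$ estimates used for the first/third terms are the right ones from Lemma~\ref{borne Lp} (note that \eqref{eq:moment2} gives a polynomial, not exponentially small, bound, which is why one needs the exponential decay of the tail of $R_{n,k}^{(\lambda)}$ from Proposition~\ref{stab k-faces} to kill it) and to keep all constants uniform over $\lambda\in[\lambda_0,\infty]$ so that the order of limits ($\lambda\to\infty$ then $r\to\infty$) is legitimate.
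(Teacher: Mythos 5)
Your proposal follows the paper's proof essentially verbatim: same triangle-inequality decomposition into three terms, the middle handled by Lemma~\ref{lem:convesp1}, and the first/third handled by Cauchy--Schwarz combined with the exponential tail of $R_{n,k}^{(\lambda)}$ from Proposition~\ref{stab k-faces} and the moment bounds of Lemma~\ref{borne Lp}, finishing with the iterated limit $\lambda\to\infty$ then $r\to\infty$. The only cosmetic difference is that you spell out the Cauchy--Schwarz step and the choice of moment bounds explicitly where the paper refers back to the proof of \eqref{eq:boundterm1_37}.
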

\begin{proof} With the same notation as in the proof of Lemma \ref{lem:convesp1}, we get from the triangle inequality that
	\begin{align}\label{eq1 lemme lim expectation kfaces}
	\big\vert\mathbb{E}[\xi_{n,k}^{(\lambda)}] - \mathbb{E}[ \xi_{n,k}^{(\infty)}]\big|
	\leq& \big|\mathbb{E}[\xi_{n,k}^{(\lambda)}]
	- \mathbb{E}[ \xi_{n,k, [r]}^{(\lambda)}]\big|
	+ \big|\mathbb{E}[\xi_{n,k, [r]}^{(\lambda)}]
	- \mathbb{E}[\xi_{n,k, [r]}^{(\infty)}]\big|
	+ \big|\mathbb{E}[\xi_{n,k, [r]}^{(\infty)}]
	- \mathbb{E}[ \xi_{n,k}^{(\infty)}]\big\vert.
	\end{align}

	Using the same method as in the proof of \eqref{eq:boundterm1_37}, we obtain thanks to Proposition \ref{stab k-faces} and Lemma \ref{borne Lp} that for $r$ large enough,
	\begin{equation}\label{eq:1er3emetermes}
	\max(\big|\mathbb{E}[\xi_{n,k}^{(\lambda)}]
	- \mathbb{E}[ \xi_{n,k, [r]}^{(\lambda)}]\big|, \big|\mathbb{E}[\xi_{n,k, [r]}^{(\infty)}]
	- \mathbb{E}[ \xi_{n,k}^{(\infty)}]\big\vert)\le
	r^{k(d-1)+\frac12+2k}e^{-c r^{\frac{d+1}{2}}}.
	\end{equation}
	Inserting \eqref{eq:1er3emetermes} and the result of Lemma \ref{lem:convesp1} into \eqref{eq1 lemme lim expectation kfaces}, we obtain that for any $\varepsilon>0$, there exists $r$ large enough such that $\lsup_{\la\to\infty}\big\vert\mathbb{E}[\xi_{n,k}^{(\lambda)}] - \mathbb{E}[ \xi_{n,k}^{(\infty)}]\big|\le \varepsilon$. This completes the proof of Proposition \ref{expectation convergence xi k-faces}.
\end{proof}
In view of the required variance estimates, we now aim at extending Proposition \ref{expectation convergence xi k-faces} when the expectation of a score is replaced by the covariance of two scores $\xi_{n,k}^{(\la)}((0,h_0),\cP^{(\la)})$ and $\xi_{n,k}^{(\la)}((v_1,h_1),\cP^{(\la)})$ at two points $(0,h_0)$ and $(v_1,h_1)$ belonging to $W_\la$. To do so, we study the two-point correlation function defined at \eqref{eq:defcnkla}.

\begin{prop}\label{correlation convergence}
For all $h_0 \ge 0$ and $(v_1, h_1) \in \mathbb{R}^{d-1} \times \mathbb{R}_+$ we have 
	$$ \lim\limits_{\lambda \rightarrow \infty} c_{n,k}^{(\lambda)}((0, h_0), (v_1, h_1)) =
	c_{n,k}^{(\infty)}((0, h_0), (v_1, h_1))   .$$ 
\end{prop}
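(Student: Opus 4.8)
The plan is to treat separately the two pieces in the definition \eqref{eq:defcnkla} of $c_{n,k}^{(\la)}$. The subtracted term $\E[\xi_{n,k}^{(\la)}((0,h_0),\cP^{(\la)})]\,\E[\xi_{n,k}^{(\la)}((v_1,h_1),\cP^{(\la)})]$ converges to the corresponding product at $\la=\infty$: the first factor is handled directly by Proposition \ref{expectation convergence xi k-faces}, and for the second one, the rotation invariance of $\cP_\la$ gives $\xi_{n,k}^{(\la)}((v_1,h_1),\cP^{(\la)})\overset{d}{=}\xi_{n,k}^{(\la)}((0,h_1),\cP^{(\la)})$ (the two preimages under $T^{(\la)}$ have the same Euclidean norm), so Proposition \ref{expectation convergence xi k-faces} applied at height $h_1$ together with the translation invariance of $\cP$ yields $\E[\xi_{n,k}^{(\la)}((v_1,h_1),\cP^{(\la)})]\to\E[\xi_{n,k}^{(\infty)}((v_1,h_1),\cP)]$. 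It then remains to prove convergence of the joint term
\begin{align*}
J^{(\la)}:=\E\Big[&\xi_{n,k}^{(\la)}\big((0,h_0),\cP^{(\la)}\cup\{(v_1,h_1)\}\big)\\
&\times\xi_{n,k}^{(\la)}\big((v_1,h_1),\cP^{(\la)}\cup\{(0,h_0)\}\big)\Big]
\end{align*}
to $J^{(\infty)}$.

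For this I would reproduce, at the level of a product of two scores, the two-step localization used for a single score in Lemma \ref{lem:convesp1} and Proposition \ref{expectation convergence xi k-faces}. Fix $r\ge 1$. Proposition \ref{stab k-faces} together with \eqref{eq:translinvRnkla} — applied, with the obvious adaptations, to the point processes $\cP^{(\la)}\cup\{(v_1,h_1)\}$ and $\cP^{(\la)}\cup\{(0,h_0)\}$, noting that adding one deterministic point shifts every layer number by at most one by the remark following Lemma \ref{lem:dalalreecrit} — shows that outside an event of probability at most $c_1 e^{-c_2 r^{d+1}}$ both scores agree with their lateral $r$-truncations; Lemma \ref{localisation hauteur k-faces} controls likewise the truncation in height at level $l_n r^2$. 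Hence, by a Cauchy--Schwarz/Hölder estimate as in the bound on $|\E[\xi_{n,k,[r]}^{(\la)}-\xi_{n,k,[r,l_n r^2]}^{(\la)}]|$ in the proof of Lemma \ref{lem:convesp1}, using the moment bounds of Lemma \ref{borne Lp} with $p=4$ and their versions with an added point, one may replace $J^{(\la)}$, up to an additive error bounded by a fixed polynomial in $r$, $h_0$, $h_1$, $\|v_1\|$ times $e^{-c_2 r^{d+1}}$ and uniformly in $\la\in[\la_0,\infty]$, by
\begin{align*}
J^{(\la)}_r:=\E\Big[&\xi_{n,k}^{(\la)}\big((0,h_0),(\cP^{(\la)}\cup\{(v_1,h_1)\})\cap C_0^{\le l_n r^2}(r)\big)\\
&\times\xi_{n,k}^{(\la)}\big((v_1,h_1),(\cP^{(\la)}\cup\{(0,h_0)\})\cap C_{v_1}^{\le l_n r^2}(r)\big)\Big];
\end{align*}
in particular the same estimate also holds at $\la=\infty$.

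It now suffices to show $J^{(\la)}_r\to J^{(\infty)}_r$ for each fixed $r$. The integrand defining $J^{(\la)}_r$ is a bounded functional of the restriction of $\cP^{(\la)}$ to the fixed compact set $K_r:=C_0^{\le l_n r^2}(r)\cup C_{v_1}^{\le l_n r^2}(r)$, together with the two deterministic points. As in the proof of Lemma \ref{lem:convesp1}, for any finite configuration $\eta\subset K_r$ in general position the combinatorial structure of the quasi-parabolic hull peeling of $\eta\cup\{(0,h_0),(v_1,h_1)\}$ stabilizes for $\la$ large, since the quasi-paraboloids converge to paraboloids by Lemma \ref{lem:image cap}; hence the functional is eventually constant in $\la$ on such configurations. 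Since $\cP^{(\la)}\to\cP$ in distribution on $K_r$ by Lemma \ref{convergence rescaled} and $\cP$ is a.s. in general position, \cite[Theorem 5.5]{B68} gives convergence in distribution of the truncated product and \cite[Theorem 5.4]{B68} upgrades it to convergence of expectations, the required uniform integrability coming from the $L^2$ bound of Lemma \ref{borne Lp}. Combining the three steps — choose $r$ so that the localization errors at finite $\la$ and at $\la=\infty$ are each below $\varepsilon/3$, then let $\la\to\infty$ — yields $J^{(\la)}\to J^{(\infty)}$ and hence the proposition.

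I expect the main obstacle to be organizational rather than conceptual: one must carry two distinct localizing cylinders and the two added deterministic points through the stabilization results of Section \ref{sec:stab} and the moment bounds of Section \ref{sec:lpandconv}. Each of the variants needed here is proved by the very same argument as its single-score, single-point counterpart — the added deterministic point is either too far away to affect the relevant cylinder or shifts each layer number by at most one — so no new idea is required beyond careful bookkeeping.
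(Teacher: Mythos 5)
Your proposal is correct and takes essentially the same approach as the paper: handle the subtracted product with Proposition \ref{expectation convergence xi k-faces}, then for the crossed term localize both scores laterally (Proposition \ref{stab k-faces}, via \eqref{eq:translinvRnkla}) and in height (Lemma \ref{localisation hauteur k-faces}), controlling the truncation error by H\"older together with the moment bounds of Lemma \ref{borne Lp}, and finally pass to the limit on the truncated product by the continuous mapping theorem (\cite[Theorems 5.4, 5.5]{B68}) and Lemma \ref{convergence rescaled}. The only cosmetic difference is that you collapse the paper's two separate truncation steps (lateral, then in height) into one.
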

\begin{proof}
For sake of simplicity, let us use the following reduced notation for $\la\in [\la_0,\infty]$:
\begin{equation}\label{eq:reducednotation}
\xi^{(\la)}(0):=\xi_{n,k}^{(\lambda)}\left((0, h_0), \mathcal{P}^{(\lambda)} \cup \lbrace (v_1, h_1) \rbrace
	\right),\xi^{(\la)}(1):=\xi_{n,k}^{(\la)}\left((v_1, h_1), \mathcal{P}\cup\{(0,h_0)\} \right).   
\end{equation}
	By Proposition \ref{expectation convergence xi k-faces}, we get 
	\begin{align*} 
\lim\limits_{\lambda \rightarrow \infty} 
	\mathbb{E}\left[\xi_{n,k}^{(\lambda)}\left((0, h_0), \mathcal{P}^{(\lambda)}   \right)  \right]
	\mathbb{E}\left[\xi_{n,k}^{(\lambda)}\left((v_1, h_1), \mathcal{P}^{(\lambda)}   \right)  \right]&
	= \mathbb{E}\left[\xi_{n,k}^{(\infty)} \left((0, h_0), \mathcal{P}  \right)  \right]
	\mathbb{E}\left[\xi_{n,k}^{(\infty)}\left((v_1, h_1), \mathcal{P} \right)  \right]&
	\end{align*}
	so we only need to prove 
	\begin{align}\label{eq:resultconvcrossed} 
	\lim\limits_{\lambda \rightarrow \infty} \mathbb{E}\left[\xi^{(\lambda)}(0)
	 \xi^{(\lambda)}(1)  \right]
	 = \mathbb{E}\left[\xi^{(\infty)}(0)
	 \xi^{(\infty)}(1)\right].
	\end{align}
	
	Let us fix $\varepsilon>0$. We can show that for $r$ large enough
	\begin{align}
	\lsup_{\la\to\infty}\big|\E\left[\xi^{(\lambda)}(0)
	 \xi^{(\lambda)}(1)  \right]-\E[\left[\xi_{[r]}^{(\lambda)}(0)
	 \xi_{[r]}^{(\lambda)}(1)  \right] \big| &\le \varepsilon,\label{eq:groscalcul1}\\
	 \lsup_{\la\to\infty}\big|\E\left[\xi_{[r]}^{(\lambda)}(0)
	 \xi_{[r]}^{(\lambda)}(1)  \right]-\E[\left[\xi_{_{[r,l_nr^2]}}^{(\lambda)}(0)
	 \xi_{[r,l_nr^2]}^{(\lambda)}(1)  \right] \big| &\le \varepsilon,\label{eq:groscalcul2}\\
	 \lim_{\la\to\infty}\big|\E[\left[\xi_{_{[r,l_nr^2]}}^{(\lambda)}(0)
	 \xi_{[r,l_nr^2]}^{(\lambda)}(1)  \right]-\E[\left[\xi_{_{[r,l_nr^2]}}^{(\infty)}(0)
	 \xi_{[r,l_nr^2]}^{(\infty)}(1)  \right]\big|&=0,\label{eq:groscalcul3}\\
	 \big|\E\left[\xi_{[r]}^{(\infty)}(0)
	 \xi_{[r]}^{(\infty)}(1)  \right]-\E[\left[\xi_{_{[r,l_nr^2]}}^{(\infty)}(0)
	 \xi_{[r,l_nr^2]}^{(\infty)}(1)  \right] \big| &\le \varepsilon,\label{eq:groscalcul4}\\
	  \big|\E\left[\xi^{(\infty)}(0)
	 \xi^{(\infty)}(1)  \right]-\E[\left[\xi_{[r]}^{(\infty)}(0)
	 \xi_{[r]}^{(\infty)}(1)  \right] \big| &\le \varepsilon\label{eq:groscalcul5}
	\end{align}
	where we have used the reduced notation for $\xi_{n,k,[r]}^{(\la)}$ and $\xi^{(\la)}_{n,k,[r,l_n r^2]}$ and their counterparts in the limit model similar to the one introduced at \eqref{eq:reducednotation}.
	These five assertions imply \eqref{eq:resultconvcrossed} so it is enough to show each of them.  We claim that \eqref{eq:groscalcul1}, \eqref{eq:groscalcul2}, \eqref{eq:groscalcul4} and \eqref{eq:groscalcul5} are obtained by similar methods. Consequently we omit the proofs of  \eqref{eq:groscalcul2}, \eqref{eq:groscalcul4} and \eqref{eq:groscalcul5} and concentrate on getting \eqref{eq:groscalcul1}.
	\begin{align}\label{eq:demo4.16}
	&\big|\E\left[\xi^{(\lambda)}(0)
	 \xi^{(\lambda)}(1)  \right]-\E[\left[\xi_{[r]}^{(\lambda)}(0)
	 \xi_{[r]}^{(\lambda)}(1)  \right] \big|\notag\\&\le 
	 \big|\E\left[(\xi^{(\lambda)}(0)-\xi_{[r]}^{(\lambda)}(0))
	 \xi^{(\lambda)}(1)  \right] \big|+\big|\E\left[\xi_{[r]}^{(\lambda)}(0)(
	 \xi^{(\lambda)}(1)-  
	 \xi_{[r]}^{(\lambda)}(1))  \right] \big|.
	 \end{align}
	 We derive an upper bound for the first term in the rhs of \eqref{eq:demo4.16}, the second term being treated identically. Using H\"older's inequality, we obtain
	 \begin{align*}
	 &\big|\E\left[(\xi^{(\lambda)}(0)-\xi_{[r]}^{(\lambda)}(0))
	 \xi^{(\lambda)}(1)  \right] \big|\\&=\big|\E\left[(\xi^{(\lambda)}(0)-\xi_{[r]}^{(\lambda)}(0))
	 \xi^{(\lambda)}(1)\mathds{1}_{\{\xi^{(\la)}(0)\ne \xi_{[r]}^{(\la)}(0)\}}  \right] \big|\\
	 &\le \E[|\xi^{(\lambda)}(0)-\xi_{[r]}^{(\lambda)}(0)|^3]^{1/3}\E[\xi^{(\lambda)}(1)^3]^{1/3}\P(\xi^{(\la)}(0)\ne \xi_{[r]}^{(\la)}(0))^{1/3}.
	\end{align*}
	The estimate \eqref{eq:groscalcul1} is then a consequence of Proposition \ref{stab k-faces} and Lemma \ref{borne Lp}. 
	The convergence \eqref{eq:groscalcul3} is derived analogously to \eqref{eq:cvterm2_37} by using \cite[Theorem 5.4]{B68} and Lemma \ref{borne Lp}. This proves Proposition \ref{correlation convergence}.
	\end{proof}
Lemma \ref{Lp bound correlation} below asserts that the correlation function $c_{n,k}^{(\la)}$ decays exponentially fast as a function of the heights of the two points and of the distance between them. This means in particular that the scores get closer to being independent as the distance between the points increases. The proof of Lemma \ref{Lp bound correlation} relies on Proposition \ref{stab k-faces} and Lemma \ref{borne Lp} and is identical to the proof of \cite[Lemma 4.8]{CY2}. For that reason, it is omitted.
\begin{lem}\label{Lp bound correlation}
	For all $n \geq 1$, there exist  $c_1,\ldots,c_4>0$ such that for any $h > 0$, $(v_1, h_1) \in \mathbb{R}^{d-1} \times \mathbb{R}_+$, $p \geq 1$ and $\lambda \in \left[1, \infty \right]$ we have 
	$$ |c_{n,k}^{(\lambda)}((0, h_0), (v_1, h_1))| \leq c_1 h_0^{c_2} h_1^{c_3} \exp\left(-c_4 \left( \Vert v_1 \Vert^{d+1} + h_0^{\frac{d+1}{2}} + h_1^{\frac{d+1}{2}} \right) \right)
	.$$ 
\end{lem}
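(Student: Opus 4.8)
The plan is to bound the correlation function $c_{n,k}^{(\la)}$ by splitting the analysis into the two regimes dictated by the distance between the two points, namely whether $\|v_1\|$ is small or large compared to $h_0^{1/2}+h_1^{1/2}$, exactly as in the proof of \cite[Lemma 4.8]{CY2}. The starting observation is the elementary fact that a covariance between two functionals vanishes as soon as one can decouple them on a high-probability event. More precisely, if on an event $A$ of probability at least $1-\delta$ the score $\xi_{n,k}^{(\la)}((0,h_0),\cP^{(\la)}\cup\{(v_1,h_1)\})$ does not depend on the presence of $(v_1,h_1)$ and is moreover determined by $\cP^{(\la)}$ restricted to a cylinder $C(r)$ disjoint from a corresponding cylinder for the second point, then $\xi^{(\la)}(0)$ and $\xi^{(\la)}(1)$ become independent on $A$, and one writes
\begin{align*}
|c_{n,k}^{(\la)}((0,h_0),(v_1,h_1))| &\le \big|\E[\xi^{(\la)}(0)\xi^{(\la)}(1)\mathds{1}_{A^c}]\big|+\big|\E[\xi^{(\la)}(0)]\E[\xi^{(\la)}(1)]\big|\,\mathds{1}_{A^c\text{ contributes}}\\
&\le 2\,\E[(\xi^{(\la)}(0))^3]^{1/3}\E[(\xi^{(\la)}(1))^3]^{1/3}\,\delta^{1/3}
\end{align*}
by H\"older's inequality, plus the analogous term for the unconditioned product.

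The key inputs are then the moment bound of Lemma \ref{borne Lp}, which gives $\E[(\xi_{n,k}^{(\la)}((v,h),\cP^{(\la)}))^p]\le c_1 h^{pk}\exp(-c_2 h^{\frac{d+1}{2}})$ (and a similar bound after adding one extra point, which only changes constants), and the stabilization estimate of Proposition \ref{stab k-faces}, which states $\P(R_{n,k}^{(\la)}((0,h_0),\cP^{(\la)})\ge r)\le c_1\exp(-c_2 r^{d+1})$. First I would dispose of the case $\|v_1\|\ge \tfrac12(\|v_1\|^{} )$... more precisely: if $\|v_1\|$ is large, choose $r=\|v_1\|/3$; then the events $\{R_{n,k}^{(\la)}((0,h_0),\cP^{(\la)})<r\}$ and $\{R_{n,k}^{(\la)}((v_1,h_1),\cP^{(\la)})<r\}$ together force the two scores to be measurable with respect to $\cP^{(\la)}$ in two disjoint cylinders $C(r)$ and $C_{v_1}(r)$, hence independent, while also making the addition of the other point irrelevant; the complement has probability at most $c_1\exp(-c_2\|v_1\|^{d+1})$, and combining with the cubic moment bounds (which contribute the polynomial prefactors $h_0^{c_2}h_1^{c_3}$ and the exponential factors $\exp(-c h_0^{(d+1)/2})$, $\exp(-c h_1^{(d+1)/2})$ coming from $\E[(\xi^{(\la)}(j))^3]$ being tiny when $h_j$ is large) yields the claimed bound in this regime. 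When $\|v_1\|$ is small, it is dominated by $h_0^{1/2}+h_1^{1/2}$ up to a constant, so one simply uses $r=\max(h_0,h_1)^{1/2}$-type radii together with the moment bounds: the factor $\exp(-c_4(h_0^{(d+1)/2}+h_1^{(d+1)/2}))$ already absorbs $\exp(-c_4\|v_1\|^{d+1})$ since $\|v_1\|^{d+1}\le c(h_0^{(d+1)/2}+h_1^{(d+1)/2})$, and the Cauchy--Schwarz/H\"older splitting gives the result directly from Lemma \ref{borne Lp}.

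The main obstacle, and the only genuinely non-routine point, is handling the interaction between the two added points: the score $\xi^{(\la)}(0)=\xi_{n,k}^{(\la)}((0,h_0),\cP^{(\la)}\cup\{(v_1,h_1)\})$ is evaluated on the process \emph{plus} the point $(v_1,h_1)$, so one must argue that when $\|v_1\|$ is large this extra point lies outside the stabilization cylinder of $(0,h_0)$ and therefore does not affect $\xi^{(\la)}(0)$ at all; this requires a mild strengthening of Proposition \ref{stab k-faces} to the process with one inserted point, which however costs nothing since Lemma \ref{borne Lp} and its proof (via the radius $R$ defined there and estimate \eqref{eq:stabkfacesamelioree}) already accommodate such insertions, the constants being uniform. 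Everything else is a bookkeeping exercise combining the polynomial moment prefactors with the three independent exponential decays, so I would present it compactly by reference to \cite[Lemma 4.8]{CY2}, spelling out only the decoupling identity and the choice of radii. The remaining care is simply to make sure all constants are independent of $\la\in[1,\infty]$, which is guaranteed because both Proposition \ref{stab k-faces} and Lemma \ref{borne Lp} are stated uniformly over $\la\in[\la_0,\infty]$ and the finitely many $\la<\la_0$ are handled trivially.
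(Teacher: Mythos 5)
The paper states that the proof of this lemma is identical to that of \cite[Lemma 4.8]{CY2} and omits it; your outline reproduces exactly that standard stabilization-plus-moments decoupling, including the inserted-point subtlety, so you are on the intended route. The one thing to repair is the displayed inequality: conditioning on a random event $A$ does not turn a product expectation into a product of expectations, so you cannot simply discard the contribution of $A$. The usual fix pivots through the localized scores $\xi^{(\la)}_{[r]}(j)$ (which \emph{are} genuinely independent once the two cylinders of radius $r\approx\|v_1\|/3$ are disjoint), writing $c_{n,k}^{(\la)}$ as a sum of two brackets — one comparing $\E[\xi^{(\la)}(0)\xi^{(\la)}(1)]$ to $\E[\xi^{(\la)}_{[r]}(0)]\E[\xi^{(\la)}_{[r]}(1)]$, and the other comparing the latter to $\E[\xi_{n,k}^{(\la)}((0,h_0),\cP^{(\la)})]\E[\xi_{n,k}^{(\la)}((v_1,h_1),\cP^{(\la)})]$ — and then controlling each bracket by H\"older on the stabilization-failure events $\{R_{n,k}^{(\la)}(j)\ge r\}$ and their inserted-point variants, which is what produces the bound on the second line of your display. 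With that adjustment, your regime split between large $\|v_1\|$ (take $r=\|v_1\|/3$) and small $\|v_1\|$ (absorb $\|v_1\|^{d+1}$ into $h_0^{(d+1)/2}+h_1^{(d+1)/2}$) carries through, and this is the intended proof.
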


\section{Proofs of the main results}\label{sec:proofs}
In this section, Theorems \ref{lim esperance precis}--\ref{lim variance precis vol} are proved. We recall that they include the statements on the limiting expectations and variances of Theorems \ref{thm:principalintro} and \ref{thm:principalintrovol}. We have chosen to omit the proof of the central limit theorems as it is 
    the exact replicate line by line of \cite[pages 93--98]{CSY} when the convex hull is replaced by the $n$-th layer of the peeling.
    
\subsection{Results on $k$-dimensional faces}
\begin{proof}[Proof of Theorem \ref{lim esperance precis}.]

~\\~\\{\textit{Proof of the convergence of the normalized expectation.}}
	The first step consists in taking the expectation in \eqref{eq:decompavantrescaling} and using the Mecke formula to get  	\begin{equation*}
		\mathbb{E}[N_{n,k,\lambda}] = \lambda \int_{\mathbb{B}^d} \mathbb{E}[\xi_{n,k}(x, \mathcal{P}_\lambda)] \mathrm{d}x .
	\end{equation*}
	Let us introduce $e_d=(0, \ldots, 0, 1)$. By rotation-invariance of  $\mathbb{E}[\xi_{n,k}(x, \mathcal{P}_\lambda)]$, we obtain
	$$\mathbb{E}[\xi_{n, k}(x, \mathcal{P}_\lambda)] = \mathbb{E}[\xi_{n,k}(|x| e_d, \mathcal{P}_\lambda)].$$
	Then we apply a change of coordinates in spherical coordinates to get 
	\begin{equation}\label{eq change spherical} \mathbb{E}[N_{n, k,\lambda}] = \lambda \int_{\mathbb{S}^{d-1}} \int_{0}^{1}
		 \mathbb{E}[\xi_{n,k}(r e_d, \mathcal{P}_\lambda)] r^{d-1} \mathrm{d}r \mathrm{d}\sigma_{d-1}(u), 
	\end{equation}
	where $\sigma_{d-1}$ is the unnormalized area measure on $\mathbb{S}^{d-1}$.
	
Recall that for every $h > 0$,
	$\xi_{n,k}^{(\la)}((0,h), \cP^{(\la)}) = \xi_{n,k}\left([T^{(\la)}]^{-1}((0,h)), \cP_\la\right),$ see \eqref{eq:egalie scores}.
	Consequently,
	 an application of the change of variables $h = \lambda^{\frac{2}{d+1}}(1-r)$ in \eqref{eq change spherical} leads to
	\begin{align}\label{eq:espinterm}
		\mathbb{E}[N_{n, k,\lambda}] &= \lambda \int_{\mathbb{S}^{d-1}} \int_{0}^{\lambda^{\frac{2}{d+1}}}
		\mathbb{E}[\xi_{n,k}^{(\lambda)}((0,h), \mathcal{P}^{(\lambda)})] \lambda^{-\frac{2}{d+1}} (1 - \lambda^{-\frac{2}{d+1}}h)^{d-1}  \mathrm{d}h \mathrm{d}\sigma_{d-1}(u)\nonumber\\
		&= \lambda^{\frac{d-1}{d+1}} \int_{\mathbb{S}^{d-1}} \int_{0}^{\lambda^{\frac{2}{d+1}}}
		\mathbb{E}[\xi_{n,k}^{(\lambda)}((0,h), \mathcal{P}^{(\lambda)})] (1 - \lambda^{-\frac{2}{d+1}}h)^{d-1}  \mathrm{d}h \mathrm{d}\sigma_{d-1}(u).
	\end{align}
	We now wish to use the dominated convergence theorem.
	Lemma \ref{expectation convergence xi k-faces} implies that
	for any $h > 0$
	$$\mathbb{E}[\xi_{n,k}^{(\lambda)}((0,h), \mathcal{P}^{(\lambda)})] (1 - \lambda^{-\frac{2}{d+1}}h)^{d-1}\mathds{1}_{0 \leq h \leq \lambda^{\frac{2}{d+1}}}
		\xrightarrow[\lambda \rightarrow +\infty]{}  \mathbb{E}[\xi_{n,k}^{(\infty)}((0,h), \mathcal{P})].$$
	Thanks to Lemma \ref{borne Lp}, the integrand in the rhs of \eqref{eq:espinterm} is bounded from above by an integrable function of $h$, i.e.
	$$\mathbb{E}[\xi_{n,k}^{(\lambda)}((0,h), \mathcal{P}^{(\lambda)})] (1 - \lambda^{-\frac{2}{d+1}}h)^{d-1}\mathds{1}_{0 \leq h \leq \lambda^{\frac{2}{d+1}}} \leq c_1 \exp(-c_2 h^{\frac{d+1}{2}}).$$
	Applying the dominated convergence theorem in \eqref{eq:espinterm} shows that 
	\begin{equation}\label{eq:limitedelesp}
	\lim\limits_{\lambda \rightarrow \infty} {\lambda^{-\frac{d-1}{d+1}}} {\mathbb{E}[N_{n,k,\lambda}]}
	= \text{\normalfont Vol}_{d-1}(\mathbb{S}^{d-1}) \int_{0}^{\infty} \mathbb{E}[\xi_{n,k}^{(\infty)}((0,h), \mathcal{P})] \mathrm{d}h < \infty .
	\end{equation}

\noindent\textit{Proof of the positivity of the limiting expectation.} Noticing that $$\mathbb{E}[\xi_{n,k}^{(\infty)}((0,h), \mathcal{P})]\ge \frac{1}{k+1}\mathbb{P}\left( (0,h_0) \in \partial \Phi_n^{(\infty)}(\mathcal{P})  \right),$$
	we deduce that the positivity of the rhs of \eqref{eq:limitedelesp} is a consequence of
	Lemma \ref{lem proba etre sur une couche} below.
\end{proof}

\begin{lem}\label{lem proba etre sur une couche}
	For any $(0, h_0)$ with $h_0 > 0$ and any $n \geq 1$ : $$\mathbb{P}\left( (0,h_0) \in \partial \Phi_n^{(\infty)}(\mathcal{P})  \right) \neq 0.$$
\end{lem}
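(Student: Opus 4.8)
The statement is to be read, in the light of the definition of the score $\xi_{n,k}^{(\infty)}$ in \eqref{eq:defxink}, as $\mathbb{P}\big(\ell^{(\infty)}((0,h_0),\mathcal{P})=n\big)>0$, i.e.\ that $(0,h_0)$, once added to $\mathcal{P}$, lies on the $n$-th layer with positive probability. The plan is to build a deterministic, finite, ``idealized'' configuration of points sitting inside the bounded region $Q:=\Pi^{\downarrow}(0,h_0)\cap(\R^{d-1}\times\R_+)$ that forces this, and then to realize a $\delta$-perturbation of it on a positive-probability event of $\mathcal{P}$; recall $\mathrm{Vol}_d(Q)<\infty$. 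By Lemma \ref{lem:be on layer n}, $\ell^{(\infty)}((0,h_0),Y)=n$ holds as soon as: \emph{(a)} every point of $Y$ lying in $\Pi^{\downarrow}(0,h_0)$ is on one of the first $n-1$ layers of the peeling of $Y\cup\{(0,h_0)\}$ (then part (ii) applies with the witnessing paraboloid $\Pi^{\downarrow}(0,h_0)$, whose boundary passes through $(0,h_0)$), and \emph{(b)} every downward paraboloid through $(0,h_0)$ contains a point of $Y$ lying on a layer of index $\ge n-1$ (then part (i) applies). By Lemma \ref{lem:demiparabole}, in \emph{(b)} it is enough that every ``half-bowl'' $Q\cap P^{+}$, with $P$ a vertical hyperplane through $(0,h_0)$, contains such a point; these half-spaces are parametrized by the compact set $\S^{d-2}$, so finitely many well-placed points will do.

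The core is the following construction, carried out by induction on $n$: there is a finite set $\mathcal{G}_n=\mathcal{G}_n(h_0)\subset Q\cap\{h>0\}$, in general position and bounded away from $\partial\Pi^{\downarrow}(0,h_0)$, and a $\delta>0$, such that for every locally finite $Y\subset\R^{d-1}\times\R_+$ whose trace $Y\cap\Pi^{\downarrow}(0,h_0)$ consists of exactly one point within distance $\delta$ of each point of $\mathcal{G}_n$, one has $\ell^{(\infty)}((0,h_0),Y)=n$; moreover each point of $\mathcal{G}_n$ is on a layer of index $\le n-1$, witnessed (iterating Lemma \ref{lem:be on layer n}(ii)) by downward paraboloids whose parts in $\R^{d-1}\times\R_+$ stay inside $\Pi^{\downarrow}(0,h_0)$. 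The base case $n=1$ is $\mathcal{G}_1=\varnothing$: if $Y\cap\Pi^{\downarrow}(0,h_0)=\varnothing$, then $\Pi^{\downarrow}(0,h_0)$ itself witnesses that $(0,h_0)$ is extreme for $Y\cup\{(0,h_0)\}$. For the inductive step one uses that the maps $(v,h)\mapsto(sv,s^2h)$ send paraboloids to paraboloids, hence commute with the peeling, so a rescaled copy of the $(n-1)$-gadget is again an $(n-1)$-gadget: place $N$ rescaled and rotated copies of the $(n-1)$-gadget inside $\Pi^{\downarrow}(0,h_0)$, pairwise well-separated (their target paraboloids pairwise disjoint, contained in $\Pi^{\downarrow}(0,h_0)$, and all at heights bounded away below $h_0$), the apex of the $j$-th copy being at $(\rho u_j,a)$ for $\rho,a$ small and $u_1,\dots,u_N$ a family of unit vectors that positively spans $\R^{d-1}$, and let $\mathcal{G}_n$ be the union of the copies. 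For any admissible $Y$: each copy's apex lies on layer $n-1$ inside the full configuration, because the remaining copies, the point $(0,h_0)$, and the part of $Y$ outside $\Pi^{\downarrow}(0,h_0)$ all sit outside that copy's target paraboloid, so the inductive insensitivity applies; consequently every half-bowl of $Q$ contains a layer-$(n-1)$ point, giving $\ell^{(\infty)}((0,h_0),Y)\ge n$ via (i), while the points of $Y$ lying in $\Pi^{\downarrow}(0,h_0)$ are exactly those of $\mathcal{G}_n$, all of layer $\le n-1$, giving $\ell^{(\infty)}((0,h_0),Y)\le n$ via (ii). All conditions used are open (general position, the strict nesting of the auxiliary paraboloids inside $\Pi^{\downarrow}(0,h_0)$, the positive spanning, which by compactness of $\S^{d-2}$ is quantitatively robust), so a uniform $\delta>0$ can be fixed.

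To conclude, set $B_i:=B(g_i,\delta)$ with $g_i$ ranging over $\mathcal{G}_n$, shrinking $\delta$ so that the $B_i$ are pairwise disjoint and contained in $\Pi^{\downarrow}(0,h_0)$, and consider the event $A$ that $\mathcal{P}$ has exactly one point in each $B_i$ and no point in $Q\setminus\bigcup_i B_i$. Since $\mathcal{P}$ is a unit-intensity Poisson process and $\mathrm{Vol}_d(Q)<\infty$,
\[\mathbb{P}(A)=\Big(\prod_i\mathrm{Vol}_d(B_i)\,e^{-\mathrm{Vol}_d(B_i)}\Big)\,e^{-\mathrm{Vol}_d(Q\setminus\bigcup_i B_i)}>0,\]
and $\mathcal{P}$ is almost surely in general position. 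On $A$, the trace of $\mathcal{P}$ on $\Pi^{\downarrow}(0,h_0)$ is a $\delta$-perturbation of $\mathcal{G}_n$ of the required type, hence $\ell^{(\infty)}((0,h_0),\mathcal{P})=n$, and the lemma follows.

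I expect the genuine difficulty to be the ``insensitivity to the outside'': because adding points to a configuration can only raise layer indices (Lemma \ref{lem:dalalreecrit} and the remark after it), the upper bounds ``layer $\le n-1$'' for the gadget points and ``layer $\le n$'' for $(0,h_0)$ must be witnessed solely by downward paraboloids whose relevant parts are nested inside $\Pi^{\downarrow}(0,h_0)$; verifying that this nesting, together with the packing of the $N$ rescaled copies and the positive-spanning choice of the $u_j$, can be arranged consistently for every $n$ is the technical heart of the argument, and is where the inspiration from \cite{D04} enters.
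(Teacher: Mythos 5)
Your proposal is correct and takes essentially the same approach as the paper: an inductively built, self-similar tree of points nested inside $\Pi^{\downarrow}(0,h_0)$ whose children's paraboloids are pairwise disjoint and contained in the parent's (the paper uses the explicit positive-spanning family $\{\pm e_i\}$ where you allow a general one), followed by a $\delta$-perturbation realized on a positive-probability Poisson event. Your explicit appeal to the parabolic scaling $(v,h)\mapsto(sv,s^2h)$ is a nice compression of the paper's hand-chosen $8^{-l}$ constants, but the logic — monotonicity of layer index under point addition for lower bounds, nested witness paraboloids for upper bounds, Lemma~\ref{lem:demiparabole} plus positive spanning to hit every half-bowl — is the same.
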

\begin{proof} 
	Let us write $w_0:= (0, h_0)$ and put $\cT_0:=\{w_0\}$. Our first step is purely deterministic and consists in constructing for $n=2$ an idealized point configuration which  puts $w_0$ on the second layer. We then extend the procedure for every $n>2$ to get a point set which puts $w_0$ on the $n$-th layer. In a second step, we introduce randomness and show that this property is stable with respect to a small random perturbation of the configuration.
	
	For each $i \in \{1, \ldots, d - 1 \}$, we write $w_{(i, +)} = ( \frac{\sqrt{2h_0}}{2} e_i, \frac{h_0}{8})$ and 
 	$w_{(i,-)} = ( -\frac{\sqrt{2h_0}}{2} e_i, \frac{h_0}{8})$.
	A direct calculation shows that for any $i$, $\Pi^{\downarrow}(w_{(i, +)}) \subseteq \Pi^{\downarrow}(0,h_0)$,
	$\Pi^{\downarrow}(w_{(i, -)}) \subseteq \Pi^{\downarrow}(w_0)$ and $\Pi^{\downarrow}(w_{(i, s_1)}) \cap \Pi^{\downarrow}(w_{(j, s_2)}) = \varnothing$ for any $i,j \in \{1, \ldots, d-1\}$, $s_1, s_2 \in \{+, -\}$ such that
	$(i, s_1) \neq (j, s_2)$. 
	
	Let us consider the deterministic point set $\cT_1:=\{w_0\}\cup\{w_{(i,s)} : i \in \{1,\ldots,d-1\}, s \in \{+,-\}\}$ and describe the peeling of $\cT_1$.
	The points $w_{(i, s)}$ are on the first layer
    because each $\Pi^\downarrow(w_{(i, s)})$ is empty. 
	Any downward paraboloid whose boundary goes through $w_0$ contains at least half of $\Pi^\downarrow(w_0)$ by Lemma \ref{lem:demiparabole} so it contains at least 
	one of the $w_{(i,s)}$. This implies in turn that $w_0$ is not on the first layer. Furthermore $\Pi^\downarrow(w_0)$ only contains points on the first layer so
	$w_0$ is on the second layer.
	
	We are going to iterate this construction by induction to obtain for every $n>2$ an extended deterministic point configuration $\cT_{n-1}=\{w_a : a\in \cup_{l=0}^{n-1} (\{1,\ldots,d-1\}\times\{+,-\})^{l}\}$, with the convention $(\cdot)^0=\{0\}$, which guarantees that $w_0$ is on the $n$-th layer of the peeling of $\cT_{n-1}$ and that for any $1\le l\le (n-1)$, $\cT_l\setminus \cT_{l-1}$ is included on its $(n-l)$-th layer. To do so, let us assume that we have constructed $\cT_{n-2}$ with that property.
	Let us fix $a\in (\{1,\ldots,d-1\}\times\{+,-\})^{n-2}$ and write $w_a = (v,h)$. We define  $w_{a, (i,+)} := (v + \frac{1}{2} \sqrt{2\frac{h_0}{8^{n-2}}} e_i, \frac{h}{8^{n-1}})$ and
	$w_{a, (i,-)} := (v - \frac{1}{2} \sqrt{2\frac{h_0}{8^{n-2}}} e_i, \frac{h}{8^{n-1}})$.
	\begin{figure}
		\centering
		\includegraphics[trim=0.5cm 0cm 0.5cm 0cm, clip,width=0.45\textwidth]{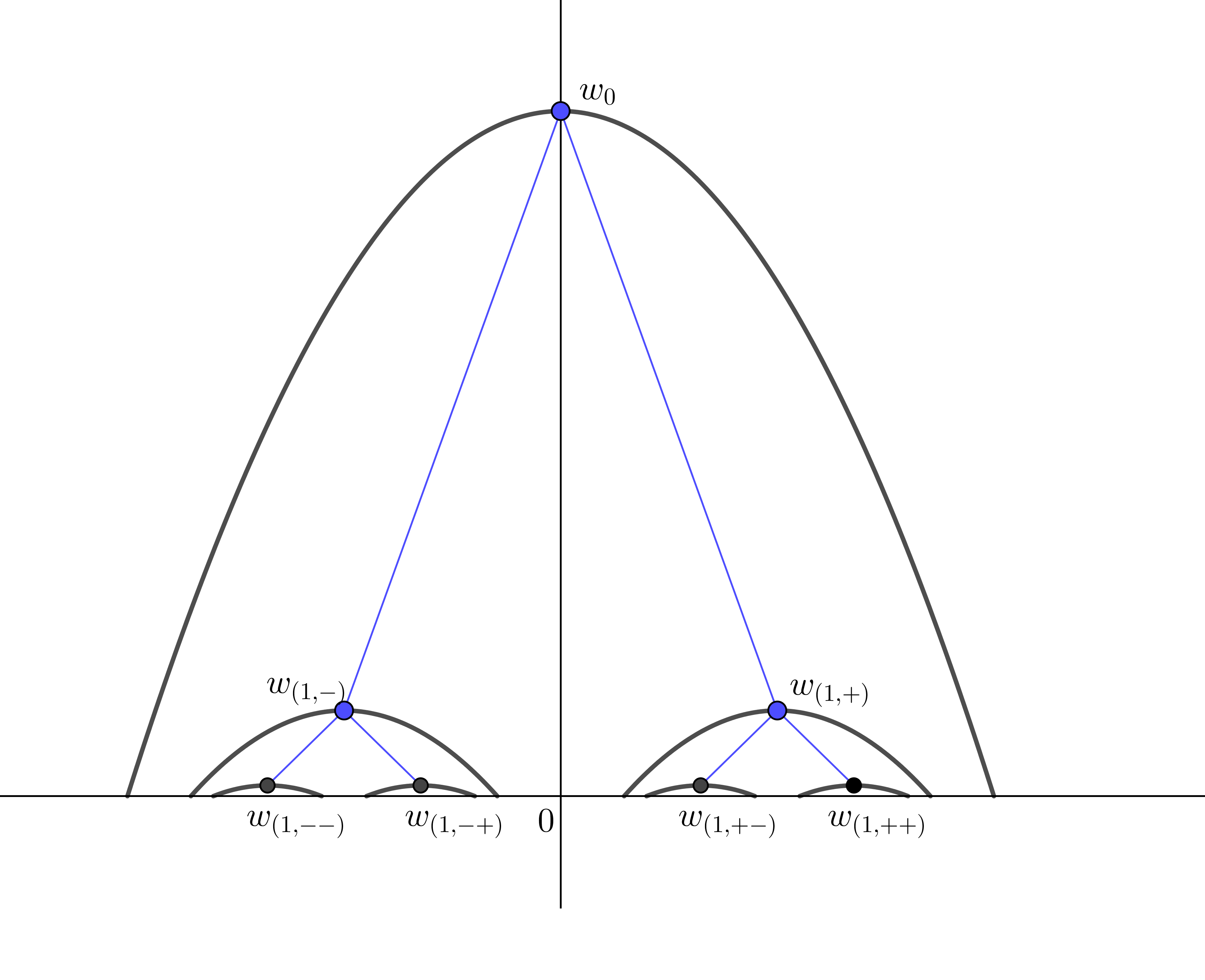}
		\includegraphics[trim=6cm 1.7cm 6cm 3.8cm, clip, width=0.54\textwidth]{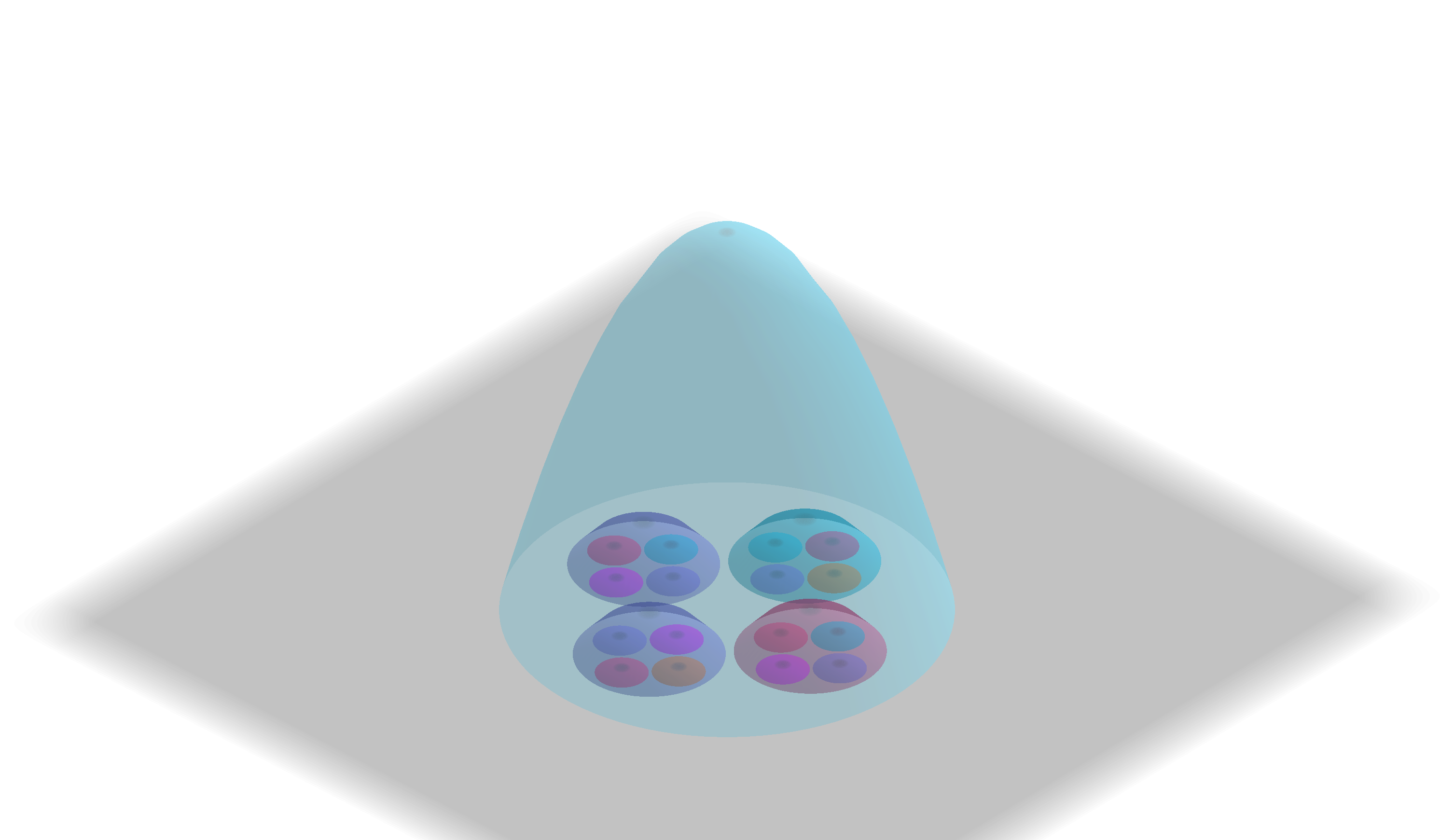}
		\caption{The tree of Lemma \ref{lem proba etre sur une couche} for $n = 3$ and $d=2,3$. 
		}
		\label{arbre}
	\end{figure}

	The construction is just a rescaling of the situation described for the case $n=2$ with $w_a$ playing the role of $w_0$ and
	$w_{a, (i,s)}$ that of $w_{(i, s)}$. Consequently, we get the same properties, i.e.
	for any $(i, s)$ 
	\begin{equation}\label{prop1 non nul}\Pi^{\downarrow}(w_{a, (i,s)}) \subseteq \Pi^{\downarrow}(w_a)\end{equation}
	and for $(i,s_1) \neq (j, s_2)$
	\begin{equation}\label{prop3 non nul}\Pi^{\downarrow}(w_{a, (i, s_1)}) \cap \Pi^{\downarrow}(w_{a,(j, s_2)}) = \varnothing . \end{equation}
	If we put an edge from $w_a$ to each $w_{a, (i,s)}$ then we endow $\cT_{n-1}$ with a structure of tree with root $w_0$, see Figure \ref{arbre} in the case $n=3$ in dimension $2$ and $3$. We also take the convention $\cT_{-1}=\varnothing$. 
	
	The construction has been done such that for every $l\in\{0,\ldots,(n-1)\}$, each point $w_a\in (\cT_l\setminus \cT_{l-1})$ is on the $(n-l)$-th layer of the peeling of $ {\mathcal T}_{n-1}$. 
	
	We prove now by downward induction the stability of the property above when the idealized point set is subject to a small random perturbation. Let us fix $\varepsilon>0$ and consider the event
		$$A_n=\cap_{w_a\in {\mathcal T}_{n-1}\setminus\{w_0\}}\{{\#(\mathcal P}\cap B(w_a,\varepsilon))=1\}\cap \{{\mathcal P}\cap (\Pi^{\downarrow}(w_0)\setminus \cup_{w_a\in {\mathcal T}_{n-1}} B(w_a,\varepsilon))=\varnothing\}.$$
	On the event $A_n$, for every $w_a\in {\mathcal T}_{n-1}$ we denote by $\tilde{w}_a$ the unique point belonging to ${\mathcal P}\cap B(w_a,\varepsilon)$ and call it a perturbed point from the tree (with same depth as the original point $w_a$). We then assert and prove below that the stability occurs, i.e. that the perturbed point $\tilde{w}_a$ is on the $(n-l)$-th layer of the peeling of ${\mathcal P}\cup\{w_0\}$ as soon as the original point $w_a$ is at depth $l$ in ${\mathcal T}_{n-1}$.
	
	We make the following preliminary observation: the calibrations in the construction of the tree ${\mathcal T}_{n-1}$ allow us to strengthen \eqref{prop1 non nul} and \eqref{prop3 non nul} by claiming that for $\varepsilon$ small enough, the $\varepsilon$-neighborhood of $\Pi^{\downarrow}(w_a)$ is included in $\Pi^{\downarrow}(w_b)$ for any $w_a\in \cT_{n-1}$ with parent $w_b$ and that the $\varepsilon$-neighborhoods of the downward parabolas associated with two distinct perturbed points at same depth do not meet. 
	
	We start with our base case, which is depth $(n-1)$. For any $a\in(\{1,\ldots, d-1\} \times \{ +, - \})^{n-1}$, thanks to the observation above, the perturbed point $\tilde{w}_a$ satisfies that $\Pi^{\downarrow}(\tilde{w}_a)$ does not intersect ${\mathcal P}$ when $\varepsilon$ is small enough. This shows that all perturbed points at depth $(n-1)$ are extreme.
	
	Now we assume that for some $l \in \{1, \ldots, n-1\}$, each perturbed point at depth $k$ is on the $(n-k)$-th layer for every $l\le k\le (n-1)$.
	We consider a perturbed point $\tilde{w}_a$ at depth $(l-1)$, which means $a \in(\{1, \ldots, d-1\} \times \{+, - \})^{(l-1)}$. Because of the preliminary observation, for $\varepsilon$ small enough, the downward paraboloid $\Pi^{\downarrow}(\tilde{w}_a)$ only contains points that can be written $\tilde{w}_{a,b}$ for some $b \in (\{1, \ldots, d-1\} \times \{+, - \})^p$
	and $1\le p\le (n-l)$.  Thus $\tilde{w}_a$ is at most
	on the $(n - l +1)$-th layer. 
	Moreover, noticing that any downward paraboloid  whose boundary goes through $\tilde{w}_a$ contains at least 
	one of the $\tilde{w}_{a,(i,s)}$, which are on the $(n-l)$-th layer. This shows that $\tilde{w}_a$ is on the $(n-l+1)-st$ layer.
	A finite induction on $l$ thus gives us that on the event $A_n$, $w_0$ is on the $n$-th layer.
	
	It remains to show that ${\mathbb P}(A_n)>0$. Using the Poisson property and \eqref{prop1 non nul}-\eqref{prop3 non nul}, we get
	\begin{align*}
	&\mathbb{P}(A_n
	)
	= 	\mathbb{P}({\mathcal P}\cap (\Pi^{\downarrow}(w_0)\setminus \cup_{w_a\in {\mathcal T}_{n-1}} B(w_a,\varepsilon))=\varnothing)\prod_{\mbox{\scalebox{.4}{$w_a\in {\mathcal T}_{n-1}\setminus\{w_0\}$}}} \mathbb{P}( \# (\mathcal{P}\cap B(w_a, \varepsilon)) =1) 
	> 0.
	\end{align*}
\end{proof}

\begin{proof}[Proof of Theorem \ref{lim variance precis}.]
	
~\\~\\\noindent{\textit{Proof of the convergence of the normalized variance.}}	
	In the same way as for the expectation asymptotics, the first idea consists in using Mecke's formula. Combining it with Fubini's theorem and writing $\xi'_{n,k,\la}(x,\cP_\la)$ for the same functional as $\xi_{n,k,\la}(x, \cP_\la)$ save for the fact that $x$ is not added to the process, we get
	\begin{align*}
		&\Var(N_{n,k,\lambda})\\&= \mathbb{E}\bigg[ \bigg(\sum_{x \in \mathcal{P_\lambda}} \xi'_{n,k, \lambda}(x, \mathcal{P}_\lambda) \bigg)^2 \bigg] - \mathbb{E}\bigg[ \sum_{x \in \mathcal{P_\lambda}} \xi'_{n,k, \lambda}(x, \mathcal{P}_\lambda) \bigg]^2\\
		&
		= \mathbb{E}\bigg[ \sum_{x \in \mathcal{P_\lambda}} \xi'_{n,k, \lambda}(x, \mathcal{P}_\lambda)^2 \bigg]
		 + \mathbb{E}\bigg[ \sum_{\substack{x,y \in \mathcal{P_\lambda}\\ x \neq y}} \xi'_{n,k, \lambda}(x, \mathcal{P}_\lambda) \xi'_{n,k, \lambda}(y, \mathcal{P}_\lambda) \bigg] 
		 - \mathbb{E}\bigg[ \sum_{x \in \mathcal{P_\lambda}} \xi'_{n,k, \lambda}(x, \mathcal{P}_\lambda) \bigg]^2
		\\
		& = I_1(\lambda) + I_2(\lambda)
	\end{align*}
	where
	$$ I_1(\lambda) := \lambda \int_{\B^d} \mathbb{E}\left[ \xi_{n,k,\lambda}(x, \mathcal{P}_{\lambda})^2 \right]\mathrm{d}x\; 
	\mbox{ and }\;
	I_2(\lambda) := \lambda^2 
	\iint_{(\B^d)^2} c_{n,k,\la}(x,y)\mathrm{d}x \mathrm{d}y.
	$$
	with $$c_{n,k,\la}(x,y)=\mathbb{E}\left[\xi_{n,k, \lambda}(x, \mathcal{P}_\lambda \cup \lbrace y \rbrace) \xi_{n,k, \lambda}(y, \mathcal{P}_\lambda \cup \lbrace x \rbrace) \right]
-  \mathbb{E}\left[ \xi_{n,k, \lambda}(x, \mathcal{P}_\lambda) \right] 
	\mathbb{E}\left[ \xi_{n,k, \lambda}(y, \mathcal{P}_\lambda) \right].$$
	We treat $I_1(\lambda)$ as in the proof of Theorem \ref{lim esperance precis} to get 
	$$\lim\limits_{\lambda \rightarrow +\infty } {\lambda^{-\frac{d-1}{d+1}}} I_1(\lambda) = I_1$$
	where we recall the definition of $I_1$ at \eqref{eq:defI1}.
	We now prove the convergence of $\la^{-\frac{d-1}{d+1}}I_2(\la)$. 
		We follow line by line the method used in \cite[calculation of II on pages 92-93]{CSY}, i.e. rewriting in spherical coordinates, then use of a change of variables provided by the scaling transformation $T^{(\la)}$, to obtain that 
$$I_2(\lambda) =  \lambda^{\frac{d-1}{d+1}} \int_{\mathbb{S}^{d-1}}  \int_{0}^{\lambda^{\frac{2}{d+1}}} \int_{T_u^{(\lambda)}(\mathbb{S}^{d-1})} 
	f^{(\lambda)}(u_0, h_0, v_1, h_1) \mathrm{d}v_1 \mathrm{d}h_1 \mathrm{d}h_0 \mathrm{d}\sigma_{d-1}(u_0) $$
	where
	$$ f^{(\lambda)}(u_0, h_0, v_1, h_1)  := (1 - \lambda^{-\frac{2}{d+1}}h_0)^{d-1}  \frac{\sin^{d-2}\left( \lambda^{-\frac{1}{d+1}} |v_1| \right)}{ |\lambda^{-\frac{1}{d+1}} v_1|^{d-2}} ( 1 - \lambda^{-\frac{2}{d+1}} h_1)^{d-1}
	 c_{n,k}^{(\lambda)}((0, h_0), (v_1, h_1)),$$
	 with $c_{n,k}^{(\la)}$ defined at \eqref{eq:defcnkla}.
	 
	It remains to apply the dominated convergence theorem.
	Using Lemma \ref{Lp bound correlation}, we obtain
	$$ |f^{(\lambda)}(u_0, h_0, v_1, h_1) | \leq c_1 h_0^{c_2} h_1^{c_3} \exp\left(-c_4 \left( \Vert v_1 \Vert^{d+1} + h_0^{\frac{d+1}{2}} + h_1^{\frac{d+1}{2}} \right) \right)$$
	which is integrable on $ \mathbb{S}^{d-1} \times \R_+ \times \mathbb{R}^{d-1} \times \R_+$.
	We deduce from Proposition \ref{correlation convergence} that
	$$\lim\limits_{\lambda \rightarrow \infty} f^{(\lambda)}(u_0, h_0, v_1, h_1) = c^{(\infty)}_{n,k}((0, h_0), (v_1, h_1)) . $$
	This implies that 
	$$ \lim\limits_{\lambda \rightarrow \infty} {\lambda^{-\frac{d-1}{d+1}}}I_2(\lambda) = I_2,$$
	where $I_2$ has been defined at \eqref{eq:defI2}.
	\\~\\
\textit{Proof of the positivity of the limiting variance.}
	This proof is essentially adapted from \cite[Section 4.5]{CY4}. The main difference is that we make sure
	that our points are on the $n$-th layer instead of being extremal.
	
	Similarly to what is done in \cite[Lemma 7.6]{CSY}, we can use the same arguments as in the proof of Theorem \ref{lim variance precis} to show that 
	\begin{equation}\label{var pos eq1}
	\lim\limits_{\lambda \rightarrow \infty} {\lambda^{-\frac{d-1}{d+1}}}{\Var[N_{n,k,\lambda}]} 
		= \lim\limits_{\lambda \rightarrow \infty} {\lambda^{-\frac{d-1}{d+1}}}{\Var[\tilde{N}_{n,k,\lambda}]}
	\end{equation}
	where we write 
	$\tilde{N}_{n,k,\lambda} := \sum_{w \in \mathcal{P} \cap W_\lambda} \xi^{(\infty)}_{n,k}(w, \mathcal{P})$.
	
	Our strategy to prove that the limit in the rhs of \eqref{var pos eq1} is positive is the following: we start by discretizing $W_\la$ and construct in each parallelepiped of that discretization two different configurations, said to be \textit{good}, which have a positive probability to occur and which give birth to two different values for the local number of $k$-faces of the $n$-th layer. Then we check that this counting is not affected by the external configuration and finally, we find a lower bound for the total variance conditional on the intersection of the point process with the outside of the parallelepipeds which contain one of the two \textit{good} configurations.
	\\~\\
	\label{page:step1 var pos}
	\noindent\textit{Step 1. Construction of a good configuration in a thin parallelepiped.}
	First, we consider a cube $Q \subseteq \mathbb{R}^{d-1}$ and we take $\rho \in (0, \infty)$ smaller than the
	diameter of $Q$. 
	We take $\delta > 0$ sufficiently small such that the paraboloids $\Pi^{\downarrow}(w)$ are pairwise disjoint for 
	$w$ belonging to the grid $(\rho \mathbb{Z}^{d-1}\cap Q) \times \{ \delta \}$. 
	For each $w \in (\rho \mathbb{Z}^{d-1} \cap Q) \times \{ \delta \}$, we make the same tree construction as in the proof of Lemma \ref{lem proba etre sur une couche} inside $\Pi^{\downarrow}(w)$. We obtain a forest that we call $\cT_{n,\rho}$. In particular, the construction ensures that all points $w\in (\rho \mathbb{Z}^{d-1} \cap Q) \times \{ \delta \}$ belong to the $n$-th layer of $\cT_{n,\rho}$.
	Let us write
	$F_{n,k}(Q, \rho, \delta)$ for the number of k-faces of the $n$-th layer of $(\cP\setminus (Q\times [0,\infty)))\cup \cT_{n,\rho}$ going through any 
	$w \in (\rho \mathbb{Z}^{d-1} \cap Q) \times \{ \delta \}$. If we ignore the points of $\mathcal{P}\setminus (Q\times [0,\infty))$, as the diameter of $Q$
	gets large compared to $\rho$, boundary effects become negligible and we get
	$$ F_{n,k}(Q, \rho/2, \delta) \sim 2^{d-1} F_{n,k}(Q, \rho, \delta).$$
	Then we consider $\varepsilon > 0$ such that
	\begin{equation}\label{eq:conddeltaeps}
	\delta+\varepsilon\le \frac{\rho^2}{8}.
	\end{equation}
	This guarantees in particular that for any 
$w=(v,\delta+\varepsilon) \mbox{ and }w'=(v',\delta+\varepsilon)\mbox{ with } |v-v'|\ge \rho$, we have $\Pi^{\downarrow}(w)\cap \Pi^{\downarrow}(w')=\varnothing$.    
	As in the proof of the positivity of the expectation, we consider a random perturbation of the points of $\cT_{n,\rho}$ by at most $\varepsilon$ for the Euclidean distance, i.e. we assume that $\cP\cap B(w,\varepsilon)$ consists of one point for each $w\in \cT_{n,\rho}$. We notice that the obtained perturbed points have a height at most equal to $\delta+\varepsilon$ and are distant by at most $2\rho$ when $\varepsilon<\rho/2$. Consequently, for $\varepsilon$ small enough, the maximal height of $(Q \times [0, \infty) ) \setminus \bigcup_{w'\in \cP\cap \cup_{w\in \cT_{n,\rho}}B(w,\varepsilon)} \Pi^\uparrow(w')$ is smaller than $\alpha$ where
	$\alpha$ is the maximal height of a point in $(Q \times [0, \infty) ) \setminus \bigcup_{w \in (Q \cap 2 \rho \mathbb{Z}^{d-1}) \times \{ \delta +\varepsilon\} } \Pi^\uparrow(w)$. In particular, we claim that there is a constant $c_\alpha>1$ depending only on dimension $d$ such that 
	\begin{equation}\label{eq:calpha}\alpha \leq c_\alpha\rho^2.
	\end{equation}
	Indeed, up to boundary effects, the set of apices of down paraboloids which contain $2^{d-1}$ points of $Q\cap 2\rho\Z^{d-1}\times \{\delta+\varepsilon\}$ is located on a translated dual grid at height $((d-1)\frac{\rho^2}{2}+\delta+\varepsilon) $, see Figure \ref{fig:lafiguredelamort}.  
	\begin{figure}
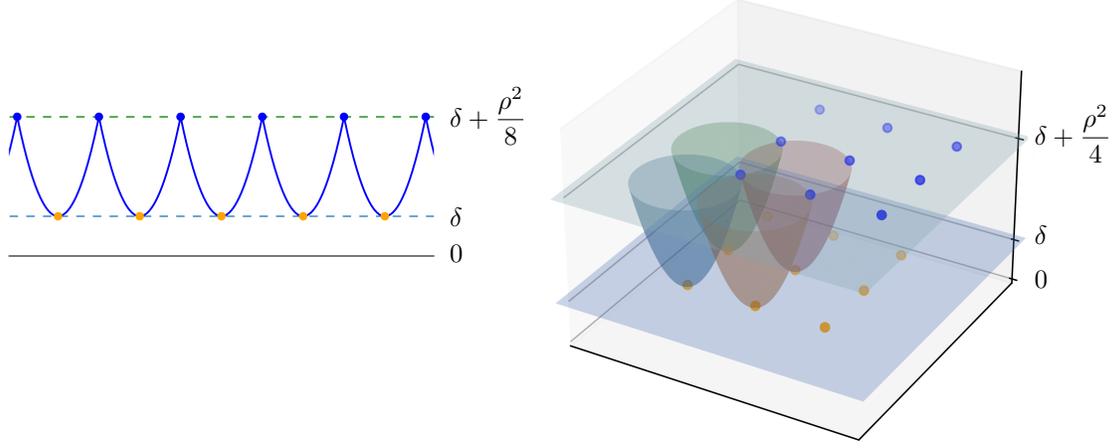

        \raisebox{1.2\height}{\begin{overpic}[width=0.42\textwidth]{illu_variance2d.png}
         \put (93,0) {$\displaystyle 0$}
         \put (93,7.5) {$\displaystyle \delta$}
         \put (93,28.5) {$\displaystyle \delta + \frac{\rho^2}{8}$}
        \end{overpic}}
       \hspace{0.5cm} \begin{overpic}[ width=0.45\textwidth]{illu_variance3d.png}
            \put (97, 32) {$\displaystyle 0 $}
            \put (97, 40.5) {$\displaystyle \delta $}
            \put (97, 60) {$\displaystyle \delta + \frac{\rho^2}{4}$}
        \end{overpic}
        \caption{A part of the dual grid for non-perturbed points in dimensions $2$ and $3$.}
        \label{fig:lafiguredelamort}
    \end{figure}
	
	Consequently, let us consider the event
	$$A_{n,\rho}=\cap_{w\in {\cT_{n,\rho}}}\{{\#(\mathcal P}\cap B(w,\varepsilon))=1\}\cap \{({\mathcal P}\cap [(Q\times [0,\alpha])\setminus \cup_{w\in \cT_{n,\rho}}B(w,\varepsilon)]=\varnothing\}.$$
Let us write $F_{n,k}(Q, \rho, \delta, \varepsilon)$
	for the total number of $k$-faces of the $n$-th layer of $\cP$ going through at least one point in $\cP\cap \cup_{w\in (\rho \mathbb{Z}^{d-1} \cap Q) \times \{ \delta \}}B(w,\varepsilon)$. Conditional on $A_{n,\rho}$ and when the points of $\cP\setminus (Q\times [0,\alpha])$ are ignored, for $\varepsilon$ small enough, this quantity is in fact equal to $F_{n,k}(Q, \rho, \delta)$ and we keep the relation
	\begin{equation}\label{var pos scaling} F_{n,k}(Q, \rho/2, \delta, \varepsilon) \sim 2^{d-1} F_{n,k}(Q, \rho, \delta, \varepsilon).\end{equation}
	
	\noindent\textit{Step 2. Influence of the points outside of the thin parallelepiped $Q\times [0,\alpha]$.}
		For any closed set 
	$C \subseteq \mathbb{R}^{d-1}$, we define for any $\gamma > 0$ $C^{(\gamma)} := \{ x \in C : d(x, \partial C) > \gamma \}$. We claim that on $A_{n,\rho}$, for any $w \in \cP\cap (Q^{(\rho)}\times [0,\alpha])$, the status of $w$ does not depend on points outside $Q\times[0,\alpha]$, i.e.
	$$\ell^{(\infty)}(w,\cP\cap (Q\times[0,\alpha]))=\ell^{(\infty)}(w,\cP).$$
	This is due to the fact that the condition \eqref{eq:conddeltaeps} guarantees that the down paraboloids with apices at points from $\cP\cap (Q^{(\rho)}\times [0,\alpha])$ are included in $Q\times[0,\alpha]$. 
	Moreover, for $\textbf{c}=\sqrt{2c_\alpha}+2$, we assert that the facial structure around any point inside $\cP\cap (Q^{(\textbf{c}\rho)}\times [0,\alpha])$ which belongs to the $n$-th layer of the peeling of $\cP$ does not depend on points outside $Q\times[0,\alpha]$. Indeed, let us consider $w\in \cP\cap (Q^{(\textbf{c}\rho)}\times [0,\alpha])$. We choose $(d-1)$ points from $\cP\cap(Q^{(\sqrt{2c_\alpha}\rho)}\times [0,\alpha])$ which share a common facet of the $n$-th layer of the peeling of $\cP\cap (Q\times[0,\alpha])$. The down paraboloid which contains this facet has an apex in $Q^{(\sqrt{2 c_\alpha}\rho)}\times[0,\alpha]$. Consequently, thanks to \eqref{eq:calpha}, that down paraboloid is included in $Q\times[0,\alpha]$, which implies that the facet containing $w$ and the $(d-1)$ other points is a facet of the $n$-th layer of the peeling of $\cP$.

	~\\
	\textit{Step 3. Discretization of $W_\la$ and lower bound for the variance.}
		We are now ready to discretize $W_\la$ and isolate the \textit{good} parallelepipeds from the discretization, according to the two previous steps. 
	We choose $\delta$  and $\varepsilon$ which satisfy \eqref{eq:conddeltaeps} with the choice $\rho=1$. 
	We take a large positive number $M$ and we partition $\left[ -\frac{\lambda^{\frac{1}{d+1}}}{2}, \frac{\lambda^{\frac{1}{d+1}}}{2}\right]^{d-1}$ into $L:= \left[\frac{\lambda^{\frac{1}{d+1}}}{M}\right]^{d-1}$ cubes 	$Q_1, \ldots, Q_L$. We consider the cubes $Q_i$ satisfying the following properties:
	
	(a) For each $z \in (\mathbb{Z}^{d-1} \cap (Q_i \setminus Q_i^{(\textbf{c})})) \times  \{ \delta \}$,  $\cP \cap B(z, \varepsilon) $ is a singleton and is put on the $n$-th layer using the  tree construction associated with $\cT_{n,1}$ and the perturbation of each point by at most $\varepsilon$ as in Step $1$.
	
	(b) One of these two conditions holds:
	\begin{enumerate}
		\item For each $z \in (\mathbb{Z}^{d-1} \cap Q_i^{(\textbf{c})}) \times  \{ \delta \}$, $\cP \cap B(z, \varepsilon)$ is a singleton and this point is put on the $n$-th layer as in property (a).
		\item For each $z \in (\frac{1}{2}\mathbb{Z}^{d-1} \cap Q_i^{(\textbf{c})}) \times  \{ \delta \}$, $\cP \cap B(z, \varepsilon)$ is a singleton and this point is put on the $n$-th layer as before.
	\end{enumerate}

	(c) Aside from the points described above, $\mathcal{P}$ has no point in 
	$Q_i \times [0, \alpha]$.
	
	After a possible relabeling, let $I := \{1, \ldots K \} $ be the indices of cubes partitioning $\left[ -\frac{\lambda^{\frac{1}{d+1}}}{2}, \frac{\lambda^{\frac{1}{d+1}}}{2}\right]^{d-1}$ that verify properties (a) to (c).
	Since any cube has a positive probability to verify these properties, we have 
	\begin{equation} \label{var pos number Qi}
		\mathbb{E}[K] \geq c \lambda^{\frac{d-1}{d+1}} .
	\end{equation}
	Let $\mathcal{F}_\lambda$ be the $\sigma$-algebra
	generated by $I$, the positions of points in $W_\lambda \setminus (\bigcup_{i \in I} Q_i^{(\textbf{c})}  \times [0, \alpha])$ and the scores $\xi_{n,k}^{(\infty)}(x,\cP)$ at these points.
	For each $i \in I$ we claim that
	\begin{equation}\label{var pos Qi}
		\Var\bigg[ \sum_{x \in \mathcal{P} \cap (Q_i \times [0, \alpha])} \xi^{(\infty)}_{n,k}(x, \mathcal{P}) \Big| \mathcal{F}_\lambda \bigg]=\Var\bigg[ \sum_{x \in \mathcal{P} \cap (Q_i^{(3)} \times [0, \alpha])} \xi^{(\infty)}_{n,k}(x, \mathcal{P}) \Big| \mathcal{F}_\lambda \bigg] \geq c_0.
	\end{equation}
	Indeed,
	either condition (b1) or condition (b2) occurs in $Q_i^{(\textbf{c})} \times [0, \alpha]$, each with positive probability. Moreover, we observe that $\sum_{x \in \mathcal{P} \cap (Q_i \times [0, \alpha])} \xi^{(\infty)}_{n,k}(x, \mathcal{P})$ is larger when (b2) is satisfied. This is due to the scaling result \eqref{var pos scaling} which implies that the contribution of points inside $Q_i^{(\textbf{c})}\times [0, \alpha]$ provides a quantity almost $2^{d-1}$ times larger when (b2) is satisfied. 
	
	Since only scores in $\cup_{i \in I} Q_i \times [0, \alpha]$ have any variability conditional on $\mathcal{F}_\lambda$ we get 
	\begin{align*}
		\Var[\tilde{N}_{n,k,\lambda}] & =\Var[\E[\tilde{N}_{n,k,\la}|\cF_\la]+\E[\Var[\tilde{N}_{n,k,\la}|\cF_\la]]\\&\geq \mathbb{E}\left[ \Var\left[\tilde{N}_{n,k,\lambda} | \mathcal{F}_\lambda  \right] \right]\\
			&= \mathbb{E}\bigg[ \Var\bigg[\sum_{i \in I} \sum_{x \in \mathcal{P} \cap (Q_i^{(3)} \times [0, \alpha])} \xi^{(\infty)}_{n,k}(x, \mathcal{P}) \Big| \mathcal{F}_\lambda \bigg] \bigg].
	\end{align*}
	Then the sums of scores in $Q_i^{(\textbf{c})} \times [0, \alpha]$ and $Q_j^{(\textbf{c})} \times [0, \alpha]$ for $i \neq j$ are independent conditional on $\mathcal{F}_\lambda$ since the scores in $Q_i^{(\textbf{c})} \times [0, \alpha]$ only depend on points in $Q_i \times [0, \alpha]$.
	Thus we can write
	\begin{align*}
	\Var[\tilde{N}_{n,k,\lambda}] &\geq \mathbb{E}\bigg[ \sum_{i \in I} \Var\bigg[ \sum_{x \in \mathcal{P} \cap (Q_i \times [0, \alpha])} \xi^{(\infty)}_{n,k}(x, \mathcal{P}) \Big| \mathcal{F}_\lambda \bigg] \bigg]\\
	&\geq c_0 \mathbb{E}[K]\\
	&\geq c \lambda^{\frac{d-1}{d+1}}.
	\end{align*}
	where the second inequality comes from \eqref{var pos Qi} applied to each $Q_i$ and the last inequality comes from
	\eqref{var pos number Qi}. Thanks to \eqref{var pos eq1}, this proves the positivity of the limiting variance.
    \end{proof}
    
    \subsection{Results on intrinsic volumes}\label{sec:intrinsic}
    We define the score and two-point correlation function in the case of intrinsic volumes by following closely the method of \cite[pp. 54--55]{CSY}, which relies on Kubota's formula, see \cite[equation (6.11)]{SW08}.
    For convenience, let us write $\conv_{n,\la} := \conv_n(\cP_\la)$ and let us denote by $\kappa_m$ the volume of the $m$-dimensional unit ball. By Kubota's formula applied to $\conv_{n,\la}$, we get
    \begin{equation}\label{eq:VkKubota}
      V_k(\conv_{n,\la}) = \frac{d! \kappa_d}{k!\kappa_k(d-k)!\kappa_{d-k}} \int_{G(d,k)} \text{Vol}_k(\conv_{n,\la} | L) \mathrm{d}\nu_k(L)
    \end{equation}
    where $\nu_k$ is the normalized Haar measure on the $k$-th Grassmanian $G(d,k)$ of $\R^d$ and $\conv_{n,\la} | L$ is the orthogonal projection of $\conv_{n,\la}$ onto $L$.
    For every $x \in \R^d \setminus \{0\}$ we define 
    $\vartheta_L(x, \conv_{n,\la}) := \mathds{1}_{\{x \not\in \conv_{n,\la}|L \}}$ and the projection avoidance functionals
    $$\vartheta_k(x, \conv_{n,\la}) := \int_{G(\text{lin}[x],k) } \vartheta_L(x, \conv_{n,\la}) \mathrm{d}\nu_k^{\text{lin}[x]}(L)$$
    where $\text{lin}[x]$ is the linear space spanned by $x$, $G(\text{lin}[x],k)$ is the set of all $k$-dimensional linear subspaces of $\R^d$ containing $\text{lin}[x]$ and $\nu_k^{\text{lin}[x]}$ is the normalized Haar measure on $G(\text{lin}[x],k)$. Combining \eqref{eq:VkKubota} and Fubini's theorem we can rewrite the defect $k$-th intrinsic volume of $\conv_{n,\la}$ as
    \begin{equation*}
    V_{n,k,\la} = \frac{{d-1 \choose k-1}}{\kappa_{d-k}} \int_{\R^d} \vartheta_k(x, \conv_{n,\la}) \frac{\mathrm{d}x}{|x|^{d-k}}.
    \end{equation*}
    The functional $V_{n,k,\la}$ can then be written as a sum of scores, i.e.
    $$V_{n,k,\la} = \sum_{x \in \cP_\la} \xi_{V,n,k}(x, \cP_\la)$$
    with
    $$\xi_{V,n,k}(x, \cP_\la) := \left\{\begin{array}{ll}\frac{{d-1 \choose k-1}}{d\kappa_{d-k}} \int_{\text{cone}_n(x, \cP_\la)} \vartheta_k(y, \conv_{n}(\cP_\la \cup \{ x \})) \frac{\mathrm{d}y}{|y|^{d-k}}&\mbox{ if $x \in \partial \conv_{n,\la}$}\\0 &\mbox{ otherwise}  \end{array}\right.$$
    where $\text{cone}_n(x, \cP_\la) := \{ry : r > 0 \text{ and } y \in \cF_{n,d-1}(x, \cP_\la) \}$.
    We denote by 
    \begin{equation}\label{eq:defxiVnk}
    \xi_{V,n,k}^{(\la)}(x, \cP^{(\la)}) := \la\xi_{V,k,\la}([T^{(\la)}]^{-1}(x), \cP_\la)
    \end{equation}
    their rescaled counterparts, see \cite[page 84]{CSY} for an explanation on the factor $\la$. 
    Let us explain how to define the limit versions of the rescaled scores. For a point $w = (v,h) \in \R^{d-1} \times \R_+$ we write $w^{\updownarrow}$ for the set
    $\{v\}\times \R_+$. We denote by $A(w^\updownarrow,k)$ the set of all $k$-dimensional affine spaces in $\R^{d-1}\times\R_+$ containing $w^\updownarrow$. For any $k$ and any affine space $L \in A(w^\updownarrow,k)$  we define the orthogonal parabolic volume
    $$\Pi^\perp(w, L) :=  (w \oplus L^\perp) \cap \Pi^\downarrow(w) .$$
    We put $\vartheta_{n,L}^{(\infty)}(w) = 1$ if $\Pi^\perp(w,L) \cap \Phi_n(\cP) = \varnothing$ and $0$ otherwise. Then we define 
    $$\vartheta_{n,k}^{(\infty)}(w) := \int_{A(w^\updownarrow,k)} \vartheta_{n,L}^{(\infty)}(w) \mathrm{d}\mu_k^{w\updownarrow}(L) $$
    where $\mu_k^{w \updownarrow}$ is the normalized Haar measure on $A(w^\updownarrow,k)$. We are finally able to define the limit score
    \begin{equation}\label{eq:defxiinftyVnk}
    \xi^{(\infty)}_{V,n,k}(w,\cP) := \frac{{d-1 \choose k-1}}{d\kappa_{d-k}} \int_{\text{v-cone}(\cF_{n,d-1}^{(\infty)}(w, \cP) )} \vartheta_{n,k}^{(\infty)}(w') \mathrm{d}w'
    \end{equation}
    where $\text{v-cone}(\cF_{n,d-1}^{(\infty)}(w, \cP) ) := \{ (v', h'), \exists h''\ : (v',h'') \in \cF^{(\infty)}_{n,d-1}(w) \}$. 
    
    For any $\la \in (0, \infty]$, the corresponding two-point correlation function is then defined by the identity 
    \begin{align}\label{eq:defclaVnk}
&c_{V,n,k}^{(\lambda)}((0, h_0), (v_1, h_1))\notag\\&:=
\E[\xi_{V,n,k}^{(\la)}((0,h_0),\cP^{(\la)}\cup\{(v_1,h_1)\})\xi_{V,n,k}^{(\la)}((v_1,h_1),\cP^{(\la)}\cup\{(0,h_0)\})]
\notag\\&\hspace{3.6cm}-\E[\xi_{V,n,k}^{(\la)}((0,h_0),\cP^{(\la)})]\E[\xi_{Vn,k}^{(\la)}((v_1,h_1),\cP^{(\la)})].   
    \end{align}
    
    \begin{proof}[Proof of Theorems \ref{lim esperance precis vol} and \ref{lim variance precis vol}]

    ~\\~\\{\textit{Proof of the convergence of the normalized expectation and variance.}}
    The proofs of Theorems \ref{lim esperance precis vol} and \ref{lim variance precis vol} go along the same lines as the proofs of Theorems \ref{lim esperance precis} and \ref{lim variance precis} : after application of Mecke's formula and a suitable change of variables in the integral, we need to apply Lebesgue's dominated convergence theorem which requires the convergence of
    $\E[\xi_{V,n,k}^{(\la)}(w,\cP^{(\la)})], \E[(\xi_{V,n,k}^{(\la)}(w, \cP))^2] $ and $c^{(\la)}_{V,n,k}(w,w')$ in the same spirit as in Propositions \ref{expectation convergence xi k-faces} and \ref{correlation convergence} as well as moment bounds similar to those in Lemma \ref{borne Lp}. All these results rely on stabilization results identical to the tail estimates in Propositions \ref{stab k-faces} and \ref{localisation hauteur k-faces}. 
    
    As an example, we explain how to adapt Proposition \ref{stab k-faces}, Lemma \ref{borne Lp} and Proposition \ref{expectation convergence xi k-faces} to get the convergence of the expectation of $\xi^{(\la)}_{V,n,k}$. 
    
    We claim that as soon as $H_n^{(\la)}((0,h_0),\cP^{(\la)})\le \frac{r^2}{32}$, the calculation of $\xi^{(\la)}_{V,n,k}$ only depends on the set $\mathcal{U}$ defined at \eqref{eq:defEnsembleU}. Consequently, the radius of stabilization for $\xi^{(\la)}_{V,n,k}((0,h), \cP^{(\la)})$ is the same as the one considered in Proposition \ref{stab k-faces}.
    

    To prove the moment bounds for $\xi^{(\la)}_{V,n,k}$,
    we notice that $\xi_{V,n,k}^{(\la)}(0,h)$ is smaller than the volume of $C^{\leq H}(R)$ where $R$ is the stabilization radius of $\xi^{(\la)}_{V,n,k}((0,h), \cP^{(\la)})$ and $H=H_n^{(\la)}((0,h),\cP^{(\la)})$. Using Lemmas \ref{stab hauteur} and \ref{stab k-faces} as in the proof of Lemma \ref{borne Lp}, we deduce that for some $c_1,c_2>0$, 
\begin{equation*}
	\mathbb{E}\left[ (\xi_{V,n,k}^{(\la)}((0,h), \mathcal{P^{(\lambda)}})) ^p \right] \leq c_1 h^{p} \exp(- c_2h^{\frac{d+1}{2}}).
	\end{equation*}        
    
    These two ingredients allow us to prove the convergence of $\E[\xi_{V,n,k}^{(\la)}(w,\cP^{(\la)})]$ with the help of the continuous mapping theorem as in Lemmas \ref{lem:convesp1} and \ref{expectation convergence xi k-faces}. 
    
    ~\\
    \noindent\textit{Proof of the positivity of the limiting constants for the intrinsic volumes.} 
    The defect intrinsic volumes are increasing with respect to set inclusion. As the limiting constant is positive for $\conv_1(\cP^{(\la)})$ \cite[Theorem 3]{B89}, it remains true for $\conv_n(\cP_\la).$
    
    It remains to prove that the limiting constant for the variance is positive.
    The strategy is the same as the one that we used in the case of the $k$-faces.
    Because of the renormalization by $\la$ in the definition of $\xi_{V,n,k}^{(\la)}$, see \eqref{eq:defxiVnk} , the equality \eqref{var pos eq1} in the case of the volume scores becomes
    \begin{equation*}
	\lim\limits_{\lambda \rightarrow \infty} \la^{\frac{d+3}{d+1}}\Var[V_{n,k,\lambda}]
		= \lim\limits_{\lambda \rightarrow \infty} {\lambda^{-\frac{d-1}{d+1}}}{\Var[\tilde{V}_{n,k,\lambda}]},
	\end{equation*}
	where $\tilde{V}_{n,k,\la}:= \sum_{w \in \mathcal{P} \cap W_\lambda} \xi^{(\infty)}_{V,n,k}(w, \mathcal{P})$.
    The paragraph right after \eqref{var pos eq1} details the rest of our strategy.  We repeat word for word the first two steps of the proof of the positivity of the limiting variance on page \pageref{page:step1 var pos}, save for all the statements regarding $F_{n,k}$. 
    In the sequel we use the notations of the aforementioned proof and go directly to Step 3, i.e. the choice of the \textit{good} configurations.
    
    We discretize $W_\la$ by decomposing it into parallelepipeds and isolate the \textit{good} parallelepipeds from the discretization as we have done for the $k$-faces. 
	We choose $\delta$  and $\varepsilon$ which satisfy \eqref{eq:conddeltaeps} with the choice $\rho=1$. 
	We take a large positive number $M$ and we partition $\left[ -\frac{\lambda^{\frac{1}{d+1}}}{2}, \frac{\lambda^{\frac{1}{d+1}}}{2}\right]^{d-1}$ into $L:= \left[\frac{\lambda^{\frac{1}{d+1}}}{M}\right]^{d-1}$ cubes 	$Q_1, \ldots, Q_L$. We consider the cubes $Q_i$ satisfying the following properties, note that the main difference is the content of conditions b)1) and b)2):
	
	(a) For each $z \in (\mathbb{Z}^{d-1} \cap (Q_i \setminus Q_i^{(\textbf{c})})) \times  \{ \delta \}$,  $\cP \cap B(z, \varepsilon) $ is a singleton and is put on the $n$-th layer using the  tree construction associated with $\cT_{n,1}$ and the perturbation of each point by at most $\varepsilon$ as in Step $1$.
	
	(b) One of these two conditions holds:
	\begin{enumerate}
		\item For each $z \in (\mathbb{Z}^{d-1} \cap Q_i^{(\textbf{c})}) \times  \{ \delta \}$, $\cP \cap B(z, \varepsilon)$ is a singleton and this point is put on the $n$-th layer as in property (a).
		\item For each $z \in (\mathbb{Z}^{d-1} \cap Q_i^{(\textbf{c})}) \times  \{ \delta / 2 \}$, $\cP \cap B(z, \varepsilon)$ is a singleton and this point is put on the $n$-th layer as before.
	\end{enumerate}

	(c) Aside from the points described above, $\mathcal{P}$ has no point in 
	$Q_i \times [0, \alpha]$.
	
	After a possible relabeling, let $I := \{1, \ldots K \} $ be the indices of cubes partitioning $\left[ -\frac{\lambda^{\frac{1}{d+1}}}{2}, \frac{\lambda^{\frac{1}{d+1}}}{2}\right]^{d-1}$ that verify properties (a) to (c) and it remains true that
	\begin{equation} 
		\mathbb{E}[K] \geq c \lambda^{\frac{d-1}{d+1}} .
	\end{equation}
	Let $\mathcal{F}_\lambda$ be the $\sigma$-algebra
	generated by $I$, the positions of points in $W_\lambda \setminus (\bigcup_{i \in I} Q_i^{(\textbf{c})}  \times [0, \alpha])$ and the scores $\xi_{V,n,k}^{(\infty)}(x,\cP)$ at these points.
	For each $i \in I$ we claim that
	\begin{equation}\label{var pos Qi intr}
		\Var\bigg[ \sum_{x \in \mathcal{P} \cap (Q_i \times [0, \alpha])} \xi^{(\infty)}_{V,n,k}(x, \mathcal{P}) \Big| \mathcal{F}_\lambda \bigg]=\Var\bigg[ \sum_{x \in \mathcal{P} \cap (Q_i^{(3)} \times [0, \alpha])} \xi^{(\infty)}_{V,n,k}(x, \mathcal{P}) \Big| \mathcal{F}_\lambda \bigg] \geq c_0.
	\end{equation}
	
	Again,
	either condition (b1) or condition (b2) occurs in $Q_i^{(\textbf{c})} \times [0, \alpha]$, each with positive probability.
	To ensure that the remaining part of the proof for the $k$-faces works here as well, it remains to prove that $\sum_{x \in \mathcal{P} \cap (Q_i \times [0, \alpha])} \xi^{(\infty)}_{V,n,k}(x, \mathcal{P})$ is larger when (b1) is satisfied. We do it when $k=d$, i.e. in the case of the volume, as it contains all the ingredients needed to prove it for any intrinsic volume but with slightly less technicality that would only obfuscate the ideas.
    
    If each point $z$ were deterministic in $(\mathbb{Z}^{d-1} \cap Q_i^{(\textbf{c})}) \times  \{ \delta \}$ or $(\mathbb{Z}^{d-1} \cap Q_i^{(\textbf{c})}) \times  \{ \delta / 2 \}$ then the $n$-th layer in case (b2) would be a copy of the $n$-th layer in case (b1) at a lower height. The difference of volume between the two cases (b1) and (b2) would thus be a constant times the difference of height, i.e. $c \delta$. In the general case when the points are random, since we can choose $\varepsilon > 0$ as small as we want, we can make the difference of volume  as close as needed to this deterministic situation. Thus we can make sure that there exists a constant $c> 0$ such that $\sum_{x \in \mathcal{P} \cap (Q_i \times [0, \alpha])} \xi^{(\infty)}_{n,k}(x, \mathcal{P})$ in situation (b2) is smaller than $c$ times $\sum_{x \in \mathcal{P} \cap (Q_i \times [0, \alpha])} \xi^{(\infty)}_{n,k}(x, \mathcal{P})$ in situation (b1). Thus we have proved \eqref{var pos Qi intr}.
    
    The end of the proof follows along the same lines as in the case of the $k$-faces.
    \end{proof}
    
    \section{Concluding remarks} \label{sec:conclusion}
    
    
    
    
    
    Let us conclude with a list of possible extensions of our results and related open problems.
    \begin{itemize}
        \item  \textit{Other intensity measures}. In \cite{CY2}, the convex hull of a Poisson point process with a Gaussian intensity measure is studied. While the intensity of the process and therefore the rescaling are different, the techniques that are used are very similar so these results should extend to the $n$-th layer as we did in this paper for the uniform measure in the unit ball. We also expect that our results extend to a stationary Poisson point process in any smooth convex body as in \cite{CY1} where the case of the first layer is treated.  The $n$-th layer should have the same behaviour as the first one in this case as well. However, \cite{CY1} uses a sandwiching result stating that with high probability the first layer lies between two floating bodies, see \cite{R05b}. Such a result has not been proved for the $n$-th layer and would be interesting on its own.
        
        The same kind of problem arises in \cite{CY3} about the convex hull of the peeling of a uniform point set in a polytope. Again, an argument on the sandwiching of the first layer is needed, see \cite{BR10b}. An extension to the $n$-th layer inside a polytope will be the subject of a future work.
        
        Finally, in \cite{BFGHR20} asymptotic results on the expected number of vertices of the convex hull of an i.i.d. sample of an arbitrary non atomic probability distribution on $\R^d$ are derived from an approximation of the convex hull with floating bodies. We may hope that an approximation of the same kind should be possible for the $n$-th layer.
        \item {\it Invariance principles.} Let us consider the
        processes defined as the integrated versions of the defect support function and radius support function of each of the first layers. It is proved in \cite[Theorem 8.3]{CSY} that in the case of the first layer, such processes when properly rescaled and centered converge to a Brownian
        sheet process. We expect to get analogous functional central limit theorems for subsequent layers as we think that the arguments, for both the convergence of the finite-dimensional distributions and the tightness, should translate well to the case of the $n$-th layer. This would certainly be more challenging to do so when the number of the layer depends on $\lambda$, see below.
        \item \textit{Depoissonization.} We left aside the case where we have a fixed number of i.i.d. random points uniformly distributed in the unit ball, i.e. a binomial point process instead of a Poisson point process. We expect a result of depoissonization in the spirit of \cite[Theorem 1.2]{CY1} and \cite[Theorem 1.1]{CY4} to occur.
        
        \item \textit{Optimal Berry-Esseen bounds.} Recently, a method to derive central limit theorems for stabilizing functionals has been derived in \cite{LSY19}. The case of the number of $k$-faces of a convex hull of Poisson point processes in a smooth convex body is given as an application with an improved rate of convergence, i.e. $O\left(\la^{-\frac{d-1}{2(d+1)}} \right)$ instead of $O\left(\la^{-\frac{d-1}{2(d+1)}} (\log \la)^{3d + 1} \right)$ as in Theorem \ref{thm:principalintro}. We conjecture that the same rate of convergence should hold for the $n$-th layer but extending the method from \cite[Theorem 3.5]{LSY19} seems somehow challenging.
        \item \textit{Monotonicity.} A monotonicity problem arises naturally from Theorem \ref{thm:principalintro}. Denoting by $C_{n,k,d}$ the limit obtained for the expectation in Theorem \ref{thm:principalintro}, we might 
         wonder how the constants $C_{n,k,d}$ evolve with $n$. For $k=0$, we expect that the sequence $(C_{n,0,d})_n$ decreases with $n$. This is what our simulations suggest. In general, monotonicity problems in random polytopes are difficult, some insightful results on the monotonicity of the number of $k$-faces of the convex hull when the number of points increases can be found in \cite{DGGMR13}, \cite{BR17} and \cite{BG3TW17}.
        \item \textit{Other regimes.} Until now we have only considered a fixed layer number $n$ that does not depend $\la$ which means that we have only studied the first layers. It would be interesting to study different regimes, where $n$ would vary with $\la$. Thanks to \cite{D04} and \cite{CS20} we know that the expected number of layers is equivalent to $\la^{\frac{2}{d+1}}$ up to a constant. Thus a natural regime to study would be the case where $n = c \la^{\frac{2}{d+1}}$. Calder and Smart conjecture in \cite{CS20} that the mean number of points in this regime is still equivalent to $\la^{\frac{d-1}{d+1}}$ up to an explicit constant. They provide in particular a heuristic argument and simulations that we were able to reproduce.
    \end{itemize}
    ~\\
    \noindent{\em Acknowledgments}. This work was partially supported by the French ANR grant ASPAG (ANR-17-CE40-0017), the Institut Universitaire de France and the French research group GeoSto (CNRS-RT3477). The authors would also like to thank J.~E.~Yukich for fruitful discussions and helpful comments.
\bibliography{biblio.bib}
~\\
{Pierre Calka, Gauthier Quilan, Laboratoire de Math\'ematiques Rapha\"el Salem, Universit\'e de Rouen Normandie, Avenue de l'Universit\'e, BP 12, Technop\^ole du Madrillet, F76801 Saint-\'Etienne-du-Rouvray France;\quad\quad pierre.calka@univ-rouen.fr, gauthier.quilan@univ-rouen.fr} 
\end{document}